\documentclass[a4paper,12pt]{amsart}

\usepackage[english]{babel}
\usepackage{amsmath,amsthm,amssymb,amsfonts}
\usepackage[all]{xy}
\usepackage{multicol}
\usepackage{todonotes}
\usepackage[shortlabels]{enumitem} 
\usepackage[dvipsnames]{xcolor}
\usepackage{pdflscape}

\usepackage{mathtools}
\usepackage{extarrows}

\usepackage{thmtools}

\usepackage[export]{adjustbox}
\usepackage{graphicx}
\usepackage{subcaption}
\usepackage{tikz}
\usetikzlibrary{mindmap,backgrounds, calc}
\usetikzlibrary{matrix,chains,scopes,positioning,arrows,fit}
\usetikzlibrary{positioning, shapes, patterns}
\usetikzlibrary{automata} 
\usetikzlibrary{shapes.geometric, arrows, arrows.meta}
\usetikzlibrary{positioning,decorations.markings}
\usepackage{caption}
\usepackage[normalem]{ulem}
\usepackage{marginnote}
\usepackage{cancel}
\usepackage{bbm}
\usepackage{comment}

\setlist[enumerate]{itemsep=0.2em, topsep=0.25em}

\makeatletter
\newcommand*\bigcdot{\mathpalette\bigcdot@{.5}}
\newcommand*\bigcdot@[2]{\mathbin{\vcenter{\hbox{\scalebox{#2}{$\m@th#1\bullet$}}}}}
\makeatother

\newcommand{\vertiii}[1]{{\left\vert\kern-0.25ex\left\vert\kern-0.25ex\left\vert #1 
		\right\vert\kern-0.25ex\right\vert\kern-0.25ex\right\vert}}

\makeatletter
\renewcommand{\tocsection}[3]{%
	\indentlabel{\@ifnotempty{#2}{\bfseries\ignorespaces#1 #2\quad}}\bfseries#3}
\renewcommand{\tocsubsection}[3]{%
	\indentlabel{\@ifnotempty{#2}{\ignorespaces#1 #2\quad}}#3}

\def\@tocline#1#2#3#4#5#6#7{\relax
	\ifnum #1>\c@tocdepth 
	\else
	\par \addpenalty\@secpenalty\addvspace{#2}%
	\begingroup \hyphenpenalty\@M
	\@ifempty{#4}{%
		\@tempdima\csname r@tocindent\number#1\endcsname\relax
	}{%
		\@tempdima#4\relax
	}%
	\parindent\z@ \leftskip#3\relax \advance\leftskip\@tempdima\relax
	\rightskip\@pnumwidth plus1em \parfillskip-\@pnumwidth
	#5\leavevmode\hskip-\@tempdima{#6}\nobreak
	\leaders\hbox{$\m@th\mkern \@dotsep mu\hbox{.}\mkern \@dotsep mu$}\hfill
	\nobreak
	\hbox to\@pnumwidth{\@tocpagenum{\ifnum#1=1\bfseries\fi#7}}\par
	\nobreak
	\endgroup
	\fi}
\AtBeginDocument{%
	\expandafter\renewcommand\csname r@tocindent0\endcsname{0pt}
}
\def\l@subsection{\@tocline{2}{0pt}{2.5pc}{5pc}{}}
\makeatother

\usepackage{tikz}
\usetikzlibrary{mindmap,backgrounds, calc}
\usetikzlibrary{matrix,chains,scopes,positioning,arrows,fit}
\usetikzlibrary{positioning, shapes, patterns}
\usetikzlibrary{automata} 
\usetikzlibrary{shapes.geometric, arrows, arrows.meta}
\usetikzlibrary{positioning,decorations.markings}

\usepackage[a4paper,left=3cm,right=3cm,top=2.5cm,bottom=3cm]{geometry}

\theoremstyle{definition}
\newcounter{maincoro}

\newtheorem{theorem}{Theorem}[section]
\newtheorem{lemma}[theorem]{Lemma}

\newtheorem{proposition}[theorem]{Proposition}
\newtheorem{corollary}[theorem]{Corollary}

\theoremstyle{definition}
\newcounter{maintheorem}

\newtheorem{definition}[theorem]{Definition}
\newtheorem{example}[theorem]{Example}
\newtheorem{problem}[theorem]{Problem}

\theoremstyle{remark}
\newtheorem{remark}[theorem]{Remark}

\numberwithin{equation}{section}

\newcommand{\R}{\mathbb{R}}
\newcommand{\C}{\mathbb{C}}
\newcommand{\N}{\mathbb{N}}
\newcommand{\Z}{\mathbb{Z}}
\newcommand{\K}{\mathbb{K}}
\newcommand{\Q}{\mathbb{Q}}
\newcommand{\T}{\mathbb{T}}

\newcommand{\clco}{\mathop{\overline{\mathrm{co}}}\nolimits}

\newcommand{\Iso}{\mathrm{Iso}}

\makeatletter
\tikzset{
	arrow/.style   = {-{Implies}, double equal sign distance, line width=0.65pt},
	iffarrow/.style= {{Implies}-{Implies}, double equal sign distance, line width=0.65pt},
}

\renewcommand{\tocsection}[3]{%
	\indentlabel{\@ifnotempty{#2}{\bfseries\ignorespaces#1 #2\quad}}\bfseries#3}
\renewcommand{\tocsubsection}[3]{%
	\indentlabel{\@ifnotempty{#2}{\ignorespaces#1 #2\quad}}#3}

\def\@tocline#1#2#3#4#5#6#7{\relax
	\ifnum #1>\c@tocdepth 
	\else
	\par \addpenalty\@secpenalty\addvspace{#2}%
	\begingroup \hyphenpenalty\@M
	\@ifempty{#4}{%
		\@tempdima\csname r@tocindent\number#1\endcsname\relax
	}{%
		\@tempdima#4\relax
	}%
	\parindent\z@ \leftskip#3\relax \advance\leftskip\@tempdima\relax
	\rightskip\@pnumwidth plus1em \parfillskip-\@pnumwidth
	#5\leavevmode\hskip-\@tempdima{#6}\nobreak
	\leaders\hbox{$\m@th\mkern \@dotsep mu\hbox{.}\mkern \@dotsep mu$}\hfill
	\nobreak
	\hbox to\@pnumwidth{\@tocpagenum{\ifnum#1=1\bfseries\fi#7}}\par
	\nobreak
	\endgroup
	\fi}
\AtBeginDocument{%
	\expandafter\renewcommand\csname r@tocindent0\endcsname{0pt}
}
\def\l@subsection{\@tocline{2}{0pt}{2.5pc}{5pc}{}}
\makeatother

\DeclareMathOperator{\dist}{dist\,}
\DeclareMathOperator{\diam}{diam\,}
\DeclareMathOperator{\co}{co}

\DeclareMathOperator{\re}{Re}

\DeclareMathOperator{\id}{Id}

\DeclareMathOperator{\sign}{sign}
\DeclareMathOperator{\iso}{Iso}

\newcommand{\nn}[1]{{\left\vert\kern-0.25ex\left\vert\kern-0.25ex\left\vert #1 
		\right\vert\kern-0.25ex\right\vert\kern-0.25ex\right\vert}}

\renewcommand{\geq}{\geqslant}
\renewcommand{\leq}{\leqslant}

\newcommand{\norm}[1]{\left\Vert#1\right\Vert}
\newcommand{\abs}[1]{\left\vert#1\right\vert}

\newcommand{\spann}{\operatorname{span}}

\newcommand{\dent}[1]{\operatorname{dent}\left(#1\right)}

\newcommand{\e}{\varepsilon}

\newcommand{\ext}{\operatorname{ext}}

\newcommand{\eps}{\e}

\newcommand{\clspan}{\overline{\operatorname{span}}}

\usepackage[bookmarks=true,hyperindex,pdftex,colorlinks,citecolor=cyan]{hyperref}
\hypersetup{colorlinks=true,  linkcolor=blue, citecolor=red, urlcolor=cyan}
\usepackage{cleveref}

\begin{document}

	\title[A group action approach to the Daugavet property]{A group action approach to the Daugavet property}
	
	\author[S.~Dantas]{Sheldon Dantas}
	\address[S.~Dantas]{Czech Technical University in Prague, FEE, Department of Mathematics, Technická 2, 16627, Prague 6, Czech Republic \newline
		\href{https://orcid.org/0000-0001-8117-3760}{ORCID: \texttt{0000-0001-8117-3760}}}
	\email{\texttt{sheldon.dantas@fel.cvut.cz}}
	\urladdr{sheldondantas.com}
	
	\author[H.~Del R\'{\i}o]{Helena del R\'{\i}o} 
	\address[H.~Del R\'{i}o]{Department of Mathematical Analysis and Institute of Mathematics (IMAG), University of Granada, E-18071 Granada, Spain \newline
		\href{https://orcid.org/0009-0004-5078-6993}{ORCID: \texttt{0009-0004-5078-6993} }}
	\email[]{\texttt{helenadelrio@ugr.es}}
	
	\author[T.~Raunig]{Tomáš Raunig}
	\address[T.~Raunig]{Institute of Mathematics, Czech Academy of Sciences, Žitná 25, 115 67, Czech Republic\newline second address: Faculty of Mathematics and Physics, Charles University, Sokolovská 83, Prague, 186 00 \newline
		\href{https://orcid.org/0009-0001-3425-8726}{ORCID: \texttt{0009-0001-3425-8726}}}
	\email{raunig@karlin.mff.cuni.cz}

	\begin{abstract} We introduce the $G$-Daugavet property ($G$-DPr, for short) for Banach spaces endowed with an action of a group $G$ by surjective linear isometries. This notion provides a common framework for the classical Daugavet property and the alternative Daugavet property, which correspond respectively to the trivial action and to the scalar action of $S_{\K}$. We establish several characterizations of the $G$-DPr in terms of $G$-slices and closed convex $G$-invariant hulls, recovering the usual slice descriptions of the DPr and the aDPr as particular cases. We show that the presence of a group action leads to new behavior in Daugavet theory. In particular, the $G$-DPr may hold on classical reflexive spaces in sharp contrast with the classical Daugavet property. We relate this phenomenon to convex transitivity, almost transitivity and finite-dimensional rotation problems. We also prove group-action versions of the classical characterizations for $L^1(\mu, X)$- and $C(K,X)$-spaces. The paper also studies group separable determination, $G$-versions of numerical radius and numerical index, and connections between the $G$-DPr and strong Radon-Nikodým and SCD operators.
		Finally, we introduce a parameter which measures how far the $G$-DPr is from the classical DPr in a quantitative manner. As a consequence of these results, we obtain conditions under which the $G$-DPr recovers several classical implications, including the failure of the RNP for both $X$ and $X^*$, the presence of copies of $\ell_1$ and the failure of the unit ball to be an SCD set. 
	\end{abstract}

	\thanks{ }
	
	\subjclass[2020]{Primary 46B04; Secondary 46B20, 46B22, 47A30}
	\keywords{Daugavet property; alternative Daugavet property; group actions}
	
	\maketitle
	
	\tableofcontents
	
	\thispagestyle{plain}

	\section{Introduction} The starting point of the present work is the classical theorem of I.K. Daugavet, which asserts that every compact operator $T$ on $C[0,1]$ satisfies the norm identity
	\begin{equation} \label{eq1}
		\|\id_X+T\|=1+\|T\|.
	\end{equation}
	This identity leads to the definition of the Daugavet property. A Banach space $X$ is said to have the \emph{Daugavet property} $($DPr, for short$)$ if every rank-one operator $T:X\to X$ satisfies (\ref{eq1}). Since its definition, the Daugavet property has become a central object in the geometry of Banach spaces. There are many classical examples of Banach spaces with the DPr. Among them one finds $C(K)$-spaces when $K$ is perfect \cite{Daugavet1963}, $L^1(\mu)$-spaces when $\mu$ is atomless \cite{Lozanovskii66}, Lipschitz-free spaces and Lipschitz spaces over length metric spaces \cite{IKW07,GLPRZ18}, non-atomic $C^*$-algebras and preduals of diffuse von Neumann algebras \cite{BM05,Oik02}, as well as certain Banach algebras of holomorphic functions on arbitrary Banach spaces \cite{Jung2023,Werner97,Wojtaszcyk92}. It was proved recently that the DPr is equivalent to its polynomial version \cite{DantasMartinPerreau2025}. For a detailed presentation of the theory and its background, we refer the reader to the recent monograph \cite{KMRW}.

	The Daugavet property has received considerable attention not only as an isometric property, but also because of its strong consequences for the linear structure of the space. For instance, a Banach space with the DPr cannot be isomorphically embedded into a space with an unconditional Schauder basis \cite{Kadets2000}, extending a classical theorem of A. Pe{\l}czýnski for $L^1[0,1]$. Further structural consequences are that such a Banach space fails the Radon-Nikodým property (RNP, for short) and contains many isomorphic copies of $\ell_1$. In particular, spaces with the Daugavet property cannot be Asplund \cite{KMRW, Kadets2000}.

	A closely related notion is the \emph{alternative Daugavet property} $($aDPr, for short$)$, which was introduced and systematically studied in \cite{MO2004}. Let us recall its definition. A Banach space $X$ is said to have the aDPr if every rank-one operator $T:X\to X$ satisfies
	\begin{equation*}
		\max_{\theta\in S_{\K}}\|\id_X+\theta T\|=1+\|T\|.
	\end{equation*}
	Clearly, the aDPr is weaker than the DPr, but it still shares many of its geometric features. Also it is deeply connected with numerical ranges and numerical index (see \cite{MO2004} and also \cite{BonsallDuncanV1,BonsallDuncanV2}). Several results for vector-valued function spaces have parallel formulations for the DPr and the aDPr \cite{MO2004, MartinPaya2000}. Nevertheless, the aDPr can be drastically different from the classical DPr: for instance, in the real case it may occur in finite-dimensional spaces \cite{LMP1999,MO2004}.
	
	The parallel behavior of the DPr and the aDPr suggests that both properties should be understood as particular cases of a more general principle. Indeed, the alternative Daugavet equation already contains the action of the scalar group $S_{\K}$ on the space. This observation naturally leads to the following broader point of view: instead of only allowing multiplication by scalars, one may allow an arbitrary group of surjective linear isometries to act on the space. Group actions have become increasingly relevant in Banach space theory, appearing in the study of transitivity, invariant renormings, rigidity, variational principles, twisted sums and related geometric questions (see, for instance, \cite{AronFalcoMaestre2018,BaderFurmanGelanderMonod2007,BekkaDeLaHarpeValette2008,CastilloFerenczi2023,DantasDouchaJungRaunig2025,DantasFalcoJung2023,  Doucha2026,Falco2021GroupInvariantBishopPhelps,FalcoGarciaJungMaestre2022, FalcoIsert2024, FalcoIsert2026,FerencziRosendal2013,Raunig2025}).
	
	The aim of this paper is to develop such a group-action approach to Daugavet-type properties. Given a Banach space $X$ and a group $G$ acting on $X$ by surjective linear isometries, we introduce the \emph{$G$-Daugavet property} $($$G$-DPr, for short$)$. We say that $X$ has the $G$-DPr if every rank-one operator $T:X\to X$, not assumed to commute with the action of $G$, satisfies the equation
	\begin{equation*}
		\sup_{g\in G}\|\id_X+g\circ T\|=1+\|T\|.
	\end{equation*}
	Let us notice that this definition contains the two classical cases. If $G=\{\id_X\}$, then one recovers exactly the classical DPr. If $G=S_{\K}$ acts by scalar multiplication, then one recovers the alternative Daugavet property. Thus the $G$-DPr provides a common framework in which the DPr and the aDPr can be treated at the same time.
	
	A substantial part of the classical theory can be recovered in this wider setting. In particular, the usual slice characterizations of the DPr and the aDPr admit natural $G$-versions once ordinary slices are replaced by $G$-slices and closed convex hulls are replaced by closed convex $G$-invariant hulls. This allows several arguments which are traditionally written separately for the DPr and the aDPr to be viewed as instances of the same general mechanism.
	
	One of the main points of the paper is that this is not merely a formal generalization. Indeed, the presence of a group action leads to a new behavior in the Daugavet theory. In the classical theory, the Daugavet property is incompatible with reflexivity and with several regularity properties of Banach spaces. By contrast, the $G$-DPr may hold for classical spaces which are reflexive (or, as expected, even finite-dimensional as the aDPr). This shows that Daugavet-type behavior may arise not only from the geometry of the space itself, but also from the way the group moves points of the unit sphere. In the second half of this paper, our goal is therefore to identify which parts of the classical theory survive in the general group-action setting and which parts fail without further restrictions on the action.

	\subsection{Outline of the results}
	
	The paper is organized as follows. In Section \ref{sec: definition}, we introduce the main notion of the paper, namely the $G$-Daugavet property. This property is defined for Banach spaces endowed with an action (not necessarily continuous) of a group $G$ by linear isometries and requires every rank-one operator to satisfy the corresponding $G$-Daugavet equation. As we have mentioned before, this new definition contains the classical Daugavet property as the particular case $G=\{\id_X\}$ and the alternative Daugavet property as the case in which $G=S_{\K}$ acts by scalar multiplication. We first record in Proposition \ref{fact:G-DP-equation-commutes} that the different natural ways of inserting the group element in the Daugavet equation lead to the same supremum. We also prove the basic monotonicity property in Proposition \ref{fact-DP-implies-GDP}, from which Corollary \ref{DPr-implies-G-DPr} shows that both the DPr and the aDPr imply the corresponding $G$-DPr in the expected cases.
	
	After introducing $G$-slices in Definition \ref{def:G-slice}, we establish the main geometric characterizations of the $G$-DPr in Proposition \ref{prop:characterization-G-DP}. These characterizations are formulated in terms of $G$-slices and closed convex $G$-invariant hulls. They provide the group counterpart of the classical slice descriptions of the DPr and the aDPr, and they are used repeatedly throughout the paper. We also prove in Proposition \ref{prop:inheritanceFromDual} that the $G$-DPr is inherited from the dual space when $X^*$ is equipped with the canonical dual action.
	
	Section \ref{section:far-classical} is devoted to examples showing that the $G$-DPr can behave very differently from the classical DPr. We first prove in Proposition \ref{cor:convexTranstiveImpliesGDP} that convex-transitive actions always yield the $G$-DPr. In the LUR setting, Theorem \ref{theorem:GDPrOnUniformlyConvexSpace} gives a converse: an LUR $G$-Banach space has the $G$-DPr if and only if the action of $G$ on the unit sphere is almost transitive. This leads to the examples in Corollary \ref{cor:Lp-ellp-examples}, including the fact that $L^p[0,1]$ has the $\Iso(L^p[0,1])$-DPr, whereas $\ell_p$ fails the $G$-DPr for every $G\leq \Iso(\ell_p)$ when $p\neq2$. In finite dimensions, Corollary \ref{corollary:finite-dimensional-strictly-convex} relates the $G$-DPr for strictly convex spaces to transitivity of the closure of the group and to the finite-dimensional Mazur rotation problem. We also provide more infinite-dimensional reflexive spaces with the $G$-DPr in Example \ref{inf-dim-reflexive-with-GDPr-not-almost-transitive}.
	
	We then turn to stability and vector-valued function spaces. The $\ell_1$-sum stability results are given in Proposition \ref{stability-ell1} and Proposition \ref{prop:diagonal-ell1-stability}, while the corresponding $\ell_\infty$-sum result is obtained in Proposition \ref{prop:ellInftySumStability}. These results are used to prove the group analogues of the classical characterizations for $L^1(\mu,X)$ and $C(K,X)$. More precisely, Theorem \ref{L1(X,mu)-iff} shows that, for the pointwise action induced by the action on $X$, the space $L^1(\mu,X)$ has the $G$-DPr if and only if $\mu$ is atomless or $X$ has the $G$-DPr. Similarly, Theorem \ref{thm:c-of-K-X} proves that $C(K,X)$ has the $G$-DPr if and only if $K$ is perfect or $X$ has the $G$-DPr, recovering once again classical results. Examples \ref{examples-L1-C(K)-independent-actions} and \ref{C(K,X)-action} show that the compatibility between the action on the function space and the action on $X$ is essential.
	
	The next part of the paper studies separable determination. Proposition \ref{prop:denseSubsetG-DP} gives a dense-subset criterion for the $G$-DPr and Lemma \ref{lem:sepDeterminationLemma} provides the main technical tool for passing from increasing families of separable invariant subspaces and subgroups to the ambient space. These results lead to group-action separable determination theorems for the $G$-DPr, both for arbitrary topological groups and, in a simpler form, for separable or $\sigma$-compact groups with continuous actions.
	
	We also introduce a group version of the numerical radius $v_G(T)$ of an operator $T$ and numerical index $n_G(X)$ of a $G$-Banach space $X$. Lemma \ref{lem:G-DP-numerical-radius} proves that the $G$-Daugavet equation for an operator is equivalent to the equality $v_G(T)=\|T\|$. This allows us to connect the $G$-DPr with numerical-index theory. Among other results, Proposition \ref{prop:convex-transitivity-G-numerical-index} shows that convex-transitive actions have $G$-numerical index one. We further relate this circle of ideas to strong Radon-Nikodým operators in Proposition \ref{prop:sRN-Operators}, obtaining in Theorem \ref{thm:G-DPandRNP} that if $X$ has both the RNP and the $G$-DPr, then $n_G(X)=1$. The more general group version involving two group actions is given in Theorem \ref{thm:GDPandGRNP} where we estabilish a connection between the $G$-DPr and the $G$-RNP recently introduced in \cite{DantasDouchaJungRaunig2025}.
	
	The relation with SCD sets and operators is developed next. Proposition \ref{prop:GDPandSCDslices} gives an SCD-type characterization of the $G$-DPr in terms of weak-star dense subsets. Proposition \ref{prop:G-DPandSCDoperators} shows that every SCD-operator on a space with the $G$-DPr satisfies the $G$-Daugavet equation. This is extended to certain nonseparable-range operators in Corollary \ref{cor:GDP-SCD-nonseparable}. As a consequence, Corollary \ref{result-contain-copies-of-l1} shows that, under the assumption that the action is continuous, a $G$-Banach space with the $G$-DPr that contains no copy of $\ell_1$ must satisfy $n_G(X)=1$. We also include a group version of lushness in Proposition \ref{G-DPr-G-lush}, which says that if $X$ has the $G$-DPr but it is not $G$-lush, then its unit ball is not an SCD set. This once again recovers classical results in the theory.
	
	In the final section, we introduce a parameter denoted by $\alpha_G(X)$. It measures how far the action is from producing antipodal behavior on the unit sphere. Our main goal in this section is to provide a quantitative way to say when the $G$-DPr is close to the DPr. Theorem \ref{thm:GDPr-alphaG} shows that if $X$ has the $G$-DPr and $\alpha_G(X)<2$, then $X$ fails the RNP and, moreover, if $X$ has an unconditional basis, then it contains a copy of $c_0$. Proposition \ref{relation-between-alpha-and-numerical-index} relates $\alpha_G(X)$ to the $G$-numerical index and Theorem \ref{theorem:all-together} recovers several classical results: under the assumption that $\alpha_G(X)<2$, the $G$-DPr forces the presence of $\ell_1$ under continuity of the action, the failure of the RNP and the failure of $B_X$ being an SCD set. We also prove the quantitative estimate of Proposition \ref{proposition-estimation-alpha-G}, which gives a lower bound for $\|\id_X+T\|$ for rank-one operators in terms of $\alpha_G(X)$. The last quantitative results concern Daugavet indices of thickness. For real $G$-Banach spaces with the $G$-DPr, Theorem \ref{estimate-T-s-alpha-G} proves that $\mathcal T^s(X)\geq 2-\alpha_G(X)$, while Theorem \ref{theorem:relation-between-alphaG-dual} gives the weak-star dual estimate $\mathcal T^s_{w^*}(X)\geq 2-\alpha_G(X)$. These estimates show that when $\alpha_G(X)<2$, the $G$-DPr recovers quantitative features of the classical DPr. Finally, in the complex case, we obtain Corollary \ref{complex-reflexive-aDPr-problem}, which provides a complete picture of $S_{\C}$-Banach spaces under the scalar multiplication. Indeed, if $H \leq S_{\C}$, then $X$ cannot have the $H$-DPr whenever $X$ has the RNP and $H$ is finite, and if $H$ is infinite, then to say that $X$ has the $H$-DPr is equivalent to say that $X$ has the aDPr.

	\subsection{Notation} In the present paper, all Banach spaces will be considered over the scalar field $\K$, where $\K=\R$ or $\C$. Given a Banach space $X$, we denote by $B_X$ its closed unit ball, by $S_X$ its unit sphere and by $X^*$ its topological dual. The symbol $S_{\K}$ stands for the unit sphere of the scalar field $\K$; that is, $S_{\R}=\{-1,1\}$ in the real case and $S_{\C}=\{\lambda\in\C:|\lambda|=1\}$ in the complex case. When convenient, we identify $S_{\K}$ with the subgroup $\{\lambda\id_X:\lambda\in S_{\K}\}\leq \Iso(X)$.
	
	For a subset $A$ of a Banach space, we denote by $\diam(A)$ its diameter, by $\co(A)$ its convex hull and by $\overline{\co}(A)$ its closed convex hull. Given $z\in X$ and $\e>0$, we denote by $B(z,\e)$ the open ball centered at $z$ with radius $\e$. We denote by $\mathcal L(X,Y)$ the Banach space of all bounded linear operators from $X$ into $Y$ and, in order to simplify the notation, we write $\mathcal L(X)=\mathcal L(X,X)$. Throughout the paper, the word operator always means bounded linear operator. The identity operator on $X$ will be denoted by $\id_X$ and $\Iso(X)$ stands for the group of all surjective linear isometries from $X$ onto itself.
	
	We will use $\re$ to denote the real part of a complex number. If $x^*\in S_{X^*}$ and $\e>0$, the slice of $B_X$ determined by $x^*$ and $\e$ is the set
	\begin{equation*} 
		S(B_X,x^*,\e) = \{x\in B_X:\re x^*(x)>1-\e\}.
	\end{equation*}

	We refer to a $G$-Banach space as a Banach space endowed with an action of a group $G$ by surjective linear isometries, or equivalently, with a group homomorphism from $G$ into $\Iso(X)$. We write $g\cdot x$, or simply $gx$ when the context is clear, for the image of $x\in X$ under the action of $g\in G$. Whenever we need the continuity of the action, we will be explicit about it. We will use the phrase ``Let $X$ be a $G$-Banach space'' as a shorthand for ``Let $X$ be a Banach space, let $G$ be a group and let $G$ act on $X$ by surjective linear isometries''. If $X$ is a $G$-Banach space and $T\in\mathcal L(X)$, we will use the notation $T\circ g$ and $g\circ T$ to denote the bounded linear operators given by $(T\circ g)(x)=T(gx)$ and $(g\circ T)(x)=g(Tx)$ for every $x\in X$.
	
	Let $X$ and $Y$ be $G$-Banach spaces. We say that a linear operator $T:X\to Y$ is $G$-equivariant, or simply equivariant when the context is clear, if $T(gx)=gT(x)$ for every $g\in G$ and every $x\in X$. We denote by $\mathcal L^G(X,Y)$ the Banach space of all $G$-equivariant bounded linear operators from $X$ into $Y$ and we write $\mathcal L^G(X)=\mathcal L^G(X,X)$. In the language of representation theory, such operators are often called intertwining operators.
	
	A subset $C$ of a $G$-Banach space $X$ is said to be $G$-invariant if $gC\subseteq C$ for every $g\in G$. Since $G$ is a group, this is equivalent to $gC=C$ for every $g\in G$. For every $C\subseteq X$, we denote by $\clco_G(C)$ to be the smallest closed convex $G$-invariant subset of $X$ containing $C$. Equivalently, $\clco_G(C)=\overline{\co}(G\cdot C)$, where we use the notation $G\cdot C=\{gx:g\in G,\ x\in C\}$. Finally, whenever $x^*\in X^*$ and $x\in X$, we denote by $x^*\otimes x$ the rank-one operator on $X$ defined by $(x^*\otimes x)(y)=x^*(y)x$ for every $y\in X$.

	\section{Main definition, first results and characterizations} \label{sec: definition}
	
	In this section, we introduce the main definition of the paper and present the first results. We begin by recalling the definition of the Daugavet property, which was briefly discussed in the introduction and state it here in a formal manner.
	\begin{definition}
		A Banach space $X$ is said to have the {\it Daugavet property} (DPr, for short) if every rank-one operator $T:X\to X$ satisfies
		\begin{equation} \label{Daugavet-equation}
			\norm{\id+T}=1+\norm{T}.
		\end{equation}
	\end{definition}
	A similar property, strictly weaker than the DPr, with roots in the theory of numerical ranges has been also studied (see \cite{MO2004} - see also the monograph \cite{KadetsMartinMeriPerez2018}). This property has became known as the alternative Daugavet property for its resemblance to the Daugavet property and is formally defined as follows.
	\begin{definition} A Banach space $X$ is said to have the {\it alternative Daugavet property} (aDPr, for short) if every rank-one operator $T:X\to X$ satisfies 
		\begin{equation} \label{alternative-Daugavet-equation}
			\max_{\theta \in S_{\K}}\norm{\id+\theta T}=1+\norm{T}.
		\end{equation}
		
	\end{definition}

	Throughout the text, whenever convenient, we will refer to (\ref{Daugavet-equation}) and (\ref{alternative-Daugavet-equation}) as the Daugavet equation and the alternative Daugavet equation, respectively. While the similarities between both properties have been long recognized and it is known that often proofs for one property can be very easily adapted to the other (see, for instance, \cite[Section 12.3]{Kadets2000}), both of these properties have been treated separately in the literature. In what follows, we introduce the $G$-Daugavet property, of which both the Daugavet and the alternative Daugavet properties are particular instances. In other words, this new property will allow us to treat both properties in a unified way.

	\begin{definition} \label{definition-G-DPr}
		Let $X$ be a $G$-Banach space. We say that $X$ has the \emph{$G$-Daugavet property} ($G$-DPr, for short) if every rank-one operator $T:X\to X$ satisfies the \emph{$G$-Daugavet equation}
		\begin{equation} \label{G-Daugavet-equation-1}
			\sup_{g\in G}\norm{\id + \; g \circ T}=1+\norm{T}.
		\end{equation}
	\end{definition}

	It is clear that if we take $G=\{\id_X \}$, then the $G$-Daugavet property becomes the Daugavet property and if we take $G=S_{\K}$, then the $G$-Daugavet property becomes the alternative Daugavet property since $S_{\K}$ is compact and thus the supremum on $G$ in (\ref{G-Daugavet-equation-1}) is attained. Notice also that, as in the classical cases, $T$ satisfies the $G$-Daugavet equation if and only if $aT$ satisfies the $G$-Daugavet equation for every real number $a > 0$.

	Immediately one should wonder about our choice of the expression in the supremum (\ref{G-Daugavet-equation-1}), that is, why do we consider $g\circ T$ instead of $T\circ g$? For the alternative Daugavet property, both expressions are equivalent since we consider $T$ to be a linear operator. However, for a~general group $G$, if $T$ is not $G$-equivariant, then we do not have in general that the identity $g\circ T=T\circ g$ holds true for every $g\in G$. However, as we show in the following proposition, the $G$-Daugavet equation is in some sense ``$G$-equivariant'' in its own right.

	\begin{proposition} \label{fact:G-DP-equation-commutes}
		Let $X$ be a $G$-Banach space. Let $T: X \to X$ be a linear bounded operator. Then, we have that
		\begin{equation*}
			\sup_{g \in G} \norm{g + T}
			= \sup_{g \in G} \norm{\id + T \circ g}
			= \sup_{g \in G} \norm{\id + g \circ T}.
		\end{equation*}
	\end{proposition}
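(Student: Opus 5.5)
The plan is to compare all three quantities termwise. The key tool is that every $g\in G$ acts as a surjective linear isometry. Hence composing an operator with $g$ or with $g^{-1}$, on the left or on the right, does not change its norm. We will also use that $g\mapsto g^{-1}$ is a bijection of $G$, so a supremum over $G$ is unchanged if we reparametrize by inverses.

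First, fix $g\in G$. Writing $g$ for the isometry $x\mapsto gx$, we factor
\begin{equation*}
	g+T=(\id+T\circ g^{-1})\circ g .
\end{equation*}
Since $g$ is a surjective isometry, $\norm{g+T}=\norm{\id+T\circ g^{-1}}$. Indeed, right-composition with a surjective isometry preserves the operator norm, because $\{gx: x\in B_X\}=B_X$. Taking the supremum over $g\in G$ and using that $g^{-1}$ ranges over all of $G$ as $g$ does, we get $\sup_{g}\norm{g+T}=\sup_{g}\norm{\id+T\circ g}$.

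Second, again for fixed $g$, we factor on the left:
\begin{equation*}
	g+T=g\circ(\id+g^{-1}\circ T).
\end{equation*}
Left composition with an isometry preserves norms, so $\norm{g+T}=\norm{\id+g^{-1}\circ T}$. Taking suprema and reparametrizing $g\mapsto g^{-1}$ gives $\sup_{g}\norm{g+T}=\sup_{g}\norm{\id+g\circ T}$. Combining the two steps yields the claimed chain of equalities.

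There is no genuine obstacle here. The only points needing care are the following:
\begin{enumerate}
	\item The identities $g\circ g^{-1}=\id$ and $g^{-1}\circ g=\id$ require $G$ to act through a group homomorphism into $\Iso(X)$, which holds by assumption.
	\item Right composition preserves the norm because the isometries are surjective, so they map $B_X$ onto $B_X$.
	\item Continuity of the action is never used.
\end{enumerate}
The suprema may be infinite only if $T$ is unbounded, which is excluded; the argument holds equally for sups that are not attained.
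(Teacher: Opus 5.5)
Your proof is correct and follows essentially the paper's argument: the first equality is obtained exactly as in the paper, from $\norm{g+T}=\norm{\id+T\circ g^{-1}}$ followed by reparametrizing $G$ by inverses. For the second equality the paper uses the conjugation $S\mapsto g\circ S\circ g^{-1}$, which gives $\norm{\id+T\circ g}=\norm{\id+g\circ T}$ for each fixed $g$; you instead pass through $\norm{g+T}=\norm{\id+g^{-1}\circ T}$ and reparametrize again, which relies on the same isometric-composition mechanism.
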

	
	\begin{proof} Consider the conjugation on $\mathcal{L}(X)$, that is, for every $g \in G$, consider the map $\alpha_g: \mathcal{L}(X) \rightarrow \mathcal{L}(X)$ defined by $\alpha_g(S) := g \circ S \circ g^{-1}$ for every $S \in \mathcal{L}(X)$. Then, $\alpha_g$ defines an action of $G$ on $\mathcal{L}(X)$. Notice also that it is linear and isometric. As
		\begin{equation*}
			\alpha_g(\id_X) = g \circ \id_X \circ g^{-1} = \id_X, 
		\end{equation*}
		the identity is $G$-invariant, that is, it is fixed by the action. Finally, the action of $g$ on $\id_X + T \circ g$ is exactly $\id_X + g \circ T$ as 
		\begin{equation*}
			\alpha_g(\id_X + T \circ g) = g \circ (\id_X + T \circ g) \circ g^{-1} = \id_X + g \circ T
		\end{equation*}
		and therefore the norms are the same as the action is isometric. Now we take supremum over $G$ and we are done. For the first equality, simply note that for every $g\in G$, since $g^{-1}$ is an isometry, $\|g+T\|=\|(g+T)\circ g^{-1}\|=\|\id_X+T\circ g^{-1}\|$. Then  we are done as we can take supremum once again by using the fact that inversion is a bijection on $G$.
	\end{proof}

	From \Cref{fact:G-DP-equation-commutes}, we then can say that a bounded linear operator $T:X\to X$ satisfies the $G$-Daugavet equation whenever
	\begin{equation} \label{G-Daugavet-equation}
		\sup_{g\in G}\norm{\id + \; g \circ T}= \sup_{g \in G} \norm{g + T}
		= \sup_{g \in G} \norm{\id + T \circ g} = 1+\norm{T} .
	\end{equation}

	Since the triangle inequality always guarantees that for an action by isometries we have $\norm{\id + g \circ T} \leq 1 + \norm{T}$ for all $g \in G$, the following fact is an immediate consequence of the definition of the $G$-Daugavet property.
	
	\begin{proposition} \label{fact-DP-implies-GDP}
		Let $X$ be a $G$-Banach space and let $H \leq G$ be a~subgroup of $G$. If $X$ has the $H$-DPr, then $X$ has the $G$-DPr. 
	\end{proposition}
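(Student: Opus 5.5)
The plan is a direct sandwich argument: bound the $G$-supremum above by the triangle inequality and below by the $H$-supremum, which the hypothesis already pins to $1+\norm{T}$. Fix a rank-one operator $T:X\to X$. The action of $H$ on $X$ is meant to be the restriction of the action of $G$, so each $h\in H$ acts as the surjective linear isometry $h\cdot x$ coming from $G$. Hence the operators $\id+h\circ T$ with $h\in H$ are among the operators $\id+g\circ T$ with $g\in G$.

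First, since $H\subseteq G$, the supremum over the smaller set is at most the supremum over the larger one:
\begin{equation*}
	\sup_{h\in H}\norm{\id+h\circ T}\leq \sup_{g\in G}\norm{\id+g\circ T}.
\end{equation*}
Second, the triangle inequality and the fact that each $g$ is an isometry give, for every $g\in G$,
\begin{equation*}
	\norm{\id+g\circ T}\leq 1+\norm{g\circ T}=1+\norm{T}.
\end{equation*}
Taking the supremum over $G$ yields $\sup_{g\in G}\norm{\id+g\circ T}\leq 1+\norm{T}$.

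Third, the $H$-DPr says that the left-hand side of the first inequality equals $1+\norm{T}$. Combining the two bounds gives
\begin{equation*}
	1+\norm{T}\leq \sup_{g\in G}\norm{\id+g\circ T}\leq 1+\norm{T},
\end{equation*}
so equality holds. Since $T$ was an arbitrary rank-one operator, $X$ has the $G$-DPr.

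There is no real obstacle here. The only point needing care is that the $H$-structure on $X$ is the one inherited from $G$, so that the sets of operators being compared are genuinely nested. By Proposition~\ref{fact:G-DP-equation-commutes}, the same argument works equally well with either of the other two forms, $\norm{g+T}$ or $\norm{\id+T\circ g}$.
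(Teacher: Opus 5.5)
Your proof is correct and matches the paper's argument. The paper notes only that the triangle inequality gives $\norm{\id + g\circ T}\leq 1+\norm{T}$ for every $g\in G$, so the statement follows immediately from the definition because the supremum over $H$ is dominated by the supremum over $G$.
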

	
	As an immediate consequence of \Cref{fact-DP-implies-GDP}, we the have \Cref{DPr-implies-G-DPr} below. As we will see throughout the entire paper (see, for instance, \Cref{section:far-classical}), the converse of (a) and (b) in Corollary \ref{DPr-implies-G-DPr} below are far for holding true. For (a) see, for instance, \Cref{cor:Lp-ellp-examples}. A straightforward example which shows that satisfying the $G$-DPr with $S_{\K} \subseteq G$ is not enough to get back the aDPr is the following. Let $X=\ell_2(\C)$ and $G=\iso(X)$, the full unitary group, which acts on $X$ as $g \cdot x = g(x)$ for every $g \in G$ and $x \in X$. Then, $S_{\C} \subseteq G$ as every modulus-one scalar $\omega$ acts as the unitary $\omega \id_X$. Moreover, the action of $G$ on $X$ is transitive and, in particular, almost transitive. By \Cref{theorem:GDPrOnUniformlyConvexSpace}, $X$ satisfies the $G$-DPr. However, the numerical index of the complex Hilbert space is 1/2 (see \cite{Halmos1967}) and this cannot happen with spaces that satisfy the aDPr (see, for instance, \cite[Corollary 12.3.2]{KMRW}).
	
	\begin{corollary} \label{DPr-implies-G-DPr} Let $X$ be a $G$-Banach space.
		\begin{itemize}
			\item[(a)] If $X$ has the DPr, then $X$ has the $G$-DPr.
			\item[(b)] If $X$ has the aDPr, then $X$ has the $G$-DPr whenever $S_{\K} \subseteq G$.
		\end{itemize}
	\end{corollary}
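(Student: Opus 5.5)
Both parts follow from \Cref{fact-DP-implies-GDP}, once the classical properties are rewritten as $H$-Daugavet properties for a suitable subgroup $H\leq G$. In each case we identify $H$ and check that the $H$-Daugavet equation (\ref{G-Daugavet-equation-1}) for $H$ is exactly the classical equation.

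For (a), take $H=\{e\}$, the trivial subgroup of $G$. It acts on $X$ through the identity operator, so for every rank-one $T$ we get
\begin{equation*}
\sup_{h\in H}\norm{\id+h\circ T}=\norm{\id+T}.
\end{equation*}
The DPr says this equals $1+\norm{T}$, which means $X$ has the $H$-DPr. \Cref{fact-DP-implies-GDP} then gives the $G$-DPr.

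For (b), interpret the hypothesis $S_{\K}\subseteq G$ as follows: there is a subgroup $H\leq G$ whose elements act on $X$ exactly as the operators $\theta\,\id_X$, $\theta\in S_{\K}$. This matches the convention in the notation section of identifying $S_{\K}$ with $\{\lambda\id_X\}\leq\Iso(X)$. For every rank-one $T$ we then have
\begin{equation*}
\sup_{h\in H}\norm{\id+h\circ T}=\sup_{\theta\in S_{\K}}\norm{\id+\theta T}.
\end{equation*}
This supremum is attained because $S_{\K}$ is compact and $\theta\mapsto\norm{\id+\theta T}$ is continuous. The aDPr therefore gives $X$ the $H$-DPr, and \Cref{fact-DP-implies-GDP} yields the $G$-DPr.

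Neither part has a real obstacle. The only care needed is in (b), in making the inclusion $S_{\K}\subseteq G$ precise as a subgroup acting by scalar multiplication, as done above. It also helps to recall why \Cref{fact-DP-implies-GDP} holds: the supremum over $G$ dominates the supremum over $H$, and every term is bounded above by $1+\norm{T}$ by the triangle inequality, since each $g$ is an isometry.
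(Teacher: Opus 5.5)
Your proposal is correct and matches the paper's argument. The paper also derives both parts directly from \Cref{fact-DP-implies-GDP}, viewing the DPr as the $\{\id_X\}$-DPr and the aDPr as the $S_{\K}$-DPr; the latter identification uses compactness of $S_{\K}$, so the supremum is a maximum.
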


	Next, we show that the well-known geometric characterizations of the DPr and aDPr admit straightforward generalizations to the $G$-DPr by placing $g \in G$ conveniently. Before doing so, since the theory of the Daugavet property relies heavily on the notion of slices of the unit ball, we first introduce the corresponding $G$-version of this concept and provide a simple example.
	
	\begin{definition}\label{def:G-slice} Let $X$ be a~$G$-Banach space, $\e>0$ and $x^* \in S_{X^*}$. A \emph{$G$-slice of $B_X$} is a set of the form 
		\begin{equation*}
			S_G(B_X, x^*, \e) = \{y \in B_X: (\exists g \in G) \ \re x^*(gy) > 1 - \e \} = \bigcup_{g \in G} g^{-1} S(B_X, x^*, \e).
		\end{equation*}
	\end{definition}

	The following simple example illustrates how different the classical notion of a slice is from the group one.
	
	\begin{example} \label{ex:G-Slice}
		Let $X=\mathbb R^2$ endowed with the Euclidean norm so that $B_X$ is the unit disk and let $x^*((a,b))=a$ for every $(a,b)\in \R^2$. Fix $0<\e<1$. Then the classical slice of $B_X$ determined by $x^*$ and $\e$  is given by $S(B_X,x^*,\e)=\{(a,b)\in B_X:\ a>1-\e\}$, that is, the cap on the right-hand side of the unit disk. Now let $G=\langle R_{\pi/2}\rangle\cong \mathbb Z_4$, where $R_{\pi/2}(a,b)=(-b,a)$. Then, by definition of $G$-slice, we have that
		\begin{equation*} 
			S_G(B_X,x^*,\e)
			=
			\{(a,b)\in B_X:\ (\exists g\in G) \ x^*(g(a,b))>1-\e\}.
		\end{equation*} 
		Since the orbit of $(a,b)$ under $G$ consists of $(a,b), (-b,a), (-a,-b)$ and $(b,-a)$, we get that
		\begin{equation*} 
			S_G(B_X,x^*,\e)
			=
			\{(a,b)\in B_X:\ \max\{a,-a,b,-b\}>1-\e\}.
		\end{equation*}
		Hence $S_G(B_X,x^*,\e)$ is exactly the union of the four rotated copies of the classical slice (see Figure \ref{fig:G-slice-R2}).
		
		\begin{figure}[ht]
			\centering
			\begin{tikzpicture}[scale=2.2]
				\def\c{0.55} 
				
				\begin{scope}
					\draw[thick] (0,0) circle (1);
					\begin{scope}
						\clip (0,0) circle (1);
						\fill[blue!20] (\c,-1.2) rectangle (1.2,1.2);
					\end{scope}
					\draw[dashed] (\c,-1.05) -- (\c,1.05);
					\draw[->,gray] (-1.15,0) -- (1.15,0);
					\draw[->,gray] (0,-1.15) -- (0,1.15);
					\node at (0,1.25) {};
					\node at (0,-1.32) {$S(B_X,x^*,\e)$};
				\end{scope}
				
				\begin{scope}[xshift=3.2cm]
					\draw[thick] (0,0) circle (1);
					\begin{scope}
						\clip (0,0) circle (1);
						\fill[blue!20] (\c,-1.2) rectangle (1.2,1.2);
						\fill[blue!20] (-1.2,-1.2) rectangle (-\c,1.2);
						\fill[blue!20] (-1.2,\c) rectangle (1.2,1.2);
						\fill[blue!20] (-1.2,-1.2) rectangle (1.2,-\c);
					\end{scope}
					\draw[dashed] (\c,-1.05) -- (\c,1.05);
					\draw[dashed] (-\c,-1.05) -- (-\c,1.05);
					\draw[dashed] (-1.05,\c) -- (1.05,\c);
					\draw[dashed] (-1.05,-\c) -- (1.05,-\c);
					\draw[->,gray] (-1.15,0) -- (1.15,0);
					\draw[->,gray] (0,-1.15) -- (0,1.15);
					\node at (0,1.25) {};
					\node at (0,-1.32) {$S_G(B_X,x^*,\e)$};
				\end{scope}
			\end{tikzpicture}
			\caption{On the left the classical slice of the Euclidean unit disk and on the right the $G$-slice for the group of quarter-turn rotations.}
			\label{fig:G-slice-R2}
		\end{figure}
		
	\end{example}
	
	In what follows, we will be using the following notation borrowed from the monograph \cite{Kadets2000}. Given a $G$-Banach space $X$, $x \in S_X$ and $\eps>0$, we set
	\begin{equation*}
		Q_G(x,\e):= \{y \in B_X: (\exists g \in G) \ \norm{x + gy} > 2 - \e\}.
	\end{equation*}

	We are now in a position to state the promised characterizations of the \(G\)-DPr. This theorem is the counterpart of \cite[Theorem 3.1.11]{KMRW} (for the DPr) and \cite[Proposition 2.1]{MO2004} (for the aDPr). Its proof requires inserting the $g$'s in the appropriate places. Nevertheless, we include it for the sake of completeness and also as a warm-up for readers who are not yet familiar with group-action language.

	\begin{proposition} \label{prop:characterization-G-DP} Let $X$ be a $G$-Banach space. The following are equivalent.
		\begin{enumerate}
			\item $X \in G$-DPr.
			\item For every $\e>0$, $x \in S_X$ and $G$-slice $S_G$ of $B_X$, there is $y \in S_G$ such that $\|x+y\| > 2 -\e$.
			\item For every $\e>0$, $x \in S_X$ and $G$-slice $S_G$ of $B_X$, there is $y \in S_G$ such that $\|x-y\| > 2 -\e$.
			\item For every $\e>0$ and $x \in S_X$, we have that $\overline{\co}_G(Q(x,\e)) = B_X$, where $Q(x,\e) = \{y \in B_X: \|x+y\| > 2 - \e\}$.
			\item Every $G$-slice $S_G$ of $B_X$ is such that $S_G \cap Q(x, \e) \not= \emptyset$ for every $\e>0$ and every $x \in S_X$.
			\item For every $x \in S_X$, $G$-slice $S_G$ of $B_X$ and $\delta>0$, there is a further $G$-slice $\tilde{S}_G$ of $B_X$ such that $\tilde{S}_G \subseteq S_G$ and $z \in Q_G(x, \delta)$ for every $z \in \tilde{S}_G$.
			\item For every $x \in S_X$ and $\eps > 0$, we have that $\clco_G(S_X \setminus (x + (2-\eps) B_X)) = B_X$.
		\end{enumerate}
	\end{proposition}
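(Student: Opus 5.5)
We follow the classical scheme of \cite[Theorem 3.1.11]{KMRW} and \cite[Proposition 2.1]{MO2004}, inserting group elements where needed. The basic computation concerns a rank-one operator $T=x^*\otimes x$ with $\|x^*\|=\|x\|=1$, which suffices by the homogeneity remark after Definition \ref{definition-G-DPr}. By \Cref{fact:G-DP-equation-commutes} we may use the form $\sup_g\|\id+T\circ g\|$. For $y\in B_X$ and $g\in G$,
\begin{equation*}
\|y+x^*(gy)x\|\geq \|y+ \theta x\| - |x^*(gy)-\theta|
\end{equation*}
for any scalar $\theta$. Conversely, if $\|\id+T\circ g\|>2-\e$, we can pick $y$ with $\|y+x^*(gy)x\|>2-\e$. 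This forces $|x^*(gy)|>1-\e$ and, after multiplying $y$ by a unimodular scalar, $\re x^*(gy)>1-\e$ together with $\|y+x\|>2-2\e$. So the $G$-Daugavet equation for $x^*\otimes x$ is equivalent to the statement: for every $\e>0$ there is $y\in S(B_X,x^*\circ g,\e)$ with $\|x+y\|>2-\e$, up to constants. Since $y\in g^{-1}S(B_X,x^*,\e)$ exactly when $y\in S_G(B_X,x^*,\e)$, this gives (1)$\Leftrightarrow$(2).

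\textbf{The remaining equivalences.}
\begin{itemize}
\item (2)$\Leftrightarrow$(3): replace $x$ by $-x$.
\item (2)$\Leftrightarrow$(5): this is a restatement.
\item (5)$\Leftrightarrow$(4): use Hahn--Banach. If $\clco_G(Q(x,\e))\neq B_X$, separate a point of $B_X$ from this closed convex $G$-invariant set by a functional $x^*\in S_{X^*}$. Then $\sup \re x^*$ over $G\cdot Q(x,\e)$ is at most some $1-\delta$, so the $G$-slice $S_G(B_X,x^*,\delta)$ misses $Q(x,\e)$. Indeed, if $y\in Q(x,\e)$ with $\re x^*(gy)>1-\delta$, then $gy\in G\cdot Q(x,\e)$, a contradiction. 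Conversely, if some $G$-slice misses $Q(x,\e)$, then $G\cdot Q(x,\e)$ misses the ordinary slice $S(B_X,x^*,\e')$, which is $G$-invariantly avoided. Hence $\overline{\co}(G\cdot Q)$ lies in $\{\re x^*\leq 1-\e'\}$ and is not all of $B_X$.
\item (7): handle it the same way as (4), with the set $S_X\setminus(x+(2-\e)B_X)=\{y\in S_X:\|y-x\|>2-\e\}$, using (3) and a normalization step. Points in a slice with $\|x-y\|>2-\e$ can be pushed to norm one: $\|y\|>1-\e$, so $y/\|y\|$ still lies in a slightly larger slice and stays far from $x$.
\item (6)$\Rightarrow$(5): given $S_G$, take $\tilde S_G\subseteq S_G$ and $z\in\tilde S_G$. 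There is $g$ with $\|x+gz\|>2-\delta$, so $gz\in Q(x,\delta)$. It remains to check that $gz\in S_G$: this holds because $G$-slices are $G$-invariant ($S_G=\bigcup_h h^{-1}S$ and $g^{-1}$ permutes the union).
\end{itemize}

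\textbf{The main obstacle: (1)$\Rightarrow$(6).} This is the classical step where, given $S(B_X,x^*,\e)$ and $y_0\in S_X$, one uses the Daugavet equation in the dual form $\|x^*+y^*\|=2$ to find a smaller slice. Here we plan to apply (1) to the operator $x^*\otimes x$ in adjoint form. Using $\|\id+T\circ g\|=\|\id_{X^*}+g^*\circ T^*\|$, pick $g$ and $y^*\in S_{X^*}$ with $y^*(x)$ real, so that $\|y^*+ (g^*x^*)\,y^*(x)\|$ is nearly $2$. This yields a norming functional $u^*=(y^*+g^*x^*)/\|y^*+g^*x^*\|$.

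Every $z$ in the slice $S(B_X,u^*,\eta)$ then satisfies $\re y^*(z)$ close to $1$ and $\re x^*(gz)$ close to $1$. The first gives $\|x+z\|$ near $2$ up to the relation between $y^*$ and $x$. The second gives $z\in g^{-1}S(B_X,x^*,\e)\subseteq S_G$. Taking $\tilde S_G=S_G(B_X,u^*,\eta)$ requires additionally handling $hz$ for other $h$. Since $S_G$ is $G$-invariant and $Q_G(x,\delta)$ is $G$-invariant, it suffices that the ordinary slice lies in both, which handles this.

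The delicate bookkeeping is choosing $y^*$ so that $\re y^*(z)\approx1$ forces $\|x+z\|\approx 2$. We plan to take $y^*$ almost norming $x$, obtained by running the argument with the operator $x^*\otimes x$ and $\|\id+\cdot\|$ evaluated at $x$-direction functionals, and to tune the constants at the end.
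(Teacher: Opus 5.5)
Your proposal is correct. For (1)$\Leftrightarrow$(2), (2)$\Leftrightarrow$(3), (2)$\Leftrightarrow$(5), the Hahn--Banach argument linking (4) and (5), and (6)$\Rightarrow$(5), you follow the paper essentially verbatim. One small point: in (2)$\Rightarrow$(1) over $\mathbb{C}$ the error term is $|1-x^*(gy)|\le\sqrt{2\varepsilon}$, not linear in $\varepsilon$, but this is harmless. Your normalization step for (7) also fills in the $S_X$ versus $B_X$ discrepancy that the paper's one-line (4)$\Leftrightarrow$(7) glosses over.

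The real difference is how you reach (6). The paper proves (5)$\Rightarrow$(6) on the primal side. It applies (5) to a thinner $G$-slice to get $x_0$ with $\re x_0^*(x_0)>1-\varepsilon$ and $g_0$ with $\|x+g_0x_0\|>2-\varepsilon$. It then takes a functional norming $x+g_0x_0$ and uses the $G$-slice determined by $x_0^*+x^*\circ g_0$; membership in $Q_G(x,\delta)$ is witnessed by $g_0g$. You instead go directly from (1): you dualize the $G$-Daugavet equation of $x^*\otimes x$, take the ordinary slice of $y^*+x^*\circ g$, and let the $G$-invariance of $S_G$ and of $Q_G(x,\delta)$ upgrade it to a $G$-slice.

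Your route has three advantages. It avoids the auxiliary point and the norming-functional lemma. It places the ordinary slice inside $Q(x,\delta)$ itself. And it passes through a natural dual condition ($\re y^*(x)\approx 1$ and $\|y^*+x^*\circ g\|\approx 2$), the same adjoint mechanism the paper uses later in the proof of Theorem~\ref{theorem:relation-between-alphaG-dual}. The paper's route has the merit of deriving (6) from the purely geometric condition (5), without operators or adjoints.

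The step you flag as delicate bookkeeping is in fact automatic, so no separate argument is needed. Since $\|y^*+y^*(x)\,g^*x^*\|\le 1+|y^*(x)|$, the choice $\|y^*+y^*(x)\,g^*x^*\|>2-\eta$ already forces $|y^*(x)|>1-\eta$. Rotate $y^*$ so that $y^*(x)=|y^*(x)|$; it must be positive, not merely real. This gives $\|y^*+g^*x^*\|>2-2\eta$. Then every $z\in S(B_X,u^*,\eta)$ satisfies $\re x^*(gz)>1-4\eta$ and $\|x+z\|\ge \re y^*(x)+\re y^*(z)>2-5\eta$.
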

	
	\begin{proof} $(2) \implies (1)$: Let $T = x^* \otimes x$, $x \in S_X$, $x^* \in S_{X^*}$, be a~rank-one operator and $\eps > 0$.
		Denote $S_G = S_G(B_X, x^*, \eps)$.
		By (2), there is $y \in S_G$ such that $\norm{x + y} > 2 - \eps$. If $g \in G$ is such that $\re x^*(gy) > 1 - \eps$, then $\abs{x^*(gy)} \leq \norm{y} \leq 1$ and hence $\abs{1-x^*(gy)} \leq \sqrt{2\eps}$.
		Thus,
		\begin{align*}
			\norm{g^{-1} + T}
			&\geq \norm{(g^{-1} + T)(gy)}
			= \norm{y + x^*(gy) x}\\
			&\geq \norm{x + y} - \abs{1 - x^*(gy)} \norm{x}
			> 2 - \eps - \sqrt{2\eps}.
		\end{align*}
		\noindent
		$(1) \implies (2)$: Let $x \in S_X$, $\eps > 0$ and let $S_G = S_G(B_X, x^*, \delta)$ be a~$G$-slice of $B_X$.
		Define $T = x^* \otimes x$ and use (1) to find $g \in G$ such that $\norm{g + T} > 2 - \eps/2$. Define $\eta = \min\{\eps/2, \delta\}$.
		Then, there is $y_0 \in S_X$ such that
		\begin{equation*}
			\norm{gy_0 + Ty_0} = \norm{g y_0 + x^*(y_0) x} > 2 - \frac{\eps}{2} - \frac{\eta}{2}.
		\end{equation*}
		It follows that $\abs{x^*(y_0)} > 1 - \eta/2$.
		Let $\theta \in S_\K$ be such that $x^*(\theta y_0) = |x^*(y_0)|$ and define $y = \theta y_0$.
		Then, $\re x^*(y) = x^*(y) = |x^*(y_0)| > 1 - \frac{\eta}{2} > 1 - \delta$. In particular, $\abs{1-x^*(y)} < \eta/2$ and hence
		\begin{align*}
			\norm{x + gy}
			&\geq \norm{gy + x^*(y) x} - \abs{1 - x^*(y)} \norm{x} \\
			&> 2 - \frac{\eps}{2} - \frac{\eta}{2} - \frac{\eta}{2} \geq 2 - \eps.
		\end{align*}
		Since $y \in S(B_X, x^*, \delta)$, we have that $gy \in S_G(B_X, x^*, \delta)$.

		\noindent
		$(2) \iff (3)$: As in the classical case the equivalence here is obtained simply by replacing $x$ by $-x$.

		\noindent
		$(2) \implies (4)$: For the sake of contradiction, suppose that (2) holds but (4) does not.
		Then, there are $\e_0>0$ and $x_0 \in S_X$ such that $B_X \setminus \overline{\co}_G(Q(x_0, \e_0)) \not= \emptyset$. Take $y \in B_X \setminus \overline{\co}_G(Q(x_0, \e_0))$. By the Hahn-Banach separation theorem, we can get $x^* \in X^*$ and $\alpha > 0$ to be such that $y \in S(B_X, x^*, \alpha)$ and $\overline{\co}_G(Q(x_0, \e_0)) \subseteq X \setminus S(B_X, x^*, \alpha)$. In fact, we have that $\overline{\co}_G(Q(x_0, \e_0)) \subseteq X \setminus S_G(B_X, x^*, \alpha)$; indeed, if there were $z \in \overline{\co}_G(Q(x_0, \eps_0)) \cap S_G(B_X, x^*, \alpha)$, then there would exists $g \in G$ such that $gz \in S(B_X, x^*, \alpha) \cap \overline{\co}_G(Q(x_0, \eps_0))$, which is a contradiction. By hypothesis, there exists $z \in S_G$ such that $\|x+z\| > 2 - \alpha$, which means that 
		\begin{equation*}
			z \in Q(x_0, \alpha) \subseteq \overline{\co}_G(Q(x_0, \alpha)) \subseteq X \setminus S_G(B_X, x^*, \alpha),
		\end{equation*}
		which is a contradiction once again.
		
		\noindent
		$(4) \implies (2)$: Suppose that (4) holds. Then, for every $x \in S_X$ and for every slice $S = S(B_X, x^*, \delta)$ of $B_X$, we have that $Q_G(x, \e) \cap S(B_X, x^*, \delta) \not= \emptyset$. This means that, for every $x \in S_X$ and for every slice $S = S(B_X, x^*, \delta)$, there are $g \in G$ and $y \in S$ such that $\|x+gy\| > 2 - \e$. This is the same as saying that there exists $z \in S_G = S_G(B_X, x^*, \delta)$ such that $\|x+z\| > 2 - \e$.

		\noindent
		$(5) \iff (2)$: The equivalence is immediate from the definitions.

		\noindent
		$(5) \implies (6)$:
		Fix $x \in S_X$. Let $\delta > 0$. Assume that $S_G = S_G(B_X, x_0^*, \e_0)$ with $\|x_0^*\| = 1$ and $\e_0 \in (0,1)$. As in \cite{KMRW}, consider $\e>0$ such that
		\begin{equation*}
			\e \in \left( 0, \min \left\{ \frac{\delta}{3}, \frac{\e_0}{2} \right\} \right).
		\end{equation*}
		By hypothesis applied to the smaller $G$-slice $S_G(B_X, x_0^*, \e)$, we have that the intersection $S_G(B_X, x_0^*, \e) \cap Q_G(x, \e)$ is nonempty. This means that there exists $x_0 \in S_X$ in this intersection and satisfying without loss of generality that $\re x_0^*(x_0) > 1 - \e$. Also, for being an element of this intersection, there exists $g_0 \in G$ such that $\|x + g_0 x_0\| > 2 - \e$. By \cite[Lemma 2.6.8]{KMRW} applied to $x \in S_X$, $g_0 x_0 \in S_X$ and $\e>0$, there exists a functional $x^* \in S_{X^*}$ such that $\re x^*(x) > 1 - \e$ and $\re x^*(g_0 x_0) > 1- \e$. Now, let us define the functional $x_2^* \in S_{X^*}$ by $x_2^*(y):= x^*(g_0 y)$ for every $y \in X$ and let us consider the $G$-slice of $B_X$ given by 
		\begin{align*}
			\tilde{S}_G
			&:= \{ z \in B_X: (\exists g \in G) \ \re (x_0^* + x_2^*)(gz) > 2 - 2 \e \}\\
			&= S_G\left(B_X,\frac{x_0^*+x_2^*}{\norm{x_0^*+x_2^*}}, 1 - \frac{2-2\eps}{\norm{x_0^* + x_2^*}}\right).
		\end{align*}
		Let us show that $\tilde{S}_G$ is nonempty and since $\tilde{S}_G$ is a halfspace of $B_X$, we will have that $\tilde{S}_G$ is a nonempty $G$-slice. Indeed, to see that $x_0 \in \tilde{S}_G$ we calculate
		\begin{eqnarray*}
			\re (x_0^*+x_2^*)(x_0)
			&=& \re x_0^*(x_0) + x_2^*(x_0) \\
			&=& \re x_0^*(x_0) + x^*(g_0 x_0) > 2 - 2\e.
		\end{eqnarray*}
		We have to prove now that $\tilde{S}_G \subseteq S_G$ and that, for every $z \in \tilde{S}_G$, we have that $z \in Q_G(x,\delta)$. Indeed, let $z \in \tilde{S}_G$. Then, we have that there exists $g \in G$ such that $\re (x_0^*+x_2^*)(gz) > 2 - 2 \e$. Thus,
		\begin{equation*}
			\re  x_0^*(gz) > 2 - 2\e - x_2^*(gz) \geq 1 - 2\e > 1 - \e_0,
		\end{equation*}
		that is, $z \in S_G$ and, therefore, $\tilde{S}_G \subseteq S_G$.  On the other hand,
		\begin{equation*}
			\re x_2^*(gz) > 2- 2\e - x_0(gz) \geq 1 - 2\e 
		\end{equation*}
		and thus
		\begin{eqnarray*}
			\|x+g_0gz\| = \|g_0^{-1}x + gz\| &\geq& \re x_2^*(g_0^{-1}x + gz) \\
			&=& \re x^*(g_0g_0^{-1}x) + \re x_2^*(gz) \\
			&>&2-3\e > 2- \delta.
		\end{eqnarray*}
		
		\noindent
		$(6) \implies (5)$: This implication is straightforward using that $G$-slices are $G$-invariant.

		\noindent
		$(4) \iff (7)$: Let $y \in S_X$ and $\eps > 0$. Then we have that $y \in S_X \setminus (x + (2-\eps) B_X)$ is equivalent to $\norm{x - y} > 2 - \eps$, which in turn is equivalent to $y \in Q(-x, \eps)$.
	\end{proof}

	We also have the analogue of the fact that Daugavet property is inherited from dual spaces endowed with the canonical dual action. Notice that the action defined below does not need to be continuous.

	\begin{proposition} \label{prop:inheritanceFromDual}
		Let $X$ be a $G$-Banach space. Equip $X^*$ with the canonical dual action, that is, $(gx^*)(x) = x^*(g^{-1}x)$ for every $g \in G$, $x \in X$ and $x^* \in X^*$. If $X^*$ has the $G$-DPr with respect to this action, then so does $X$.
	\end{proposition}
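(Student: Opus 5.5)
The plan is to pass to adjoints and use \Cref{fact:G-DP-equation-commutes}. Let $T=x^*\otimes x$ be a rank-one operator on $X$ with $x\in S_X$, $x^*\in S_{X^*}$; by homogeneity this normalization suffices. Its adjoint is $T^*=\hat x\otimes x^*$, acting on $X^*$ by $T^*(y^*)=y^*(x)x^*$. Here $\hat x\in X^{**}$ is the canonical image of $x$, so $T^*$ is a rank-one operator on $X^*$ with $\|T^*\|=\|T\|=1$. By hypothesis, $X^*$ has the $G$-DPr for the dual action, so
\[
\sup_{g\in G}\|\id_{X^*}+g\circ T^*\|=2 ,
\]
where $g$ now denotes the dual isometry $x^*\mapsto x^*\circ g^{-1}$.

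Next we identify $g\circ T^*$ as an adjoint. For $y^*\in X^*$ and $y\in X$ we have
\[
(g\circ T^*)(y^*)(y)=T^*(y^*)(g^{-1}y)=(T\circ g^{-1})^*(y^*)(y).
\]
Hence $\id_{X^*}+g\circ T^*=(\id_X+T\circ g^{-1})^*$. Since an operator and its adjoint have the same norm, this gives $\|\id_{X^*}+g\circ T^*\|=\|\id_X+T\circ g^{-1}\|$.

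Taking the supremum over $g\in G$, and using that inversion is a bijection of $G$, we obtain $\sup_{g}\|\id_X+T\circ g\|=2=1+\|T\|$. By \Cref{fact:G-DP-equation-commutes}, this supremum equals $\sup_g\|\id_X+g\circ T\|$, so $T$ satisfies the $G$-Daugavet equation on $X$, and $X$ has the $G$-DPr.

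The only delicate point is checking that the dual action composes correctly. The inverse in $(gx^*)(x)=x^*(g^{-1}x)$ turns $g\circ T^*$ into the adjoint of $T\circ g^{-1}$ rather than of $g\circ T$. This is precisely why \Cref{fact:G-DP-equation-commutes} is needed to switch sides. No continuity of the action is used anywhere, consistent with the remark preceding the statement.
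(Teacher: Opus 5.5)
Your proof is correct and is essentially the paper's argument: pass to adjoints, observe that the adjoint of $g$ is the dual action of $g^{-1}$, and invoke \Cref{fact:G-DP-equation-commutes} once to move the group element to the other side of $T$. The only cosmetic difference is where that proposition is applied: the paper uses it on $X^*$ to get $\sup_h\|\id+T^*\circ h\|=1+\|T^*\|$ and then identifies $(\id+g\circ T)^*=\id+T^*\circ g^{-1}$, whereas you identify $\id+g\circ T^*=(\id+T\circ g^{-1})^*$ and use the proposition on $X$.
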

	\begin{proof}
		Let $T:X \to X$ be a rank-one bounded linear operator. Consider the adjoint operator $T^*: X^* \to X^*$. Since $T$ is a rank-one operator, $T^*$ is also a rank-one operator, and we have $\norm{T^*} = \norm{T}$.
		By assumption, $X^*$ has the $G$-Daugavet property with respect to the dual action. Therefore, $T^*$ satisfies the $G$-Daugavet equation on $X^*$. By \Cref{fact:G-DP-equation-commutes}, this allows us to write
		$$
		\sup_{h \in G} \norm{\id + T^* \circ h} = 1 + \norm{T^*}
		$$
		where we identify $h \in G$ with the linear operator on $X^*$ given by the dual action. We obtain that
		$$
		\norm{\id + g \circ T} = \norm{(\id + g \circ T)^*} = \norm{\id + T^* \circ g^*}.
		$$
		Let us analyze the adjoint of the action of $g$ on $X$. For any $x \in X$ and $x^* \in X^*$, by the definition of the dual action we have $(g^{-1}x^*)(x) = x^*(g x) = (g^* x^*)(x)$. This shows that the adjoint operator $g^*$ coincides exactly with the dual action of $g^{-1}$ on $X^*$. Substituting this into our norm equality, it yields
		$$
		\norm{\id + g \circ T} = \norm{\id + T^* \circ g^{-1}}.
		$$
		Taking the supremum over all $g \in G$ and using the fact that the map $g \mapsto g^{-1}$ is a bijection on the group $G$, we get
		$$
		\begin{aligned}
			\sup_{g \in G} \norm{\id + g \circ T} &= \sup_{g \in G} \norm{\id + T^* \circ g^{-1}} \\
			&= \sup_{h \in G} \norm{\id + T^* \circ h} 
			= 1 + \norm{T^*} 
			= 1 + \norm{T}.
		\end{aligned}
		$$
		Thus, $X$ has the $G$-Daugavet property.
	\end{proof}
	
	The use of the dual action in \Cref{prop:inheritanceFromDual} is essential. Indeed, the conclusion may fail if one equips $X^*$ with an arbitrary action of $G$ unrelated to the given action on $X$. For a simple example, let $X=\R^2$ with the Euclidean norm and let $G=\Z$. Let $G$ act trivially on $X$, so that $n\cdot x=x$ for every $n\in\Z$ and $x\in X$. Then the $G$-DPr on $X$ is just the classical DPr, since for every rank-one operator $T$ one has $\sup_{g\in G}\|\id_X+g\circ T\|=\|\id_X+T\|$. As $\R^2$ does not have the DPr, it follows that $X$ fails the $G$-DPr. On the other hand, identify $X^*$ with $\R^2$ and let $G$ act on $X^*$ by irrational rotations, that is, let the generator $1\in\Z$ act as the rotation $R_\theta$ through an angle $\theta$ such that $\theta/\pi\notin\mathbb Q$. Then the orbit of every point of the unit circle is dense, so the action is almost transitive. Since $\R^2$ is LUR, Theorem \ref{theorem:GDPrOnUniformlyConvexSpace} below yields that $X^*$ has the $G$-DPr with respect to this action. Thus, $X^*$ may have the $G$-DPr for a non-dual action of $G$ while $X$ fails the $G$-DPr for its original action.

	
	
	\section{Examples exhibiting non-classical behavior} \label{section:far-classical}

	As mentioned in the introduction, the classical Daugavet property has strong consequences for the isomorphic structure of the underlying Banach spaces. Although we have mentioned them in the introduction, we next recall those consequences that will be most relevant for our purposes. They can be derived, for instance, from \cite{Kadets2000}. If $X$ has the Daugavet property, then the following assertions hold true.
	
	\begin{itemize}
		\item $X$ cannot be embedded into a Banach space with an unconditional basis.
		\item $X$ does not have the RNP and, consequently, $X$ cannot be reflexive.
		\item $X$ contains many isomorphic copies of $\ell_1$ and so $X$ cannot be Asplund.
	\end{itemize}
	
	It is therefore natural to ask whether one can impose conditions on the acting group $G$ under which the $G$-DPr would yield, as in the classical setting, non-reflexivity, failure of the Radon-Nikodým property, non-Asplundness and the absence of an unconditional Schauder basis. As we shall see, however, the results that follow show that such implications cannot be expected in full generality. Depending on the action under consideration, one may in fact obtain results which point in the opposite direction from those associated with the classical DPr.
	
	\subsection{Transitivity} \label{section:transitivity} We say that the action of $G$ on a Banach space is {\it convex-transitive} if every point of its unit sphere is a {\it big point} with respect to this action, that is, for every $x \in S_X$, we have that $\overline{\co}(G \cdot x) = B_X$. For more information on convex-transitive Banach spaces and big points as well as almost-transitive Banach spaces we refer the reader to \cite{BG-RP99,BG-RP02}.

	By using item (4) of \Cref{prop:characterization-G-DP}, we immediately get the following result.
	\begin{proposition} \label{cor:convexTranstiveImpliesGDP}
		Let $X$ be a~Banach space equipped with a convex-transitive action of a~group $G$. Then, $X$ has the $G$-DPr.
	\end{proposition}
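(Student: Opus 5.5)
We verify item (4) of \Cref{prop:characterization-G-DP}: for every $\e>0$ and $x\in S_X$, we must have $\clco_G(Q(x,\e))=B_X$. The plan is to find one point inside $Q(x,\e)$ that is itself a big point, and then let convex-transitivity do the rest.

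\textbf{Step 1 (a point of $Q(x,\e)$ on the sphere).} The natural candidate is $y=x$ itself. Indeed, $x\in B_X$ and $\norm{x+x}=2>2-\e$, so $x\in Q(x,\e)$. No Hahn--Banach argument or auxiliary slice is needed here.

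\textbf{Step 2 (monotonicity of the hull).} Since $x\in Q(x,\e)$, we have $G\cdot x\subseteq G\cdot Q(x,\e)$. Taking closed convex hulls and using $\clco_G(C)=\overline{\co}(G\cdot C)$ gives
\begin{equation*}
\overline{\co}(G\cdot x)\subseteq \clco_G(Q(x,\e))\subseteq B_X .
\end{equation*}
The last inclusion holds because $Q(x,\e)\subseteq B_X$, and $B_X$ is closed, convex and $G$-invariant, since $G$ acts by isometries.

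\textbf{Step 3 (use convex-transitivity).} By hypothesis every point of $S_X$ is a big point, so $\overline{\co}(G\cdot x)=B_X$. The chain of inclusions in Step 2 then collapses to $\clco_G(Q(x,\e))=B_X$. This is exactly item (4), and the equivalence $(4)\iff(1)$ in \Cref{prop:characterization-G-DP} yields that $X$ has the $G$-DPr.

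There is essentially no obstacle. The only point deserving care is confirming that $Q(x,\e)$ is taken with the same $x\in S_X$ as in item (4), so that $x$ itself is admissible. Alternatively, one could argue directly through item (2). Given a $G$-slice $S_G(B_X,x^*,\delta)$, convex-transitivity at $x$ provides some $g\in G$ with $\re x^*(gx)>1-\delta$, since otherwise the slice would miss $\overline{\co}(G\cdot x)=B_X$. Hence $x$ lies in the $G$-slice, and $\norm{x+x}=2$.
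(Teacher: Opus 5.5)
Your proof is correct and is essentially the paper's argument: the paper also notes $x\in Q(x,\e)$ and writes the chain $B_X=\overline{\co}(G\cdot x)\subseteq\overline{\co}(G\cdot Q(x,\e))=\clco_G(Q(x,\e))\subseteq B_X$, then invokes item (4) of \Cref{prop:characterization-G-DP}. Your alternative argument through item (2) is also valid.
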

	\begin{proof}
		For every $x \in S_X$ and $\eps > 0$, we have that
		\begin{equation*}
			B_X = \overline{\co}(G \cdot x)
			\subseteq \overline{\co}(G \cdot Q(x, \eps))
			= \overline{\co}_{G}(Q(x, \eps))
			\subseteq B_X.
		\end{equation*}
		This shows that $X$ has the $G$-DPr.
	\end{proof}
	
	One might wonder if the converse of \Cref{cor:convexTranstiveImpliesGDP} above is also true but it is not the case as we can see in the next example.

	\begin{example} The real Banach space $X=\ell_\infty^2$ has the aDPr (see, for instance, \cite[Proposition 1.4.8]{KMRW}), but no action of a group $G$ on $X$ by surjective linear isometries is convex-transitive. For completeness, we include a proof of the latter assertion, although it is well-known to specialists. It is enough to show that $X$ is not convex-transitive under the action of its full isometry group $\Iso(X)$. Since $B_X$ has exactly four extreme points and linear isometries preserve extreme points, every element of $\Iso(X)$ induces a permutation of these four points. Consequently, $\Iso(X)$ is finite. Now, let $x \in S_X$ be a point which is not an extreme point of $B_X$. Then its orbit $\Iso(X)\cdot x$ is finite and therefore $\overline{\co}(\Iso(X)\cdot x)=\co(\Iso(X)\cdot x)$. Since $x$ is not extreme and linear isometries preserve non-extreme points, every point in the orbit $\Iso(X)\cdot x$ is again non-extreme. Hence $\co(\Iso(X)\cdot x)$ cannot contain any extreme point of $B_X$ and so it cannot coincide with $B_X$.
	\end{example}

	We shall see next that the $G$-DPr is closely related to strong transitivity assumptions when the underlying space is locally uniformly rotund. More precisely, if the action of $G$ on $X$ is almost transitive, then $X$ has the $G$-DPr.  Conversely, if an LUR $G$-Banach space has the $G$-DPr, then the action of $G$ must be almost transitive (see Theorem \ref{theorem:GDPrOnUniformlyConvexSpace} below). Recall that the action of $G$ on $X$ is said to be \emph{almost transitive} if, for every $x \in S_X$, the orbit $G \cdot x$ is dense in $S_X$. Every almost-transitive action is convex-transitive, whereas $C(K)$, with $K$ the Cantor set, is a classical example of a convex-transitive Banach space which is not almost transitive (see \cite{PR1962}). We also recall that a Banach space $X$ is said to be \emph{locally uniformly rotund} (LUR, for short) if, whenever $x \in X$ and $(x_k)$ is a sequence in $X$ such that $\|x_k\|\leq \|x\|$ for every $k\in\N$ and $\lim_k \left\| \frac{x_k+x}{2} \right\| = \lim_k \|x_k\| = \|x\|$, it follows that $\lim_k \|x_k-x\|=0$.
	
	\begin{theorem} \label{theorem:GDPrOnUniformlyConvexSpace} Let $X$ be a $G$-Banach space and assume that $X$ is LUR. Then, $X$ has the $G$-DPr if and only if the action of $G$ on $X$ is almost transitive.
	\end{theorem}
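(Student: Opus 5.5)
Almost transitivity implies convex transitivity: for $x\in S_X$, $\overline{G\cdot x}\supseteq S_X$, so $\overline{\co}(G\cdot x)\supseteq\overline{\co}(S_X)=B_X$. Hence the ``if'' direction follows at once from \Cref{cor:convexTranstiveImpliesGDP}; LUR is not needed here.

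For the converse, assume $X$ is LUR and has the $G$-DPr. Fix $x,z\in S_X$ and $\eta>0$; we must find $g\in G$ with $\|gx-z\|<\eta$. We use item (2) of \Cref{prop:characterization-G-DP}, or rather its proof $(1)\Rightarrow(2)$, which also controls the group element. Since $X$ is LUR, hence strictly convex, we choose $z^*\in S_{X^*}$ with $z^*(z)=1$. The key LUR consequence is the following: there is $\delta>0$ such that every $y\in B_X$ with $\|y+z\|>2-\delta$ satisfies $\|y-z\|<\eta$. This is just the sequential definition of LUR applied at the point $z$, arguing by contradiction with $\delta=1/n$.

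Now apply the $G$-DPr to the rank-one operator $T=x^*\otimes z$, where $x^*\in S_{X^*}$ is chosen with $x^*(x)=1$, and use the equivalent form $\sup_g\|g+T\|=2$ from \Cref{fact:G-DP-equation-commutes}. As in $(1)\Rightarrow(2)$, we obtain $g\in G$ and $y_0\in S_X$ with $\|gy_0+x^*(y_0)z\|>2-\e$. This forces $|x^*(y_0)|>1-\e$, and after rotating by a scalar we get $\|gy+z\|>2-2\e$ with $\re x^*(y)>1-\e$. So $y$ lies in a thin slice of $B_X$ around $x$, and $gy$ is close to $z$ by the LUR step at $z$. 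The remaining problem is to pass from $y$ to $x$: slices of $B_X$ at $x$ need not have small diameter when $X$ is only LUR, since $x^*$ need not strongly expose $x$.

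To fix this, choose the slice more cleverly by using the sharper item (6) of \Cref{prop:characterization-G-DP}, or rather use LUR at $x$ directly. Replace $T$ by an operator that forces $y$ near $x$ in norm. Apply the characterization with the point $-z$ (item (3)) and the $G$-slice given by $x^*$. This gives $h\in G$ and $y\in S(B_X,x^*,\e)$ with $\|hy+z\|>2-\e$. Alternatively, exploit symmetry: also apply the DPr to $z^*\otimes x$, which yields $g'$ with $g'w$ close to $x$ for some $w$ in a slice at $z$. This is where I expect the main obstacle: LUR gives norm closeness only when the midpoint is nearly maximal, not from functional values alone.

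The plan at this step is to use the ``Daugavet'' point as the anchor. From $\|hy+z\|>2-\e$ with $\|hy\|\le 1$, LUR at $z$ gives $hy\approx z$. Hence $z$ lies within $\eta$ of the orbit $G\cdot S(B_X,x^*,\e)$ for every $\e$. Taking $\e\to0$ and repeating with the roles of $x$ and $z$ swapped (both are points where LUR applies), one shows that the closed set $\overline{G\cdot x}\cap S_X$ is nonempty near every $z$. If the norm is only LUR and $x^*$ does not strongly expose $x$, I would pass through the auxiliary point: first show every $z\in S_X$ is approximated by $G\cdot y$ for $y$ in arbitrarily small slices at $x$. Then apply the same argument with $z:=x$ and the slice at $y$ to move $y$ back near $x$. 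Composing the two group elements yields $g$ with $gx$ near $z$, which gives almost transitivity.
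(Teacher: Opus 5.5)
\textbf{The gap is in the last step.} Your ``if'' direction is correct. Your setup for the converse is also the paper's: with $T=x^*\otimes z$ you get $g\in G$ and $y\in B_X$ with $\re x^*(y)>1-\e$ and $\|gy+z\|>2-2\e$, and LUR at $z$ then makes $\|gy-z\|$ small. What fails is passing from $y$ to $x$. You claim that slices of $B_X$ at $x$ need not be small because $x^*$ need not strongly expose $x$. In an LUR space this is false. The workaround you offer in its place also does not close the argument. Applying ``the same argument with $z:=x$ and the slice at $y$'' gives some $w$ in a slice at $y$ and some $k\in G$ with $kw$ close to $x$. It does not give $ky$ close to $x$. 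To compose $k^{-1}$ with your first group element you would again need $w$ close to $y$, which is exactly the small-slice property you said was unavailable. So the argument is circular, and as written the proposal never produces a $g$ with $\|gx-z\|<\eta$.

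\textbf{The missing observation.} You wrote that LUR ``gives norm closeness only when the midpoint is nearly maximal, not from functional values alone''. But functional values do force the midpoint to be nearly maximal, and this is exactly what the paper uses. Since $x^*(x)=1=\|x^*\|$, every $y\in B_X$ with $\re x^*(y)>1-\e$ satisfies $\|x+y\|\geq\re x^*(x+y)>2-\e$. So your own LUR consequence, applied at $x$ instead of at $z$, gives $\|y-x\|<\eta$ once $\e$ is small. Equivalently, in an LUR space any $x^*\in S_{X^*}$ with $x^*(x)=1$ strongly exposes $x$. Then $\|gx-z\|\leq\|gx-gy\|+\|gy-z\|=\|x-y\|+\|gy-z\|<2\eta$, which finishes the proof. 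This is precisely the paper's argument: it applies LUR at both $x$ and the target point along the sequence $\e=1/n$.
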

	\begin{proof}
		If the action is almost transitive, then it is also convex-transitive and so, by \Cref{cor:convexTranstiveImpliesGDP}, $X$ has the $G$-DPr.
		
		Conversely, suppose that $X$ has the $G$-DPr. We will show that the action is almost transitive. Choose $x,y \in S_X$ and $\eps > 0$ arbitrarily. We need to show that $Gx \cap B(y, \eps)$ is non-empty. Using the Hahn-Banach theorem, we can find $x^* \in S_{X^*}$ such that $x^*(x) = 1$. Consider the operator $T = x^* \otimes y$.
		Since $X$ has the $G$-DPr, for any $n \in \N$, there are $g_n \in G$ and $z_n' \in S_X$ such that
		\begin{equation*}
			\norm{g_n z_n' + x^*(z_n')y} = \norm{g_n z_n' + T z_n'} > 2 - \frac{1}{n}.
		\end{equation*}
		It follows that 
		\begin{equation*} 
			\abs{x^*(z_n')} > \norm{g_n z_n' + x^*(z_n')y} - \norm{g_n z_n'} > 1 - \frac{1}{n}.
		\end{equation*} 
		Find $\theta_n \in \T$ such that $x^*(\theta_n z_n') = |x^*(z_n')|$ and define $z_n = \theta_n z_n'$. Then, 
		\begin{equation*} 
			\re x^*(z_n) = x^*(z_n) = |x^*(z_n')| > 1 - \frac{1}{n}
		\end{equation*}     
		implying that $\abs{1 - x^*(z_n)} < \frac{1}{n}$. Moreover, we have
		\begin{equation*}
			\norm{g_n z_n + x^*(z_n) y} = \norm{\theta_n(g_n z_n' + x^*(z_n')y)} > 2 - \frac{1}{n}.
		\end{equation*}
		So we see that for every $n \in \N$ one has
		\begin{eqnarray*}
			\norm{g_n z_n + y} &\geq& \norm{g_n z_n + x^*(z_n)y} - \norm{x^*(z_n)y - y} \\
			&>& 2 - \frac{1}{n} - \abs{x^*(z_n) - 1}\norm{y} \\
			&\geq& 2 - \frac{1}{n} - \frac{1}{n}
			= 2 - \frac{2}{n}
		\end{eqnarray*}
		yielding the convergence
		\begin{equation} \label{eq:GDPOnLUR:gzPLUSy}
			\lim_{n \to \infty}\norm{g_n z_n + y} = 2.
		\end{equation}
		Moreover, from the previous estimates we also get
		\begin{equation*}
			\abs{x^*(x+z_n)}
			= x^*(x) + x^*(z_n)
			> 1 + \left(1 - \frac{1}{n}\right)
			= 2 - \frac{1}{n}.
		\end{equation*}
		As $\norm{x^*} = 1$, we deduce that
		\begin{equation} \label{eq:GDPOnLUR:xPLUSz}
			\lim_{n \to \infty}\norm{x + z_n} = 2.
		\end{equation}
		As $X$ is LUR, we deduce from \eqref{eq:GDPOnLUR:gzPLUSy} and \eqref{eq:GDPOnLUR:xPLUSz} that
		\begin{equation*}
			\lim_{n \to \infty}\norm{g_n z_n - y} = 0
			\ \ \ \text{and} \ \ \ 
			\lim_{n \to \infty}\norm{z_n - x} = 0.
		\end{equation*}
		Now, choose $n$ such that $\norm{g_n z_n - y} < \frac{\eps}{2}$ and $\norm{z_n - x} < \frac{\eps}{2}$.
		From this, however, as $G$ acts by isometries, it follows that
		\begin{equation*}
			\norm{g_n x - y}
			\leq \norm{g_n x - g_n z_n} + \norm{g_n z_n - y}
			< \frac{\eps}{2} + \frac{\eps}{2}
			= \eps
		\end{equation*}
		and hence $g_n x \in B(y, \eps)$.
	\end{proof}

	As a consequence of \Cref{theorem:GDPrOnUniformlyConvexSpace}, there are infinite dimensional reflexive Banach spaces $X$ satisfying the $\iso(X)$-DPr as we can see below. In fact, there are Banach spaces which fail the $G$-DPr for every subgroup $G\leq \Iso(X)$ and hence fail even the weakest possible group version, namely the $\Iso(X)$-DPr (see item (c) below).

	\begin{corollary} \label{cor:Lp-ellp-examples}
		Let $1<p<\infty$ be given.
		\begin{itemize}
			\item[(a)] $L^p[0,1]$ has the $\Iso(L^p[0,1])$-Daugavet property.
			
			\item[(b)]  $L^p[0,1]$ fails the $\Iso^+(L^p[0,1])$-Daugavet property.
			
			\item[(c)] If $p\not=2$ and $G\leq \Iso(\ell_p)$, then \(\ell_p\) fails the $G$-Daugavet property.
		\end{itemize}
	\end{corollary}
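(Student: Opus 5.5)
The three items all follow from \Cref{theorem:GDPrOnUniformlyConvexSpace}. For $1<p<\infty$ the spaces $L^p[0,1]$ and $\ell_p$ are uniformly convex by Clarkson's inequalities, hence LUR. So for any group $G$ acting on them by surjective linear isometries, the $G$-DPr is equivalent to almost transitivity of the action on the unit sphere. Each item therefore reduces to deciding whether the relevant action is almost transitive.

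\emph{Item (a).} I would invoke the classical theorem (Pe{\l}czy\'nski--Rolewicz) that $L^p[0,1]$ is almost transitive under its full isometry group. To keep things self-contained I would also sketch why. Given $f,g\in S_{L^p}$, use density to assume both are simple with $|f|,|g|>0$ a.e. By Lamperti's description, every map of the form
\begin{equation*}
Tu=h\,(\varphi')^{1/p}\,(u\circ\varphi),
\end{equation*}
with $\varphi$ an increasing absolutely continuous bijection of $[0,1]$ and $|h|=1$, is a surjective isometry. First choose a measure-preserving rearrangement, which is an isometry, so that $|f|$ and $|g|$ become monotone. Then choose $\varphi$ so that $\int_0^{\varphi(t)}|f|^p=\int_0^t|g|^p$. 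This gives $(\varphi')^{1/p}\,|f\circ\varphi|=|g|$. Finally, the unimodular factor $h$ corrects the sign or phase. Composing these isometries sends $f$ to $g$ exactly, so orbits are dense in the sphere.

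\emph{Item (b).} I read $\Iso^+(L^p[0,1])$ as the group of positive surjective isometries, i.e.\ those preserving the cone $L^p_+$. This is a group: by Lamperti, the inverse $h^{-1}(\cdot)\circ\varphi^{-1}$-type map of a positive isometry is again positive. The orbit of $\mathbbm 1$ under this group stays inside the closed cone $L^p_+$. Since $\|-\mathbbm 1-u\|_p\geq 1$ for every $u\in L^p_+$, the orbit is not dense in the sphere. Hence the action is not almost transitive, and the theorem gives failure of the $\Iso^+$-DPr.

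\emph{Item (c).} For $p\neq 2$ I would use the Banach--Lamperti description of $\Iso(\ell_p)$: every surjective isometry has the form $(x_n)\mapsto(\theta_n x_{\sigma(n)})$ with $\sigma$ a permutation and $|\theta_n|=1$. Thus for any $G\leq\Iso(\ell_p)$ the orbit of $e_1$ lies in $\{\theta e_n:\ |\theta|=1,\ n\in\N\}$. Take $y=2^{-1/p}(e_1+e_2)\in S_{\ell_p}$. Every point $\theta e_n$ of the orbit satisfies $\|\theta e_n-y\|_p\geq 2^{-1/p}$, because at least one of the coordinates $1$ or $2$ of $\theta e_n-y$ equals $-2^{-1/p}$. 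So no orbit is dense, the action is not almost transitive, and the theorem yields that $\ell_p$ fails the $G$-DPr.

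The main obstacle is item (a). Items (b) and (c) are soft once the structure of isometries is quoted, whereas (a) requires the genuinely nontrivial almost-transitivity of $L^p[0,1]$. I would rely on the Lamperti-type change-of-variables construction above, or simply cite the classical theorem.
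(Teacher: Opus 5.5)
Your proposal is correct and follows the paper's proof. All three items reduce to Theorem~\ref{theorem:GDPrOnUniformlyConvexSpace} via LUR-ness: (a) from the classical almost transitivity of $\Iso(L^p[0,1])$, (b) from the orbit of $\mathbbm{1}$ staying in the positive cone at distance $\geqslant 1$ from $-\mathbbm{1}$, and (c) from the Banach--Lamperti description of $\Iso(\ell_p)$. Your additions (the change-of-variables sketch for (a), unimodular $\theta_n$ to cover complex scalars, and the explicit witness $2^{-1/p}(e_1+e_2)$) are sound refinements of the same argument.
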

	
	\begin{proof} It is well-known that the action of $\Iso(L^p[0,1])$ on $L^p[0,1]$ is almost transitive (see \cite[Theorem~9.6.3]{RolewiczStefan1985Mls}). Therefore, \Cref{theorem:GDPrOnUniformlyConvexSpace} yields that $L^p[0,1]$ satisfies the $\Iso(L^p[0,1])$-DPr. This shows (a).
		
		For item (b), let $U\in \Iso^+(L^p[0,1])$ and denote by $\mathbbm{1}$ the constant $1$ function. Since $U$ is positive, we have $U \mathbbm{1} \geq 0$ almost everywhere. Hence, $\Iso^+(L^p[0,1])\cdot \mathbbm{1} \subseteq \{f\in S_{L^p[0,1]}: f\ge 0 \text{ a.e.}\}$. In particular, for every $f$ in the orbit of $\mathbbm{1}$, we have that 
		\begin{equation*} 
			\|f-(-\mathbbm{1})\|_p=\|f+\mathbbm{1}\|_p \ge \|\mathbbm{1}\|_p = 1,
		\end{equation*} 
		so the orbit of $\mathbbm{1}$ cannot be dense in $S_{L^p[0,1]}$. Therefore the action of $\Iso^+(L^p[0,1])$ is not almost transitive and \Cref{theorem:GDPrOnUniformlyConvexSpace} implies that $L^p[0,1]$ does not have the $\Iso^+(L^p[0,1])$-DPr.
		
		Finally, let $G\leq \Iso(\ell_p)$. By the Banach-Lamperti theorem (see, for instance, \cite[Theorem 3.2.5]{FlemingJamison2003}), every surjective linear isometry of $\ell_p$, $p\neq 2$, is given by a permutation of the canonical basis together with coordinatewise sign changes. Hence the $\Iso(\ell_p)$-orbit of $e_1$ is exactly $\{\pm e_n : n\in\N\}$. In particular, the $G$-orbit of $e_1$ is contained in $\{\pm e_n : n\in \N \}$ and therefore it cannot be dense in $S_{\ell_p}$. Thus the action of $G$ on $\ell_p$ is not almost transitive. A new application of  \Cref{theorem:GDPrOnUniformlyConvexSpace} gives item (c).
	\end{proof}

	\begin{remark}
		Let us note that from \Cref{theorem:GDPrOnUniformlyConvexSpace} it follows that if $X$ is a LUR Banach space then an action of a group by linear isometries on $X$ is almost transitive if and only if it is convex transitive. The implication from almost transitive to convex transitive is a direct consequence of the definitions. The other way, if the action is convex transitive, then by \Cref{cor:convexTranstiveImpliesGDP}, $X$ has the $G$-DPr with respect to this action and then by \Cref{theorem:GDPrOnUniformlyConvexSpace} the action must be almost transitive. This result is certainly not new and even stronger results can be found in \cite{BECERRAGUERRERO2009108}.
	\end{remark}

	We have the following consequence of \Cref{theorem:GDPrOnUniformlyConvexSpace} for strictly convex finite dimensional spaces with the $G$-Daugavet property.
	
	\begin{corollary}\label{corollary:finite-dimensional-strictly-convex} Let $X$ be a finite-dimensional strictly convex and let $G \le \iso(X)$. Denote by $\overline{G}$ the closure of $G$ in $\iso(X)$. The following statements are equivalent.
		\begin{itemize}
			\item[(1)] $X$ has the $G$-Daugavet property.
			\item[(2)] The action of $G$ on $S_X$ is almost transitive.
			\item[(3)] The action of $\overline{G}$ on $S_X$ is transitive.
			\item[(4)] $X$ is linearly isometric to a finite dimensional Hilbert space and $\overline{G}$ acts transitively on its unit sphere.
		\end{itemize}
		In particular, if $G$ is closed, then $X$ has the $G$-Daugavet property if and only if $X$ is Euclidean and $G$ acts transitively on $S_X$.
	\end{corollary}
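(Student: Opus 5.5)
The plan is to prove the cycle $(1)\Leftrightarrow(2)\Leftrightarrow(3)\Leftrightarrow(4)$.

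For $(1)\Leftrightarrow(2)$, note that a finite-dimensional strictly convex space is uniformly convex, by compactness of the unit sphere. In particular it is LUR, so \Cref{theorem:GDPrOnUniformlyConvexSpace} applies directly and gives the equivalence.

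For $(2)\Leftrightarrow(3)$, I use that $\iso(X)$ is a compact group when $\dim X<\infty$: it is a closed and bounded subset of $\mathcal L(X)$. Hence $\overline{G}$ is compact, and the evaluation map $\overline{G}\times S_X\to S_X$ is continuous.

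For $(3)\Rightarrow(2)$, fix $x,y\in S_X$ and $\eps>0$. By (3) there is $h\in\overline{G}$ with $hx=y$. Choose $g\in G$ with $\|g-h\|<\eps$; then $\|gx-y\|<\eps$, so $Gx$ is dense in $S_X$.

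For $(2)\Rightarrow(3)$, fix $x,y\in S_X$. Pick $g_n\in G$ with $g_nx\to y$. By compactness of $\overline{G}$, pass to a subsequence $g_n\to h\in\overline{G}$; by continuity of evaluation, $hx=y$. So $\overline{G}$ acts transitively. The only point requiring care here is that the closure is taken in the operator norm, which coincides with the strong operator topology in finite dimensions.

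For $(4)\Rightarrow(3)$ there is nothing to prove. The substantive step is $(3)\Rightarrow(4)$: a finite-dimensional normed space whose isometry group acts transitively on the sphere is Euclidean. This is the finite-dimensional Mazur rotation problem, and I would give the classical averaging argument.

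Let $\overline{G}$ carry its normalized Haar measure $dh$. Fix any inner product $\langle\cdot,\cdot\rangle_0$ on $X$ and average it:
\begin{equation*}
\langle u,v\rangle := \int_{\overline{G}} \langle hu,hv\rangle_0\,dh .
\end{equation*}
This is an inner product, and by invariance of Haar measure it is $\overline{G}$-invariant. Its norm $|\cdot|$ is therefore constant on each $\overline{G}$-orbit. By transitivity, $S_X$ is a single orbit, so $|\cdot|$ equals some constant $c>0$ on all of $S_X$. By homogeneity, $\|\cdot\|=c^{-1}|\cdot|$ on $X$. Thus $X$ is linearly isometric to a Hilbert space, and $\overline{G}$ still acts transitively. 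Alternatively, one can use the John ellipsoid: it is unique and hence $\iso(X)$-invariant, and an invariant ellipsoid must coincide with the ball up to scaling by transitivity. Either route is short; the only nontrivial ingredient is the existence of Haar measure, or of the John ellipsoid, on a compact group.

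The final sentence of the statement follows at once, since $\overline{G}=G$ when $G$ is closed. I expect no serious obstacle. The step needing the most care is making sure that compactness of $\iso(X)$ and continuity of the action are justified when passing from almost transitivity to transitivity.
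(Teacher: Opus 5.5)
Your proposal is correct and follows essentially the same route as the paper. The paper proves $(1)\Leftrightarrow(2)$ via LUR, and $(2)\Leftrightarrow(3)$ via compactness of $\overline{G}$ and the identity $\overline{G}\cdot x=\overline{G\cdot x}$, which is equivalent to your subsequence argument. The only difference is that the paper merely cites $(3)\Leftrightarrow(4)$ as the well-known finite-dimensional Mazur rotation problem, whereas you supply the standard Haar-averaging proof, which is correct and does not even need strict convexity.
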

	
	\begin{proof} Since $X$ is finite-dimensional and strictly convex, it is LUR. Thus, (1) $\Leftrightarrow$ (2) follows immediately from \Cref{theorem:GDPrOnUniformlyConvexSpace}. The equivalence (3) $\Leftrightarrow$ (4) is well-known, it is Mazur's rotation problem for finite-dimensional spaces. Now let us prove (2) $\Leftrightarrow$ (3). Since $X$ is finite-dimensional, $\Iso(X)$ is compact and so is $\overline{G}$, being a closed subset of $\Iso(X)$. For a fixed $x\in S_X$, the map $\Phi_x:\overline{G}\longrightarrow S_X$ given by $\Phi_x(g)=g\cdot x$ is continuous. Hence $\overline{G}\cdot x=\Phi_x(\overline{G})$ is compact and therefore closed. We claim that $\overline{G}\cdot x=\overline{G\cdot x}$. Indeed, since $G\subseteq \overline{G}$, we have $G\cdot x\subseteq \overline{G}\cdot x$. As $\overline{G}\cdot x$ is closed, it follows that $\overline{G\cdot x}\subseteq \overline{G}\cdot x$. For the reverse inclusion, take $h\in \overline{G}$. Then there exists a net $(g_\alpha)\subseteq G$ such that $g_\alpha\to h$. By the continuity of $\Phi_x$, we get $g_\alpha x\longrightarrow hx$. Thus $hx\in \overline{G\cdot x}$. Since $h\in \overline{G}$ was arbitrary, we obtain $\overline{G}\cdot x\subseteq \overline{G\cdot x}$. Therefore, $\overline{G}\cdot x=\overline{G\cdot x}$. Now, $G$ acts almost transitively on $S_X$ if and only if $G\cdot x$ is dense in $S_X$ for every $x\in S_X$, that is, $\overline{G\cdot x}=S_X$ for every $x\in S_X$. By the equality we have obtained before, this is equivalent to $\overline{G}\cdot x=S_X$ for every $x\in S_X$, which is precisely the meaning of $\overline{G}$ acting transitively on $S_X$. This proves (2) $\Leftrightarrow$ (3). Finally, if $G$ is closed, then $\overline{G}=G$. Therefore, by the equivalences already proved, $X$ has the $G$-Daugavet property if and only if $X$ is Euclidean and $G$ acts transitively on the unit sphere $S_X$.
	\end{proof}

	\subsection{In the opposite direction of the DPr and aDPr} Although we have seen this phenomenon already happening in the previous section, in the present one, we will be dealing with examples that show that the $G$-Daugavet property gives much more freedom than the DPr and then aDPr thanks to the groups and their actions on the involved Banach spaces (see once again \Cref{theorem:GDPrOnUniformlyConvexSpace}, \Cref{cor:Lp-ellp-examples} and \Cref{corollary:finite-dimensional-strictly-convex}). In particular, these examples show that compactness or non-compactness of $G$ is not, in itself, decisive for the $G$-Daugavet property. Indeed, both compact and non-compact groups may give rise to the property and both may also fail to do so.

	It is known that every reflexive real Banach space with the aDPr must be finite-dimensional (see \cite{LMP1999}). The next example shows that no analogous conclusion holds for the $G$-Daugavet property in full generality. Although this is already implied by \Cref{cor:Lp-ellp-examples}, the example below is more informative. Indeed, it exhibits an infinite-dimensional reflexive $G$-Banach space whose $G$-DPr is non-trivial in the sense that it is not merely a consequence of the transitivity of the action. This also illustrates that, in the group setting, the geometry of the underlying space may also play a substantial role.

	\begin{example} \label{inf-dim-reflexive-with-GDPr-not-almost-transitive}
		Let $Y$ be a~reflexive infinite-dimensional Banach space such that $\Iso(Y)$ acts (almost) transitively on $Y$ (for instance, $Y = \ell_2$). Define $X = Y \oplus_1 Y$ and let $G := \Z_2 \times \Iso(Y)$ act on $X$ by
		\begin{equation*}
			(a, T) (x, y)
			= \begin{cases}
				(T x, T y), & a = 1,\\
				(T y, T x), & a = -1
			\end{cases}
		\end{equation*}
		for every $(x,y) \in X$ and $(a,T) \in G$. Then, the following holds true.
		\begin{itemize}
			\item[(1)] $X$ is a reflexive infinite-dimensional $G$-Banach space.
			\item[(2)] The action of $G$ on $X$ is not almost transitive.
			\item[(3)] $X$ has the $G$-DPr.
		\end{itemize} 
	\end{example}
	
	\begin{proof} We prove directly the case in which $\Iso(Y)$ acts almost transitively on $S_Y$ since the transitive case is a particular case. Since $Y$ is reflexive, so is $X$. To see that the action of $G$ on $X$ is not almost transitive, consider $y = (y_1, 0) \in S_X$ for some $y_1 \in S_Y$. Then, the orbit of $y$ is
		\begin{equation*}
			G \cdot y = \{ (T y_1, 0), (0, T y_1) : T \in \Iso(Y) \}.
		\end{equation*}
		Since $\Iso(Y)$ acts almost transitively on $S_Y$, we have that
		\begin{equation*}
			\overline{G \cdot y} = S_Y \times \{0\} \cup \{0\} \times S_Y
		\end{equation*}
		and this set is not dense in $S_X$. Therefore, the action is not almost transitive. Let us show that $X$ has the $G$-DPr.
		Let $T: X \rightarrow X$ be a rank-one operator with $\norm{T} = 1$.
		Then, there are $x^* = (x_1^*, x_2^*) \in S_{X^*}$ and $x = (x_1, x_2) \in S_X$ such that
		\begin{equation*}
			T(y_1, y_2) = x^*(y_1, y_2) x = (x_1^*(y_1) + x_2^*(y_2)) (x_1, x_2)
		\end{equation*}
		for every $(y_1, y_2) \in X$. We have that $\norm{x^*} = \max{\norm{x_1^*}, \norm{x_2^*}} = 1$. Assume that $\norm{x_1^*} = 1$ (the case $\norm{x_2^*} = 1$ is analogous).
		Then, by the reflexivity of $Y$, there is $z \in S_Y$ such that $x_1^*(z) = 1$.
		We have two cases we need to treat.

		\noindent
		\textbf{Case 1:} Suppose that $\norm{x_1} \neq 0$.
		Let $\eta>0$. Since $\Iso(Y)$ acts almost transitively on $S_Y$, we can find $g \in \Iso(Y)$ such that
		\begin{equation*}
			\left\|g z - \frac{x_1}{\norm{x_1}}\right\| < \eta.
		\end{equation*}
		Then, we have
		\begin{align*}
			\norm{(1, g) + T}
			&\geq \norm{((1, g) + T)(z, 0)}\\
			&= \norm{\left(gz, 0\right) + (x_1^*(z) + x_2^*(0)) (x_1, x_2)} \\
			&= \norm{\left(gz + x_1, x_2\right)}\\
			&= \norm{gz+x_1}+\norm{x_2}\\
			&\geq
			\left\|\frac{x_1}{\norm{x_1}}+x_1\right\|
			-
			\left\|gz-\frac{x_1}{\norm{x_1}}\right\|
			+
			\norm{x_2} >
			1+\norm{x_1}-\eta+\norm{x_2}
			=
			2-\eta.
		\end{align*}
		This gives us that $\sup_{g\in G}\norm{g+T}=2$.
		
		\noindent
		\textbf{Case 2:} Suppose that $\norm{x_1} = 0$.
		In this case, we have $\norm{x_2}=1$. Let $\eta>0$. Since $\Iso(Y)$ acts almost transitively on $S_Y$, we can find $g\in \Iso(Y)$ such that $\|gz-x_2\|<\eta$. Then, we have
		\begin{eqnarray*}
			\norm{(-1, g) + T}
			&\geq& \norm{((-1, g) + T)(z, 0)} \\
			&=& \norm{\left(0, gz\right) + (x_1^*(z) + x_2^*(0)) (x_1, x_2)} \\
			&=& \norm{\left(0, gz+x_2\right)} \\
			&=& \norm{gz+x_2}\\
			&\geq& 2-\eta.
		\end{eqnarray*}
		This gives once again that $\sup_{g\in G}\norm{g+T}=2$.
	\end{proof}

	Let $K$ be a compact Hausdorff space, let $X$ be a Banach space and let $(\Omega,\Sigma,\mu)$ be a measure space with $\mu$ positive. It is known that the Banach space $C(K,X)$ has the Daugavet property if and only if either $K$ is perfect or $X$ has the Daugavet property (see \cite[Remark 6]{MartinPaya2000}). Likewise, $L^1(\mu,X)$ has the Daugavet property if and only if either $\mu$ is atomless or $X$ has the Daugavet property (see \cite[Remark 9]{MartinPaya2000}). The analogous statements for the aDPr also hold true and it can be found in \cite[Theorem 3.4]{MO2004}. In Section~\ref{section:L1-and-CK}, we will show that these equivalences remain valid under suitable restrictions on the group action and, as a consequence, we will recover both of the aforementioned results. Notice first that if $\mu$ is atomless, then $L^1(\mu,X)$ has the Daugavet property and hence, as a consequence, it has the $G$-Daugavet property (see Corollary \ref{DPr-implies-G-DPr}), regardless of the action of $G$ on $L^1(\mu,X)$. Similarly, if $K$ is perfect, then $C(K,X)$ has the Daugavet property and therefore also the $G$-Daugavet property, independently of the action considered on $C(K,X)$. The difficulty arises in the converse direction: in the absence of a natural compatibility\footnote{Here, compatibility means that the action on the function space is obtained by applying to each value of the function the same operator $g$ which acts on $X$, that is, $(gf)(t)=g(f(t))$ for $\mu$-almost every $t$ in the case of $L^1(\mu,X)$ and $(gf)(s)=g(f(s))$ for every $s\in K$ in the case of $C(K,X)$.
	} between the action on the ambient function space and the action on $X$, one cannot expect the corresponding characterization results to remain valid.
	
	\begin{example} \label{examples-L1-C(K)-independent-actions}  Let $X$ be the space $\R^2$ equipped with the norm
		\begin{equation*}
			\norm{(a,b)}_X = \max \left\{ \abs{a}, \abs{b}, \frac{\abs{a} + \abs{b}}{\sqrt{2}} \right\}
		\end{equation*}
		for every $(a,b) \in \R^2$ (see Figure \ref{fig:ballForVecValL1example}). Let $G = \Z_8$ and equip $X$ with the action of $G$ by rotations by multiples of $\pi/4$.
		Denote $Y = \ell_1(G, X)$, where we take the counting measure on $G$ and equip $Y$ with the action of $G$ by translations\footnote{The action of $G$ on $Y$ by translations means that $G$ shifts the coordinates, that is, $(hf)(i) = f(i-h)$ for every $i,h \in \Z_8$, where $i-h$ is taken modulo 8. This action is by linear isometries as it only permutes the eight elements of the group. Notice also that this action on $Y$ is not the action induced pointwise from the rotation action on $X$.}.
		Then, the following holds true.
		\begin{enumerate}
			\item $X$ has the $G$-Daugavet property.
			\item $Y$ does not have the $G$-Daugavet property.
		\end{enumerate}
		
		\begin{figure}
			\begin{tikzpicture}[scale=1.9]
				\def\a{0.4142}
				
				\fill[blue!15] (0,0) -- (\a, 1) -- (-\a, 1) -- cycle;
				
				\draw[->, gray!80] (-1.5, 0) -- (1.5, 0) node[right, text=black] {$x$};
				\draw[->, gray!80] (0, -1.5) -- (0, 1.5) node[above, text=black] {$y$};
				
				\draw[thick, black] 
				(1, \a) -- (\a, 1) -- (-\a, 1) -- (-1, \a) -- 
				(-1, -\a) -- (-\a, -1) -- (\a, -1) -- (1, -\a) -- cycle;
				
				\draw[thick, blue] (0,0) -- (\a, 1);
				\draw[thick, blue] (0,0) -- (-\a, 1);
				\draw[thick, blue] (\a, 1) -- (-\a, 1); 
				
				\filldraw (1, \a) circle (0.9pt) node[right] {$(1, \sqrt{2}-1)$};
				\filldraw (\a, 1) circle (0.9pt) node[above right] {$(\sqrt{2}-1, 1)$};
				\filldraw (-\a, 1) circle (0.9pt) node[above left] {$(1-\sqrt{2}, 1)$};
				\filldraw (-1, \a) circle (0.9pt) node[left] {$(-1, \sqrt{2}-1)$};
				\filldraw (-1, -\a) circle (0.9pt) node[left] {$(-1, 1-\sqrt{2})$};
				\filldraw (-\a, -1) circle (0.9pt) node[below left] {$(1-\sqrt{2}, -1)$};
				\filldraw (\a, -1) circle (0.9pt) node[below right] {$(\sqrt{2}-1, -1)$};
				\filldraw (1, -\a) circle (0.9pt) node[right] {$(1, 1-\sqrt{2})$};
				
				\filldraw (0,0) circle (0.9pt) node[below right] {$(0,0)$};
			\end{tikzpicture}
			\caption{The unit ball in $X$}
			\label{fig:ballForVecValL1example}
		\end{figure}
	\end{example}

	\begin{proof}
		Indeed, to see that $X$ has the $G$-DPr, let $x \in S_X$. If $x$ is one of the eight extreme points of $S_X$, then $x \in Q(x, \eps)$ for all $\eps >0$. If $x$ is not an extreme point, then $Q(x,\eps)$ contains the entire edge of the sphere containing $x$ and thus $Q(x, \eps)$ contains at least two extreme point for any $\eps > 0$. In both cases, $Q(x, \eps)$ contains at least one extreme point and since the action on $X$ is transitive for the set of extreme points of $B_X$, we have $\clco Q_G(x, \eps) = B_X$ for any $x \in S_X$ and $\eps > 0$.

		To see item (2), let us assume that $Y$ has the $G$-DPr. We know that $Y^* = \ell_\infty(G, X^*)$. It is easy to verify that the element $(0, 1)$ represents a~norm one functional on $X$. Hence we can define $y \in S_Y$ to be the constant function $(1/8, 0)$ and $y^* \in S_{Y^*}$ to be the constant function $(0,1)$. Then, we have that $T: Y \to Y$ defined by $T = y^* \otimes y$ is a rank-one linear operator on $Y$ with $\norm{T} = 1$.
		Since we assume $Y$ to have the $G$-DPr, we have that
		\begin{equation*}
			\sup_{g \in G} \norm{\id + T \circ g} = 2.
		\end{equation*}
		Notice also that $y^*$ is $G$-invariant. Indeed, when we say that $y^* \in S_{Y^*}$ is the constant function $(0,1)$ we mean the following. Let $\varphi \in X^*$ defined by $\varphi(a,b) = b$. Then, $y^* = (\varphi, \varphi, \ldots, \varphi) \in \ell_{\infty}(G, X^*)$. This implies that, for $z = ((a_i, b_i))_{i=0}^7 \in Y$, we have that $y^*(z) = \sum_{i=0}^7 b_i$. And now we can show directly that $y^*$ is invariant under this translation action: for every $h \in G$, we have
		\begin{equation*}
			y^*(hz) = \sum_{i=0}^7 \varphi((hz)(i)) = \sum_{i=0}^7 \varphi(z(i-h)) = \sum_{i=0}^7 \varphi(z(i)) = y^*(z).
		\end{equation*}    
		
		Now, since $y^*$ is $G$-invariant, we get $\norm{\id + T} = 2$. By compactness of $B_Y$, we see that there must exist $z \in Y$ such that 
		\begin{equation*}
			\norm{z + y^*(z)y} = 2.
		\end{equation*}
		For ease of notation, denote $z = ((a_i, b_i))_{i=0}^7$. From the above equality, it follows that $|y^*(z)|=1$ and so (potentially switching $-z$ for $z$)
		\begin{equation*}
			1 = \abs{y^*(z)} = \abs{\sum_{i=0}^7 b_i} = \sum_{i=0}^7 b_i
			\le \sum_{i=0}^7 \abs{b_i} \le \sum_{i=0}^7 \norm{z_i} = \norm{z} = 1.
		\end{equation*}
		That is, other than $\sum_{i=0}^7 b_i = 1$ we also know that $b_i \ge 0$ and for every $i \in \{0, \dots, 7\}$
		\begin{equation*}
			\max \left\{ \abs{a_i}, \frac{\abs{a_i} + \abs{b_i}}{\sqrt{2}} \right\} \le b_i.
		\end{equation*}
		This means that all $z_i$ are contained in the cone highlighted in~\Cref{fig:ballForVecValL1example}. Therefore, for all $0 \le i \le 7$, we have $\abs{a_i} \le (\sqrt{2}-1)b_i$. Now we look at
		\begin{equation*}
			\norm{z + y^*(z)y}_Y
			= \norm{z + y}_Y
			= \sum_{i=0}^7 \norm{(a_i + 1/8, b_i)}_X.
		\end{equation*}
		From the considerations above, we have that $\abs{a_i + 1/8} \le b_i + 1/8$ and if $b_i > 0$, then the inequality is strict; also $\abs{b_i} < b_i + 1/8$ and 
		\begin{equation*}
			\frac{\abs{a_i + 1/8} + \abs{b_i}}{\sqrt{2}}
			\le \frac{(\sqrt{2}-1+1)b_i + 1/8}{\sqrt{2}}
			< b_i + \frac{1}{8}.
		\end{equation*}
		Since $b_i > 0$ for at least one $i$, for this $i$, in all three cases the inequality is strict and we obtain that
		\begin{equation*}
			\norm{z + y^*(z)y}_Y
			= \sum_{i=0}^7 \norm{(a_i + 1/8, b_i)}_X
			< \sum_{i=0}^7 \left( b_i + \frac{1}{8} \right)
			= 1 + \sum_{i01}^7 b_i
			= 2.
		\end{equation*}
		This is, however, a~contradiction with our choice of $z$.
	\end{proof}
	
	Now let us consider the same phenomenon in $C(K,X)$ spaces.
	
	\begin{example} \label{C(K,X)-action}
		Let $G = \Z_8$ and $X$ be the $G$-Banach space from~\Cref{examples-L1-C(K)-independent-actions}. Then, we have that $X^*$ has the $G$-DPr and if we equip $Y = C(G, X^*)$ with the action by translations, then $Y$ does not have the $G$-DPr. 
	\end{example}
	\begin{proof}
		Since $X$ is finite-dimensional, it is reflexive. It is easy to see that the action on $X^{**}$ induced as the dual action to the dual action on $X^*$ is the same action as the original action on $X$. Hence, it follows from \Cref{prop:inheritanceFromDual} and the fact that $X$ has the $G$-DPr (which was shown in~\Cref{examples-L1-C(K)-independent-actions}), that $X^*$ has the $G$-DPr.
		Now, since $G$ is discrete, we have that actually $Y = \ell_\infty(G, X^*)$. It follows then that $Y = \ell_1(G, X)^*$. If we show that the dual action to the action of $G$ on $\ell_1(G, X)$ by translations is the action by translations on $\ell_\infty(G, X^*)$, then we will know that the latter cannot have the $G$-DPr for then, by virtue of~\Cref{prop:inheritanceFromDual}, so would the former. Let $x = (x_i)_{i=0}^7 \in \ell_1(G, X)$, $x^* = (x^*_i)_{i=0}^7 \in \ell_\infty(G, X)$ and $g \in G$. Then, considering the dual action on $\ell_\infty(G, X^*)$, we have that 
		\begin{eqnarray*}
			(gx^*)(x) = x^*(g^{-1}x) &=& x^*\big( (x_{i+g \textrm{ mod } 8})_{i=0}^7 \big) \\
			&=& \sum_{i=0}^7 x^*_i (x_{i+g \textrm{ mod } 8}) \\
			&=& \sum_{i=0}^7 x^*_{i-g \textrm{ mod } 8} (x_i).
		\end{eqnarray*}
		However, the expression on the right is nothing else but the action of $x^*$ translated by $g$ on $x$.
	\end{proof}

	One might hope for a connection between the $G$-Daugavet property and the possibility of embedding a space into another one with an unconditional basis. However, the classical theory of the alternative Daugavet property already shows that such a statement cannot hold in full generality: indeed, the space $c_0$ has the alternative Daugavet property (recall that $c_0$ has numerical index equals one) and yet embeds trivially into itself. One may argue that the $G$-Daugavet property is a weaker requirement, so perhaps one should only expect a correspondingly weaker statement. A natural refinement would then be to ask whether a space with the $G$-Daugavet property can embed into a space with an unconditional basis $(e_n)$ which is preserved by the action, in the sense that
	\begin{equation*} 
		G\cdot \{e_n : n\in \mathbb N\}=\{e_n : n\in \mathbb N\}.
	\end{equation*}
	Right away, this excludes the previous pathology coming from the alternative Daugavet property, since if $e_n$ is a basis vector, then $-e_n$ cannot belong to the same basis. Nevertheless, \Cref{ex:Iso^+(ell_1)} below provides a simple counterexample even to this variant of the statement as $\Iso^+(\ell_1)$, the group of all positive surjective linear isometries of $\ell_1$, preserves the canonical basis of $\ell_1$.
	
	\begin{proposition} \label{ex:Iso^+(ell_1)}
		The $G$-Banach space $\ell_1$ has the $G$-DPr for $G = \Iso^+(\ell_1)$.
	\end{proposition}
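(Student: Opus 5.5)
The plan is to verify the $G$-Daugavet equation directly from \Cref{definition-G-DPr}, in the equivalent form $\sup_{g\in G}\norm{g+T}=1+\norm{T}$ given by \Cref{fact:G-DP-equation-commutes}. The key input is a description of $G=\Iso^+(\ell_1)$. A positive surjective isometry of $\ell_1$ maps the canonical basis vectors, which are the positive extreme points of $B_{\ell_1}$, to positive extreme points, and so it permutes $\{e_n\}$. We will in fact only need the easy inclusion: every permutation operator $P_\sigma e_n=e_{\sigma(n)}$, for $\sigma$ a bijection of $\N$, is a positive surjective isometry. In particular every transposition of two coordinates lies in $G$. This matters because sign changes are \emph{not} available in $G$, so all scalar adjustments must be made on the test vector rather than through the group.

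By homogeneity, fix a rank-one operator $T=x^*\otimes x$ with $x\in S_{\ell_1}$ and $x^*=(x^*_n)\in S_{\ell_\infty}$, and fix $\e>0$. First, choose an index $k$ with $|x^*_k|>1-\e$; this is possible because $\norm{x^*}_\infty=1$. Second, choose an index $m\neq k$ with $|x_m|<\e$; this is possible because $x\in\ell_1$ forces $x_n\to0$. Let $g\in G$ be the transposition swapping $e_k$ and $e_m$. Finally, take the test vector $y=\theta e_k\in B_{\ell_1}$, where $\theta\in S_\K$ is chosen with $\theta x^*_k=|x^*_k|$. Scalars are allowed in the test vector even though they are not in the group.

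With these choices,
\[
(g+T)(y)=\theta e_m+|x^*_k|\,x .
\]
We estimate its $\ell_1$-norm coordinatewise. Away from coordinate $m$ it contributes $|x^*_k|\bigl(\norm{x}_1-|x_m|\bigr)$. At coordinate $m$ it contributes at least $1-|x^*_k||x_m|$. Hence
\[
\norm{g+T}\geq 1+|x^*_k|-2|x_m|>2-3\e .
\]
Letting $\e\to0$ gives $\sup_{g\in G}\norm{g+T}=2=1+\norm{T}$, which is the $G$-DPr.

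I expect no serious obstacle; this is essentially the classical argument that $\ell_1$ has the aDPr. The only real subtlety is that $G$ contains no sign changes. This is handled by rotating the test vector by $\theta$ instead, and by moving the mass to a coordinate $m$ where $x$ is negligible. That move is exactly what the permutations in $\Iso^+(\ell_1)$ permit.
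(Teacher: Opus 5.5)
Your proof is correct, but it follows a different route from the paper. The paper never works with operators. It uses the hull characterization (4) of \Cref{prop:characterization-G-DP}: for $x\in S_{\ell_1}$ it picks $i$ with $|x_i|<\eps/2$ and notes that $\pm e_i\in Q(x,\eps)$. Since $G$ contains the permutations, this gives $\{\pm e_n : n\in\N\}\subseteq G\cdot Q(x,\eps)$, and it concludes $\clco_G(Q(x,\eps))=B_{\ell_1}$.

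You instead verify the $G$-Daugavet equation directly for each rank-one $T=x^*\otimes x$. You use a single transposition that carries the coordinate where $x^*$ almost attains its norm to a coordinate where $x$ is negligible, and you absorb the phase into the test vector $\theta e_k$. Your estimate $\norm{(g+T)(\theta e_k)}\geq 1+|x_k^*|-2|x_m|$ is right.

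The paper's argument is shorter once the slice/hull machinery is available. As literally written, however, it relies on $\clco\{\pm e_n\}=B_{\ell_1}$, which holds only over $\R$. In the complex case one has to add that $\theta e_i\in Q(x,\eps)$ for every $\theta\in S_\K$, which follows from the same estimate. Your argument is self-contained, needing only \Cref{fact:G-DP-equation-commutes}, and it treats both scalar fields uniformly because the scalar sits in the test vector rather than in the group.

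Both proofs rest on the same mechanism: a vector of $\ell_1$ has arbitrarily small coordinates, and permutations can move basis vectors onto them. Both in fact use only permutations. Your version makes it most visible that one transposition per operator suffices, so the $G$-DPr already holds for the countable group of finitely supported permutations.
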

	\begin{proof}
		For any $x \in S_{\ell_1}$ and $\eps > 0$, find $i \in \N$ such that $\abs{x_i} < \eps/2$. Then, we have that $\pm e_i \in Q(x,\eps)$ and hence $\{\pm e_n : n \in \N\} \subseteq G Q(x,\eps)$, so $B_X = \clco_G(Q(x,\eps))$.
	\end{proof}
	
	In general, there is no ``minimal group'' (even if we restrict ourselves to closed subgroups of $\Iso(X)$) for which a $G$-Banach space has the $G$-DPr. In particular, we have the following consequence of \Cref{ex:Iso^+(ell_1)}.
	
	\begin{corollary} \label{cor:intersectionInstability}
		Let $X$ be a $G$-Banach space. If $G, H \leq \Iso(X)$ are such that $X$ has both the $G$-DPr and $H$-DPr, then $X$ need not have the $(G \cap H)$-DPr.
	\end{corollary}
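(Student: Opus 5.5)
We will produce an explicit counterexample on $X=\ell_1$ (real scalars, say), taking $G=\Iso^+(\ell_1)$ and $H=S_{\K}=\{\pm\id\}$ (or $\{\lambda\id:\lambda\in S_{\K}\}$ in the complex case). The intersection $G\cap H$ is then the trivial group $\{\id\}$: a scalar multiple $\lambda\id$ with $|\lambda|=1$ is a positive operator only when $\lambda=1$. Consequently the $(G\cap H)$-DPr is nothing but the classical DPr.

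The verification has three short steps.

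\textbf{The $G$-DPr.} This is exactly \Cref{ex:Iso^+(ell_1)}.

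\textbf{The $H$-DPr.} We must show that $\ell_1$ has the aDPr. We will use the slice characterization, item (4) of \Cref{prop:characterization-G-DP} with $G=S_{\K}$. Fix $x\in S_{\ell_1}$ and $\eps>0$, and choose $i$ with $|x_i|<\eps/2$, exactly as in the proof of \Cref{ex:Iso^+(ell_1)}. Then $\|x+e_i\|=\|x\|-|x_i|+|x_i+1|>2-\eps$, so $e_i\in Q(x,\eps)$. Since such indices $i$ form a cofinite set, we obtain infinitely many basis vectors, and their $S_{\K}$-multiples, in $S_{\K}\cdot Q(x,\eps)$.

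This alone does not give all of $B_{\ell_1}$ in the closed convex hull, because finitely many $e_j$ are missing. To handle them, note that for every $j$ and every large $i$ we also have $(e_j+e_i)/2$-type points available, but more simply we observe that $\|x+\theta e_j\|$ for the missing $j$ may fail. So instead of chasing the missing vectors, I would invoke the known fact that $\ell_1$ has numerical index $1$, hence has the aDPr (see \cite{MO2004}, \cite[Chapter 12]{KMRW}). With that, \Cref{DPr-implies-G-DPr}(b) gives the $H$-DPr.

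If one prefers a self-contained argument, rank-one $T=x^*\otimes x$ can be handled directly. Choose $\theta$ and a large $i$ with $|x_i|$ small and $|x^*(e_i)|$ close to $\|x^*\|_\infty$; this is possible only when the supremum of $|x^*_i|$ is approached along a tail, so the numerical-index route is cleaner.

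\textbf{Failure of the DPr.} This is the main point: $\ell_1$ fails the DPr. The standard witness is $T=e_1^*\otimes(-e_1)$, the projection onto the first coordinate with a minus sign. Then $\|\id+T\|=1$, since $\id+T$ kills the first coordinate, whereas $1+\|T\|=2$. Alternatively, $\ell_1$ has the RNP, and every space with the DPr fails the RNP. Either way $\ell_1$ fails the $\{\id\}$-DPr, and hence fails the $(G\cap H)$-DPr.

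I expect no real obstacle. The only step needing care is justifying the aDPr of $\ell_1$, which we will quote from the literature rather than reprove.
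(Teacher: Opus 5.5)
Your counterexample is exactly the paper's argument, and it is correct: $X=\ell_1$ with $G=\Iso^+(\ell_1)$ and $H=\{\pm\id\}$, whose intersection is trivial, so the $(G\cap H)$-DPr is the classical DPr, which $\ell_1$ fails. As a side remark, the ``missing'' $e_j$ you worried about are not actually missing: if $x_j\neq 0$ then $\|x+\sign(x_j)e_j\|=2$, so $\pm e_j\in\{\pm\id\}\cdot Q(x,\eps)$ for every $j$, and item (4) of \Cref{prop:characterization-G-DP} gives the aDPr of $\ell_1$ directly, although citing $n(\ell_1)=1$ is also fine.
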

	\begin{proof}
		Indeed, $\ell_1$ has the $\{\id, -\id\}$-DPr (which is just the aDPr) and $\Iso^+(\ell_1)$-DPr, both of which are closed subgroups of $\Iso(\ell_1)$ with the SOT topology. Nevertheless, $\{\id, -\id\} \cap \Iso^+(\ell_1) = \{\id\}$ and $\ell_1$ does not have the classical Daugavet property.
	\end{proof}
	
	Although $\ell_1$ does not have the DPr, it does have the $G$-DPr for suitably large groups, for instance for $\Iso^+(\ell_1)$ by \Cref{ex:Iso^+(ell_1)}. The following result shows that the $G$-DPr of $\ell_1$ is driven not merely by positivity of the action but by the size of the acting group. Indeed, while the full group $\Iso^+(\ell_1)$ is large enough to force the property, every SOT-compact subgroup is too small to do so.
	
	\begin{proposition}
		The $G$-Banach space $\ell_1$ equipped with the action of an SOT-compact subgroup $G \leq \Iso^+(\ell_1)$ does not have the $G$-Daugavet property.
	\end{proposition}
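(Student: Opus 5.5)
The plan is to show that condition (2) of \Cref{prop:characterization-G-DP} fails. That is, we will exhibit a point $x\in S_{\ell_1}$ and a $G$-slice $S_G$ of $B_{\ell_1}$ such that $\|x+y\|\leq 2-\delta$ for every $y\in S_G$, with some fixed $\delta>0$.

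The first step is to understand which isometries can occur. Every positive surjective linear isometry of $\ell_1$ maps each $e_n$ to a positive norm-one vector. Since the inverse is also a positive isometry and disjointness is preserved, it maps $e_n$ to some $e_{\sigma(n)}$. Hence every $g\in G$ is a permutation of the canonical basis, with no sign changes.

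Next we use SOT-compactness. The orbit map $g\mapsto g e_1$ is continuous from $(G,\text{SOT})$ into $\ell_1$ with the norm topology, so the orbit $G\cdot e_1$ is norm-compact. This orbit is contained in $\{e_n:n\in\N\}$, which is $2$-separated. A compact $2$-separated set is finite, so $G\cdot e_1=\{e_n:n\in A\}$ for some finite set $A\ni 1$. Let $k=|A|$.

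Now we build the bad pair. Take $x=-\frac1k\sum_{n\in A}e_n\in S_{\ell_1}$ and the $G$-slice $S_G=S_G(B_{\ell_1},e_1^*,\e)$ with $\e<1-\frac1k$ (if $k=1$, any $\e<1$ will do). A point $y\in B_{\ell_1}$ lies in $S_G$ if and only if $\re (gy)_1>1-\e$ for some $g\in G$. Since $(gy)_1=y_{\sigma^{-1}(1)}$ and $\sigma^{-1}(1)$ ranges over the orbit of the index $1$, which is $A$, this means that $\re y_n>1-\e\geq\frac1k$ for some $n\in A$. For such $y$ we estimate
\begin{equation*}
\|x+y\|\leq |y_n-\tfrac1k|+\sum_{m\neq n}|y_m|+\tfrac{k-1}{k}\leq \|y\|-\tfrac1k\cdot 2\cdot\tfrac12\cdot 2+\tfrac{k-1}{k}.
\end{equation*}
More carefully, write $y_n=a+ib$ with $a>\frac1k$. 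Then $|y_n-\frac1k|\leq|y_n|-\frac1k+O(\e)$, because $|y_n|$ is close to $a$ when $a>1-\e$ and $|y_n|\leq1$. This gives
\begin{equation*}
\|x+y\|\leq 1-\tfrac1k+\tfrac{k-1}{k}+O(\e)=2-\tfrac2k+O(\e),
\end{equation*}
which stays below $2-\frac1k$ for $\e$ small. This contradicts (2) in \Cref{prop:characterization-G-DP}.

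The main obstacle is the identification of the positive isometries with basis permutations and the finiteness of orbits coming from SOT-compactness. The norm estimate itself is routine; in the complex case it only requires choosing $\e$ small enough to control the imaginary part of $y_n$.
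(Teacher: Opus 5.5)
Your proof is correct and follows essentially the paper's route. In both, SOT-compactness together with $2$-separation makes the orbit of a basis vector finite. One then picks $x$ with mass of the opposite sign on every orbit coordinate, so that $\|x+y\|$ stays bounded away from $2$ near the orbit.

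The difference is only in packaging:
\begin{itemize}
\item The paper takes $x=(2^{-k})_k$ and refutes item (4) of Proposition~\ref{prop:characterization-G-DP}. It uses that $-e_1$ is strongly exposed to pass from a ball around $-e_1$ to a slice.
\item You refute item (2) directly, by computing $S_G(B_{\ell_1},e_1^*,\e)=\{y\in B_{\ell_1}:\re y_n>1-\e \text{ for some } n\in A\}$. This is more explicit and gives the concrete constant $1/k$.
\end{itemize}

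Two small slips should be fixed.
\begin{itemize}
\item The right-hand side of your first display is false as written ($\|y\|-\frac2k+\frac{k-1}{k}$). It should read $\|y\|-\frac1k+\frac{k-1}{k}$.
\item In the complex case the error term in $|y_n-\frac1k|\le|y_n|-\frac1k+(\text{error})$ is in general only $O(\sqrt{\e})$, not $O(\e)$. For example, when $k=1$ and $y=y_1e_1$ with $|y_1|=1$ and $\re y_1=1-\e/2$, the difference is exactly $\sqrt{\e}$. What you actually have is $|\mathrm{Im}\, y_n|<\sqrt{2\e}$.
\end{itemize}
Neither slip affects your corrected estimate $\|x+y\|\le 2-\frac2k+o(1)<2-\frac1k$ for $\e$ small, so the conclusion stands.
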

	\begin{proof}
		Let $G$ be a~compact subgroup of $(\Iso^+(\ell_1), \mathrm{SOT})$. The map $T \mapsto T(-e_1)$ is SOT-to-norm continuous, 
		so the orbit $G(-e_1)$ must be a norm-compact subset of $\ell_1$. Since $G(-e_1)$ consists entirely of negative standard basis vectors, which are uniformly separated by a distance of 2, this norm-compact set must be finite.
		Together with the fact that $G \leq \Iso^+(\ell_1)$, we get that there is $n \in \N$ and $k_1, \dots, k_n$ such that $G(-e_1) = \{-e_{k_i}: 1 \leq i \leq n\}$. Let $\eps = \min \left\{ 2^{-k_i} : 1 \leq i \leq n \right\}$. Define $x = (2^{-k}) \in S_{\ell_1}$. Then, for every $1 \leq i \leq n$, we have that
		\begin{eqnarray*}
			\norm{x + (-e_{k_i})}
			&=& \abs{2^{-k_i} - 1} + \sum_{k \neq k_i} 2^{-k} \\
			&=& 1 - 2^{-k_i+1} + \sum_{k \in \N} 2^{-k} 
			= 2 - 2^{-k_i+1}
			\leq 2 - 2\eps.
		\end{eqnarray*}
		Now, for every $u \in B_{\ell_1}$ such that $\|u+e_{k_i}\| < \e$, we have that $\|x+u\| \leq \|x - e_{k_i}\| + \|u+e_{k_i}\| < 2 - \e$. This shows that $B(-e_{k_i}, \e) \cap Q(x, \e) = \emptyset$ for every $1 \leq i \leq n$. Therefore, one can show that $G Q(x, \e) \cap B(-e_1, \e) = \emptyset$. Since $-e_1$ is strongly exposed in $B_{\ell_1}$ by the functional $-e_1^*$, there exists $\eta > 0$ such that $S(B_{\ell_1}, -e_1^*, \eta) \subseteq B(-e_1, \e)$. This means that $GQ(x,\e)$ is disjoint from this new slice, that is, $-e_1^*(w) \leq 1 - \eta$ for every $w \in GQ(x, \e)$. By convexity and norm-closedness of $\{u \in \ell_1: -e_1^*(u) \leq 1 - \eta\}$, we get that $\clco_G Q(,x\e) = \overline{\co}(GQ(x, \e)) \subseteq \{ u \in \ell_1: -e_1^*(u) \leq 1 - \eta\}$. Since $-e_1^*(-e_1) = 1$, we get that $-e_1 \not\in \clco_G Q(x, \e)$.
		
	\end{proof}

	The preceding examples might suggest that the $G$-Daugavet property is essentially produced almost transitivity (as in $L_p$-spaces) or by the presence of a very large non-compact group (as in the case of $\Iso^+(\ell_1)$). One could therefore hope that, after excluding such mechanisms, the $G$-Daugavet property would force some of the familiar isomorphic consequences of the classical DPr, for instance non-reflexivity or non-Asplundness. The following example shows that this is also not the case. In fact, even for a compact action with no nontrivial invariant subspaces and which is not convex-transitive, the $G$-Daugavet property may still hold on a finite-dimensional space (see also \Cref{inf-dim-reflexive-with-GDPr-not-almost-transitive}).
	
	\begin{example}
		Let \(X=(\mathbb R^2,\|\cdot\|)\) and let \(G=\mathbb Z_8=\langle R\rangle\) be as in \Cref{examples-L1-C(K)-independent-actions}. Then, the following assertions hold true.
		\begin{itemize}
			\item[(a)] $G$ is compact and $X$ is a reflexive space which satisfies the $G$-DPr.

			\item[(b)] The action has no nontrivial proper invariant subspaces.
			
			\item[(c)] The action is not convex-transitive on $X$.
		\end{itemize}
		\begin{proof} Item (a) was established in \Cref{examples-L1-C(K)-independent-actions}. For (b), note that a nonzero proper invariant subspace of the real 2-dimensional space $X$ would necessarily be 1-dimensional. Suppose that $E=\R v$ is $G$-invariant. Since $G=\Z_8=\langle R\rangle$, where $R$ acts by rotation through angle $\pi/4$, the invariance of $E$ implies that $Rv\in E$, so $Rv=\lambda v$ for some real scalar $\lambda$. This is impossible, since the rotation $R$ has no real eigenvalues. Therefore the only $G$-invariant subspaces are $\{0\}$ and $X$. Finally, let us prove (c). Take $x_0=(0,1)\in S_X$ and consider the functional $f(a,b)=b$. For every $g\in G$, we have $f(gx_0)\leq 1$, and equality holds if and only if $gx_0=x_0$ as the orbit $Gx_0$ consists of the eight points obtained by rotating $(0,1)$ (see Figure \ref{fig:ballForVecValL1example}), and only the original point has second coordinate exactly $1$. Let $u=(\xi,1)$ be an extreme point of $B_X$. Then $f(u)=1$. Suppose, towards a contradiction, that $u\in \co(Gx_0)$. Then
			\begin{equation*}
				u=\sum_{k=1}^n \lambda_k g_kx_0
			\end{equation*}
			for some $\lambda_k\geq 0$ with $\sum_{k=1}^n \lambda_k=1$. Applying $f$, we get
			\begin{equation*}
				1=f(u)=\sum_{k=1}^n \lambda_k f(g_kx_0).
			\end{equation*}
			Since $f(g_kx_0)\leq 1$ for every $k$, this forces $f(g_kx_0)=1$ whenever $\lambda_k>0$. Hence $g_kx_0=x_0$ for every such $k$, which implies $u=x_0$. But this is impossible, since $u=(\xi,1)\neq (0,1)=x_0$. Therefore $u\notin \co(Gx_0)$, and consequently $\overline{\co}(Gx_0)\neq B_X$.
		\end{proof}
	\end{example}
	
	To end this section, let us provide one more result in spirit to \Cref{cor:intersectionInstability} but now taking unions instead.
	\begin{example}
		Let $X = \C$, $G_n = \langle e^{2^{-n} \pi i}\rangle \le S_\C$ and $G = \bigcup_{n=1}^\infty G_n$ and consider the natural actions of $G_n$'s and $G$ on $X$ by rotations. Then $X$ does not have the $G_n$-DPr for any $n \in \N$, yet it has the $G$-DPr.
	\end{example}
	
	\begin{proof}
		Since $\C$ is LUR, this is a~simple consequence of \Cref{theorem:GDPrOnUniformlyConvexSpace} -- the action of $G$ is almost transitive since $G = \langle e^{2^{-n} \pi i} : n \in \N \rangle$ is dense in $S_\C$. However, the orbit of any point of $S_X$ under the action of $G_n$ is finite for every $n \in \N$ and hence the orbits cannot be dense in the unit circle.
	\end{proof}

	\section{Generalizations of the classical theory}

	In the previous section we observed that the $G$-Daugavet property may behave very differently from its classical counterparts even on reflexive or finite-dimensional spaces, due to the additional flexibility provided by the group action. We now turn to situations in which the group setting remains closer to the classical Daugavet theory. The aim of this section is to identify conditions on the action or on the interaction between the action and the geometry of the involved Banach space, under which one can recover consequences of the classical Daugavet property and the alternative Daugavet property.

	\subsection{The $G$-Banach spaces $L^1(\mu,X)$ and $C(K,X)$}
	\label{section:L1-and-CK} In this section, we extend to the context of the $G$-Daugavet property the classical characterizations from \cite{MO2004, MartinPaya2000} asserting that
	\begin{itemize}
		\item $L^1(\mu,X)$ has the (a)DPr $\Leftrightarrow$ $\mu$ is atomless or $X$ has the (a)DPr, and
		\item $C(K,X)$ has the (a)DPr $\Leftrightarrow$ $K$ is perfect or $X$ has the (a)DPr.
	\end{itemize}
	Our aim is to show that these equivalences remain valid in the group setting provided one equips $L^1(\mu,X)$ and $C(K,X)$ with the natural pointwise actions induced by the action on $X$.

	It is important to emphasize that the choice of these actions is not arbitrary. As we have seen before, if the action on $L^1(\mu,X)$ or on $C(K,X)$ is not related to the original action on $X$, then there is no reason to expect any characterization of the above form to remain true. This is precisely what happens in Examples \ref{examples-L1-C(K)-independent-actions} and \ref{C(K,X)-action}. This choice is also natural from the point of view of the structure of linear isometries on these spaces. In the scalar-valued case (see, for instance, \cite{FlemingJamison2003}), the Banach-Stone theorem shows that surjective linear isometries on $C(K)$ are determined by homeomorphisms of $K$ together with modulus-one multipliers (in the real case, this reduces to continuous changes of sign). In the vector-valued setting (see, for instance, \cite{ FlemingJamison2007}), the corresponding description of surjective linear isometries on $C(K,X)$ has the same form, except that the scalar multiplier is replaced by a continuous change of surjective linear isometries of $X$. Likewise, for vector-valued $L^p$-spaces (including $L^1$), Banach-Lamperti type theorems describe, under suitable assumptions on $X$, natural classes of surjective linear isometries in terms of measure-space transformations combined with isometries of $X$. 
	
	The role of the next two stability results is precisely to explain why the induced actions appearing in Theorems \ref{L1(X,mu)-iff} and \ref{thm:c-of-K-X} are the correct ones.
	
	The first proposition concerns coordinatewise actions on $\ell_1$-sums and will be the basic tool for the $L^1$-case, while the second one concerns coordinatewise actions on $\ell_\infty$-sums and will be used in the $C(K,X)$-case. As expected, their proofs are based on the classical cases and we will include them for the sake of completeness. 
	
	In what follows, for a family of groups $(G_i)_{i\in\N}$, we denote by $\prod_{i=1}^\infty G_i$ their direct product, that is, the group of all sequences $(g_i)_{i\in\mathbb N}$ with $g_i\in G_i$ for every $i$, endowed with coordinatewise multiplication. We denote by $\bigoplus_{i=1}^\infty G_i$ the direct sum, that is, the subgroup of $\prod_{i=1}^\infty G_i$ consisting of all sequences with finite support.
	
	\begin{proposition} \label{stability-ell1}
		Let $X_i$ be $G_i$-Banach spaces for $i \in I$, where $I$ is any index set, and define $X = \left(\sum_{i \in I} X_i\right)_{\ell_1}$. Consider the coordinatewise action given by
		\begin{equation*}
			(g_i)_{i \in I}(x_i)_{i \in I} = (g_i x_i)_{i \in I}
		\end{equation*}
		for every $(x_i)_{i \in I} \in X$ and $(g_i)_{i \in I} \in G$, where $G$ is defined below. Then, we have that the following are equivalent.
		\begin{itemize}
			\item[(a)] $X$ has the $G$-DPr for $G = \prod_{i\in I} G_i$.
			\item[(b)] $X$ has the $G$-DPr for $G = \bigoplus_{i \in I} G_i$, w.r.t. the restriction of the same action.
			\item[(c)] Each $X_i$ has the $G_i$-DPr.
		\end{itemize}
	\end{proposition}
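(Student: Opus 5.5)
I would prove the cycle (b)$\Rightarrow$(a)$\Rightarrow$(c)$\Rightarrow$(b). The first implication is immediate from \Cref{fact-DP-implies-GDP}, since $\bigoplus_{i\in I} G_i$ is a subgroup of $\prod_{i\in I} G_i$ acting by restriction.

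For (a)$\Rightarrow$(c), I fix $j\in I$ and use the slice characterization (2) of \Cref{prop:characterization-G-DP} in $X_j$. Let $x\in S_{X_j}$, let $S_{G_j}(B_{X_j},x_j^*,\e)$ be a $G_j$-slice and take $\delta>0$. I embed $x$ as $\tilde x\in S_X$ supported on the $j$-th coordinate, and I extend $x_j^*$ to $\tilde x^*=(x_j^*,0,0,\dots)\in S_{X^*}=S_{(\sum X_i^*)_{\ell_\infty}}$. By (a), there are $y=(y_i)\in B_X$ and $g=(g_i)\in G$ with $\re x_j^*(g_jy_j)>1-\e$ and $\|\tilde x+y\|>2-\delta$. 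The first inequality forces $\|y_j\|>1-\e$, so $\sum_{i\ne j}\|y_i\|<\e$. Hence
\[
\|x+y_j\|\ge \|\tilde x+y\|-\sum_{i\neq j}\|y_i\| > 2-\delta-\e .
\]
Normalizing $y_j$ (or simply using $y_j\in B_{X_j}$) gives an element of the $G_j$-slice with $\|x+y_j\|>2-\delta-\e$. Since $\e$ can first be shrunk, this is (2) for $X_j$.

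For (c)$\Rightarrow$(b), I follow the classical $\ell_1$-sum argument using characterization (2). Let $x=(x_i)\in S_X$, let $x^*=(x_i^*)\in S_{X^*}$, and let $\e>0$. I choose an index $j$ with $\|x_j^*\|>1-\e/2$ and pick $u\in S_{X_j}$ with $\re x_j^*(u)>1-\e/2$. If $x_j\neq0$, I apply (c) in $X_j$ to $x_j/\|x_j\|$ and to the $G_j$-slice $S_{G_j}(B_{X_j},x_j^*/\|x_j^*\|,\eta)$, which yields $y_j$ and $g_j$ with $\re x_j^*(g_jy_j)>1-\e$ and $\|x_j/\|x_j\|+y_j\|>2-\eta$. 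I then set $y$ to be $y_j$ placed in coordinate $j$ and $g$ to be $g_j$ placed in coordinate $j$ and the identity elsewhere, so $g\in\bigoplus G_i$. Then $y$ lies in the $G$-slice, and
\[
\|x+y\|=\sum_{i\ne j}\|x_i\|+\|x_j+y_j\|.
\]
The key estimate is the standard one, $\|x_j+y_j\|\ge \|x_j\|+1-\eta$, obtained from $\|\,\|x_j\|^{-1}x_j+y_j\|>2-\eta$ by the elementary inequality $\|a+b\|\ge\|a/\|a\|+b\|-(1-\|a\|)$. This gives $\|x+y\|>2-\eta$. If $x_j=0$, I simply take $y$ supported on coordinate $j$ in the slice, and then $\|x+y\|=1+\|y_j\|>2-\e$.

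The main obstacle I expect is making sure that only a finitely supported group element is needed and that the slice condition involves a single coordinate. Both points are handled by choosing one coordinate $j$ where $x^*$ almost attains its norm, so I would check this bookkeeping carefully, including the complex case, where the real part is used throughout.
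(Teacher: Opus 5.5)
Your proof is correct, but it takes a different route from the paper's. The paper never uses the slice characterization here; it works directly with rank-one operators.
\begin{itemize}
\item For (a)$\Rightarrow$(c), it extends $T_k$ by zero to $\hat T$ on $X$ and takes $x$ nearly attaining $\|\id+g\circ\hat T\|$. It then shows that the mass of $x$ concentrates on coordinate $k$, and restricts and normalizes.
\item For (c)$\Rightarrow$(b), it writes $T=\phi\otimes y$ and picks $k$ with $\|\phi_k\|\ge 1-\eta$. It then splits into two cases. If $\|y_k\|<\eta$, it takes $g$ to be the identity and a norming vector for $\phi_k$, letting the $\ell_1$-mass of $y$ off coordinate $k$ do the work. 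If $\|y_k\|\ge\eta$, it applies the $G_k$-DPr to $\phi_k\otimes y_k$ with an $\eta^2$ margin to control $|\phi_k(u)|$.
\end{itemize}

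You instead use characterization (2) of Proposition~\ref{prop:characterization-G-DP} in both directions. This is the tool the paper uses only for the $\ell_\infty$-sum in Proposition~\ref{prop:ellInftySumStability}. In (a)$\Rightarrow$(c), the concentration argument moves from the norming vector to the slice element: $\|y_j\|>1-\e$ forces $\sum_{i\ne j}\|y_i\|<\e$. In (c)$\Rightarrow$(b), additivity of the $\ell_1$-norm together with $\|a+b\|\ge\|a/\|a\|+b\|-(1-\|a\|)$ finishes with a witness supported in one coordinate. So the paper's case analysis on $\|y_k\|$ collapses to the trivial case $x_j=0$.

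Your route is shorter and more transparent, but it relies on the equivalence (1)$\Leftrightarrow$(2), including its scalar-rotation step. The paper's route works straight from the definition.

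When writing it up, make two bookkeeping points explicit:
\begin{itemize}
\item Take $\eta\le\e/2$, so that $\|x_j^*\|(1-\eta)>1-\e$ actually places $y$ in the slice.
\item Keep the target accuracy in (2) independent of the slice width. In the case $x_j=0$, take $y_j=u$, which gives $\|x+y\|=2$ rather than merely $>2-\e$.
\end{itemize}
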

	
	\begin{proof} (b) $\Rightarrow$ (a). Since the direct sum is a subgroup of the direct product, this implications follows from \Cref{fact-DP-implies-GDP}.  
		
		\noindent
		(a) $\Rightarrow$ (c). Assume that $X$ has the $G$-DPr and fix an arbitrary $k \in I$. We show that $X_k$ has the $G_k$-DPr. Let $T_k: X_k \to X_k$ be a rank-one operator and without loss of generality assume that $\norm{T_k} = 1$. Choose $\eps > 0$ arbitrarily. We define an extension $\hat{T}: X \to X$ by
		\[
		\hat{T}((x_i)_{i \in I}) = (0, \dots, 0, T_k(x_k), 0, \dots)
		\]
		where the non-zero term is in the $k$-th position. Then $\hat{T}$ is a rank-one operator on $X$ with $\|\hat{T}\| = \norm{T_k} = 1$. Since $X$ has the $G$-DPr, we have
		\[
		\sup_{g \in G} \norm{\id_X + g \circ \hat{T}} = 1 + \|\hat{T}\| = 2.
		\]
		So there are $g = (g_i)_{i \in I} \in G$ and $x = (x_i)_{i \in I} \in S_X$ such that
		\begin{equation*}
			\norm{x + g \circ \hat{T}(x)} > 2 - \eps.
		\end{equation*}
		Observe that
		\[
		(\id_X + g \circ \hat{T})(x) = x + g(\hat{T}x) = (x_1, \dots, x_k + g_k T_k x_k, \dots).
		\]
		Calculating the norm in $X = (\sum X_i)_{\ell_1}$, we get
		\begin{align*}
			2 - \eps
			&< \norm{(\id_X + g \circ \hat{T})(x)}
			= \sum_{i \neq k} \norm{x_i}_{X_i} + \norm{x_k + g_k T_k x_k}_{X_k}\\
			&= (1 - \norm{x_k}_{X_k}) + \norm{x_k + g_k T_k x_k}_{X_k}
			\leq (1 - \norm{x_k}_{X_k}) + \norm{x_k} (1 + \norm{T_k})\\
			&= 1 + \norm{x_k}_{X_k},
		\end{align*}
		implying that $\norm{x_k}_{X_k} > 1 - \eps$ and consequently $\sum_{i \neq k} \norm{x_i}_{X_i} < \eps$.
		Now, consider the vector $y \in X$ defined by the embedding of $x_k$ (i.e., $y_k = x_k$ and $y_i = 0$ for $i \neq k$). Note that $\norm{x - y}_X = \sum_{i \neq k} \norm{x_i} < \e$. By the triangle inequality, we get that
		\begin{align*}
			\norm{x_k + g_k T_k x_k}_{X_k}
			&= \norm{y + g \circ \hat{T}(y)}_X \\
			&\geq \norm{x + g \circ \hat{T}(x)}_X - \norm{(x - y) + g \circ \hat{T}(x - y)}_X.
		\end{align*}
		Since $\norm{\id + g \circ \hat{T}} \leq 2$, we have that
		\[
		\norm{(\id_{X_k} + g_k T_k)(x_k)}_{X_k} > (2 - \e) - 2\e = 2 - 3\e.
		\]
		Finally, let $u = x_k / \norm{x_k} \in S_{X_k}$. Since $\norm{x_k} \leq 1$, we get
		\[
		\norm{(\id_{X_k} + g_k T_k)(u)}_{X_k} = \frac{1}{\norm{x_k}} \norm{(\id_{X_k} + g_k T_k)(x_k)} > 2 - 3\e.
		\]
		Since $\e$ was arbitrary, $\sup_{h \in G_k} \norm{\id_{X_k} + h \circ T_k} = 2 = 1 + \norm{T_k}$, completing the proof.
		
		\noindent
		(c) $\Rightarrow$ (b). Assume that each $X_i$ has the $G_i$-DPr. Let $T: X \to X$ be a~rank-one operator and again without loss of generality assume that $\norm{T} = 1$. We can write $T = \phi \otimes y$ for some $\phi \in S_{X^*}$ and $y \in S_X$.
		Recall that $X^*$ is isometric to $(\sum X_i^*)_{\ell_\infty}$.
		Thus, we can write $\phi = (\phi_i)_{i \in I}$ with $\phi_i \in X_i^*$ and $\norm{\phi} = \sup_i \norm{\phi_i}_{X_i^*}$.
		Similarly, we can write $y = (y_i)_{i \in I}$ with $y_i \in X_i$ and $\norm{y} = \sum_i \norm{y_i}_{X_i}$.
		Let $\eps > 0$ and find $\eta \in (0, 1)$ such that $3\eta < \eps$. Choose an index $k \in I$ such that $\norm{\phi_k} \ge 1 - \eta = \norm{\phi} - \eta$.
		We need to handle two cases.
		
		\noindent
		\textbf{Case 1:} $\norm{y_k} < \eta$.
		
		In this case, we can find $u \in S_{X_k}$ such that $\phi_k(u) > 1 - \eta$. Then, define $g \in G$ to be the identity on each coordinate and $x \in S_X$ by embedding $u$ into the $k$-th coordinate (i.e., $x_k=u$ and $x_i=0$ for $i \neq k$). We compute
		\begin{align*}
			\norm{x + gTx}
			&= \norm{x + \phi_k(u)y} 
			= \sum_{j \neq k} \norm{\phi_k(u) y_j}_{X_j} + \norm{u + \phi_k(u) y_k}_{X_k}.
		\end{align*}
		For the first term, we use $\sum_{j \neq k} \norm{y_j} = 1 - \norm{y_k}$:
		\[
		\sum_{j \neq k} \norm{\phi_k(u) y_j} = \abs{\phi_k(u)} (1 - \norm{y_k}) \ge (1-\eta)(1-\eta) > 1 - 2\eta.
		\]
		For the second term, using the triangle inequality to get
		\[
		\norm{u + \phi_k(u) y_k} \ge \norm{u} - \abs{\phi_k(u)}\norm{y_k} > 1 - \eta.
		\]
		Summing these gives $\norm{x + gTx} > 2 - 3\eta > 2 - \eps$.

		\noindent
		\textbf{Case 2:} $\norm{y_k} \ge \eta$.
		
		Consider the operator $T_k = \phi_k \otimes y_k$ on $X_k$.
		Since $X_k$ has the $G_k$-DPr, there exist $g_k \in G_k$ and $u \in S_{X_k}$ such that
		\[
		\norm{u + g_k T_k u}_{X_k} \ge 1 + \norm{T_k} - \eta^2.
		\]
		Substituting $T_k u = \phi_k(u)y_k$, this becomes
		\begin{equation} \label{eq:comp_k}
			\norm{u + g_k \phi_k(u) y_k}_{X_k}
			\ge 1 + \norm{\phi_k}\norm{y_k} - \eta^2
			\ge1 + \norm{\phi_k}\norm{y_k} - \eta.
		\end{equation}
		Using the triangle inequality, we obtain $\norm{g_k\phi_k(u)y_k} \ge 1 + \norm{\phi_k}\norm{y_k} - \eta^2 - 1$, which, using the fact that $G$ acts by isometries and dividing by $\norm{y_k}$, implies
		\begin{equation*}
			\abs{\phi_k(u)}
			\ge \norm{\phi_k} - \frac{\eta^2}{\norm{y_k}}
			\ge \norm{\phi_k} - \eta
			\ge 1 - 2\eta.
		\end{equation*}
		
		Now, define $x \in S_X$ by embedding $u$ into the $k$-th coordinate (i.e., $x_k=u$ and $x_i=0$ for $i \neq k$). Define $g \in G$ such that its $k$-th component is $g_k$ and $g_j$ is the identity element of $G_j$ for $j \neq k$. We compute
		\begin{align*}
			\norm{x + gTx}
			&= \norm{x + g(\phi(x)y)} 
			= \norm{x + g(\phi_k(u)y)} \\
			&= \sum_{j \neq k} \norm{\phi_k(u) y_j}_{X_j} + \norm{u + g_k \phi_k(u) y_k}_{X_k}.
		\end{align*}
		Thus, using \eqref{eq:comp_k} above, we get that
		\begin{align*}
			\norm{x + gTx} 
			&\ge |\phi_k(u)| \sum_{j \neq k} \norm{y_j} + (1 + \norm{\phi_k}\norm{y_k} - \eta)\\
			&\ge (1-2\eta) \sum_{j \neq k} \norm{y_j} + 1 + (1-\eta)\norm{y_k} - \eta\\
			&= 1 + \sum_{j \neq k} \norm{y_j} + \norm{y_k}
			- \left(2\sum_{j \neq k} \norm{y_j} + \norm{y_k} + 1\right)\eta\\
			&\ge 1 + \norm{y} - (2\norm{y} + 1) \eta\\
			&= 2 - 3\eta \ge 2-\eps.
		\end{align*}
		As $\e$ was chosen arbitrarily, $\sup_{g \in G} \norm{\id + g \circ T} \ge 1 + \norm{T}$ and $X$ has the $G$-DPr.
	\end{proof}
	
	One can also get a~``diagonal'' version of the previous result.
	\begin{proposition}[Diagonal $G$-$\ell_1$ stability] \label{prop:diagonal-ell1-stability}
		Let $X_i$ be Banach spaces and let a single group $G$ act by isometries on each $X_i$. Let $X = \left( \sum X_i \right)_{\ell_1}$ and let $G$ act diagonally, that is, $g(x_i)_i := (g x_i)_i$ for every $g \in G$ and every $(x_i)_i \in X$. Then, $X$ has the $G$-DPr if and only if $X_i$ has the $G$-DPr for every $i$.
	\end{proposition}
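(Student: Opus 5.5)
The plan is to follow the proof of \Cref{stability-ell1} almost verbatim. The only change is that a group element is now a single $g\in G$ acting on every coordinate at once, rather than a family $(g_i)_i$ that can be chosen independently. The key observation is that in both directions of that proof the group element was only ever ``active'' in one coordinate $k$. So we must check that letting $g$ act on the other coordinates as well costs nothing, because the $\ell_1$-norm of each coordinate is invariant under isometries.

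\emph{Necessity.} Assume $X$ has the $G$-DPr, fix $k$ and a rank-one $T_k$ on $X_k$ with $\norm{T_k}=1$, and form the extension $\hat T$ exactly as before. Pick $g\in G$ and $x\in S_X$ with $\norm{x+g\hat T x}>2-\eps$. Now
\[
x+g\hat Tx=(x_i)_{i\neq k}\ \text{in the coordinates } i\neq k,\qquad x_k+gT_kx_k\ \text{in the coordinate } k,
\]
because $\hat T x$ vanishes off coordinate $k$, so $g$ only affects the $k$-th coordinate. Hence the same norm estimates go through unchanged: we get $\norm{x_k}>1-\eps$, then $\norm{x_k+gT_kx_k}>2-3\eps$, and normalizing gives $\sup_{h\in G}\norm{\id+hT_k}=2$.

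\emph{Sufficiency.} Write $T=\phi\otimes y$ with $\norm{\phi}=\norm{y}=1$, choose $k$ with $\norm{\phi_k}\ge1-\eta$, and use the same two cases. Case 1 ($\norm{y_k}<\eta$) uses $g=\id$ and is literally identical. In Case 2, the $G$-DPr of $X_k$ gives $g\in G$ and $u\in S_{X_k}$ with $\norm{u+g\phi_k(u)y_k}\ge1+\norm{\phi_k}\norm{y_k}-\eta^2$. We then use the same $g$ diagonally on $X$, with $x=u$ placed in coordinate $k$. Then
\[
\norm{x+gTx}=\sum_{j\neq k}\abs{\phi_k(u)}\,\norm{gy_j}+\norm{u+g\phi_k(u)y_k},
\]
and since $\norm{gy_j}=\norm{y_j}$ this is exactly the quantity estimated in \Cref{stability-ell1}. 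We therefore obtain $\norm{x+gTx}\ge2-3\eta$, as required.

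The only point requiring care is this last step. In the diagonal setting $g$ also moves the other coordinates $y_j$, and the argument works only because the $\ell_1$-sum norm depends on each coordinate solely through its norm, which the isometric action preserves. No further obstacle is expected; the remaining steps are routine adaptations of the computations already carried out.
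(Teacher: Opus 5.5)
Your proposal is correct and follows the paper's intended route: the paper omits this proof, remarking only that it is similar to that of \Cref{stability-ell1}. Your two observations are exactly what make that adaptation work: a single $g$ acts only on coordinate $k$ in the necessity direction because $\hat T x$ is supported there, and it enters the off-$k$ coordinates only through $\norm{gy_j}=\norm{y_j}$ in Case 2. One cosmetic point is inherited from the original Case 1: choose $k$ with $\norm{\phi_k}>1-\eta$ strictly, which is possible since $\sup_i\norm{\phi_i}=1$, so that a $u\in S_{X_k}$ with $\phi_k(u)>1-\eta$ actually exists.
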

	The proof of this is similar to the proof of \Cref{stability-ell1} and is henceforth omitted.
	
	As we shall see below, Proposition \ref{prop:diagonal-ell1-stability} is exactly the ingredient that allows one to pass from the classical decomposition of $L^1(\mu,X)$ into atomless and atomic parts to the group characterization in Theorem \ref{L1(X,mu)-iff} below. Let us recall that atomless $L^1(\nu, X)$ has the classical DPr \cite{Kadets2000}. This implies that, for atomless $\nu$, $L^1(\nu, X)$ has the $G$-DPr for every $G$ acting by isometries on $X$.
	We can follow \cite[Remark 9]{MartinPaya2000} to get the promised result.

	\begin{theorem} \label{L1(X,mu)-iff} Let $X$ be a $G$-Banach space and $\mu$ a positive measure. Equip $L^1(\mu, X)$ with the action given by $(gf)(t) = g(f(t))$ for every $g \in G$, $f \in L^1(\mu, X)$ and $\mu$-almost every $t$. Then, $L^1(\mu, X)$ has the $G$-DPr if and only if $\mu$ is atomless or $X$ has the $G$-DPr.
	\end{theorem}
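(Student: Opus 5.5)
The plan follows \cite[Remark 9]{MartinPaya2000}: decompose the measure into its atomless and purely atomic parts and then use the diagonal $\ell_1$-stability result, Proposition \ref{prop:diagonal-ell1-stability}. Write $\mu=\mu_c+\mu_a$, where $\mu_c$ is atomless and $\mu_a$ is purely atomic, and let $\{A_i\}_{i\in I}$ be the atoms of $\mu_a$. A strongly measurable (essentially separably valued) function is $\mu$-a.e. constant on each atom. Hence we get an isometric identification
\begin{equation*}
L^1(\mu,X)\cong L^1(\mu_c,X)\oplus_1\Big(\sum_{i\in I}X\Big)_{\ell_1},
\qquad f\mapsto \big(f|_{\Omega_c},\,(\mu(A_i)f(A_i))_{i\in I}\big).
\end{equation*}
We must check that this identification is $G$-equivariant when every summand carries the pointwise (respectively diagonal) action. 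This is immediate from $(gf)(t)=g(f(t))$ and the linearity of $g$. So $L^1(\mu,X)$ is the $\ell_1$-sum, with the diagonal $G$-action, of the $G$-spaces $L^1(\mu_c,X)$ and copies of $X$, one for each atom. Trivial summands (for example $\mu_c=0$) can be dropped.

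For the ``if'' direction, suppose first that $\mu$ is atomless. Then $L^1(\mu,X)$ has the classical DPr \cite{Kadets2000}, so it has the $G$-DPr by Corollary \ref{DPr-implies-G-DPr}(a). Suppose instead that $X$ has the $G$-DPr. The summand $L^1(\mu_c,X)$, whenever it is nonzero, has the $G$-DPr by the same classical argument, and each atomic summand is a copy of $X$ with its original action. Proposition \ref{prop:diagonal-ell1-stability} then gives the $G$-DPr for the whole sum.

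For the ``only if'' direction, assume $L^1(\mu,X)$ has the $G$-DPr and $\mu$ has an atom. The decomposition above exhibits $L^1(\mu,X)$ as a diagonal $\ell_1$-sum in which $X$ itself appears as a summand. The ``only if'' half of Proposition \ref{prop:diagonal-ell1-stability} yields that every summand has the $G$-DPr, in particular $X$. If one prefers not to rely on the omitted proof, the argument of (a)$\Rightarrow$(c) in Proposition \ref{stability-ell1} can be run verbatim. We extend a rank-one $T_k$ on the atomic summand by zero and pick $g$ and $x$ nearly attaining $2$. The $\ell_1$ estimate forces the mass of $x$ to concentrate on that summand, and the component $g$ of the diagonal element acts on it exactly as on $X$.

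The main obstacle is the measure-theoretic bookkeeping in the decomposition, which must work for an arbitrary (possibly non-$\sigma$-finite) positive measure $\mu$. We need that $L^1(\mu,X)$ splits as stated: there can be uncountably many atoms, and functions in $L^1$ live on $\sigma$-finite sets, so the $\ell_1$-sum over $I$ is the correct model. We also need the fact that Bochner integrable functions are a.e. constant on atoms. I would handle this by restricting to the $\sigma$-finite support of each function and using the standard decomposition of $\sigma$-finite measures into atomless and countably atomic parts. Once this is in place, equivariance and the use of Proposition \ref{prop:diagonal-ell1-stability} are formal.
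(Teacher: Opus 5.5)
Your proposal is correct and is essentially the paper's proof: split $L^1(\mu,X)$ as $L^1(\nu,X)\oplus_1\bigl(\sum_{i\in I}X\bigr)_{\ell_1}$ with $\nu$ atomless, handle the atomless part by the classical DPr together with Corollary \ref{DPr-implies-G-DPr}, and apply Proposition \ref{prop:diagonal-ell1-stability} in both directions. Your check that the splitting is $G$-equivariant and your measure-theoretic bookkeeping fill in details the paper leaves implicit. Both arguments tacitly assume atoms have finite measure: an atom of infinite measure carries no nonzero integrable function, so it contributes no summand isometric to $X$.
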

	
	\begin{proof} Suppose that $X$ has the $G$-DPr and let $\mu$ be a positive measure. Then, we can write $L^1(\mu, X) = L^1(\nu, X) \oplus_1 [ \oplus_{i \in I} X ]_{\ell_1}$ for some set $I$ and atomless $\nu$. Since $\nu$ is atomless, $L^1(\nu, X)$ has the DPr and a fortiori it has the $G$-DPr. As $X$ is assumed to have the $G$-DPr, by the diagonal $G$-$\ell_1$ stability, $L^1(\mu, X)$ has the $G$-DPr. Conversely, if $L^1(\mu, X)$ has the $G$-DPr and $\mu$ has an atom, then one can write $L^1(\mu, X) = X \oplus_1 Z$ for some Banach space $Z$. Again by the diagonal $G$-$\ell_1$ stability, $X$ has the $G$-DPr.
	\end{proof}

	\begin{remark} 
		The compatibility (once again, for the meaning of the word ``compatibility'' here, we send the reader to the comments immediately before Example \ref{examples-L1-C(K)-independent-actions} above) between the action on $X$ and the induced action on $L^1(\mu,X)$ in Theorem~\ref{L1(X,mu)-iff} is (once again) essential. One can produce trivial counterexamples by taking a measure $\mu$ with atoms and equipping $L^1(\mu,X)$ with the trivial action. In that case, the $G$-DPr reduces to the classical DPr and therefore the conclusion of the theorem fails whenever $X$ is finite-dimensional. More interestingly, the theorem may also fail for natural non-trivial actions if the action on $L^1(\mu,X)$ is not compatible with the action on $X$. To see this, let $X$ and $Y$ be as in \Cref{examples-L1-C(K)-independent-actions} and let $G=\mathbb Z_8$. Denote by $\alpha$ the action of $G$ on $Y$ by translations, by $\beta$ the action of $G$ on $Y$ induced by the action on $X$ as in Theorem~\ref{L1(X,mu)-iff}, and by $\gamma$ the action of $G$ on $L^1(\mu,Y)$ induced by $\beta$. If $\mu$ has atoms, then \Cref{examples-L1-C(K)-independent-actions} shows that $Y$ does not have the $G$-DPr with respect to $\alpha$, while it does have the $G$-DPr with respect to $\beta$. Consequently, Theorem~\ref{L1(X,mu)-iff} yields that $L^1(\mu,Y)$ has the $G$-DPr with respect to $\gamma$, even though $\mu$ is not atomless and $Y$ does not have the $G$-DPr with respect to the action $\alpha$.
	\end{remark}

	Now we turn to the $C(K, X)$ characterization.
	
	\begin{proposition} \label{prop:ellInftySumStability}
		Let $X_i$ be $G_i$-Banach spaces for $i \in \N$, and define $X = \left(\sum_{i=1}^{\infty} X_i\right)_{\ell_\infty}$. Consider the coordinatewise action given by
		\begin{equation*}
			(g_i)_{i \in \N}(x_i)_{i \in \N} = (g_i x_i)_{i \in \N}
		\end{equation*}
		for every $(x_i)_{i \in \N} \in X$ and $(g_i)_{i \in \N} \in G$, where $G$ is defined below. Then, we have that the following are equivalent.
		\begin{itemize}
			\item[(a)] $X$ has the $G$-DPr for $G = \prod_{i=1}^{\infty} G_i$.
			\item[(b)] $X$ has the $G$-DPr for $G = \bigoplus_{i=1}^{\infty} G_i$, w.r.t. the restriction of the same action.
			\item[(c)] Each $X_i$ has the $G_i$-DPr.
		\end{itemize}
	\end{proposition}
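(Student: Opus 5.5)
The plan is to prove the cycle (b) $\Rightarrow$ (a) $\Rightarrow$ (c) $\Rightarrow$ (b). The implication (b) $\Rightarrow$ (a) is immediate from \Cref{fact-DP-implies-GDP}, since $\bigoplus_{i} G_i\leq\prod_i G_i$ and the action of the direct sum is the restriction of the action of the product.

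For (a) $\Rightarrow$ (c), I would argue as in \Cref{stability-ell1}. Fix $k$ and a rank-one $T_k$ on $X_k$ with $\|T_k\|=1$. Extend it to $\hat T(x)=(0,\dots,0,T_kx_k,0,\dots)$, which is rank-one with $\|\hat T\|=1$. By (a), there are $g=(g_i)\in G$ and $x\in S_X$ with
\begin{equation*}
\|x+g\hat Tx\|_\infty=\max\Bigl\{\sup_{i\neq k}\|x_i\|,\ \|x_k+g_kT_kx_k\|\Bigr\}>2-\e .
\end{equation*}
The first term is at most $1$, so for $\e<1$ the maximum must be $\|x_k+g_kT_kx_k\|>2-\e$. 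Since $\|x_k+g_kT_kx_k\|\leq 2\|x_k\|$, we get $\|x_k\|>1-\e/2>0$. Normalizing $u=x_k/\|x_k\|$ then gives $\|u+g_kT_ku\|>2-\e$. This step is easier than in the $\ell_1$ case.

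For (c) $\Rightarrow$ (b), I will not work with operators, because $X^*$ is not the $\ell_1$-sum of the $X_i^*$. Instead I will verify condition (2) of \Cref{prop:characterization-G-DP} for $G=\bigoplus_i G_i$.
\begin{itemize}
\item Fix $x\in S_X$, $\e>0$ and a $G$-slice $S_G(B_X,\varphi,\e)$. It suffices to find $\tilde z\in B_X$ and $g\in G$ with $g\tilde z\in S(B_X,\varphi,\e)$ and $\|x+\tilde z\|>2-\e'$, where $\e'\to0$ as $\e\to0$.
\item Choose $k$ with $\|x_k\|>1-\e$ and pick $w\in S(B_X,\varphi,\e)$.
\item Split $X=X_k\oplus_\infty Z$, with $Z$ the $\ell_\infty$-sum of the remaining coordinates. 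This is a finite decomposition, so $X^*=X_k^*\oplus_1 Z^*$. Write $\varphi(u,v)=\varphi_k(u)+\varphi'(v)$ with $\|\varphi_k\|+\|\varphi'\|=1$, and $w=(w_k,w')$.
\item From $\re\varphi(w)>1-\e$ and $\re\varphi'(w')\leq\|\varphi'\|$ we obtain $\re\varphi_k(w_k)>\|\varphi_k\|-\e$.
\end{itemize}

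There are two cases. If $\|\varphi_k\|\leq\e$, put $\tilde z=(x_k/\|x_k\|,w')$ and $g=\id$. Then $\re\varphi(\tilde z)\geq \re\varphi'(w')-\e>1-3\e$, so $\tilde z$ lies in the slat of parameter $3\e$, and $\|x+\tilde z\|\geq\|x_k+x_k/\|x_k\|\|>2-\e$. If $\|\varphi_k\|>\e$, apply (2) of \Cref{prop:characterization-G-DP} in $X_k$ to $x_k/\|x_k\|$ and the $G_k$-slice $S_{G_k}(B_{X_k},\varphi_k/\|\varphi_k\|,\delta)$, with $\delta$ chosen as a function of $\e/\|\varphi_k\|$. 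This yields $z\in B_{X_k}$ and $h\in G_k$ with $\re\varphi_k(hz)>\|\varphi_k\|-\|\varphi_k\|\delta$ and $\|x_k/\|x_k\|+z\|>2-\e$. Put $\tilde z=(z,w')$ and let $g$ be $h$ in the $k$-th coordinate and the identity elsewhere; then $g\in\bigoplus G_i$. Now $g\tilde z=(hz,w')$ has $\re\varphi(g\tilde z)>1-O(\e)$. Also $\|x+\tilde z\|\geq\|x_k+z\|>2-2\e$ up to the normalization error. Adjusting the constants gives (2).

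I expect the main obstacle to be the bookkeeping in this last step. The point is to keep the slice estimate on $w'$ intact while replacing only the $k$-th coordinate, and the $\ell_1$-splitting of $\varphi$ on the finite decomposition $X_k\oplus_\infty Z$ is what makes this work.
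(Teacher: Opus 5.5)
Your proposal is correct and follows essentially the same route as the paper's proof. Both arguments get (b)$\Rightarrow$(a) by monotonicity, and (a)$\Rightarrow$(c) by extending $T_k$ and reading off the $k$-th coordinate of the $\ell_\infty$-norm. For (c)$\Rightarrow$(b), both use characterization (2) with the splitting $\varphi=\varphi_k+\varphi'$, $\|\varphi_k\|+\|\varphi'\|=1$, keep $w'$ untouched off the $k$-th coordinate, and use a group element supported only at $k$.

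The differences are cosmetic. You split cases at $\|\varphi_k\|\le\varepsilon$ rather than $\|\varphi_k\|=0$, and you normalize $x_k$. In your second case no dependence of $\delta$ on $\varepsilon/\|\varphi_k\|$ is needed: $\delta=\varepsilon$ already gives $\mathrm{Re}\,\varphi(g\tilde z)>1-2\varepsilon$.
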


	\begin{proof} (b) $\Rightarrow$ (a). Since the direct sum is a subgroup of the direct product, this implication follows from \Cref{fact-DP-implies-GDP}.
		
		\noindent
		(a) $\Rightarrow $(c). Assume $X$ has the $G$-DPr and fix an arbitrary $k \in \N$. We show that $X_k$ has the $G_k$-DP. Let $T_k: X_k \to X_k$ be a rank-one operator, without loss of generality assume that $\norm{T_k} = 1$, and choose $0<\eps<1$. We define an extension $\hat{T}: X \to X$ by
		\[
		\hat{T}((x_i)_{i \in \N}) = (0, \dots, 0, T_k(x_k), 0, \dots),
		\]
		where the non-zero term is in the $k$-th position. Then $\hat{T}$ is a rank-one operator on $X$ with $\|\hat{T}\| = \norm{T_k} = 1$. Since $X$ has the $G$-DP, we have
		\[
		\sup_{g \in G} \norm{\id_X + g \circ \hat{T}} = 1 + \|\hat{T}\| = 2.
		\]
		Thus there are $g = (g_i)_{i \in \N} \in G$ and $x = (x_i)_{i \in \N} \in S_X$ such that
		\begin{equation*}
			\norm{x + g \circ \hat{T}(x)} > 2 - \eps.
		\end{equation*}
		Observe that
		\[
		(\id_X + g \circ \hat{T})(x) = x + g(\hat{T}x) = (x_1, \dots, x_k + g_k T_k x_k, \dots).
		\]
		
		Calculating the norm in $X = (\sum_{i=1}^\infty X_i)_{\ell_\infty}$, we get
		\begin{equation*}
			2 - \eps < \max \left\{ \norm{x_k + g_k T_k x_k}_k, \sup_{n \in \N \setminus\{k\}} \norm{x_n}_n \right\}.
		\end{equation*}
		Since $\norm{x} = 1$, it must be that case that $\norm{x_k + g_k T_k x_k}_k > 2-\eps$. But this shows that
		\begin{equation*}
			\norm{\id + g_k \circ T_k} > 2-\eps
		\end{equation*}
		and, as $0<\eps <1$ was arbitrary, we have the desired equality.
		
		\noindent
		(c) $\Rightarrow$ (b). Assume that each $X_i$ has the $G_i$-DPr. We use the slice characterization (2) from Proposition \ref{prop:characterization-G-DP}. Let $x \in S_X$, $\eps > 0$, and let $S_G = S_G(B_X, \Phi, \delta)$ be a $G$-slice defined by a functional $\Phi \in S_{X^*}$ and some $\delta>0$. We must find $y \in S_G$ such that $\norm{x+y} > 2 - \eps$. Since $\norm{x} = \sup_{i\in\N} \norm{x_i} = 1$, there exists an index $k \in \N$ such that $\norm{x_k} > 1 - \eps/2$. We can decompose the functional $\Phi$ as $\Phi = \phi_k + \Psi$, where $\phi_k \in X_k^*$ acts on the $k$-th coordinate (viewed as a subspace of $X$) and $\Psi \in X^*$ vanishes on the $k$-th coordinate. Specifically, for any $z=(z_i) \in X$, $\Phi(z) = \phi_k(z_k) + \Psi(z)$. Since $X$ is an $\ell_\infty$-sum, we have the identity $\|\Phi\| = \|\phi_k\| + \|\Psi\| = 1$. 
		
		If $\|\phi_k\|=0,$ since $\Phi\in S_X{^*},$ we can choose $w=(w_i)_{i\in \N}\in S_X$ to be such that
		\begin{equation*}
			\re \Phi(w)>1-\delta.
		\end{equation*}
		Now, we can take $y\in X$ defined by 
		\begin{equation*}
			y_i := \begin{cases} 
				x_k, & \text{if } i = k, \\ 
				w_i, & \text{if } i \neq k. 
			\end{cases}
		\end{equation*}
		Then, $y\in B_X$ and 
		\begin{equation*}
			\re \Phi(y)=\re \Phi(w)>1-\delta,
		\end{equation*}
		so $y\in S_G$ and, moreover,
		\begin{equation*}
			\|x+y\|\geq\|x_k+y_k\|=2\|x_k\|>2-\eps.
		\end{equation*}
		Otherwise, if $\|\phi_k\|>0$ take a constant $\eta > 0$  such that $\eta < \min\left\{\frac{\delta}{4}, \frac{\e}{2}, 1\right\}$.
		Since $\|\Phi\|=1$, there exists $w \in S_X$ such that $\re \Phi(w) > 1 - \eta$. This implies
		\begin{equation} \label{eq:psi_estimate}
			\re \Psi(w) = \re \Phi(w) - \re \phi_k(w_k) > 1 - \eta - \|\phi_k\|.
		\end{equation}
		Now, we focus on the space $X_k$. Let $u = x_k / \norm{x_k} \in S_{X_k}$ and consider the normalized functional $\varphi = \phi_k / \|\phi_k\| \in S_{X_k^*}$. Since $X_k$ has the $G_k$-DPr, applying Proposition \ref{prop:characterization-G-DP} to the slice $S_{G_k}(B_{X_k}, \varphi, \eta)$ and the element $u$, there exists $v \in S_{G_k}(B_{X_k}, \varphi, \eta)$ such that $\norm{u+v} > 2 - \eta$.
		By definition of the $G_k$-slice, there exists $h \in G_k$ such that $\re \varphi(hv) > 1 - \eta$, which means
		\begin{equation} \label{eq:phi_estimate}
			\re \phi_k(hv) > \|\phi_k\|(1-\eta).
		\end{equation}
		We now construct the desired element $y = (y_i)_{i \in \N} \in B_X$ and the group element $g = (g_i)_{i \in \N} \in G$. Define
		\begin{equation*}
			y_i = \begin{cases} v & \text{if } i = k, \\ w_i & \text{if } i \neq k, \end{cases}
			\quad \text{and} \quad
			g_i = \begin{cases} h & \text{if } i = k, \\ e_i & \text{if } i \neq k, \end{cases}
		\end{equation*}
		where $e_i$ is the identity in $G_i$. Note that $g \in G = \bigoplus_{i=1}^\infty G_i$. First, we check the norm condition. Since $X$ is an $\ell_\infty$-sum:
		\begin{align*}
			\norm{x+y} &\geq \norm{x_k + y_k} = \norm{x_k + v} \\
			&\geq \norm{u+v} - \norm{u - x_k} \\
			&> 2 - \eta - (1 - \norm{x_k}) \\
			&> 2 - \eta - \eps/2 > 2 - \eps.
		\end{align*}
		Finally, we verify that $y \in S_G(B_X, \Phi, \delta)$. Using that $\Psi$ vanishes on the $k$-th coordinate and $g_i$ is the identity for $i \neq k$:
		\begin{align*}
			\re \Phi(gy) &= \re \phi_k(g_k y_k) + \re \Psi( (g_i y_i)_{i} ) \\
			&= \re \phi_k(hv) + \re \Psi(w) \\
			&> \|\phi_k\|(1-\eta) + (1 - \eta - \|\phi_k\|) \quad \text{by \eqref{eq:phi_estimate} and \eqref{eq:psi_estimate}} \\
			&= 1 - \eta(1 + \|\phi_k\|) \\
			&\geq 1 - 2\eta > 1 - \delta.
		\end{align*}
		Thus, $y \in S_G$ and $\norm{x+y} > 2-\eps$, which concludes the proof.
	\end{proof}

	In the same way as before, \Cref{prop:ellInftySumStability} will provide the stability principle needed in the proof of \Cref{thm:c-of-K-X} below, once one isolates the contribution of the isolated points of $K$. Notice that here we do not need a diagonal version of  \Cref{prop:ellInftySumStability} as in the $L^1$ case.
	
	\begin{theorem} \label{thm:c-of-K-X}
		Let $K$ be a~compact Hausdorff space and $X$ be a~$G$-Banach space. Equip $C(K, X)$ with the action of $G$ defined by $(gf)(t) = g(f(t))$ for every $t \in K, f \in C(K, X)$ and $g \in G$. Then $C(K, X)$ has the $G$-DPr if and only if $K$ is perfect or $X$ has the $G$-DPr.
	\end{theorem}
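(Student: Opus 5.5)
We split the proof into the two implications, following the scheme of \cite[Remark 6]{MartinPaya2000} and the $L^1$ case above.

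\emph{Sufficiency, $K$ perfect.} This case needs no work. By \cite{MartinPaya2000}, $C(K,X)$ then has the classical DPr, and Corollary \ref{DPr-implies-G-DPr}(a) gives the $G$-DPr for any action.

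\emph{Sufficiency, $X$ has the $G$-DPr and $K$ is not perfect.} We use the slice characterization (2) of Proposition \ref{prop:characterization-G-DP}. Fix $f\in S_{C(K,X)}$, $\e>0$ and a $G$-slice $S_G(B_{C(K,X)},\Phi,\delta)$, where $\Phi\in S_{C(K,X)^*}$ is viewed as an $X^*$-valued regular measure of bounded variation. Choose $t_0\in K$ with $\|f(t_0)\|>1-\e/2$.

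If $t_0$ is not isolated, the classical Martín--Payá argument applies with $g=\id$. Pick a small open neighbourhood $U$ of $t_0$ with $|\Phi|(U\setminus\{t_0\})$ tiny. Take $w\in S_{C(K,X)}$ with $\re\Phi(w)>1-\eta$. Modify $w$ on $U$ by Urysohn functions so that it takes the value $f(t_0)/\|f(t_0)\|$ at a point $s\in U\setminus\{t_0\}$ of small $|\Phi|$-mass, while $\re\Phi$ changes only slightly. Then $\|f+y\|\ge\|f(s)+y(s)\|\approx 2$.

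If $t_0$ is isolated, then $C(K,X)=X\oplus_\infty C(K\setminus\{t_0\},X)$ as $G$-Banach spaces, with the diagonal pointwise action on the second summand. We reproduce the argument of (c)$\Rightarrow$(b) in Proposition \ref{prop:ellInftySumStability}. Split $\Phi=\phi_{t_0}+\Psi$. Use the $G$-DPr of $X$ in the $t_0$ coordinate to obtain $v$ and $h\in G$ with $\re\phi_{t_0}(hv)$ large and $\|u+v\|>2-\eta$.

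The main obstacle arises at this point. The group element $h$ acts diagonally on all coordinates, and not only on the $t_0$-th one, so we cannot take the identity off $t_0$. To fix this, we choose $w\in S_{C(K,X)}$ with $\re\Phi(w)>1-\eta$ and set $y=(v,\,h^{-1}w|_{K\setminus\{t_0\}})$. Then $hy=(hv,w|_{K\setminus\{t_0\}})$, so
\[
\re\Phi(hy)=\re\phi_{t_0}(hv)+\re\Psi(w)>1-2\eta.
\]
This is why, as remarked before the theorem, no diagonal version of Proposition \ref{prop:ellInftySumStability} is needed: the compensation $h^{-1}$ can be absorbed into $y$ itself. The case $\phi_{t_0}=0$ is handled exactly as in that proposition.

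\emph{Necessity.} Suppose $C(K,X)$ has the $G$-DPr and $K$ is not perfect. Pick an isolated point $t_0$, so that $C(K,X)=X\oplus_\infty Z$ with the diagonal action. Given a rank-one $T_0$ on $X$ with $\|T_0\|=1$, extend it by $\hat T(f)=\chi_{\{t_0\}}\,T_0(f(t_0))$, which has norm one. Take $g\in G$ and $f\in S$ with $\|f+g\hat Tf\|>2-\e$. Off $t_0$ the norm is at most $1$, so the supremum is attained near $t_0$, which gives
\[
\|f(t_0)+gT_0f(t_0)\|>2-\e.
\]
This argument is exactly (a)$\Rightarrow$(c) of Proposition \ref{prop:ellInftySumStability}, and it does not use coordinatewise freedom. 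Hence $X$ has the $G$-DPr.
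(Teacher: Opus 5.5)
Your proof is correct. The necessity direction is essentially the paper's argument. The sufficiency direction, when $X$ has the $G$-DPr, takes a genuinely different route.

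\textbf{Necessity.} The two arguments are the same computation packaged differently. The paper enlarges the diagonal action to a $G\times G$-action acting independently at the isolated point $s$ and on $K\setminus\{s\}$; this is legitimate by the monotonicity of Proposition~\ref{fact-DP-implies-GDP}. It transports this action through the equivariant isometry $C(K,X)\cong X\oplus_\infty C(K\setminus\{s\},X)$ and then quotes (a)$\Rightarrow$(c) of Proposition~\ref{prop:ellInftySumStability}. You instead observe directly that this computation only uses the $t_0$-component of $g$, so it runs verbatim for the diagonal action.

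\textbf{Sufficiency.} The paper makes no case distinction. Starting from a norm-one rank-one $T$ on $C(K,X)$, it borrows from \cite{MartinPaya2000} a point $t_0$, a vector $x_0$ and a linear isometry $\phi\colon X\to C(K,X)$ with $\phi(x)(t_0)=x$ and $\|T(\phi(x_0))(t_0)\|>1-2\e$. It compresses $T$ to the rank-one operator $Sx=T(\phi(x))(t_0)$ on $X$, applies the $G$-DPr of $X$ to $S$, and lifts back using $(g\phi(x))(t_0)=gx$. That identity is exactly where the pointwise nature of the action is used.

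Your argument is geometric, through characterization (2) of Proposition~\ref{prop:characterization-G-DP}, and it separates two mechanisms. Near a non-isolated point, the classical Urysohn/regularity argument works with $g=\id$. The group enters only at isolated points. There, the choice $y=(v,h^{-1}w|_{K\setminus\{t_0\}})$ in effect proves the diagonal analogue of (c)$\Rightarrow$(b) in Proposition~\ref{prop:ellInftySumStability}.

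\textbf{What each buys.} The paper's route is shorter and uniform in $K$, at the cost of deferring the key construction to Mart\'{\i}n--Pay\'a. Yours is self-contained, apart from the representation of $C(K,X)^*$ by regular $X^*$-valued measures. It also makes transparent that the $G$-DPr of $X$ is only ever needed at isolated points of $K$.

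\textbf{A detail to make explicit.} In the non-isolated case, state that $U$ is chosen inside a neighbourhood of $t_0$ on which $\|f(t)-f(t_0)\|$ is small. State also that the Urysohn function $\varphi$ with $\varphi(s)=1$ vanishes at $t_0$ and outside $U$. Then $y=(1-\varphi)w+\varphi\, f(t_0)/\|f(t_0)\|$ agrees with $w$ at $t_0$, and $|\Phi(y-w)|\le 2|\Phi|(U\setminus\{t_0\})$. Without this, a possible atom of $|\Phi|$ at $t_0$ could destroy the slice estimate.
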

	\begin{proof} First, assume that $C(K,X)$ has the $G$-DPr, but $K$ is not perfect. Then $K$ contains an isolated point $s\in K$ and the map $U:C(K,X)\longrightarrow X\oplus_\infty C(K\setminus\{s\},X)$ defined by  $Uf=(f(s),f|_{K\setminus\{s\}})$ is an onto isometry. Now define an action by isometries of $G^2$ on $C(K,X)$ by setting
		\begin{equation*}
			((g,h)f)(t)
			=
			\begin{cases}
				g(f(t)), & t=s,\\
				h(f(t)), & t\neq s,
			\end{cases}
		\end{equation*}
		for every $(g,h)\in G^2$ and every $f\in C(K,X)$. The original action of $G$ on $C(K,X)$ is precisely the restriction of this $G^2$-action to the diagonal subgroup $\{(g,g):g\in G\}$. Hence, since $C(K,X)$ has the $G$-DPr with respect to the original action, it also has the $G^2$-DPr with respect to the action above. Under the isometry $U$, this $G^2$-action becomes the coordinatewise action on $X\oplus_\infty C(K\setminus\{s\},X)$, that is, $(g,h)(x,F)=(gx,hF)$, where $(hF)(t)=h(F(t))$ for every $t\in K\setminus\{s\}$. Indeed, for every $f\in C(K,X)$ and every $(g,h)\in G^2$, we have
		\begin{equation*}
			U((g,h)f)
			=
			(g f(s),h(f|_{K\setminus\{s\}}))
			=
			(g,h)Uf.
		\end{equation*}
		Thus $U$ is a $G^2$-equivariant onto isometry. Therefore $X\oplus_{\infty} C(K\setminus\{s\},X)$ has the $G^2$-DPr with respect to the coordinatewise action. By \Cref{prop:ellInftySumStability}, applied to the first coordinate, we conclude that $X$ has the $G$-DPr.
		
		On the other hand, if $K$ is perfect, then $C(K,X)$ has the DPr and therefore also the $G$-DPr by \Cref{DPr-implies-G-DPr}. So the last remaining case is to assume that $X$ has the $G$-DPr and show that so does $C(K,X)$. To that end, let $T:C(K,X)\rightarrow C(K,X)$ be a rank-one operator with $\norm{T}=1$ and fix $\eps>0$. Proceeding as in the standard case (see \cite[Theorem 5]{MartinPaya2000}), we construct $x_0\in S_X$, $t_0\in K$ and a linear isometry $\phi:X\to C(K,X)$ such that $\|T(\phi(x_0))(t_0)\|>1-2\eps$
		and $\phi(x)(t_0)=x$ for every $x\in X$. Define $S:X\rightarrow X$ by $Sx=T(\phi(x))(t_0)$. Since $T$ is rank-one, so is $S$, and
		\begin{equation*}
			\|S\| \geq \|Sx_0\| = \|T(\phi(x_0))(t_0)\| > 1 -2\e.
		\end{equation*}
		Since $X$ has the $G$-DPr, there exist $x\in S_X$ and $g\in G$ such that
		\begin{equation*}
			\|gx+Sx\| \geq 1+ \|S\|-\eps > 2-3\eps.
		\end{equation*}
		Define $h\in C(K,X)$ by $h(t)=\phi(x)(t)$ for every $t\in K$. Then
		\begin{eqnarray*}
			\|g+T\|
			&\geq& \|gh+Th\| \\
			&\geq& \|(gh)(t_0)+(Th)(t_0)\|\\
			&=& \|g(h(t_0))+(Th)(t_0)\|\\
			&=& \|gx+Sx\|\\
			&>& 2-3\eps.
		\end{eqnarray*}
		Since $\eps>0$ is arbitrary, this proves that $C(K,X)$ has the $G$-DPr.
	\end{proof}

	\subsection{Group separable determination}
	
	The aim of this section is to show that the $G$-Daugavet property is in some sense separably determined as a counterpart of the classical case. We will provide a result for a general group $G$. We start with the following result whose proof is exactly as in the standard case (see, for instance, \cite[Proposition 4.1.2]{KMRW}) and so we omit it. This provides another characterization for the $G$-DPr.
	
	\begin{proposition} \label{prop:denseSubsetG-DP}
		Let $X$ be a $G$-Banach space. Then $X$ has the $G$-Daugavet property if and only if there is a~dense subset $A \subseteq S_X$ such that for every $x \in A$ and every $\delta > 0$, we have that
		\begin{equation*}
			\clco_G(A \setminus (x + (2 - \delta) B_X)) \supseteq A.
		\end{equation*}
	\end{proposition}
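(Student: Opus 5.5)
The forward implication is immediate from item (7) of \Cref{prop:characterization-G-DP}. If $X$ has the $G$-DPr, take $A=S_X$. Then for every $x\in S_X$ and $\delta>0$ we have $\clco_G(S_X\setminus(x+(2-\delta)B_X))=B_X\supseteq S_X$, which is exactly the required inclusion.

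For the converse, fix a dense subset $A\subseteq S_X$ with the stated property. I will verify item (7) of \Cref{prop:characterization-G-DP}, namely $\clco_G(S_X\setminus(x+(2-\eps)B_X))=B_X$ for every $x\in S_X$ and $\eps>0$, in three steps.

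\textbf{Step 1: reduce to $x\in A$.} Fix $x\in S_X$ and $\eps>0$, and pick $a\in A$ with $\|x-a\|<\eps/2$. If $y\in S_X$ satisfies $\|a-y\|>2-\eps/2$, then $\|x-y\|>2-\eps$. Hence
\begin{equation*}
A\setminus(a+(2-\eps/2)B_X)\subseteq S_X\setminus(x+(2-\eps)B_X),
\end{equation*}
and it suffices to show that $\clco_G(A\setminus(a+(2-\delta)B_X))=B_X$ for $a\in A$ and $\delta=\eps/2$.

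\textbf{Step 2: the hull contains $S_X$.} Put $C=\clco_G(A\setminus(a+(2-\delta)B_X))$. By hypothesis $C\supseteq A$. Since $C$ is closed and $A$ is dense in $S_X$, we get $C\supseteq \overline{A}=S_X$.

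\textbf{Step 3: the hull is $B_X$.} The set $C$ is closed and convex, so $C\supseteq\overline{\co}(S_X)$. In any nontrivial Banach space $\overline{\co}(S_X)=B_X$: every $z$ in the open unit ball lies on a segment through $z$ whose endpoints are on $S_X$. Thus $C=B_X$, and item (7) holds.

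Notice that $G$-invariance plays no special role in this direction beyond being built into $\clco_G$. I expect no real obstacle. The only point needing care is the perturbation in Step 1: the sets $S_X\setminus(x+(2-\eps)B_X)$ must be monotone under small changes of $x$, at the cost of shrinking $\eps$ to $\eps/2$.
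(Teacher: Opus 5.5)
Your proof is correct and follows essentially the argument the paper intends. The paper omits the proof as identical to the standard one in \cite[Proposition 4.1.2]{KMRW}, which likewise takes $A=S_X$ via item (7) of \Cref{prop:characterization-G-DP} for the forward direction. For the converse, that standard argument also approximates $x$ by a point of $A$, passes to the closure to capture $S_X$, and uses $\overline{\co}(S_X)=B_X$; as you note, the group enters only through $\clco_G$.
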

	
	We emphasize that the proofs below are not obtained by simply transplanting the classical arguments to the group setting. A new feature, for instance, is that one must construct, in parallel, both a net of subspaces and a compatible net of groups. Recall that a~family $\mathcal{E}$ of subsets of a~Banach space $X$ is said to be \emph{upward filtering} if for every $A, B \in \mathcal{E}$, there is $C \in \mathcal{E}$ such that $A \cup B \subseteq C$. Similarly, a~family $\mathcal{G}$ of subgroups of a~group $G$ is said to be \emph{upward filtering} if for every $G_1, G_2 \in \mathcal{G}$, there is $H \in \mathcal{G}$ such that $G_1, G_2 \leq H$.
	
	\begin{lemma} \label{lem:sepDeterminationLemma}
		Let $X$ be a~$G$-Banach space, $(E_i, G_i)_{i \in I}$ be a~net where $E_i$ are subspaces of $X$ such that $E_i \leq E_j$ for $i \leq j$ and $G_i$ are subgroups of $G$ such that $G_i \leq G_j$ for $i \leq j$. Suppose that $E_i$ is $G_i$-invariant for every $i \in I$ and denote $Y = \overline{\bigcup_{i \in I} E_i}$ and $H = \bigcup_{i \in I} G_i$. If for every $\delta > 0$ and every $i \in I$, there is $j \in I$ such that for every $x \in S_{E_i}$, we have that
		\begin{equation*}
			\clco_{G_j}(S_{E_j} \setminus (x + (2 - \delta) B_{X})) \supseteq S_{E_i},
		\end{equation*}
		then $Y$ is an~$H$-invariant subspace with the $H$-Daugavet property.
	\end{lemma}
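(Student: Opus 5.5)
We argue in two stages. First, $Y$ is a closed subspace and $H$ is a subgroup; then we verify the $H$-DPr on $Y$ through \Cref{prop:denseSubsetG-DP}, applied with $A=\bigcup_{i\in I}S_{E_i}$.

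\textbf{$H$ is a subgroup and $Y$ is $H$-invariant.} Since the net is directed, $H$ is a subgroup: given $g\in G_i$ and $h\in G_j$, pick $k\ge i,j$; then $gh^{-1}\in G_k\subseteq H$. For invariance, take $h\in H$, say $h\in G_i$. For any $j$, choose $k\ge i,j$. Then $E_j\le E_k$ and $h\in G_k$, so $hE_j\subseteq E_k\subseteq Y$. Thus $h$ maps $\bigcup_j E_j$ into $Y$, and by continuity it maps $Y$ into $Y$. Hence $H$ acts on $Y$ by surjective linear isometries, since each $h^{-1}$ also preserves $Y$. The union $\bigcup E_i$ is a linear subspace, again by directedness, so $Y$ is a closed subspace.

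\textbf{The dense set.} Let $A=\bigcup_i S_{E_i}$. This set is dense in $S_Y$: if $y\in S_Y$ and $e_n\in\bigcup E_i$ with $e_n\to y$, then $e_n/\norm{e_n}\in A$ and $e_n/\norm{e_n}\to y$.

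\textbf{Verifying the criterion.} By \Cref{prop:denseSubsetG-DP}, applied to the $H$-Banach space $Y$, it suffices to fix $x\in A$ and $\delta>0$ and show that $\clco_H(A\setminus(x+(2-\delta)B_Y))\supseteq A$. Note that for $z\in Y$ we have $z\in x+(2-\delta)B_Y$ if and only if $z\in x+(2-\delta)B_X$, because both conditions say $\norm{z-x}\le 2-\delta$.

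Let $a\in A$ and choose $i$ with $x,a\in S_{E_i}$, using directedness together with $S_{E_{i'}}\subseteq S_{E_i}$ for $i'\le i$. The hypothesis gives $j$ with
\begin{equation*}
a\in \clco_{G_j}\bigl(S_{E_j}\setminus(x+(2-\delta)B_X)\bigr)=\overline{\co}\bigl(G_j\cdot(S_{E_j}\setminus(x+(2-\delta)B_X))\bigr).
\end{equation*}
Since $S_{E_j}\subseteq A$ and $G_j\subseteq H$, the set $G_j\cdot(S_{E_j}\setminus(x+(2-\delta)B_X))$ is contained in $H\cdot(A\setminus(x+(2-\delta)B_Y))$. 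Closed convex hulls are monotone, and closures in $Y$ and in $X$ agree because $Y$ is closed. Therefore $a\in\clco_H(A\setminus(x+(2-\delta)B_Y))$, which is what the criterion requires.

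The only delicate point is bookkeeping rather than an analytic obstacle. One has to make sure that \Cref{prop:denseSubsetG-DP} is applied inside $Y$, with $B_Y$ and the $H$-action restricted to $Y$, and not inside $X$. The translation between $B_X$ and $B_Y$ and the invariance of $Y$ handle this. The directedness of the net is used repeatedly to place finitely many points in a single $E_i$ and finitely many group elements in a single $G_k$.
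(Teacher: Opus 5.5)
Your proof is correct and follows the paper's argument. Directedness shows that $H$ is a group and that $Y$ is $H$-invariant. Then \Cref{prop:denseSubsetG-DP} is applied with $A=\bigcup_{i\in I}S_{E_i}$: you place $x$ and $a$ in a common $S_{E_i}$, invoke the hypothesis, and enlarge $G_j$ to $H$ by monotonicity of $\clco$. Your extra bookkeeping fills in points the paper leaves implicit: that $x+(2-\delta)B_Y$ and $x+(2-\delta)B_X$ agree on $Y$, that closed convex hulls taken in $Y$ and in $X$ coincide, and that each $h\in H$ restricts to a surjective isometry of $Y$.
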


	\begin{remark}
		One might initially expect that $H$ will instead be defined as the intersection of all the $G_i$'s, because in general if $Y$ is an~$H$-invariant subspace and $Z$ is a~$K$-invariant subspace, then $\clspan(Y \cup Z)$ is invariant under $H \cap K$.
		However, when applying this lemma, we will indeed want to take the union. It is worth stressing out that the upward filterness is
		the reason why we can take the union of groups and not intersections.
	\end{remark}

	\begin{proof}[Proof of Lemma \ref{lem:sepDeterminationLemma}]
		The upward filtering of $(E_i)$ implies that $Y$ is a~subspace and the upward filtering of $(G_i)$ implies that $H$ is a~group acting on $Y$ by linear isometries. To see that $Y$ is $H$-invariant, pick any $y \in \bigcup_{i \in I} E_i$ and $h \in H$. Then there is $i \in I$ such that $y \in E_i$ and there is $j \in I$ such that $h \in G_j$. Since $I$ is directed, there is $k \in I$ such that $i, j \leq k$. Thus, $y \in E_k$ and $h \in G_k$ and so $hy \in E_k \subseteq Y$. By density of $\bigcup_{i \in I} E_i$ in $Y$, we get that $Y$ is $H$-invariant. To show that $Y$ has the $H$-Daugavet property, we will make use of \Cref{prop:denseSubsetG-DP}.
		Define $A = \bigcup_{i \in I} S_{E_i}$, which is dense in $S_Y$. Fix $x \in A$ and $\delta > 0$ and let $y \in A$ be arbitrary.
		Then, there is $i \in I$ such that $x, y \in S_{E_i}$. By hypothesis, there is $j \in I$ such that
		\begin{equation*}
			\clco_{G_j}(A \setminus (x + (2 - \delta) B_X))
			\supseteq \clco_{G_j}(S_{E_j} \setminus (x + (2 - \delta) B_X))
			\supseteq S_{E_i}
			\ni y.
		\end{equation*}
		Since $y \in A$ was arbitrary, we may enlarge $G_j$ to $H$ in the above inclusion and get
		\begin{equation*}
			\clco_{H}(A \setminus (x + (2 - \delta) B_X)) \supseteq A.
		\end{equation*}
		By \Cref{prop:denseSubsetG-DP}, this implies that $Y$ has the $H$-Daugavet property.
	\end{proof}
	
	We have the following result. 
	
	\begin{theorem}
		Let $X$ be a $G$-Banach space for some topological group $G$ and assume that the action of $G$ on $X$ is continuous. Then, the following are equivalent.
		\begin{enumerate}
			\item $X$ has the $G$-Daugavet property.
			\item There exists a~net $((E_i, G_i))_{i \in I}$, where $G_i$ is a~separable subgroup of $G$ and $E_i$ is a~closed separable $G_i$-invariant subspace of $X$ with the $G_i$-Daugavet property for every $i \in I$, such that $E_i \subseteq E_j$ and $G_i \leq G_j$ for all $i \leq j$ and for every separable subspace $Y$ of $X$ and every separable subgroup $H$ of $G$, there is $i \in I$ such that $Y \subseteq E_i$ and $H \leq G_i$.
		\end{enumerate}
	\end{theorem}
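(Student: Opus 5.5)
We prove the two implications separately. The implication (2)$\Rightarrow$(1) tests the $G$-Daugavet equation on a single separable piece $E_i$. The implication (1)$\Rightarrow$(2) is a closing-off argument built on \Cref{lem:sepDeterminationLemma}.

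\textbf{(2)$\Rightarrow$(1).} Let $T=x^*\otimes x$ with $x\in S_X$, $\norm{x^*}=1$, and fix $\eps>0$.
\begin{enumerate}
\item Choose $z\in S_X$ with $\abs{x^*(z)}>1-\eps$, and let $Y=\spann\{x,z\}$ and $H=\{e\}$.
\item By (2), pick $i\in I$ with $Y\subseteq E_i$.
\item Since $T(E_i)\subseteq \spann\{x\}\subseteq E_i$, the restriction $T_i=T|_{E_i}$ is a rank-one operator on $E_i$ with $\norm{T_i}\geq \abs{x^*(z)}>1-\eps$.
\item Since $E_i$ is $G_i$-invariant, $g\circ T_i=(g\circ T)|_{E_i}$ for every $g\in G_i$. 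Norms on $E_i$ are bounded by norms on $X$, and $E_i$ has the $G_i$-DPr, so
\begin{equation*}
\sup_{g\in G}\norm{\id_X+g\circ T}\geq \sup_{g\in G_i}\norm{\id_{E_i}+g\circ T_i}=1+\norm{T_i}>2-\eps .
\end{equation*}
\end{enumerate}
Letting $\eps\to 0$ gives the $G$-Daugavet equation for $T$, so $X$ has the $G$-DPr.

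\textbf{(1)$\Rightarrow$(2), choice of net.} Let $\mathcal{F}$ be the family of all pairs $(E,K)$ such that $K\leq G$ is a separable subgroup, $E\subseteq X$ is a closed separable $K$-invariant subspace, and $E$ has the $K$-DPr. Order $\mathcal F$ by coordinatewise inclusion and use $\mathcal{F}$ itself as the index set. It suffices to prove the following claim: \emph{for every separable $Y\subseteq X$ and separable $H\leq G$ there is $(E,K)\in\mathcal F$ with $Y\subseteq E$ and $H\leq K$.} This gives cofinality directly. It also gives directedness: for $(E,K),(E',K')\in\mathcal F$, apply the claim to $\clspan(E\cup E')$ and the subgroup generated by $K\cup K'$, which is again separable.

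\textbf{(1)$\Rightarrow$(2), proof of the claim.} Fix a countable subgroup $K_0\leq H$ that is dense in $H$, and set $E_0=\clspan(K_0\cdot D)$ for a countable dense $D\subseteq Y$. We then build inductively increasing countable subgroups $K_n$ and closed separable $K_n$-invariant subspaces $E_n$ as follows.
\begin{enumerate}
\item Let $D_n\subseteq S_{E_n}$ be countable and dense.
\item Item (7) of \Cref{prop:characterization-G-DP} holds in $X$. Hence for every $x,y\in D_n$, rational $\delta>0$ and $m\in\N$ there are finitely many $g_k\in G$, $z_k\in S_X$ with $\norm{x-z_k}>2-\delta$, and convex coefficients $\lambda_k$, such that $\norm{y-\sum_k\lambda_k g_kz_k}<1/m$.
\item Let $K_{n+1}$ be the group generated by $K_n$ and all these $g_k$; it is countable.
\item Let $E_{n+1}=\clspan\big(K_{n+1}\cdot(E_n\cup\{z_k\})\big)$. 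It is separable and $K_{n+1}$-invariant, since each $g\in K_{n+1}$ is a bounded operator and so preserves the closed span of a $K_{n+1}$-invariant set.
\end{enumerate}
By construction, for every $x\in D_n$ and $\delta>0$,
\begin{equation*}
\clco_{K_{n+1}}\big(S_{E_{n+1}}\setminus(x+(2-\delta)B_X)\big)\supseteq D_n .
\end{equation*}
Since this hull is closed, it contains $\overline{D_n}=S_{E_n}$.

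To apply \Cref{lem:sepDeterminationLemma} we need this for \emph{all} $x\in S_{E_n}$, not only $x\in D_n$. Given $x\in S_{E_n}$ and $\delta>0$, take $x'\in D_n$ with $\norm{x-x'}<\delta/2$. Every $z$ with $\norm{x'-z}>2-\delta/2$ satisfies $\norm{x-z}>2-\delta$, so the hull for $(x',\delta/2)$ is contained in the hull for $(x,\delta)$, and the inclusion transfers. The Lemma, with $j=i+1$, then yields that $E=\overline{\bigcup_n E_n}$ is a $K$-invariant subspace with the $K$-DPr, where $K=\bigcup_n K_n$ is countable. Clearly $Y\subseteq E$.

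\textbf{(1)$\Rightarrow$(2), enlarging the group to contain $H$.} This is the step where continuity of the action is used, and I expect it to be the main bookkeeping obstacle. Set $K'=\langle K\cup H\rangle$, which is separable. We check that $E$ is $H$-invariant:
\begin{enumerate}
\item Take $h\in H$ and $e\in E$.
\item Since $K_0$ is dense in $H$, there is a net $(k_\alpha)\subseteq K_0\subseteq K$ with $k_\alpha\to h$.
\item By continuity of the action, $k_\alpha e\to he$.
\item Each $k_\alpha e$ lies in $E$ and $E$ is closed, so $he\in E$.
\end{enumerate}
Hence $E$ is $K'$-invariant. Since $K\leq K'$, \Cref{fact-DP-implies-GDP} upgrades the $K$-DPr of $E$ to the $K'$-DPr. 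Therefore $(E,K')\in\mathcal F$ with $Y\subseteq E$ and $H\leq K'$, which proves the claim and completes (1)$\Rightarrow$(2).
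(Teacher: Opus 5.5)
Your proof is correct. For (1)$\Rightarrow$(2) you follow essentially the paper's route. A claim produces a countable subgroup and a separable invariant subspace with the Daugavet property, using item (7) of \Cref{prop:characterization-G-DP} and \Cref{lem:sepDeterminationLemma}. The index set is then the family of all admissible pairs, with directedness and cofinality coming from the claim together with continuity of the orbit maps.

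The differences in that direction are bookkeeping only:
\begin{itemize}
\item You seed the construction with a countable dense subgroup $K_0$ of $H$ and take $E_0=\clspan(K_0\cdot D)$. This makes the invariance hypothesis of the lemma hold from the first index, whereas the paper starts from $E_0=Y$.
\item You use all rational $\delta$ at every stage instead of $\delta=1/n$, which gives the lemma's hypothesis directly with $j=i+1$.
\item You enlarge the group to $\langle K\cup H\rangle$ rather than passing to the closure $\overline{K}$.
\end{itemize}

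For (2)$\Rightarrow$(1), however, your route is genuinely different. The paper applies \Cref{lem:sepDeterminationLemma}, and hence \Cref{prop:denseSubsetG-DP}, to the whole net with $j=i$, and identifies $\overline{\bigcup_i E_i}=X$ and $\bigcup_i G_i=G$. You instead test a normalized rank-one $T=x^*\otimes x$ directly on a single $E_i\supseteq\spann\{x,z\}$. You then use that restricting $\id+g\circ T$ to a $G_i$-invariant subspace can only decrease its norm.

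Your argument is more elementary: it avoids the lemma and the dense-subset criterion, whose proof the paper omits. It also shows that this direction only needs every two-dimensional subspace to lie in some $E_i$ that has the $G_i$-DPr for some subgroup $G_i\le G$ leaving it invariant. Neither the monotonicity of the net nor the group half of the cofinality condition is used. The paper's version has the merit of deriving both implications from the same separable-determination lemma.
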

	\begin{proof}
		$(2) \implies (1)$: We apply \Cref{lem:sepDeterminationLemma} for the net $((E_i, G_i))_{i \in I}$. Since each $E_i$ has the $G_i$-DPr, the hypothesis of the lemma is satisfied by taking $j=i$ for every $i \in I$. Let 
		\begin{equation*}
			Y_0:= \overline{\bigcup_{i\in I} E_i} \ \ \ \mbox{and} \ \ \ H_0 := \bigcup_{i \in I} G_i.
		\end{equation*}
		By Lemma \ref{lem:sepDeterminationLemma}, the space $Y_0$ has the $H_0$-DPr. We now show that $Y_0 = X$ and $H_0 = G$. Indeed, given $x \in X$, the space $\spann\{x\}$ is separable and $\{1_G\}$ is a separable group of $G$. Hence, by asumption, there exists $i \in I$ such that $x \in E_i$. Therefore, $X = \bigcup_{i \in I} E_i$ and so $Y_0 = X$. Similarly, given $g \in G$, the cyclic subgroup $\langle g \rangle$ is countable, hence separable. Again by the assumption, there is $i \in I$ such that $\langle g \rangle \leq G_i$. In particular, $G_i \subseteq H_0$ and therefore $G = \bigcup_{i \in I} G_i = H_0$.

		The hypothesis of the lemma is satisfied since each $E_i$ has the $G_i$-Daugavet property and thus we can take $j = i$ for every $i \in I$. Given any $x \in X$, $\spann\{x\}$ is a~separable subspace of $X$ and $\{e\}$ is a~separable subgroup of $G$, so there is $i \in I$ such that $x \in E_i$ and $e \in G_i$. Thus, by the lemma, $X = \overline{\bigcup_{i \in I} E_i}$ has the $G$-Daugavet property.

		\noindent
		$(1) \implies (2)$: First, we prove the following claim.
		
		\emph{Claim:}
		Let $Y$ be a~separable subspace of $X$ and let $H$ be a~countable subgroup of $G$. Then there is a~countable subgroup $H \leq H' \leq G$ and a~separable $H'$-invariant subspace $Y \leq Y' \leq X$ such that $Y'$ has the $H'$-Daugavet property.
		
		\emph{Proof of the claim:}
		Denote $E_0 = Y$ and $G_0 = H$. We shall inductively construct an~increasing sequence of separable subspaces $(E_n)$ of $X$ and an~increasing sequence of separable subgroups $(G_n)$ of $G$ such that $E_n$ is $G_n$-invariant for every $n \geq 1$ and
		\begin{equation*}
			\clco_{G_{n+1}} \left( S_{E_{n+1}} \setminus \left( x + \left( 2 - \frac{1}{n} \right) B_X \right) \right) \supseteq S_{E_n}
		\end{equation*}
		for every $n \geq 1$ and every $x \in S_{E_n}$. Assume $E_n$ and $G_n$ have been constructed. Take a~dense sequence $(x_k)_{k \in \N}$ in $S_{E_n}$.
		By item (7) of \Cref{prop:characterization-G-DP}, for every $(k,j) \in \N \times \N$, there is a~sequence $(y_{k,j}^m)_{m \in \N}$ in $S_X \setminus (x_k + (2 - 1/(2n)) B_X)$ such that $x_j \in \clco_G(\{y_{k,j}^m : m \in \N\})$. Since we can approximate $x_j$ by a~countable sequence of $G$-convex combinations of finitely many $y_{k,j}^m$'s, we may actually find a~countable subgroup $G_{n+1}$ of $G$  such that 
		\begin{equation*}
			x_j \in \clco_{G_{n+1}}(\{y_{k,j}^m : m \in \N\})
			\quad \text{for all } j,k \in \N,
		\end{equation*}
		and without loss of generality we can assume that $G_{n}\subseteq G_{n+1}$ (indeed, if $n \in \N$, if it is necessary, we can substitute $G_{n+1}$ by $\langle G_{n+1}, G_n\rangle$, which is still countable).
		Define
		\begin{equation*}
			E_{n+1} = \clspan \{ g y_{k, j}^m \colon g \in G_{n+1}, j, k, m \in \N \}.
		\end{equation*}
		
		Since $G_{n+1}$ is countable, $E_{n+1}$ is separable and it is clearly $G_{n+1}$-invariant.
		It remains to check that the required inclusion holds. Fix $x \in S_{E_n}$ and find $k_0 \in \N$ such that $\norm{x - x_{k_0}} < 1/2n$. Then, for every $j,m \in \N$, we have that
		\begin{equation*}
			y_{k_0, j}^m \in S_{E_{n+1}} \setminus (x + (2 - 1/n) B_X)
		\end{equation*}
		and $x_j \in \clco_{G_{n+1}}(\{y_{k_0, j}^m : m \in \N\})$.
		Since $j \in \N$ was arbitrary, we get that
		\begin{equation*}
			\clco_{G_{n+1}}(S_{E_{n+1}} \setminus (x + (2 - 1/n) B_X))
			\supseteq \overline{\{x_j: j \in \N\}}
			\supseteq S_{E_n}.
		\end{equation*}
		Notice that, by our construction, the sequence $(E_n)_{n\in \N}$ is increasing, so we may employ \Cref{lem:sepDeterminationLemma} to see that the space $Y' = \overline{\bigcup_{n \in \N} E_n}$ and the group $H' = \bigcup_{n \in \N} G_n$ satisfy the requirements of the claim, which can be shown as in the standard case.
		
		Now, to finish the proof of the theorem, let $\mathcal{S}$ be the family of all pairs $(Y,H)$, where $Y$ is a~closed separable subspace of $X$ and $H$ is a~separable subgroup of $G$ such that $Y$ is $H$-invariant and $Y$ has the $H$-Daugavet property.
		Define an~ordering on $\mathcal{S}$ by setting $(Y_1, H_1) \leq (Y_2, H_2)$ if $Y_1 \subseteq Y_2$ and $H_1 \leq H_2$.
		First we show that $\mathcal{S}$ is directed. Indeed, let $(Y_1, H_1), (Y_2, H_2) \in \mathcal{S}$. Consider the separable subspace $Z = \clspan(Y_1 \cup Y_2)$ and let $K$ be a countable dense subgroup of the  group generated by $H_1 \cup H_2$. Using the claim, we can find a~countable group $K \leq H \leq G$ and a~separable $H$-invariant subspace $Z \leq Y \leq X$ such that $Y$ has the $H$-Daugavet property. This implies that $(Y, \overline{H}) \in \mathcal{S}$ and $(Y_1, H_1), (Y_2, H_2) \leq (Y, \overline{H})$.
		
		Finally, let $Y$ be an~arbitrary separable subspace of $X$ and let $H$ be an~arbitrary separable subgroup of $G$. Let $H'$ be a~countable dense subgroup of $H$. Using the claim again, we can find a~countable group $H' \leq K \leq G$ and a~separable $K$-invariant subspace $Y \leq Z \leq X$ such that $Z$ has the $K$-Daugavet property. Since the action is assumed to be continuous, $Z$ is also $\overline{K}$-invariant and since $\overline{K} \geq K$, $Z$ also has the $\overline{K}$-Daugavet property.
		Thus, $(Z, \overline{K}) \in \mathcal{S}$ and $Y \subseteq Z$ and $H \leq \overline{K}$.
		This shows that the net $(Y, H)_{(Y,H) \in \mathcal{S}}$ satisfies the requirements of item (2).
	\end{proof}

	If we impose some additional requirements on the acting group, we can use a~variation of the same idea to get a leaner formulation.
	\begin{theorem}
		Let $G$ be either a~separable or a~$\sigma$-compact group and $X$ be a~$G$-Banach space. If the action of $G$ on $X$ is continuous, then the following are equivalent.
		\begin{enumerate}
			\item $X$ has the $G$-Daugavet property.
			\item every separable subspace of $X$ is contained in a $G$-invariant separable subspace with the $G$-Daugavet property.
		\end{enumerate}
	\end{theorem}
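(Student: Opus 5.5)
The plan is to reduce both implications to Lemma \ref{lem:sepDeterminationLemma}, now with the whole group $G$ kept fixed, or with a countable dense subgroup of it. The first step replaces $G$ by a countable set. If $G$ is separable, fix a countable dense subgroup $D\leq G$. If $G$ is $\sigma$-compact, write $G=\bigcup_n K_n$ with $K_n$ compact. Continuity of the action then makes each orbit map $g\mapsto gx$ continuous, so $K_n\cdot Y$ is separable whenever $Y$ is separable. Hence $G\cdot Y=\bigcup_n K_n\cdot Y$ is separable. In the separable case $G\cdot Y\subseteq\overline{D\cdot Y}$, again by continuity. In both cases, for every separable $Y$ the closed span of $G\cdot Y$ is a separable $G$-invariant subspace containing $Y$. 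This is the only place where the hypotheses on $G$ and on the continuity of the action enter.

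For $(2)\Rightarrow(1)$, I will check the characterization of Proposition \ref{prop:characterization-G-DP}(7) through separable pieces. Take $x\in S_X$, $\eps>0$ and $y\in B_X$. Let $E$ be a separable $G$-invariant subspace with the $G$-DPr containing $\spann\{x,y\}$. Applying item (7) inside $E$ gives $y\in\clco_G(S_E\setminus(x+(2-\eps)B_E))$. Since $E$ is isometrically embedded in $X$, we have $S_E\setminus(x+(2-\eps)B_E)\subseteq S_X\setminus(x+(2-\eps)B_X)$ (the condition is just $\|x-z\|>2-\eps$). Hence $y$ lies in the corresponding $\clco_G$ in $X$, and item (7) holds in $X$.

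For $(1)\Rightarrow(2)$, I will run the inductive construction from the proof of the Claim in the previous theorem, but with $G$ in place of the countable groups $G_n$. Start with $E_0=\clspan(G\cdot Y)$, which is separable and $G$-invariant. Given a separable $G$-invariant $E_n$, choose a dense sequence $(x_k)$ in $S_{E_n}$. Item (7) gives countable families $y^m_{k,j}\in S_X\setminus(x_k+(2-\frac1{2n})B_X)$ with $x_j\in\clco_G\{y^m_{k,j}\}$. Set $E_{n+1}=\clspan\bigl(G\cdot(E_n\cup\{y^m_{k,j}\})\bigr)$, which is separable by the first step. The same approximation argument as in the Claim then gives
\[
\clco_G\bigl(S_{E_{n+1}}\setminus(x+(2-\tfrac1n)B_X)\bigr)\supseteq S_{E_n}
\]
for every $x\in S_{E_n}$. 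Applying Lemma \ref{lem:sepDeterminationLemma} with $G_i=G$ for all $i$ shows that $\overline{\bigcup E_n}$ is separable, $G$-invariant, contains $Y$ and has the $G$-DPr.

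The main obstacle is the first step, namely the separability of $\clspan(G\cdot Y)$. This needs continuity of $g\mapsto gx$ for each $x$, which the phrase ``the action is continuous'' presumably covers. It also needs care when passing from a dense sequence in $Y$ to all of $Y$, which works because the maps act by isometries. Everything else transfers verbatim from the general theorem.
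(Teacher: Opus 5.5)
Your proposal is correct. For $(1)\Rightarrow(2)$ it is essentially the paper's argument: the same inductive use of item (7) of \Cref{prop:characterization-G-DP} to extract countable families $y^m_{k,j}$, the same closing up under $G$, and the same conclusion via \Cref{lem:sepDeterminationLemma} with the constant net of groups $G_i=G$.

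The differences in that direction are cosmetic. You isolate the separability of $\clspan(G\cdot Y)$ as a preliminary step; in the separable case you use a countable dense subgroup, whereas the paper uses that $G\cdot E'_{n+1}$ is a continuous image of the separable space $G\times E'_{n+1}$. You also put $E_n$ into $E_{n+1}$ explicitly, whereas the paper starts from $E_0=Y$ and gets $E_n\subseteq E_{n+1}$ only implicitly from $x_j\in\clco_G\{y^m_{k,j}:m\in\N\}$.

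The genuine difference is in $(2)\Rightarrow(1)$. The paper orders by inclusion the family of all separable $G$-invariant subspaces with the $G$-DPr. It uses (2) to see that this family is directed and covers $X$, and then applies \Cref{lem:sepDeterminationLemma}. That lemma in turn rests on the dense-subset criterion \Cref{prop:denseSubsetG-DP}, whose proof the paper omits.

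You instead verify item (7) in $X$ directly and pointwise. For fixed $x\in S_X$ and $y\in B_X$, a single subspace $E\supseteq\spann\{x,y\}$ suffices, because $S_E\setminus(x+(2-\eps)B_E)\subseteq S_X\setminus(x+(2-\eps)B_X)$. Moreover, a closed convex $G$-hull computed in $E$ lies inside the one computed in $X$.

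Your route is shorter and self-contained, needing neither directedness nor the density criterion. The paper's route has the merit of running both implications, and both separable-determination theorems, through the one lemma that is in any case indispensable for $(1)\Rightarrow(2)$. In neither version does this direction use the hypotheses on $G$ or the continuity of the action.
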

	\begin{proof}
		$(2) \implies (1)$:
		Let $\mathcal{S}$ be the family of all separable $G$-invariant subspaces of $X$ having the $G$-Daugavet property. We order $\mathcal{S}$ by inclusion.
		First, we show that $\mathcal{S}$ is directed. Let $E_1, E_2 \in \mathcal{S}$. Then $Y = \clspan(E_1 \cup E_2)$ is a separable subspace of $X$. By hypothesis (2), there exists $E \in \mathcal{S}$ such that $Y \subseteq E$. Thus $E_1, E_2 \subseteq E$.
		
		Since every $x \in X$ is contained in the separable subspace $\spann\{x\}$, hypothesis (2) implies that $X = \overline{\bigcup_{E \in \mathcal{S}} E}$.
		We can now apply \Cref{lem:sepDeterminationLemma} considering the net $(E)_{E \in \mathcal{S}}$ indexed by itself, and the constant net of groups $G_E = G$ for all $E \in \mathcal{S}$, yielding that $X$ has the $G$-Daugavet property.

		$(1) \implies (2)$:
		Let $Y$ be a separable subspace of $X$. Denote $E_0 = Y$ We shall inductively construct an~increasing sequence of separable subspaces $(E_n)$ of $X$ such that $E_n$ is $G$-invariant for every $n \in \N$ and
		\begin{equation*}
			\clco_{G} \left( S_{E_{n+1}} \setminus \left( x + \left( 2 - \frac{1}{n} \right) B_X \right) \right) \supseteq S_{E_n}
		\end{equation*}
		for every $n \in \N$ and every $x \in S_{E_n}$. Assume $E_n$ has been constructed and take a~dense sequence $(x_k)_{k \in \N}$ in $S_{E_n}$.
		By item (7) of \Cref{prop:characterization-G-DP}, for every $(k,j) \in \N \times \N$, there is a~sequence $(y_{k,j}^m)_{m \in \N}$ in $S_X \setminus (x_k + (2 - 1/(2n)) B_X)$ such that
		\begin{equation*}
			x_j \in \clco_G(\{y_{k,j}^m : m \in \N\})
			\quad \text{for all } j,k \in \N.
		\end{equation*}
		Define
		\begin{equation*}
			E'_{n+1} = \clspan \{ y_{k, j}^m \colon j, k, m \in \N \} \ \ \mbox{and} \ \ 
			E_{n+1} = \clspan (G \cdot E'_{n+1}).
		\end{equation*}
		First, we note that $E'_{n+1}$ is clearly separable.
		Since $G$ is separable or $\sigma$-compact, the subspace $E_{n+1}$ is also separable. Indeed, if $G$ is separable, this is immediate as the continuous image of a separable space is separable. If $G$ is $\sigma$-compact, then $G = \bigcup_{n \in \N} K_n$ where each $K_n$ is compact. For any $y \in E'_{n+1}$, the orbit $G\cdot y = \bigcup_{n \in \N} K_n y$ is a~countable union of compact (hence separable) sets. Since $E'_{n+1}$ has a~countable dense subset $\{z_k : k \in \N\}$, the space $E_{n+1} = \clspan(\bigcup_{k \in \N} Gz_k)$ is separable.
		
		It remains to check that the required inclusion holds. Fix $x \in S_{E_n}$ and find $k_0 \in \N$ such that $\norm{x - x_{k_0}} < 1/2n$. Then, for every $j,m \in \N$, we have that
		\begin{equation*}
			y_{k_0, j}^m \in S_{E_{n+1}} \setminus (x + (2 - 1/n) B_X)
		\end{equation*}
		and $x_j \in \clco_{G}(\{y_{k_0, j}^m : m \in \N\})$.
		Since $j \in \N$ was arbitrary, we get that
		\begin{equation*}
			\clco_{G}(S_{E_{n+1}} \setminus (x + (2 - 1/n) B_X))
			\supseteq \overline{\{x_j: j \in \N\}}
			\supseteq S_{E_n}.
		\end{equation*}
		With the sequence $(E_n)_{n=1}^\infty$ constructed, we may employ \Cref{lem:sepDeterminationLemma} to see that the space $Y' = \overline{\bigcup_{n \in \N} E_n}$ is the desired $G$-invariant subspace with the $G$-Daugavet property.
	\end{proof}

	\subsection{Group numerical radius} As we have mentioned before, it is well-known that the alternative Daugavet property is closely related to the theory of numerical ranges and numerical index (see, for instance, Chapter 12 of \cite{KMRW}). Recall that the {\it spatial numerical range} of an operator $T \in \mathcal{L}(X)$ is given by $V(T) = \{x^*(T(x)): \|x^*\|=\|x\|=x^*(x)=1\}$ and its {\it numerical radius} given by $v(T) = \sup_{\lambda \in V(T)} |\lambda|$. It is immediate to see that $v(\cdot)$ is a seminorm on $\mathcal{L}(X)$ and $v(T) \leq \|T\|$ for every $T \in \mathcal{L}(X)$. The {\it numerical index} of a Banach space $X$ is defined to be the best constant $k \in [0,1]$ such that $k\|T\| \leq v(T)$ for every $T \in \mathcal{L}(X)$, that is, $n(X) = \inf 
	\{v(T): T \in \mathcal{L}(X), \|T\|=1 \}$. There are real Hilbert spaces with numerical index 0, for instance, $\R^2$ endowed with the $\ell_2$-norm, while complex Banach spaces $X$ satisfy $n(X) \geq 1/e$ by using \cite[\S 4, Theorem 1]{BonsallDuncanV2}. It turns out that $n(X) = 1$ if and only if every $T \in \mathcal{L}(X)$ satisfies the alternative Daugavet equation (see, for instance, \cite[Corollary 12.3.2]{KMRW}). In particular, if $n(X) = 1$, then $X$ satisfies the aDPr. For the classical theory about numerical ranges, we send the reader to the monographs \cite{BonsallDuncanV1, BonsallDuncanV2}.

	In this short section, we extend these concepts for the group-action framework. As a consequence, and we will see this reflected in the upcoming sections, we will have the tools to connect them to the $G$-DPr, thereby recovering as particular cases some classical results concerning DPr, aDPr and RNP. We start by defining the numerical radius and numerical index in this new context. Recall that the symbol $\mathcal{L}^G(X)$ stands for all $G$-equivariant operators on an $G$-Banach space $X$.

	\begin{definition}
		Let $X$ be a~$G$-Banach space. For $T \in \mathcal{L}(X)$, we define the {\it $G$-numerical radius} of $T$ by 
		\begin{equation*}
			v_G(T) = \sup \{ \re x^*(T(gx)) : x \in S_X, x^* \in S_{X^*}, x^*(x) = 1, g \in G \}.
		\end{equation*}
		If $H$ is another group also acting on $X$ by linear isometries, we define the {\it $(G,H)$-numerical index} of $X$ by
		\begin{equation*}
			n_{G, H}(X) = \inf \{ v_G(T) : T \in \mathcal{L}^H(X), \norm{T} = 1 \}.
		\end{equation*}
	\end{definition}
	Notice that $v_{S_\K}(T)$ is the classical numerical radius, that is, the following equality $v_{S_{\K}}(T) = v(T)$ holds true for every $T \in \mathcal{L}(X)$. Moreover, if $S_{\K} \subseteq G$, then we have that $v_G(T) \geq v(T)$ as the elements of $G$ might increase the supremum. In particular, if $n(X) = 1$, then $v(T) = \|T\|$ for every $T \in \mathcal{L}(X)$ and then $v(T) = v_G(T) = \|T\|$. Regarding the numerical index, we have that $n_{S_\K, \{\id\}}(X) = n_{S_\K, S_\K}(X)$ is the numerical index $n(X)$ of $X$. 
	
	\begin{remark} \label{rem:GnumIndex}
		
		Notice that for any group $G$ and any $G$-Banach space $X$, we have
		\begin{align*}
			n_{\{\id_X\}, G}(X)
			&= \inf \{ v_{\{\id_X\}}(T) : T \in \mathcal{L}^G(X), \norm{T} = 1 \} \\
			&\le v_{\{\id_X\}}(-\id)\\
			&= \sup \{ \re x^*(-x) : x \in S_X, x^* \in S_{X^*}, x^*(x) = 1 \}\\
			&= -1.
		\end{align*}
		In particular, $n_{\{\id_X\}, G}(X) = - 1$ even when $X$ is a real Hilbert space. In fact, it is not difficult to see that $n_{G,H}(X) \in [-1,1]$ for every group $G$ and every $G$-Banach space so that $H$ acts on $X$ by linear isometries.
	\end{remark}
	
	Since we will usually be interested in the case when $H = \{\id\}$, we also introduce a shorthand notation
	\begin{equation} \label{G-numerical-index}
		n_G(X)
		= n_{G, \{\id_X\}}(X) = n_{G, S_\K}(X)
		= \inf \{v_G(T): T \in \mathcal{L}(X), \norm{T} = 1 \}
	\end{equation}
	and we call it the $G$-{\it numerical index} of $X$.
	
	\begin{remark}
		In general, $v_G$ is not a seminorm as in the classical case. Indeed, if $G=\{\id\}$, then $v_G(\id_X)=1$ while $v_G(-\id_X)=-1$, so absolute homogeneity fails. On the other hand, if $S_{\K}\subseteq G$ (in particular, if $-\id\in G$ in the real case), then for every scalar $\lambda$ and every $T\in\mathcal{L}(X)$ we have $v_G(\lambda T)=|\lambda|\,v_G(T)$. Since subadditivity is always satisfied, it follows that in this case $v_G$ is a seminorm on $\mathcal{L}(X)$.
	\end{remark}

	It is not difficult to construct nontrivial groups $G$ and $H$ acting on a real Hilbert space $X$ by linear isometries and such that $n_{G,H}(X) = 0$ as in the classical case as we can see in \Cref{example:G-numerical-index-equals-0}.
	
	\begin{example} \label{example:G-numerical-index-equals-0}
		Let
		\begin{equation*}
			X = \left( \bigoplus_{k=1}^{\infty} \mathbb{R}^2 \right)_{\ell_2} 
		\end{equation*}
		so that $X$ is a real Hilbert space. For every sequence $\e = (\e_k)_{k \in \N} \in \{\pm 1\}^{\N}$, define the operator $g_\e \in \Iso(X)$ by $g_\e \big( (x_k)_{k \in \N} \big) = (\e_k x_k)_{k \in \N}$. Let $G = H = \{g_\e : \e \in \{\pm 1\}^{\N}\}$. Then $G$ and $H$ are groups acting on $X$ by linear isometries and $n_{G,H}(X)=0$. Indeed, as $X$ is a Hilbert space we can identify $X^*$ with $X$ by means of the inner product and then, for every $T \in \mathcal{L}(X)$, we have $v_G(T) = \sup \big\{ \langle T(gx), x \rangle : x \in S_X,\ g \in G \big\}$. Since $-\id_X \in G$, it follows that for every $T \in \mathcal{L}^H(X)$ with $\|T\|=1$ we have $v_G(T)\geq 0$, and therefore $n_{G,H}(X)\geq 0$. Now define $J \in \mathcal{L}(X)$ blockwise by $J\big( (a_k,b_k)_{k \in \N} \big) = \big( (-b_k,a_k) \big)_{k \in \N}$. Then $\|J\|=1$. Moreover, $J$ is $H$-equivariant. Indeed, both $J$ and every $g_\e \in H$ act independently on each $\R^2$-block: on the $k$-th block, $J$ maps $(a,b)\mapsto(-b,a)$ while $g_\e$ acts as multiplication by $\e_k \id_{\mathbb{R}^2}$. Since scalar multiples of the identity commute with every linear operator, we get $J g_\e = g_\e J$ for every $\e$, that is, $J \in \mathcal{L}^H(X)$. Finally, for every $g_\e \in G$ and every $x=(x_k)_{k \in \N} \in S_X$, we have
		\begin{equation*}
			\langle J(g_\e x), x \rangle
			= \sum_{k=1}^{\infty} \e_k \langle Jx_k, x_k \rangle
			= 0,
		\end{equation*}
		and then $v_G(J)=0$, which implies that $n_{G,H}(X)\leq 0$.
	\end{example}

	It is a well-known fact (see \cite[Chapter 2, \S 2, Theorem 5]{BonsallDuncanV1}) that
	\begin{equation*}
		\sup \{ \re x^*(T x) : x \in S_X, x^* \in S_{X^*}, x^*(x) = 1 \} = \inf_{\alpha > 0} \frac{\norm{\id + \alpha T} - 1}{\alpha}.
	\end{equation*}
	From this, it easily follows that
	\begin{equation*}
		v_G(T) = \sup_{g \in G} \inf_{\alpha > 0} \frac{\norm{\id + \alpha Tg} - 1}{\alpha}.
	\end{equation*}
	In the following proof, we will need only one inequality from the above expression however with the supremum and infimum interchanged, and we will prove it. We will show this by following the classical proof (which can be found in \cite[Chapter 3, \S 9, Lemma 2]{BonsallDuncanV1}) and keeping track of the group action.

	\begin{lemma} \label{lem:G-DP-numerical-radius}
		Let $X$ be a~$G$-Banach space and let $T \in \mathcal{L}(X)$. Then,
		\begin{equation*}
			\sup_{g \in G} \norm{\id + Tg} = 1 + \norm{T} \iff v_G(T) = \norm{T}.
		\end{equation*}
	\end{lemma}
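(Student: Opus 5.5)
We prove the two implications separately, following the classical argument of \cite[Chapter 3, \S 9, Lemma 2]{BonsallDuncanV1} while tracking the group element $g$ at each step.

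\emph{($\Leftarrow$)} Assume $v_G(T)=\norm{T}$. Fix $\e>0$. Choose $g\in G$, $x\in S_X$ and $x^*\in S_{X^*}$ with $x^*(x)=1$ and $\re x^*(T(gx))>\norm{T}-\e$. Then
\begin{equation*}
	\norm{\id+Tg}\geq \norm{x+Tgx}\geq \re x^*(x+Tgx)=1+\re x^*(Tgx)>1+\norm{T}-\e.
\end{equation*}
Since the reverse inequality $\norm{\id+Tg}\le 1+\norm{T}$ is trivial, we get $\sup_{g}\norm{\id+Tg}=1+\norm{T}$. This direction is immediate.

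\emph{($\Rightarrow$)} Assume $\sup_{g}\norm{\id+Tg}=1+\norm{T}$. Normalize so that $\norm{T}=1$. Fix $\e>0$ and pick $g\in G$ and $y\in S_X$ with $\norm{y+Tgy}>2-\e$. Let $y^*\in S_{X^*}$ be a norming functional for $y+Tgy$. Then
\begin{equation*}
	\re y^*(y)>1-\e \quad\text{and}\quad \re y^*(Tgy)>1-\e,
\end{equation*}
since each real part is at most $1$ and their sum exceeds $2-\e$. The pair $(y,y^*)$ is only approximately a state, $y^*(y)\approx 1$, rather than satisfying $y^*(y)=1$ exactly. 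So we invoke the Bishop--Phelps--Bollob\'as theorem. It gives $x\in S_X$ and $x^*\in S_{X^*}$ with $x^*(x)=1$, $\norm{x-y}<\eta(\e)$ and $\norm{x^*-y^*}<\eta(\e)$, where $\eta(\e)\to 0$ as $\e\to 0$; the usual bound is $\eta(\e)\le\sqrt{2\e}$.

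Since $g$ and $T$ are fixed and $g$ is an isometry,
\begin{equation*}
	|x^*(Tgx)-y^*(Tgy)|\le \norm{x^*-y^*}\norm{T}+\norm{T}\norm{gx-gy}\le 2\eta(\e).
\end{equation*}
Hence $\re x^*(T(gx))>1-\e-2\eta(\e)$, and therefore $v_G(T)\ge 1-\e-2\eta(\e)$. Letting $\e\to 0$ gives $v_G(T)\ge 1=\norm{T}$. The inequality $v_G(T)\le\norm{T}$ holds trivially, since $\re x^*(Tgx)\le\norm{T}$. So $v_G(T)=\norm{T}$.

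The only nonelementary step is the passage from approximate to exact states via Bishop--Phelps--Bollob\'as. What makes it work is that the same group element $g$ serves both $y$ and the approximating $x$, because the approximation is performed after $g$ has been fixed. Alternatively, one could mimic the Bonsall--Duncan route through the formula $\inf_{\alpha>0}(\norm{\id+\alpha Tg}-1)/\alpha$. That route needs the convexity of $\alpha\mapsto\norm{\id+\alpha Tg}$ to pass from $\alpha=1$ to small $\alpha$, and the sup/inf interchange mentioned before the lemma is exactly where care would be needed. The Bishop--Phelps--Bollob\'as route avoids this issue.
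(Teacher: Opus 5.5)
Your proof is correct, but the $(\Rightarrow)$ direction takes a different route from the paper's. The paper follows Bonsall--Duncan and uses only Hahn--Banach states. It shows $\|(\id-\alpha Tg)x\|\ge(1-\alpha v_G(T))\|x\|$ and substitutes $(\id+\alpha Tg)x$ for $x$ to obtain $\|\id+\alpha Tg\|\le(1+\alpha^2\|T\|^2)/(1-\alpha v_G(T))$. It then argues by contradiction, combining this bound with $\|\id+\alpha Tg\|\ge\|\id+Tg\|-(1-\alpha)\|T\|$.

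The sup/inf issue you flagged never arises there: the upper bound does not depend on $g$, so it passes to $\sup_g$ for free. The ``convexity'' you mention is just that one triangle inequality.

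Your argument instead fixes a single $g$ and perturbs one near-extremal pair $(y,y^*)$ into an exact state by Bishop--Phelps--Bollob\'as, leaving $g$ untouched. What each approach buys:
\begin{itemize}
\item Yours is shorter and gives an explicit rate: if $\|\id+Tg\|>1+\|T\|-\e$ for some $g$ (with $\e<2$), then $v_G(T)\ge\|T\|(1-2\sqrt{2\e})-\e$. The cost is invoking BPB, which the paper itself uses later for $M_G(X)$.
\item The paper's argument is elementary. It also yields the one-sided, sup/inf-interchanged inequality $\liminf_{\alpha\downarrow0}\sup_g(\|\id+\alpha Tg\|-1)/\alpha\le v_G(T)$. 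This controls $\sup_g\|\id+\alpha Tg\|$ for all small $\alpha$, not only at $\alpha=1$.
\end{itemize}

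One small point concerns your normalization to $\|T\|=1$. It tacitly uses that the hypothesis $\sup_g\|\id+Tg\|=1+\|T\|$ is preserved under $T\mapsto aT$ for $a>0$. This is true: the paper notes it right after defining the $G$-DPr, and it follows from $\|\id+aTg\|\ge\|\id+Tg\|-(1-a)\|T\|$ for $a<1$ and $\|\id+aTg\|\ge a\|\id+Tg\|-(a-1)$ for $a>1$. But you do not need it, and $T=0$ is trivial anyway. Without normalizing, the norming functional gives $\re y^*(y)>1-\e$ and $\re y^*(Tgy)>\|T\|-\e$. The rest then runs verbatim, with perturbation error $2\|T\|\eta(\e)$.
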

	\begin{proof}
		The base case with $G = \{\id\}$ is well known and it can be found, for instance, in \cite[Lemma 12.3.1]{KMRW}. First assume that $v_G(T) = \norm{T}$. Then for any $\eps > 0$, there are $x \in S_X$, $x^* \in S_{X^*}$ with $x^*(x) = 1$ and $g \in G$ such that $\re x^*(T(gx)) > \norm{T} - \eps$. Thus,
		\begin{align*}
			\norm{\id + Tg}
			&\ge \norm{x + T(gx)}
			\ge \re x^*(x + T(gx))\\
			&= \re x^*(x) + \re x^*(T(gx))
			> 1 + \norm{T} - \eps.
		\end{align*}
		
		Now, assume that $\sup_{g \in G} \norm{\id + Tg} = 1 + \norm{T}$. For this direction, we need the following inequality as we have mentioned previously.
		
		\noindent
		\textbf{Claim:}
		\begin{equation*}
			\liminf_{\alpha\downarrow 0} \sup_{g \in G} \frac{\norm{\id + \alpha Tg} - 1}{\alpha} \le v_G(T).
		\end{equation*}
		
		Before proving the claim, let us see how it finishes the proof of the lemma. For a contradiction, assume that there is some $\eps > 0$ such that $v_G(T) < \norm{T} - \eps$. By the claim, there is $\alpha \in (0,1)$ such that
		\begin{equation} \label{eqn:G-DP-numerical-radius}
			\sup_{g \in G} \frac{\norm{\id + \alpha Tg} - 1}{\alpha} < \norm{T} - \eps.
		\end{equation}
		
		By our assumption, there is $g \in G$ such that $\norm{\id + Tg} > 1 + \norm{T} - \alpha \eps$. An easy computation then shows that
		\begin{align*}
			\norm{\id + \alpha Tg}
			&= \norm{\id + Tg - (1-\alpha)Tg}
			\ge \norm{\id + Tg} - (1-\alpha)\norm{T}\\
			&> 1 + \norm{T} - \alpha \eps - (1-\alpha)\norm{T}
			= 1 + \alpha\norm{T} - \alpha \eps.
		\end{align*}
		This, however, contradicts \eqref{eqn:G-DP-numerical-radius} as
		\begin{align*}
			\norm{T} - \eps
			= \frac{(1 + \alpha\norm{T} - \alpha\eps) - 1}{\alpha}
			< \frac{\norm{\id + \alpha Tg} - 1}{\alpha}
			\le \sup_{g \in G} \frac{\norm{\id + \alpha Tg} - 1}{\alpha}.
		\end{align*}
		
		All that remains is to prove the claim. To that end, let $\alpha \in (0,1)$ be small enough, $g \in G$ and $x \in S_X$. By the Hahn-Banach theorem, there is $x^* \in S_{X^*}$ such that $x^*(x) = 1$. Then, $\re x^*(T(gx)) \le v_G(T)$ and thus
		\begin{align*}
			\norm{(\id - \alpha Tg)x}
			\ge \re x^*(x - \alpha T(gx))
			= 1 - \alpha \re x^*(T(gx))
			\ge 1 - \alpha v_G(T).
		\end{align*}
		This means that for any $x \in X$ and for a fixed $g \in G$, we have $\norm{(\id - \alpha Tg)x} \ge (1 - \alpha v_G(T))\norm{x}$ and hence, substituting $(\id + \alpha Tg)x$ for $x$, we get
		\begin{equation*}
			\norm{(\id +\alpha Tg)x} \le \frac{1}{1 - \alpha v_G(T)} \norm{(\id - \alpha^2 (Tg)^2)x}, \quad x \in X.
		\end{equation*}
		It follows that
		\begin{equation*}
			\norm{\id + \alpha Tg}
			\le \frac{1 + \alpha^2\norm{T}^2}{1 - \alpha v_G(T)},
		\end{equation*}
		which, after rearranging, gives
		\begin{equation*}
			\frac{\norm{\id + \alpha Tg} - 1}{\alpha}
			\le \frac{v_G(T) + \alpha \norm{T}^2}{1 - \alpha v_G(T)}.
		\end{equation*}
		Since $g \in G$ was arbitrary and the right-hand side does not depend on $g$, we get
		\begin{equation*}
			\sup_{g \in G} \frac{\norm{\id + \alpha Tg} - 1}{\alpha}
			\le \frac{v_G(T) + \alpha \norm{T}^2}{1 - \alpha v_G(T)}.
		\end{equation*}
		Finally, we obtain the desired claim as
		\begin{equation*}
			\liminf_{\alpha\downarrow 0} \sup_{g \in G} \frac{\norm{\id + \alpha Tg} - 1}{\alpha}
			\le \liminf_{\alpha\downarrow 0} \frac{v_G(T) + \alpha \norm{T}^2}{1 - \alpha v_G(T)}
			= v_G(T).
	\end{equation*}\end{proof}

	As we will see in the next sections, the condition $n_G(X)=1$ will play a central role in recovering classical Daugavet-type behavior from the $G$-DPr (see, for instance, \Cref{theorem:all-together} below). In fact, the equality $n_G(X)=1$ may arise naturally from the action itself and not only from the underlying geometry of the Banach space $X$ as we will see in the next result. Recall the definition of convex-transitivity at the beginning of \Cref{section:transitivity}.
	
	\begin{proposition} \label{prop:convex-transitivity-G-numerical-index} Let $X$ be a $G$-Banach space. If the action of $G$ on $X$ is convex-transitive, then $n_G(X) = 1$. 
	\end{proposition}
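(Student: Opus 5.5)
We have to show that $v_G(T)\ge \norm{T}$ for every $T\in\mathcal L(X)$ with $\norm{T}=1$; the reverse inequality $v_G(T)\le\norm{T}$ is trivial. By \Cref{lem:G-DP-numerical-radius}, it is equivalent to show that every operator $T$ (not only rank-one ones) satisfies $\sup_{g\in G}\norm{\id+Tg}=1+\norm{T}$. I would rather argue directly with the definition of $v_G$, using convex-transitivity in its dual form.

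The plan is as follows. Fix $T$ with $\norm{T}=1$ and $\eps>0$. Choose $y_0\in S_X$ with $\norm{Ty_0}>1-\eps$, and then $y^*\in S_{X^*}$ with $\re y^*(Ty_0)>1-\eps$. Next take any $x\in S_X$ together with a supporting functional $x^*\in S_{X^*}$, $x^*(x)=1$; Bishop--Phelps is not even needed, since Hahn--Banach suffices. Convex-transitivity gives $y_0\in B_X=\overline{\co}(G\cdot x)$. Hence $y_0$ is a norm limit of convex combinations $\sum_k\lambda_k g_kx$. Since $y^*\circ T$ is continuous and linear, we get
\[
1-\eps<\re y^*(Ty_0)\le \sup_{g\in G}\re y^*(T(gx)).
\]
So there is $g\in G$ with $\re y^*(T(gx))>1-\eps$.

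This does not yet give what we want, because the functional $y^*$ is not a supporting functional of $x$. The fix is to choose $x$ adapted to $y^*$ from the start, and to use convex-transitivity only to move $x$ onto $y_0$. Concretely, the definition of $v_G$ requires a pair $(x,x^*)$ with $x^*(x)=1$ and the quantity $\re x^*(T(gx))$. So we start from $y^*$ and pick $x$ with $x^*=y^*$, which requires $y^*$ to attain its norm. By the Bishop--Phelps(--Bollobás) theorem, we may perturb so that $y^*$ attains its norm at some $x\in S_X$ while still satisfying $\re y^*(Ty_0)>1-2\eps$. Now apply convex-transitivity at this $x$: $y_0\in\overline{\co}(G\cdot x)$, and the argument above gives $g$ with $\re y^*(T(gx))>1-2\eps$. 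Since $y^*(x)=1$, this yields $v_G(T)\ge 1-2\eps$.

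The main obstacle is this compatibility step: the functional used to norm $Ty_0$ must be exactly a supporting functional at the point $x$ whose orbit we use. I would handle it with Bishop--Phelps, replacing $y^*$ by a norm-attaining $\tilde y^*$ with $\norm{\tilde y^*-y^*}<\eps$, normalised to norm one. The lower bound on $\re \tilde y^*(Ty_0)$ survives because $\norm{T}=1$. Letting $\eps\to0$ gives $v_G(T)=\norm{T}$, hence $n_G(X)=1$.
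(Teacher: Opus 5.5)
Your final argument is correct and is essentially the paper's proof. The paper also uses Bishop--Phelps to obtain a norm-attaining $x^*\in S_{X^*}$ with $x^*(x)=1$ and $\|T^*x^*\|>1-\eps$, picks $y$ with $\re T^*x^*(y)>1-\eps$, and then uses convex-transitivity at $x$ together with a convexity argument to find $g$ with $\re x^*(T(gx))>1-2\eps$; the only difference is that you fix $y_0$ and a functional norming $Ty_0$ before perturbing, and your discarded first attempt with an arbitrary $x$ and the remark about \Cref{lem:G-DP-numerical-radius} can simply be dropped.
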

	
	\begin{proof} We will prove that, for every $T \in \mathcal{L}(X)$, we have that $v_G(T) = \|T\|$. Let us fix $T \in \mathcal{L}(X)$ with $\|T\| = 1$. Then, all we need to do is to prove that $v_G(T)  \geq 1$. Let $\e \in (0,1)$ be arbitrary. Since $\|T^*\|=\|T\|=1$, we can choose a norm-one functional $x^* \in S_{X^*}$ such that $\|T^* x^*\| > 1 - \e$. In fact, by using the Bishop-Phelps theorem \cite{BishopPhelps1961}, we can assume that $x^*$ attains its norm, say $x^*(x) = 1$ for some $x \in S_X$. Also, we can choose $y \in S_X$ to be such that $\re T^*x^*(y) > 1 - \e$. Since the action is convex-transitive, there are $g_1, \ldots, g_n \in G$ and $\lambda_1, \ldots, \lambda_n \in \R$ with $\lambda_k \geq 0$ for every $k=1,\ldots,n$ such that $\lambda_1 + \ldots + \lambda_n = 1$ and the element
		\begin{equation*}
			z:= \sum_{k=1}^n \lambda_k g_k x 
		\end{equation*}
		is close to $y \in S_X$, that is, $\|y-z\| < \e$. As this holds, we have that $|T^*x^*(y-z)| \leq \|y-z\| < \e$ and 
		\begin{equation*}
			\re T^* x^*(z) \geq \re T^*x^*(y) - |T^*x^*(y-z)| > 1 -2\e.
		\end{equation*}
		As $z$ is a convex combination of the vectors $g_k x$, by the last inequality, there exists $k_0 \in \{1, \ldots, n\}$ such that $\re T^* x^*(g_{k_0} x) > 1 - 2\e$. Finally, having $x^*(x) = 1$, we obtain that 
		\begin{equation*}
			v_G(T) \geq \re x^*(T(g_{k_0}x)) = \re T^* x^*(g_{k_0} x) > 1 - 2\e.
		\end{equation*}
		Since $\e>0$ was arbitrary and $x^*(x)=1$, it follows that $v_G(T) = 1$ and $n_G(X) = 1$.
	\end{proof}

	Let us notice that $n_G(X) = 1$ can happen for reasons that have no relation with the action being convex transitive, which implies in particular that the converse of \Cref{prop:convex-transitivity-G-numerical-index} does not hold in general. Indeed, \Cref{ex:Iso^+(ell_1)} tells us that $\ell_1$ has the $\iso^+(\ell_1)$-DPr. By \Cref{thm:G-DPandRNP} below, it turns out that $n_{\iso^+(\ell_1)}(\ell_1)=1$. Nevertheless, $\ell_1$ with $\iso^+(\ell_1)$ is not convex-transitive as the positive isometries preserve positivity, so the orbit of $e_1$ is just $\{e_n: n \in \N\}$ and 
	\begin{equation*}
		\overline{\co}(\iso^+(\ell_1) \cdot e_1) = \left\{ x \in \ell_1: x_n \geq 0, \sum_{n=1}^{\infty} x_n = 1 \right\} 
	\end{equation*}
	which is far from being the whole unit ball $B_{\ell_1}$.
	
	\subsection{Strong Radon-Nikodým operators and the $G$-DPr} Let $X$ and $Y$ be Banach spaces. Recall that an operator $T \in \mathcal{L}(X,Y)$ is said to be {\it strong-Radon-Nikodým} if the subset $\overline{T(B_X)}$ satisfies the Radon-Nikodým property. It is well-known that weakly compact operators are strong Radon-Nikodým. We have the following group invariant version of \cite[Theorem 3.2.6 and Proposition 12.3.7]{KMRW}. As a consequence, we recover the results which relate the DPr, aDPr, RNP and numerical index (see \Cref{classical-RNPs} below).
	
	\begin{proposition} \label{prop:sRN-Operators}
		Let $X$ be a $G$-Banach space. The following statements are equivalent. 
		\begin{enumerate}
			\item $X$ has the $G$-Daugavet property.
			
			\item For every operator $T: X \to X$ such that $\overline{T(B_X)} = \clco\left(\dent{\overline{T(B_X)}}\right)$ we have
			\begin{equation*}
				\sup_{g \in G} \norm{g + T} = 1 + \norm{T}.
			\end{equation*}
			
			\item For every strong Radon-Nikodým operator $T : X \to X$ we have
			\begin{equation*}
				\sup_{g \in G} \norm{g + T} = 1 + \norm{T}.
			\end{equation*}
		\end{enumerate}
	\end{proposition}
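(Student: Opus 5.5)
We prove $(1)\Rightarrow(2)\Rightarrow(3)\Rightarrow(1)$, following \cite[Theorem 3.2.6]{KMRW} and carrying the group along. The implication $(3)\Rightarrow(1)$ is immediate: rank-one operators are compact, hence weakly compact, hence strong Radon-Nikodým, so (3) gives the $G$-Daugavet equation for rank-one operators. For $(2)\Rightarrow(3)$, let $T$ be strong Radon-Nikodým. Then $\overline{T(B_X)}$ is a bounded closed convex set with the RNP. By the classical theory, such a set is the closed convex hull of its denting points. So $T$ satisfies the hypothesis of (2).

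The main work is $(1)\Rightarrow(2)$. Normalize $\|T\|=1$, fix $\varepsilon>0$, and put $K=\overline{T(B_X)}=\clco(\dent{K})$.

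\emph{Choosing a denting point.} Since $\sup_{k\in K}\|k\|=1$ and $K$ is the closed convex hull of its denting points, there is a denting point $y_0\in K$ with $\|y_0\|>1-\varepsilon$. Indeed, otherwise $K\subseteq(1-\varepsilon)B_X$.

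\emph{Localizing the preimage.} Because $y_0$ is denting, there are $y^*\in S_{X^*}$ and $\eta>0$ with
\[
\operatorname{diam}\{k\in K:\re y^*(k)>\sup\re y^*(K)-\eta\}<\varepsilon .
\]
Pulling back, the set $\{x\in B_X:\re (T^*y^*)(x)>\|T^*y^*\|-\eta\}$ is a slice $S$ of $B_X$ with $\|Tx-y_0\|<\varepsilon$ for all $x\in S$. Here $T^*y^*\neq0$, since $\sup\re y^*(K)>\re y^*(y_0)-\eta\ge$ a quantity we may keep positive by choosing $y^*$ to nearly norm $y_0$. Arranging this simultaneously with the denting slice is the technical point: we take a slice inside the small neighbourhood of $y_0$ given by dentability, and accept that $y^*$ need not norm $y_0$. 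What we actually need is only that $\|Tx\|>1-2\varepsilon$ on $S$, and this follows from $\|Tx-y_0\|<\varepsilon$.

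\emph{Applying the $G$-Daugavet property.} Write $S=S(B_X,x^*,\delta)$ with $x^*=T^*y^*/\|T^*y^*\|$. Choose $u\in S_X$ with $\|u\|=1$ close to $y_0/\|y_0\|$. Apply Proposition \ref{prop:characterization-G-DP}(2) to $u$, or rather to $g$-translates. Concretely, the $G$-slice $S_G(B_X,x^*,\delta)$ contains $z$ with $\|u+z\|>2-\varepsilon$. Hence there is $h\in G$ with $hz\in S$. Then $\|T(hz)-y_0\|<\varepsilon$, so
\[
\|h^{-1}(hz)+T(hz)\|=\|z+T(hz)\|\ge \|z+u\|-\|u-y_0\|-\varepsilon>2-O(\varepsilon).
\]
This yields $\|h^{-1}+T\|>2-O(\varepsilon)$, and therefore $\sup_{g}\|g+T\|=2$.

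The expected main obstacle is this bookkeeping. The $G$-slice forces the group element onto the argument of $T$, and one must check that Proposition \ref{fact:G-DP-equation-commutes} lets us rewrite the result as $\|g+T\|$ with $g=h^{-1}$. One must also ensure that the denting slice of $K$ really pulls back to a genuine slice of $B_X$, using $\sup\re y^*(K)=\|T^*y^*\|$.
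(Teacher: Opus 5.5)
Your proposal is correct and follows essentially the same route as the paper: choose a denting point $y_0$ of $\overline{T(B_X)}$ with $\|y_0\|>1-\varepsilon$, pull a small-diameter slice containing $y_0$ back to a slice of $B_X$, apply item (2) of Proposition \ref{prop:characterization-G-DP} to $y_0/\|y_0\|$ and the associated $G$-slice, and conclude $\|h^{-1}+T\|\geq\|z+T(hz)\|>2-3\varepsilon$ (this step needs no appeal to Proposition \ref{fact:G-DP-equation-commutes}). Your unclear paragraph on $T^*y^*\neq 0$ can be settled directly: if $T^*y^*=0$, the pulled-back set would be all of $B_X\ni 0$, which contradicts $\|Tx-y_0\|<\varepsilon$ at $x=0$ once $\varepsilon<1/2$; note also that this uses that the denting slice is taken to contain $y_0$.
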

	\begin{proof}
		The implications $(2) \implies (3)$ follows immediately from the definition of strong Radon-Nikod\'ym operators and $(3) \implies (1)$ is clear. Let us prove $(1) \implies (2)$. Let $T : X \to X$ be such that for $K = \overline{T(B_X)}$, $K = \clco(\dent{K})$ and, without loss of generality, assume that $\norm{T} = 1$.
		Let $\eps > 0$. Since $K = \clco(\dent{K})$, there is a~denting point $y_0 \in K$ such that $\norm{y_0} > 1 - \eps$.
		There is a~slice $S$ of $K$, such that $\diam(S) < \eps$ and $y_0 \in S$.     By \cite[Lemma 2.6.5]{Kadets2000}, $\tilde{S} := T^{-1}(S) \cap B_X$ is a~slice of $B_X$. Let $x^* \in X^*$ and $\delta > 0$ be such that $\tilde{S} = S(B_X, x^*, \delta)$. For every $x \in \tilde{S}$, we have that $Tx \in S$ and hence $\norm{Tx - y_0} < \eps$. Using \Cref{prop:characterization-G-DP}.(ii) applied to $ y_0 / \norm{y_0} \in S_X$ and the $G$-slice $\tilde{S}_G = S_G(B_X, x^*, \delta)$, there is $z \in \tilde{S}$ and $g \in G$ such that $\norm{y_0 / \norm{y_0} + gz} > 2 - \eps$. Thus,
		\begin{equation*}
			\norm{gz + y_0}
			\geq \norm{gz + \frac{y_0}{\norm{y_0}}} - \abs{1 - \norm{y_0}}
			> 2 - \eps - \eps = 2 - 2\eps.
		\end{equation*}
		Finally, we have
		\begin{align*}
			\norm{g + T}
			&\geq \norm{(g + T)z}
			= \norm{gz + Tz} \\
			&\geq \norm{gz + y_0} - \norm{Tz - y_0}
			> 2 - 2\eps - \eps = 2 - 3\eps.
	\end{align*}\end{proof}

	\begin{theorem} \label{thm:G-DPandRNP}
		Let $X$ be a~$G$-Banach space. If $X$ has both the RNP and the $G$-DPr, then $n_{G}(X) = 1$. In particular, we have that $n_{\iso(L^p[0,1])}(L^p[0,1]) = 1$  for every $1 < p < \infty$.
	\end{theorem}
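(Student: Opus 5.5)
The plan is to combine Proposition~\ref{prop:sRN-Operators} with Lemma~\ref{lem:G-DP-numerical-radius}. The key observation is that when $X$ has the RNP, every operator $T\in\mathcal L(X)$ is strong Radon-Nikod\'ym. Indeed, $\overline{T(B_X)}$ is a closed bounded subset of $X$. Since $X$ has the RNP, every closed bounded subset of $X$ has the RNP, and so $\overline{T(B_X)}$ does.

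So fix $T\in\mathcal L(X)$ with $\|T\|=1$. Since $X$ has the $G$-DPr, implication $(1)\Rightarrow(3)$ of Proposition~\ref{prop:sRN-Operators} gives
\begin{equation*}
\sup_{g\in G}\|g+T\|=1+\|T\|.
\end{equation*}
By Proposition~\ref{fact:G-DP-equation-commutes}, this supremum coincides with $\sup_{g\in G}\|\id+T\circ g\|$. Lemma~\ref{lem:G-DP-numerical-radius} then yields $v_G(T)=\|T\|=1$.

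Taking the infimum over all norm-one $T$ gives $n_G(X)=1$; note that $n_G(X)\le 1$ trivially, since $v_G(T)\le\|T\|$.

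For the particular case, $L^p[0,1]$ with $1<p<\infty$ is reflexive and therefore has the RNP. By Corollary~\ref{cor:Lp-ellp-examples}(a), it has the $\Iso(L^p[0,1])$-DPr. The first part of the theorem then applies directly and gives $n_{\Iso(L^p[0,1])}(L^p[0,1])=1$.

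No step is a serious obstacle. The only point to check is that the conclusion of Proposition~\ref{prop:sRN-Operators}, which is phrased with $g+T$, matches the form $\id+Tg$ required by Lemma~\ref{lem:G-DP-numerical-radius}, and Proposition~\ref{fact:G-DP-equation-commutes} handles exactly this.
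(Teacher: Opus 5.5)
Your proof is correct and follows the paper's route. The paper obtains the result from Theorem~\ref{thm:GDPandGRNP} with trivial $H$: the RNP makes $\overline{T(B_X)}$ dentable, so $\overline{T(B_X)}=\clco(\dent{\overline{T(B_X)}})$, and then Proposition~\ref{prop:sRN-Operators} and Lemma~\ref{lem:G-DP-numerical-radius} finish the argument, which is exactly what you do via item (3) instead of item (2); the particular case likewise rests on Theorem~\ref{theorem:GDPrOnUniformlyConvexSpace}, which is what underlies Corollary~\ref{cor:Lp-ellp-examples}(a).
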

	
	\begin{proof} The result follows from Theorem \ref{thm:GDPandGRNP} below. Nevertheless, the reader who does not want to read up all the way to the $G$-RNP may simply drop the $H$-equivariance in the proof of \Cref{thm:GDPandGRNP} and replace \cite[Theorem 6.4]{DantasDouchaJungRaunig2025} with the well-known equivalence of dentability and the RNP to directly obtain a proof for the present statement. The ``in particular'' part follows from \Cref{theorem:GDPrOnUniformlyConvexSpace}.
	\end{proof}

	We recover the following classical results.
	
	\begin{corollary} \label{classical-RNPs} Let $X$ be a Banach space.
		\begin{itemize}
			\item[(a)] If $X$ has the RNP and the aDPr, then $n(X) = 1$.
			\item[(b)] If $X$ has the DPr, then $X$ does not have the RNP.
		\end{itemize}
	\end{corollary}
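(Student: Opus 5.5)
Both items will be read off from Theorem~\ref{thm:G-DPandRNP} by choosing the group suitably, using the dictionary set up after the definition of the $G$-numerical radius.

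For (a), let $X$ have the RNP and the aDPr, and take $G=S_{\K}$ acting by scalar multiplication. As observed after Definition~\ref{definition-G-DPr}, the $S_{\K}$-DPr is exactly the aDPr, since the supremum over the compact group $S_{\K}$ is attained. So $X$ has the $S_{\K}$-DPr. Theorem~\ref{thm:G-DPandRNP} then gives $n_{S_{\K}}(X)=1$. The remaining point is that $n_{S_{\K}}(X)=n(X)$. This holds because $v_{S_{\K}}(T)=v(T)$ for every $T$: for a fixed pair $(x,x^*)$,
\[
\sup_{\theta\in S_{\K}}\re x^*(T(\theta x))=\sup_{\theta}\re\bigl(\theta x^*(Tx)\bigr)=|x^*(Tx)|.
\]
Taking the infimum over operators $T$ with $\|T\|=1$ in \eqref{G-numerical-index} gives $n(X)=1$.

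For (b), suppose towards a contradiction that $X$ has both the DPr and the RNP, and take $G=\{\id_X\}$. The $\{\id_X\}$-DPr is exactly the DPr, so Theorem~\ref{thm:G-DPandRNP} gives $n_{\{\id_X\}}(X)=1$. However, the computation in Remark~\ref{rem:GnumIndex} shows $v_{\{\id_X\}}(-\id_X)=-1$, since $\re x^*(-x)=-1$ whenever $x^*(x)=1$. Because $-\id_X$ has norm one, this forces $n_{\{\id_X\}}(X)\le -1<1$, which is a contradiction. (The remark states this bound for $n_{\{\id_X\},G}$, but the same test operator $-\id_X$ lies in $\mathcal L(X)$ and so serves for $n_{\{\id_X\}}(X)$ as well.) Hence $X$ fails the RNP.

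Neither step is a real obstacle. The only point needing care is the identification of $v_G$ with the correct classical quantity in each case, namely that the unimodular rotation $\theta$ turns $\re x^*(Tx)$ into $|x^*(Tx)|$ in (a), and that the trivial group produces the value $-1$ in (b).
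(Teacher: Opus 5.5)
Your proof is correct and follows the paper's argument: (a) is Theorem~\ref{thm:G-DPandRNP} with $G=S_{\K}$ plus the identification $n_{S_{\K}}(X)=n(X)$, and (b) is Theorem~\ref{thm:G-DPandRNP} with $G=\{\id_X\}$, contradicted by $n_{\{\id_X\}}(X)=-1$ from Remark~\ref{rem:GnumIndex}. Your explicit check that $\sup_{\theta}\re\bigl(\theta x^*(Tx)\bigr)=|x^*(Tx)|$ only spells out the identity $v_{S_{\K}}=v$ that the paper records right after defining the $G$-numerical radius.
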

	
	\begin{proof} The point (a) is a direct consequence of \Cref{thm:G-DPandRNP} by choosing $G = S_\K$. For (b), assume that $X$ has the RNP and suppose that $X$ has the DPr (that is, the $G$-DPr for $G =\{\id_X\}$). Then, by \Cref{thm:G-DPandRNP}, we have that $n_{\{\id_X\}}(X) = 1$. However, by \Cref{rem:GnumIndex}, we have that $n_{\{\id\}}(X) = -1$, a~contradiction.
	\end{proof}
	
	The classical RNP will likely be of main interest, but given the spirit of this paper, we feel obliged to present the following result in its full ``group-generality'' and only then deduce the classical RNP as a~special case. Since the definition of the group-equivariant RNP takes some work, we do not repeat it here and instead refer the interested reader to~\cite{DantasDouchaJungRaunig2025}. Notice that the action below is required to be continuous.
	
	\begin{theorem} \label{thm:GDPandGRNP}
		Let $G$ and $H$ be two groups acting continuously on a~Banach space $X$ by linear isometries. If $X$ has the $G$-Daugavet property and every $H$-invariant bounded set is dentable, then $n_{G, H}(X) = 1$.
		In particular, if $H$ is locally compact and $\sigma$-compact, $X$ has the $G$-Daugavet property and the $H$-RNP, then $n_{G, H}(X) = 1$.
	\end{theorem}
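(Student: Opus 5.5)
Fix $T\in\mathcal L^H(X)$ with $\norm{T}=1$. By \Cref{lem:G-DP-numerical-radius} it is enough to show $\sup_{g\in G}\norm{\id+Tg}=2$, or equivalently, by \Cref{fact:G-DP-equation-commutes}, $\sup_{g\in G}\norm{g+T}=2$. The natural set to use is $K=\overline{T(B_X)}$. Since $B_X$ is $H$-invariant and $T$ is $H$-equivariant, $hT(B_X)=T(hB_X)=T(B_X)$ for every $h\in H$, so $K$ is a closed bounded $H$-invariant set. The same holds for every closed bounded $H$-invariant subset of $K$, and also for $\clco$ of such sets, since $H$ acts linearly and isometrically. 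The hypothesis therefore says that every $H$-invariant bounded subset of $K$ is dentable. I then repeat the proof of $(1)\Rightarrow(2)$ in \Cref{prop:sRN-Operators}, which only needs a denting point $y_0\in K$ with $\norm{y_0}>1-\eps$.

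\textbf{Main step: producing such a denting point.} The classical argument, where dentability of all subsets gives $K=\clco(\dent K)$, does not apply directly, because only $H$-invariant sets are assumed dentable.

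I will argue by contradiction. Let $D=\dent{K}$ and note that $D$ is $H$-invariant, since isometries map slices to slices. Suppose $\clco(D)\neq K$. Then $K_0=\clco(D)$ is $H$-invariant, and so is $K\setminus K_0$. If instead all denting points of $K$ have norm at most $1-\eps$, take $K_1=\overline{K\setminus (1-\eps)B_X}$. This set is $H$-invariant because $H$ preserves norms, so it is dentable.

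Let $S$ be a slice of $K_1$ of small diameter. Its points have norm close to at least $1-\eps$, but it need not be a slice of $K$. The standard remedy is the Bourgain-type lemma (as in the proof that RNP sets satisfy $K=\clco(\dent K)$). Consider
\[
L=\clco\bigl(K\setminus (1-\eps')B_X\bigr)\cup(1-\eps')B_X\cap K .
\]
Equivalently, let $C=\clco\bigl(H\cdot\{\text{points of }K\text{ of norm}\ge 1-\eps'\}\bigr)$. It is $H$-invariant and convex, hence dentable, and I will use denting slices of $C$ together with Phelps' lemma to move them to slices of $K$. Concretely, I intend to show that $\clco(D)\supseteq K$ in the following way. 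If $y\in K\setminus\clco(D)$, separate $y$ from $\clco(D)$ by a functional $f$, and consider the $H$-invariant set $K_f=\clco\bigl(H\cdot\{k\in K:\re f(k)>\sup_{\clco D}\re f+\delta\}\bigr)$. It is dentable, and a point-of-continuity / small-slice argument yields a denting point of $K$ inside the half-space, which is a contradiction. I expect this to be the main obstacle, since orbits can spread the half-space everywhere. If it fails, I will instead directly invoke \cite[Theorem 6.4]{DantasDouchaJungRaunig2025}, which presumably gives exactly $K=\clco(\dent K)$ for $H$-invariant closed convex bounded sets under the $H$-RNP. That result is also what settles the ``in particular'' part, given local compactness, $\sigma$-compactness, and continuity of the action.

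\textbf{Conclusion.} Take the denting point $y_0$ with $\norm{y_0}>1-\eps$ and a slice $S\ni y_0$ of $K$ with $\diam S<\eps$. By \cite[Lemma 2.6.5]{Kadets2000}, $T^{-1}(S)\cap B_X$ is a slice $S(B_X,x^*,\delta)$. Apply \Cref{prop:characterization-G-DP}(2) to $y_0/\norm{y_0}$ and the $G$-slice $S_G(B_X,x^*,\delta)$, choosing the witness $z$ so that $z\in S(B_X,x^*,\delta)$ itself (replace $gz$ by $z$, as in the proof of $(1)\Rightarrow(2)$ there). This gives $g\in G$ with $\norm{gz+y_0}>2-2\eps$ and $\norm{Tz-y_0}<\eps$. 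Hence $\norm{g+T}>2-3\eps$, so $v_G(T)=1$, and since $T$ was arbitrary, $n_{G,H}(X)=1$.
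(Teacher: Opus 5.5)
Your reduction and your closing paragraph follow the paper's route. Since $T$ is $H$-equivariant, $K=\overline{T(B_X)}$ is closed, convex, bounded and $H$-invariant. Once one knows $K=\clco(\dent{K})$, or merely that $K$ has denting points of norm arbitrarily close to $\norm{T}$, the argument of \Cref{prop:sRN-Operators}, $(1)\Rightarrow(2)$, together with \Cref{lem:G-DP-numerical-radius} gives $v_G(T)=\norm{T}$.

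\textbf{The gap is the main step, which you never establish.}
\begin{itemize}
\item The contradiction sketch does not work as written. The $H$-saturated sets you dent ($K_1$, $C$, $K_f$) are in general proper subsets of $K$. A small slice or denting point of such a set need not be a slice or denting point of $K$, so nothing places a denting point of $K$ in the separating half-space.
\item The set $L$ is not well defined.
\item The fallback, that \cite{DantasDouchaJungRaunig2025} ``presumably'' yields $K=\clco(\dent{K})$, is a guess about a reference rather than an argument. The paper invokes that theorem only for the ``in particular'' part.
\end{itemize}

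\textbf{The missing idea is that dentability passes from $H$-saturations back to arbitrary sets,} so the hypothesis is much stronger than it looks. Let $A\subseteq X$ be bounded. Then $H\cdot A$ is bounded and $H$-invariant, hence dentable. So for $\eps>0$ there are $h\in H$ and $a\in A$ with $ha\notin\clco\bigl(H\cdot A\setminus B(ha,\eps)\bigr)$. Now apply the surjective linear isometry $h^{-1}$. It leaves $H\cdot A$ invariant and maps $B(ha,\eps)$ onto $B(a,\eps)$, so
\[
a\notin\clco\bigl(H\cdot A\setminus B(a,\eps)\bigr)\supseteq\clco\bigl(A\setminus B(a,\eps)\bigr).
\]
Thus every bounded subset of $X$ is dentable, that is, $X$ has the RNP. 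Continuity of the action plays no role here.

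In particular every subset of $K$ is dentable, and the classical theorem gives $K=\clco(\dent{K})$: every closed bounded convex subset of a space with the RNP is the closed convex hull of its denting points. Equivalently, $T$ is a strong Radon--Nikod\'ym operator and \Cref{prop:sRN-Operators}(3) applies directly. This observation is also what makes the paper's one-line step ``$K$ is dentable, implying $K=\clco(\dent{K})$'' legitimate, since dentability of $K$ alone would not suffice. With it inserted, the rest of your proof goes through unchanged.
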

	\begin{proof}
		For every $T \in \mathcal{L}^H(X)$ we have that $K = \overline{T(B_X)}$ is bounded and $H$-invariant and hence dentable, implying that $K = \clco (\dent K)$ (see \cite[III.3]{GGMS87}). From \Cref{prop:sRN-Operators}.(2) and \Cref{lem:G-DP-numerical-radius}, we get that $v_G(T) = \norm{T}$ for every $H$-equivariant operator $T$. Thus, $n_{G, H}(X) = 1$.
		The ``in particular'' part follows from \cite[Theorem~6.4]{DantasDouchaJungRaunig2025}.
	\end{proof}
	
	\subsection{SCD operators and the $G$-DPr} Next, we show that in spaces with the $G$-Daugavet property, even wider classes of operators satisfy the Daugavet equation. The following are $G$-invariant adaptations of \cite{AKMMS}.
	
	Let us denote by
	\begin{equation*}
		K(X^*) = S_{X^*} \cap \overline{\ext(B_{X^*})}^{w^*}.
	\end{equation*}
	For every slice $S$ of $B_X$ and every $\e>0$, consider the set
	\begin{eqnarray*}
		D_G(S, \e)
		&:=&
		\{y^* \in K(X^*): S \cap G \cdot S(B_X, y^*, \e) \not= \emptyset\} \\
		&\subseteq&
		\{y^* \in K(X^*): S \cap \overline{\co}_G (S(B_X, y^*, \e)) \not= \emptyset \}.
	\end{eqnarray*}
	
	\begin{proposition} \label{prop:GDPandSCDslices}
		Let $X$ be a $G$-Banach space. The following statements are equivalent.
		\begin{itemize}
			\item[(i)] $X$ has the $G$-DPr.
			\item[(ii)] For every $x \in S_X$, for every $\e > 0$ and for every slice $S$ of $B_X$, there exists $y^* \in K(X^*)$ such that $x \in S(B_X, y^*, \e)$ and $S \cap G \cdot S(B_X, y^*, \e) \not= \emptyset$.
			\item[(iii)] For every $x \in S_X$, for every $\e > 0$ and for every slice $S$ of $B_X$, there exists $y^* \in D_G(S, \e)$ such that $x \in S(B_X, y^*, \e)$.
			\item[(iv)] For every $\e > 0$ and for every slice $S$ of $B_X$, the set $D_G(S, \e)$ is $w^*$-dense in $K(X^*)$.
			\item[(v)] For every $\e > 0$ and for every sequence $\{S_n: n \in \N\}$ of slices of $B_X$, the set $\bigcap_{n \in \N} D_G(S_n, \e)$ is $w^*$-dense in $K(X^*)$.
		\end{itemize}
	\end{proposition}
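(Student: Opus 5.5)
The plan is to follow the scheme of \cite{AKMMS} for the classical case, inserting group elements where needed. The implications (ii)$\Leftrightarrow$(iii) are immediate from the definition of $D_G(S,\e)$. For (iii)$\Rightarrow$(iv), fix a slice $S$, $\e>0$, $z^*\in K(X^*)$ and a basic $w^*$-neighbourhood of $z^*$. I will use the standard fact that the slices $S(B_X,x,\e)$ of $B_{X^*}$ determined by $x\in S_X$ form a neighbourhood base of the relative $w^*$-topology at points of $K(X^*)$, via Choquet's lemma applied to $\overline{\ext(B_{X^*})}^{w^*}$. Then choose $x\in S_X$ and $\e'\le\e$ with $\{y^*\in K(X^*):\re y^*(x)>1-\e'\}$ inside the given neighbourhood. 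Applying (iii) with $\e'$ gives $y^*\in D_G(S,\e')\subseteq D_G(S,\e)$ in that neighbourhood; the inclusion holds because slices grow with $\e$. Conversely, (iv)$\Rightarrow$(iii) uses the same neighbourhood-base fact in the other direction. Given $x$, find $z^*\in K(X^*)$ with $\re z^*(x)$ close to $1$: by Krein--Milman the functional $x\mapsto \sup_{K(X^*)}\re y^*(x)$ equals $\|x\|$. The set $\{y^*:\re y^*(x)>1-\e\}$ is a $w^*$-open set meeting $K(X^*)$, so density gives the required $y^*$.

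For (i)$\Rightarrow$(ii), take $x\in S_X$, $\e>0$ and $S=S(B_X,x^*,\delta)$. By Proposition~\ref{prop:characterization-G-DP}(2), with $x$ and the $G$-slice $S_G(B_X,x^*,\delta)$, there are $g\in G$ and $y\in B_X$ with $gy\in S$ (after reindexing via $g^{-1}$) and $\|x+y\|>2-\e'$. Now pick a functional norming $x+y$ that is moreover in $K(X^*)$. Since $\|x+y\|=\sup_{y^*\in\ext B_{X^*}}\re y^*(x+y)$, there is an extreme point $y^*$ with $\re y^*(x)>1-\e$ and $\re y^*(y)>1-\e$ for $\e'$ small. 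Thus $x\in S(B_X,y^*,\e)$ and $y\in S(B_X,y^*,\e)$, so $gy\in S\cap G\cdot S(B_X,y^*,\e)$.

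For (ii)$\Rightarrow$(i), I will verify Proposition~\ref{prop:characterization-G-DP}(2). Given $x$, $\e$ and a $G$-slice $S_G=\bigcup_g g^{-1}S$, choose $y^*$ as in (ii) with $\e/2$. Then there are $g\in G$ and $y\in S(B_X,y^*,\e/2)$ with $gy\in S$. Hence $y\in g^{-1}S\subseteq S_G$ and $\|x+y\|\ge\re y^*(x+y)>2-\e$.

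Finally, (v)$\Rightarrow$(iv) is trivial. The main obstacle is (iv)$\Rightarrow$(v), which needs a Baire argument: $K(X^*)$ with the $w^*$-topology must be Baire, and each $D_G(S_n,\e)$ must contain a relatively $w^*$-open dense set. I would first show $D_G(S,\e)\supseteq$ a $w^*$-open set. The natural candidate is $\{y^*\in K(X^*): \exists g,\ \exists u\in S,\ \re y^*(g^{-1}u)>1-\e\}$. This is a union over $(g,u)$ of $w^*$-open sets, hence $w^*$-open, and it equals $D_G(S,\e)$ itself, since $u=gv$ with $v\in S(B_X,y^*,\e)$. So each $D_G(S_n,\e)$ is relatively $w^*$-open, and it is dense by (iv).

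For the Baire property I would cite that $\overline{\ext B_{X^*}}^{w^*}$ is compact and $K(X^*)$ is a $G_\delta$ in it, namely $\bigcap_m\{\|y^*\|>1-1/m\}$, with each set $w^*$-open relative to the compact set by lower semicontinuity of the norm. Hence $K(X^*)$ is Baire, being a $G_\delta$ in a compact Hausdorff space. Every relatively open subset of $K(X^*)$ is also Baire, which yields density of the countable intersection.
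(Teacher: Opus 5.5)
Your route is the same as the paper's: (i)$\Leftrightarrow$(ii) via item (2) of Proposition~\ref{prop:characterization-G-DP} and norming by extreme points, (ii)$=$(iii) by definition, (iv)$\Rightarrow$(iii) because $K(X^*)$ is norming, and (iv)$\Rightarrow$(v) because $D_G(S,\e)$ is relatively $w^*$-open and $K(X^*)$ is Baire. Your details in the last step are correct and slightly more explicit than the paper's: you write $D_G(S,\e)$ directly as a union of $w^*$-open sets, and you show $K(X^*)$ is a $G_\delta$ in the $w^*$-compact set $\overline{\ext(B_{X^*})}^{w^*}$.

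The problem is the ``standard fact'' you invoke for (iii)$\Rightarrow$(iv). It is false that the $w^*$-slices $S(B_{X^*},x,\delta)$, $x\in S_X$, form a neighbourhood base of the relative $w^*$-topology at every point of $K(X^*)$. Choquet's lemma provides such bases only at extreme points of the compact convex set $B_{X^*}$. The set $\overline{\ext(B_{X^*})}^{w^*}$ is not convex in general, and its non-extreme points can fail the property. For a counterexample, let $L\subseteq\R^3$ be the convex hull of a circle $C$ and a segment $[p,q]$ orthogonal to the plane of $C$ whose midpoint $z$ lies on $C$. Put $L_1=L\times\{1\}\subseteq\R^4$, and let $X$ be the four-dimensional space whose dual ball is $\co\bigl(L_1\cup(-L_1)\bigr)$. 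Then $z^*=(z,1)\in K(X^*)$, since it is a limit of the extreme points $(c,1)$ with $c\in C\setminus\{z\}$. But $z^*=\frac12\bigl((p,1)+(q,1)\bigr)$ with both endpoints extreme, so every slice containing $z^*$ contains one of them. Hence no slice containing $z^*$ fits inside a small neighbourhood of $z^*$. (Your argument for (iv)$\Rightarrow$(iii) does not actually use this fact.)

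What your argument really needs is weaker and true. Every nonempty relatively $w^*$-open set $V\cap K(X^*)$ contains a set $\{y^*\in K(X^*):\re y^*(x)>1-\e'\}$ for some $x\in S_X$ and $0<\e'\le\e$. You never use that this set contains $z^*$, and (iii) itself guarantees that it meets $D_G(S,\e')\subseteq D_G(S,\e)$. To prove the weaker fact:
\begin{itemize}
\item[(a)] Use $\ext(B_{X^*})\subseteq K(X^*)\subseteq\overline{\ext(B_{X^*})}^{w^*}$ to pick an extreme point $e^*\in V$.
\item[(b)] Apply Choquet's lemma at $e^*$ to obtain a $w^*$-slice of $B_{X^*}$ contained in $V$.
\item[(c)] Shrink its parameter to some $\e'\le\e$.
\end{itemize}
This is exactly the paper's reduction. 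The paper shows that every $w^*$-slice of $B_{X^*}$ meets $D_G(S,\e)$ and deduces $\ext(B_{X^*})\subseteq\overline{D_G(S,\e)}^{w^*}$ by Choquet's lemma. It then takes $w^*$-closures to get $K(X^*)\subseteq\overline{D_G(S,\e)}^{w^*}$. With this repair your proof is complete.
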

	
	\begin{proof}
		The equivalence of (i), (ii) and (iii) is an immediate consequence of item (2) of Proposition \ref{prop:characterization-G-DP}.
		
		Let us prove $\textup{(iii)} \Rightarrow \textup{(iv)}$.
		Fix $\e > 0$ and a slice $S$ of $B_X$.
		As in \cite[Proposition 4.2]{AKMMS}, it is enough to show that $\overline{D_G(S,\e)}^{w^*} \supseteq \ext(B_{X^*})$. Indeed, if this holds, then
		\begin{equation*}
			\overline{D_G(S,\e)}^{w^*}
			\supseteq
			\overline{\ext(B_{X^*})}^{w^*} \cap S_{X^*}
			=
			K(X^*).
		\end{equation*}
		So let $x^* \in \ext(B_{X^*})$, let $x \in S_X$ and let $\delta \in (0,\e)$.
		We will show that the $w^*$-slice
		\begin{equation*}
			S(B_{X^*}, x, \delta)
			=
			\{z^* \in B_{X^*}: \re z^*(x) > 1-\delta\}
		\end{equation*}
		intersects $D_G(S,\e)$.
		By (iii), applied to $x$, $\delta$ and the slice $S$, there exists $y^* \in D_G(S,\delta)$ such that $x \in S(B_X, y^*, \delta)$, i.e. $\re y^*(x) > 1-\delta$. Hence $y^* \in S(B_{X^*}, x, \delta)$, and since $\delta < \e$, we clearly have $D_G(S,\delta) \subseteq D_G(S,\e)$. Therefore,  $S(B_{X^*}, x, \delta) \cap D_G(S,\e) \not= \emptyset$, and so $x^* \in \overline{D_G(S,\e)}^{w^*}$.
		
		Next, let us prove $\textup{(iv)} \Rightarrow \textup{(iii)}$.
		Fix $x \in S_X$, $\e > 0$ and a slice $S$ of $B_X$.
		Since $D_G(S,\e)$ is $w^*$-dense in $K(X^*)$ and $K(X^*)$ is norming for $X$, we can choose $y^* \in D_G(S,\e) \cap S(B_{X^*}, x, \e)$. Then $\re y^*(x) > 1-\e$, that is, $x \in S(B_X, y^*, \e)$, proving (iii).
		
		To prove $\textup{(iv)} \Rightarrow \textup{(v)}$, it is enough to verify that $D_G(S,\e)$ is relatively $w^*$-open in $K(X^*)$.
		Indeed, once this is done, the conclusion follows from Baire's theorem, exactly as in \cite[Proposition 4.2]{AKMMS}, because $K(X^*)$ is a Baire space. So fix $\e > 0$, a slice $S$ of $B_X$, and let $y^* \in D_G(S,\e)$. Then there exists $y \in S \cap G \cdot S(B_X, y^*, \e)$. Hence there exist $g \in G$ and $\eta > 0$ such that $\re y^*(gy) > 1-\e+\eta$. Consider the $w^*$-neighbourhood of $0$
		\begin{equation*}
			U:=U_{gy,\eta}
			:=
			\{x^* \in X^*: |x^*(gy)| < \eta\}.
		\end{equation*}
		We claim that $(y^*+U)\cap K(X^*) \subseteq D_G(S,\e)$. Indeed, if $z^* \in (y^*+U)\cap K(X^*)$, then  $|(z^*-y^*)(gy)|<\eta$  and therefore
		\begin{equation*}
			\re z^*(gy)
			\geq
			\re y^*(gy) - |(z^*-y^*)(gy)|
			> 1-\e.
		\end{equation*}
		Thus $gy \in S(B_X, z^*, \e)$, which means precisely that $y \in S \cap G \cdot S(B_X, z^*, \e)$. Hence $z^* \in D_G(S,\e)$, as claimed.
		
		Finally, $\textup{(v)} \Rightarrow \textup{(iv)}$ is immediate.
	\end{proof}
	
	Let us recall the definition of a \emph{slicely countably determined} (SCD, for short) operator introduced in~\cite{AKMMS}.
	Given two Banach spaces $X$ and $Y$, a~convex bounded set $A \subseteq Y$ and a~bounded linear operator $T: X \to Y$, we say that $A$ is an SCD set if there is a~countable family $\{S_n : n \in \N\}$ of slices of $A$ such that $A \subseteq \clco(B)$ for every $B \subset A$ intersecting all the sets $S_n$. We call $\{S_n\}$ a determining sequence of slices for $A$. We call $T$ an SCD operator if $T(B_X)$ is an SCD set.
	
	\begin{proposition} \label{prop:G-DPandSCDoperators}
		Assume that $X$ has the $G$-DPr and consider $T \in \mathcal{L}(X)$ to be an SCD-operator. Then,
		\begin{equation*}
			\sup_{g \in G} \|g + T\| = 1 + \|T\|.
		\end{equation*}
	\end{proposition}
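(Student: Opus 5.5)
We follow the scheme of \cite{AKMMS}, with the classical slice sets replaced by the sets $D_G(S,\e)$ of \Cref{prop:GDPandSCDslices}. Normalize so that $\|T\|=1$ and fix $\e>0$. Let $A=T(B_X)$, which is an SCD set, and let $\{S_n:n\in\N\}$ be a determining sequence of slices of $A$. Each $S_n=\{a\in A:\re z_n^*(a)>\sup\re z_n^*(A)-\alpha_n\}$ pulls back to a slice $\tilde S_n:=T^{-1}(S_n)\cap B_X=S(B_X,T^*z_n^*/\|T^*z_n^*\|,\beta_n)$ of $B_X$, exactly as in the proof of \Cref{prop:sRN-Operators} via \cite[Lemma 2.6.5]{Kadets2000}. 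By \Cref{prop:GDPandSCDslices}(v), the set $\mathcal D:=\bigcap_n D_G(\tilde S_n,\e)$ is $w^*$-dense in $K(X^*)$.

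Since $\|T^*\|=1$ and $K(X^*)$ is a $w^*$-closed norming subset of $B_{X^*}$, we can pick $y_0^*\in K(X^*)$ with $\|T^*y_0^*\|>1-\e$. Take $x_0\in S_X$ with $\re y_0^*(Tx_0)>1-\e$. The set $\{z^*\in K(X^*):\re z^*(Tx_0)>1-\e\}$ is then a nonempty relatively $w^*$-open subset of $K(X^*)$, so by density it contains some $y^*\in\mathcal D$. Consequently $\sup\re y^*(A)>1-\e$. Moreover, for every $n$ there are $u_n\in\tilde S_n$ and $g_n\in G$ with $\re y^*(g_nu_n)>1-\e$. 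In particular $Tu_n\in S_n$, so the set $B:=\{Tu_n:n\in\N\}$ meets every $S_n$, and the SCD property gives $A\subseteq\clco(B)$.

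The key step is to convert this into a single good index. Since $\sup_{a\in A}\re y^*(a)>1-\e$ and $A\subseteq\clco(B)$, some $n$ satisfies $\re y^*(Tu_n)>1-\e$. (A continuous linear functional's supremum over $\clco(B)$ equals its supremum over $B$.) For this $n$ we get
\[
\|g_n+T\|\ge\|g_nu_n+Tu_n\|\ge\re y^*(g_nu_n+Tu_n)>2-2\e,
\]
and letting $\e\to0$ gives $\sup_{g\in G}\|g+T\|=2$.

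The main obstacle is coordinating the two conditions: we need a functional $y^*$ that simultaneously almost norms $A$ and has $G$-translated slices hitting all the determining slices. This is exactly what the Baire-category $w^*$-density in item (v) provides, so the delicate point is checking that the "almost norming $A$" set is relatively $w^*$-open and nonempty in $K(X^*)$. Here we use that $K(X^*)$ norms $T(B_X)$, which holds because the $w^*$-closure of $\ext(B_{X^*})$ is norming (Krein–Milman). If we did not intersect with $S_{X^*}$, norm-one elements could be lost. However, any $w^*$-limit $z^*$ with $\re z^*(Tx_0)>1-\e$ has norm close to $1$, which suffices after a slight adjustment of $\e$.
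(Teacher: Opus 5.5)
Your proof is correct and follows the paper's argument: you pull the determining slices back to $B_X$, use the $w^*$-density of $\bigcap_n D_G(\tilde S_n,\e)$ from \Cref{prop:GDPandSCDslices}(v) to pick $y^*$ almost norming $T(B_X)$, and then use the determining property to find $u$ and $g\in G$ with $\re y^*(gu)$ and $\re y^*(Tu)$ both close to $1$. Working with the countable set $\{Tu_n\}$ and estimating $\|g_n+T\|$ directly, rather than convex combinations and $\|\id+T\circ g\|$ together with \Cref{fact:G-DP-equation-commutes}, is only a cosmetic simplification; two small corrections are that $K(X^*)$ need not be $w^*$-closed (only its normingness, via $\ext(B_{X^*})\subseteq K(X^*)$, is used), and that the closing hedge about adjusting $\e$ is unnecessary because $y^*\in\mathcal D\subseteq K(X^*)\subseteq S_{X^*}$.
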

	
	\begin{proof}
		We may assume that $\|T\|=1$.
		Let $(S_n)$ be a determining sequence of slices of $T(B_X)$.
		By \cite[Proposition 2.6.5]{KMRW}, the sets $T^{-1}(S_n)\cap B_X$ are slices of $B_X$. Fix $\e>0$, and choose $a \in S_X$ such that $\|T(a)\| > 1-\e$. By Proposition \ref{prop:GDPandSCDslices}, the set
		\begin{equation*}
			\bigcap_{n \in \N} D_G(T^{-1}(S_n)\cap B_X,\e)
		\end{equation*}
		is $w^*$-dense in $K(X^*)$.
		Since $K(X^*)$ is norming for $X$, we can find
		\begin{equation*}
			y^* \in \bigcap_{n \in \N} D_G(T^{-1}(S_n)\cap B_X,\e)
		\end{equation*}
		such that
		\begin{equation} \label{eq:GDP-SCD-op-1}
			\re y^*(T(a)) \geq \|T(a)\|-\e > 1-2\e.
		\end{equation}
		By the definition of $D_G$, for every $n \in \N$ we have $\bigl(T^{-1}(S_n)\cap B_X\bigr)\cap G\cdot S(B_X,y^*,\e)\not=\emptyset$. Hence, for every $n \in \N$, $S_n \cap T\bigl(G\cdot S(B_X,y^*,\e)\bigr)\not=\emptyset$. Since $(S_n)$ is determining for $T(B_X)$, it follows that $T(B_X)\subseteq \overline{\co}\bigl(T(G\cdot S(B_X,y^*,\e))\bigr)$. In particular, $T(a)\in \overline{\co}\bigl(T(G\cdot S(B_X,y^*,\e))\bigr)$. Therefore, there exists $z \in \co\bigl(T(G\cdot S(B_X,y^*,\e))\bigr)$ such that $\|T(a)-z\|<\e$. Together with \eqref{eq:GDP-SCD-op-1}, this gives
		\begin{equation} \label{eq:GDP-SCD-op-2}
			\re y^*(z) > 1-3\e.
		\end{equation}
		
		We may write
		\begin{equation*}
			z=\sum_{k=1}^m \lambda_k T(g_kx_k)
		\end{equation*}
		where $x_k \in S(B_X,y^*,\e)$, $g_k \in G$, $\lambda_k \geq 0$ for every $k=1,\ldots,m$, and $\sum_{k=1}^m \lambda_k=1$.
		By \eqref{eq:GDP-SCD-op-2}, there exists $k_0 \in \{1,\ldots,m\}$ such that $\re y^*(T(g_{k_0}x_{k_0})) > 1-3\e$. Since $x_{k_0} \in S(B_X,y^*,\e)$, we also have $\re y^*(x_{k_0}) > 1-\e$. Hence $\re y^*\bigl(x_{k_0}+T(g_{k_0}x_{k_0})\bigr) > 2-4\e$. Using Proposition \ref{fact:G-DP-equation-commutes}, we obtain
		\begin{eqnarray*}
			\sup_{g \in G} \|g + T\|
			= \sup_{g \in G} \|\id + T \circ g\|
			&\geq&
			\|\id + T\circ g_{k_0}\| \\
			&\geq&
			\|x_{k_0}+T(g_{k_0}x_{k_0})\| \\
			&\geq&
			\re y^*\bigl(x_{k_0}+T(g_{k_0}x_{k_0})\bigr)
			>
			2-4\e.
		\end{eqnarray*}
		Since $\e>0$ was arbitrary, the result follows.
	\end{proof}
	
	Since SCD operators must have separable range, the use-cases of the above proposition are somewhat limited. However, we can employ the same trick as in \cite[Corollary~5.5]{AKMMS} to obtain information about operators with non-separable range.
	
	Before doing so, we need a small lemma.
	\begin{lemma} \label{prop:stabilityForIncreasingSeqOfSubspaces}
		Let $X$ be a~$G$-Banach space and $X_n$, $n \in \N$, be a sequence of increasing closed $G$-invariant subspaces of $X$. If every $X_n$ has the $G$-DPr, then so does $Y = \overline{\bigcup_{n=1}^\infty X_n}$.
	\end{lemma}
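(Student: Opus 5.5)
The plan is to apply \Cref{lem:sepDeterminationLemma} to the net $(E_n, G_n)_{n\in\N}$ with $E_n = X_n$ and the constant groups $G_n = G$, indexed by $\N$ with its usual order. The setup is immediate. The $X_n$ are increasing and each is $G$-invariant. The constant net of groups is trivially increasing and upward filtering. In the notation of the lemma, $Y = \overline{\bigcup_n X_n}$ and $H = \bigcup_n G = G$. It then suffices to verify the hypothesis of the lemma: for every $\delta>0$ and $i\in\N$ there should be $j\in\N$ with
\begin{equation*}
\clco_{G}\bigl(S_{X_j}\setminus (x+(2-\delta)B_X)\bigr)\supseteq S_{X_i} \quad\text{for every } x\in S_{X_i}.
\end{equation*}

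I will take $j=i$. Since $X_i$ has the $G$-DPr with respect to the restricted action, item (7) of \Cref{prop:characterization-G-DP} applied inside $X_i$ gives
\begin{equation*}
\clco_G\bigl(S_{X_i}\setminus(x+(2-\delta)B_{X_i})\bigr)=B_{X_i}\supseteq S_{X_i}
\end{equation*}
for every $x\in S_{X_i}$. Here the closure and convex hull are computed in $X_i$, but since $X_i$ is closed in $X$ this agrees with the closed convex $G$-invariant hull taken in $X$.

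The only point that needs care is replacing $B_{X_i}$ by $B_X$ in the excluded ball. For $y\in S_{X_i}$, the condition $y\notin x+(2-\delta)B_{X_i}$ means $\|x-y\|>2-\delta$, because $x-y\in X_i$ and $X_i$ carries the norm inherited from $X$. This is the same as $y\notin x+(2-\delta)B_X$. Hence the two sets $S_{X_i}\setminus(x+(2-\delta)B_{X_i})$ and $S_{X_i}\setminus(x+(2-\delta)B_X)$ coincide, and the hypothesis of the lemma holds with $j=i$. The lemma then yields that $Y$ is $G$-invariant and has the $G$-DPr, which is exactly the claim. I expect no real obstacle here. The only subtlety is the identification of hulls and balls relative to the subspace, which holds because the subspaces are closed and isometrically embedded.
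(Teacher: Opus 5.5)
Your proof is correct, but it takes a different route from the paper. The paper argues directly from the definition. For a norm-one rank-one $T=y^*\otimes y$ on $Y$, it picks $n$ and $x\in X_n$ with $\|x-y\|<\varepsilon$ and $\|y^*|_{X_n}\|>1-\varepsilon$. It then applies the $G$-DPr of $X_n$ to $S=y^*|_{X_n}\otimes x$, obtaining $z\in S_{X_n}$ and $g\in G$ with $\|gz+Sz\|>2-3\varepsilon$. Finally it transfers this to $T$ using $\|Sz-Tz\|\le\|x-y\|<\varepsilon$.

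You instead treat the statement as the special case of Lemma~\ref{lem:sepDeterminationLemma} with the constant net of groups and $j=i$. This is the same device the paper uses in the implication $(2)\Rightarrow(1)$ of its first separable determination theorem. Your check of the hypothesis is sound: you apply item (7) of Proposition~\ref{prop:characterization-G-DP} inside $X_i$, and you note that for the closed, isometrically embedded $X_i$ the excluded ball and the closed convex $G$-invariant hull may be taken in $X$ rather than in $X_i$.

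Your route is shorter and more general. Without change, it handles an increasing sequence of groups $G_n$ with $X_n$ only $G_n$-invariant, and yields the $\bigcup_n G_n$-DPr of $Y$. That is the form actually invoked in Corollary~\ref{cor:GDP-SCD-nonseparable}, although the paper's argument adapts just as easily.

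The paper's route is self-contained: it uses only the definition and one perturbation estimate. Yours rests on Proposition~\ref{prop:denseSubsetG-DP}, whose proof the paper omits, and on item (7), whose proof there is terse. In that proof one must normalize the vectors $u\in Q(-x,\varepsilon)$ to the sphere. This is harmless: $u\mapsto u/\|u\|$ keeps $u$ in a $G$-slice $S_G(B_X,x^*,\delta)$ with $\delta<1$, and it changes $\|x-u\|$ by at most $1-\|u\|<\varepsilon$.
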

	\begin{proof}
		First of all, notice that $Y$ is $G$-invariant, so it makes sense to talk about it having the $G$-DPr as a~$G$-Banach space. Let $T: Y \to Y$ be a~rank-one operator with $\norm{T} = 1$, that is, there are $y \in S_Y$ and $y^* \in S_{Y^*}$ such that $T = y^* \otimes y$. Let $\eps > 0$ be arbitrary. Then there is $x \in \bigcup_{n=1}^\infty X_n$ such that $\norm{y-x} < \eps$. Find $n \in \N$ such that $x \in X_n$ and also $\norm{y^*|_{X_n}} > 1-\eps$ (to do so, one finds $z \in Y$ with $y^*(z) > 1-\eps/2$ and $\eps/2$-approximates $z$ with an element from $\bigcup_{n=1}^\infty X_n$). Define $S: X_n \to X_n$ by $S = y^*|_{X_n} \otimes x$. Then
		\begin{equation*}
			\norm{S} = \norm{y^*|_{X_n}} \norm{x} > (1-\eps)(1-\eps) > 1-2\eps.
		\end{equation*}
		Since $X_n$ has the $G$-DPr, there are $z \in S_{X_n}$ and $g \in G$ such that $\norm{gz + Sz} > 2 - 3\eps$.
		We clearly have
		\begin{equation*}
			\norm{Sz - Tz}
			= \norm{y^*(z)x - y^*(z)y}
			= \abs{y^*(z)}\norm{x-y}
			< \eps.
		\end{equation*}
		Putting these together, we obtain
		\begin{equation*}
			\norm{gz + Tz} \geq \norm{gz + Sz} - \norm{Sz - Tz} > 2-3\eps - \eps
		\end{equation*}
		which shows that $\norm{g+T} > 2 - 4\eps$. Taking the supremum over $g \in G$ we get the desired equality $\sup_{g \in G} \norm{g + T} = 2$.
	\end{proof}
	
	One may hope to remove the assumption that the subspaces $X_n$ are increasing and instead take $Y = \clspan \bigcup_{n=1}^\infty X_n$. A~simple example shows that this is not possible. Let $X_n = \R$, $X = (\sum_{n=1}^\infty X_n)_{\ell_p}$ with $p \in (1, \infty) \setminus \{2\}$ and $G = \{\id, -\id\}$. Then the spaces $X_n$ can be naturally identified with $G$-invariant subspaces of $X$ and it is well-known that they have the $G$-DPr (which coincides with the aDP for this choice of $G$). It is also clear that $X = \clspan \bigcup_{n=1}^\infty X_n$. So, if this more general statement was true, it would imply that $X = \ell_p$ had the aDPr, however that is not the case by \Cref{cor:Lp-ellp-examples}.
	
	\begin{corollary} \label{cor:GDP-SCD-nonseparable}
		Let $X$ be a $G$-Banach space for some topological group $G$ and assume that the action is continuous. If $X$ has the $G$-DPr and $T \in \mathcal{L}(X)$ is an operator such that $T(B_Y)$ is an SCD set for every separable $G$-invariant subspace $Y$ of $X$, then
		\begin{equation*}
			\sup_{g \in G} \|g + T\| = 1 + \|T\|.
		\end{equation*}
	\end{corollary}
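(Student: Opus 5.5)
We follow the scheme of \cite[Corollary~5.5]{AKMMS}: reduce to a separable $G$-invariant subspace carrying the relevant part of $T$, and then combine Proposition~\ref{prop:G-DPandSCDoperators} with separable determination. We may assume $\|T\|=1$ and fix $\e>0$. Choose $x_0\in S_X$ with $\|Tx_0\|>1-\e$. We build an increasing sequence of closed separable subspaces $Y_1\subseteq Y_2\subseteq\cdots$ and countable subgroups $G_1\leq G_2\leq\cdots$ of $G$. Then we set $Y=\overline{\bigcup_n Y_n}$ and $H=\overline{\bigcup_n G_n}$, and we want four properties: (a) $Y$ is $G$-invariant; (b) $T(Y)\subseteq Y$; (c) $x_0\in Y$; (d) $Y$ has the $G$-DPr as a $G$-Banach space.

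\emph{Obtaining the $G$-DPr.} Use the claim in the proof of the first separable determination theorem: given a separable subspace and a countable subgroup, it produces a larger separable subspace with the Daugavet-type inclusions of Lemma~\ref{lem:sepDeterminationLemma} relative to a larger countable subgroup. We interleave this step with two closures. First, replace the current space $E$ by $\clspan(E\cup T(E))$. Second, replace it by $\clspan(G\cdot E)$.

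\emph{The main obstacle.} The second closure need not be separable when $G$ is not separable. Invariance under the whole group $G$ is demanded by the hypothesis "every separable $G$-invariant subspace", so it cannot be dropped. My first attempt is a different route to (d). Apply Lemma~\ref{lem:sepDeterminationLemma} with the groups $G_n$ and the spaces $E_n$ to get that $Y$ has the $\overline{\bigcup G_n}$-DPr. Then invoke Proposition~\ref{fact-DP-implies-GDP}, which upgrades this to the $G$-DPr as soon as $Y$ is $G$-invariant. So only $G$-invariance of a separable $Y$ is needed. If the hypothesis is read as quantifying over subspaces $Y$ that are invariant and separable, and no separable $G$-invariant $Y$ containing $x_0$ exists, then there is a genuine gap here. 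In that case I would weaken the goal to $H$-invariant subspaces with $H$ separable, using continuity of the action, and apply the SCD hypothesis there. I expect this to be the main point to check.

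\emph{Conclusion.} Granting a separable, $G$-invariant, $T$-invariant $Y\ni x_0$ with the $G$-DPr, consider the restriction $T|_Y\in\mathcal L(Y)$. By hypothesis, $T(B_Y)$ is an SCD set, so $T|_Y$ is an SCD operator on $Y$. Also $\|T|_Y\|\ge\|Tx_0\|>1-\e$. Proposition~\ref{prop:G-DPandSCDoperators} applied in $Y$ gives $g\in G$ and $y\in S_Y$ with
\[
\|gy+Ty\|>1+\|T|_Y\|-\e>2-2\e .
\]
Hence $\|g+T\|>2-2\e$ in $X$. Since $\e>0$ was arbitrary, $\sup_{g\in G}\|g+T\|=2=1+\|T\|$, as required.
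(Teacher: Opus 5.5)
Your argument is conditional on an object you never construct: a separable, $G$-invariant, $T$-invariant subspace $Y\ni x_0$ with the $G$-DPr. For non-separable $G$ such a subspace need not exist; in the example below the only separable $G$-invariant subspace is $\{0\}$. So your main line stalls exactly where you feared. It does work when $G$ is separable or $\sigma$-compact, since then $\clspan(G\cdot E)$ stays separable, as in the second separable determination theorem.

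The idea you are missing is that neither $G$-invariance nor the $G$-DPr of $Y$ is needed, so the upgrade through Proposition~\ref{fact-DP-implies-GDP} is superfluous. The paper takes, from the separable determination theorem, separable subspaces $Y_1\subseteq Y_2\subseteq\cdots$ and separable subgroups $G_1\le G_2\le\cdots$ such that $Y_n$ is $G_n$-invariant with the $G_n$-DPr, $T(Y_n)\subseteq Y_{n+1}$ and $\|T|_{Y_1}\|=\|T\|$. Then $Y=\overline{\bigcup_n Y_n}$ is $T$-invariant and invariant under $H=\bigcup_n G_n$, and it has the $H$-DPr by the increasing-union argument of Lemma~\ref{prop:stabilityForIncreasingSeqOfSubspaces}. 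Proposition~\ref{prop:G-DPandSCDoperators} applied to the $H$-Banach space $Y$ gives $\sup_{h\in H}\|h|_Y+T|_Y\|=1+\|T\|$. Since $\|h|_Y+T|_Y\|\le\|h+T\|$ and $H\le G$, this finishes the proof. So your fallback is precisely the paper's route, but you left it as an intention rather than carrying it out.

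Your worry about the SCD hypothesis is nevertheless justified. The paper's step ``by hypothesis, $T(B_Y)$ is an SCD set'' is applied to a $Y$ that is only $H$-invariant, which the hypothesis as written does not cover. Under the literal reading the statement fails. Let $\Gamma$ be uncountable and $K=\{0,1\}^\Gamma$, a perfect compact space. Let $X=C(K)$ over $\C$, which has the DPr and hence the $G$-DPr for every $G$. Let $G$ be the group of continuous maps $u\colon K\to\{1,\omega,\omega^2\}$, $\omega=e^{2\pi i/3}$, acting by multiplication and carrying the discrete topology, so the action is continuous. If $f\neq0$, then $|f|\ge c>0$ on a clopen cylinder $C$ depending only on coordinates in a finite set $F\subseteq\Gamma$. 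For $\gamma\in\Gamma\setminus F$ let $u_\gamma=\omega$ on $\{t\in C: t_\gamma=1\}$ and $u_\gamma=1$ elsewhere. Then $\|u_\gamma f-u_{\gamma'}f\|\ge\sqrt3\,c$ whenever $\gamma\neq\gamma'$, so $G\cdot f$ is non-separable and the only separable $G$-invariant subspace is $\{0\}$. Hence $T=-\id_X$ satisfies the hypothesis trivially, while $\|u+T\|=\|u-1\|_\infty\le\sqrt3<2=1+\|T\|$ for every $u\in G$.

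The hypothesis therefore has to be read as applying to all separable subspaces, or at least to those invariant under some separable subgroup of $G$. That is how it is actually used in Corollary~\ref{result-contain-copies-of-l1}. With that reading, your fallback, carried out as above, is a complete proof.
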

	
	\begin{proof}
		First, choose a separable subspace $Y_1 \subseteq X$ such that
		\begin{equation*}
			\|T|_{Y_1}\| = \|T\|.
		\end{equation*}
		Using the separable determination theorem for the $G$-DPr, we can recursively construct separable subspaces $Y_n$ of $X$ and separable subgroups $G_n \leq G$ such that, for every $n \in \N$,
		\begin{itemize}
			\item $Y_n$ is $G_n$-invariant,
			\item $Y_n$ has the $G_n$-DPr,
			\item $Y_n \subseteq Y_{n+1}$ and $G_n \leq G_{n+1}$,
			\item $T(Y_n) \subseteq Y_{n+1}$.
		\end{itemize}
		Let
		\begin{equation*}
			Y = \overline{\bigcup_{n=1}^\infty Y_n}
			\qquad\text{and}\qquad
			H = \bigcup_{n=1}^\infty G_n.
		\end{equation*}
		Then $Y$ is a separable $H$-invariant subspace, $T(Y)\subseteq Y$, and, by \Cref{prop:stabilityForIncreasingSeqOfSubspaces}, $Y$ has the $H$-DPr.
		Also, $\|T|_Y\| = \|T\|$. By hypothesis, $T(B_Y)$ is an SCD set, so Proposition \ref{prop:G-DPandSCDoperators} applied to the $H$-Banach space $Y$ gives
		\begin{equation*}
			\sup_{h \in H} \|h + T|_Y\| = 1 + \|T|_Y\| = 1 + \|T\|.
		\end{equation*}
		Since $H \leq G$, we finally get
		\begin{equation*}
			\sup_{g \in G} \|g + T\|
			\geq
			\sup_{h \in H} \|h + T|_Y\|
			=
			1+\|T\|.
		\end{equation*}
		The reverse inequality is automatic.
	\end{proof}
	
	In the same way as for the aDPr, we obtain the following corollary.
	
	\begin{corollary} \label{result-contain-copies-of-l1}
		Let $X$ be a $G$-Banach space and assume the action to be continuous. If $X$ has the $G$-DPr and does not contain copies of $\ell_1$, then $n_{G}(X) = 1$.
	\end{corollary}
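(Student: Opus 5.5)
We follow the classical route for the aDPr, now using \Cref{cor:GDP-SCD-nonseparable} and \Cref{lem:G-DP-numerical-radius}. Since the action is continuous and $X$ has the $G$-DPr, \Cref{cor:GDP-SCD-nonseparable} yields
\begin{equation*}
	\sup_{g\in G}\|g+T\|=1+\|T\|
\end{equation*}
for every $T\in\mathcal L(X)$ such that $T(B_Y)$ is an SCD set for every separable $G$-invariant subspace $Y$ of $X$. It is therefore enough to show that, when $X$ contains no copy of $\ell_1$, every operator has this property. Once that holds, \Cref{fact:G-DP-equation-commutes} rewrites the equation as $\sup_{g}\|\id+T\circ g\|=1+\|T\|$, and \Cref{lem:G-DP-numerical-radius} then gives $v_G(T)=\|T\|$ for every $T$. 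Taking the infimum over $\|T\|=1$ in \eqref{G-numerical-index} gives $n_G(X)=1$.

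The key step is the SCD input. Let $Y$ be a separable $G$-invariant subspace and $T\in\mathcal L(X)$. The set $T(B_Y)$ is a bounded convex subset of the separable space $\overline{T(Y)}$. Since $X$ contains no copy of $\ell_1$, neither does $\overline{T(Y)}$, and by Rosenthal's $\ell_1$ theorem every bounded sequence in $T(B_Y)$ has a weakly Cauchy subsequence. We then invoke the result from \cite{AKMMS}: in a separable Banach space, every bounded convex set with no $\ell_1$-sequences (in particular every bounded convex subset of a separable space not containing $\ell_1$) is SCD. Hence $T(B_Y)$ is an SCD set, so \Cref{cor:GDP-SCD-nonseparable} applies to every $T\in\mathcal L(X)$.

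The main obstacle is this SCD step. We must check that the cited theorem covers convex sets that are not necessarily closed and that live inside $X$ rather than inside $\overline{T(Y)}$. Both points are harmless, for three reasons. First, being SCD is preserved under taking closures. Second, slices of a set are determined by functionals on the ambient separable subspace, and these extend to $X$ by Hahn--Banach. Third, the ``no $\ell_1$'' hypothesis passes to subspaces. Everything else is a direct assembly of the earlier results, so we expect the written proof to be short.
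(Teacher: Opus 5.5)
Your proposal is correct and follows essentially the same route as the paper. Both arguments show that $T(B_Y)$ is an SCD set for every separable $Y$ because $X$ contains no copy of $\ell_1$, then apply \Cref{cor:GDP-SCD-nonseparable} and conclude with \Cref{lem:G-DP-numerical-radius}. The paper gets the SCD step from operators not fixing copies of $\ell_1$ in \cite[Examples 10.4.2]{KMRW}, while you use Rosenthal's theorem and the convex-set result of \cite{AKMMS}; you also make explicit, via \Cref{fact:G-DP-equation-commutes}, the conversion of $\sup_{g}\|g+T\|$ into the form $\sup_{g}\|\id+T\circ g\|$ that the lemma requires, which the paper leaves implicit.
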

	
	\begin{proof}
		Since $X$ does not contain copies of $\ell_1$, neither does any separable $G$-invariant subspace $Y$ of $X$. Hence, for every operator $T \in \mathcal{L}(X)$, the restriction $T|_Y$ does not fix copies of $\ell_1$, and therefore $T(B_Y)$ is an SCD set by \cite[Examples 10.4.2]{KMRW}.
		By Corollary \ref{cor:GDP-SCD-nonseparable}, every operator $T \in \mathcal{L}(X)$ satisfies
		\begin{equation*}
			\sup_{g \in G} \|g + T\| = 1 + \|T\|.
		\end{equation*}
		Using Lemma \ref{lem:G-DP-numerical-radius}, we get $v_G(T)=\|T\|$ for every $T \in \mathcal{L}(X)$, and so $n_{G}(X)=1$. 
	\end{proof}
	
	By simply setting $G = S_\K$, we recover the classical version of the statement for the aDPr. By setting $G = \{\id_X\}$ and combining with \Cref{rem:GnumIndex}, we recover the statement that a~Banach space with the DPr must contain a~copy of $\ell_1$.
	
	\subsection{Group lush spaces} In this short section, we prove that $G$-lush spaces satisfy $n_G(X) = 1$ and also that if $X$ has the $G$-DPr and its unit ball is an SCD set, then $X$ must be $G$-lush. This will help us to recover classical result on the theory.

	\begin{definition} Let $X$ be a $G$-Banach space. We say that $X$ is {\it $G$-lush} if for every $x, y \in S_X$ and every $\e > 0$, there is $y^* \in S_{Y^*}$ such that $y \in S(B_X, y^*, \e)$ and $\dist(x, \co_G(S(B_X, y^*, \e))) < \e$.
	\end{definition}

	Writing out the definitions, we see that $X$ is $G$-lush if and only if for every $x, y \in S_X$ and $\e>0$, there are $y^* \in S_{X^*}$, $x_1, \ldots, x_n \in S(B_X, y^*, \e)$, $g_1, \ldots, g_n \in G$ and $\lambda_1, \ldots, \lambda_n \geq 0$ with $\sum_{k=1}^n \lambda_k =1$ such that 
	\begin{equation*}
		\left\| x - \sum_{k=1}^n \lambda_k g_k x_k \right\| < \e.
	\end{equation*}

	The analogous to \cite[Proposition 2.2]{BoykoKadetsMartinWerner2007} is the following. As it had happened before in the present paper, the proof is analogous to the original one by keeping track of the action of the group on $X$. We present the proof for the sake of completeness.
	
	\begin{proposition}
		Let $X$ be a $G$-Banach space such that $X$ is $G$-lush. Then, $n_{G}(X)=1$.
	\end{proposition}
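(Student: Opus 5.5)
The plan is to show directly that $v_G(T)\geq \|T\|$ for every $T\in\mathcal{L}(X)$ with $\|T\|=1$. The inequality $v_G(T)\leq\|T\|$ is trivial, so this gives $n_G(X)=1$. The argument follows the pattern of \Cref{prop:convex-transitivity-G-numerical-index}: there, convex-transitivity let us approximate a good point $y$ by $G$-convex combinations of an arbitrary $x$. Here $G$-lushness will let us approximate $y$ by $G$-convex combinations of points lying in a slice of a functional $y^*$ that almost attains its norm at a prescribed point. Choosing that point so that $y^*$ acts almost like $x^*$ on the relevant images will give the estimate.

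Concretely, fix $\e\in(0,1)$ and $T$ with $\|T\|=1$.

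\emph{Step 1.} Pick $x\in S_X$ with $\|Tx\|>1-\e$ and set $u=Tx/\|Tx\|\in S_X$.

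\emph{Step 2.} Apply $G$-lushness to the pair $(x,u)$, with $x$ the point to be approximated and $u$ the point to lie in the slice. This yields $y^*\in S_{X^*}$, points $x_1,\dots,x_n\in S(B_X,y^*,\e)$, elements $g_1,\dots,g_n\in G$ and convex weights $\lambda_k$ such that
\[
\re y^*(u)>1-\e \quad\text{and}\quad \Bigl\|x-\sum_k\lambda_k g_kx_k\Bigr\|<\e.
\]

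\emph{Step 3.} Apply $T$ to the approximation and then $y^*$. This gives
\[
\re\sum_k \lambda_k\, y^*(T(g_kx_k)) > \re y^*(Tx)-\e > (1-\e)^2-\e .
\]
Hence some $k_0$ satisfies $\re y^*(T(g_{k_0}x_{k_0}))>1-3\e$, while $\re y^*(x_{k_0})>1-\e$.

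\emph{Step 4.} Get back to an honest numerical range element. The point $x_{k_0}$ lies only in a slice of $y^*$; $y^*(x_{k_0})$ need not equal $1$. The cleanest fix avoids the state-approximation machinery altogether. From Step 3,
\[
\|x_{k_0}+T(g_{k_0}x_{k_0})\|\geq \re y^*\bigl(x_{k_0}+Tg_{k_0}x_{k_0}\bigr)>2-4\e .
\]
Thus $\sup_{g}\|\id+Tg\|=2=1+\|T\|$, and \Cref{lem:G-DP-numerical-radius} yields $v_G(T)=\|T\|$ immediately. Since $T$ was arbitrary, $n_G(X)=1$.

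The main obstacle I expect is Step 2: the roles of $x$ and $y$ in the definition of $G$-lushness must be matched correctly. The point placed in the slice must be $Tx/\|Tx\|$, so that $y^*$ nearly norms $Tx$. The point approximated by $G$-convex combinations must be $x$ itself, so that linearity of $T$ transfers the near-norming to some $T(g_kx_k)$. Once this assignment is right, the rest is the triangle inequality plus the already proved \Cref{lem:G-DP-numerical-radius}, which spares us a Bishop--Phelps--Bollobás type approximation.
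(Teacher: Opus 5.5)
Your proof is correct and follows the paper's argument: apply $G$-lushness to the pair $(x, Tx/\|Tx\|)$, use linearity of $T$ and a convexity argument to find some $g_{k_0}x_{k_0}$ with $\re y^*(x_{k_0}) + \re y^*(Tg_{k_0}x_{k_0})$ close to $2$, and then invoke \Cref{lem:G-DP-numerical-radius} to get $v_G(T)=\|T\|$. Your Step 3 bookkeeping, which works directly with $\re y^*(Tx) > (1-\e)^2$, is slightly cleaner than the paper's, which passes through $|y^*(T(v))|$.
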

	
	\begin{proof} Let $T \in \mathcal{L}(X)$ with $\|T\|= 1$. It is enough to prove that $v_G(T) = 1$. By \Cref{lem:G-DP-numerical-radius}, it is enough to prove that
		\begin{equation*}
			\sup_{g \in G} \| \id_X + Tg\| = 1 + \|T\| = 2.
		\end{equation*}
		As in the original proof, pick $\e \in (0,\frac{1}{2})$ and $x_0 \in S_X$ to be such that $\|T(x_0)\| > 1 - \e$, and set $y_0 = \frac{T(x_0)}{\|T(x_0)\|} \in S_X$. We apply $G$-lushness to the pair $x_0, y_0$ to get a functional $y^* \in S_{X^*}$ such that $y_0 \in S(B_X, y^*, \e)$ and elements $x_1, \ldots, x_n \in S(B_X, y^*, \e)$, $g_1, \ldots, g_n \in G$ and convex coefficients $\lambda_1, \ldots, \lambda_n \geq 0$ such that 
		\begin{equation*}
			v:= \sum_{k=1}^n \lambda_k g_k x_k \ \ \ \mbox{is such that} \ \ \ \|x_0 - v\| < \e.
		\end{equation*}
		Since $y_0 \in S(B_X, y^*, \e)$, we have that $\re y^*(y_0) > 1 - \e$ and since 
		\begin{equation*}
			\re y^*(T(v)) = \re y^* \left( \frac{T(x_0)}{\|T(x_0)\|}\right) - \re y^* \left( T \left( \frac{x_0}{\|T(x_0)\|} - v \right) \right) 
		\end{equation*}
		we have that 
		\begin{equation*}
			\re y^*(T(v)) \geq \re y^*(y_0) - \left\| \frac{x_0}{\|T(x_0)\|} - v \right\|.
		\end{equation*}
		
		Once again as in the original proof, we get that $|y^*(T(v))| > 1 - 4 \e$ and by a convex argument, there exists $j \in \{1, \ldots, n\}$ such that $\re y^*(T(g_j x_j)) > 1 - 4\e$. As $x_j \in S(B_X, y^*, \e)$, we have that 
		\begin{equation*}
			\| \id_X + T g_j\| \geq \re y^*(x_j) + \re y^*(T(g_j x_j)) > 2 - 5\e.
		\end{equation*}
		Since $T \in \mathcal{L}(X)$ was arbitrary, we have that $n_{G}(X) = 1$.
	\end{proof}

	We now present the $G$-version of~\cite[Theorem~4.4]{AKMMS}. We will use it in \Cref{theorem:all-together} to prove that a $G$-Banach space with the $G$-DPr and an additional condition on the group $G$ cannot be such that its unit ball is SCD.

	\begin{proposition} \label{G-DPr-G-lush}
		Let $X$ be a~$G$-Banach space. If $X$ has the $G$-DPr and its unit ball is an~SCD set, then $X$ is $G$-lush. In particular, $n_G(X) = 1$.
	\end{proposition}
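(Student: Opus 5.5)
The plan is to follow the proof of \cite[Theorem~4.4]{AKMMS}, with the $G$-version of the weak-star density result, Proposition \ref{prop:GDPandSCDslices}, taking the place of its classical counterpart. Fix $x,y\in S_X$ and $\e>0$. We have to produce $y^*\in S_{X^*}$ such that $y\in S(B_X,y^*,\e)$ and $\dist(x,\co_G(S(B_X,y^*,\e)))<\e$.

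Since $B_X$ is an SCD set, we choose a determining sequence of slices $(S_n)_{n\in\N}$ of $B_X$. By Proposition \ref{prop:GDPandSCDslices}(v), applied with $\e$ and this sequence, the set $\bigcap_{n}D_G(S_n,\e)$ is $w^*$-dense in $K(X^*)$. The set $K(X^*)$ is norming, and $\{z^*:\re z^*(y)>1-\e\}$ is a $w^*$-open set meeting $K(X^*)$, so we can pick
\[
y^*\in \bigcap_{n}D_G(S_n,\e)\quad\text{with}\quad \re y^*(y)>1-\e .
\]
In particular $y^*\in S_{X^*}$ and $y\in S(B_X,y^*,\e)$.

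By the definition of $D_G$, for every $n$ there are $g_n\in G$ and $x_n\in S(B_X,y^*,\e)$ with $g_nx_n\in S_n$. Hence the set
\[
B:=G\cdot S(B_X,y^*,\e)\cap B_X
\]
meets every $S_n$. Because $(S_n)$ is determining, this gives $B_X\subseteq\overline{\co}(B)\subseteq \overline{\co_G(S(B_X,y^*,\e))}$. Therefore $x$ lies in the norm closure of $\co_G(S(B_X,y^*,\e))$, so $\dist(x,\co_G(S(B_X,y^*,\e)))<\e$. This proves that $X$ is $G$-lush, and the ``in particular'' part follows from the preceding proposition.

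The point that needs care is the choice of $y^*$. It has to satisfy two requirements at once: lie in the countable intersection of the sets $D_G(S_n,\e)$, and lie in the $w^*$-open slice determined by $y$. The Baire argument already built into Proposition \ref{prop:GDPandSCDslices}(v) is what delivers this. One also has to check that $G\cdot S(B_X,y^*,\e)$ is indeed a subset of $B_X$, so that the determining property can be applied to it. This holds because $G$ acts by isometries.
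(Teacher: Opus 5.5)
Your proof is correct and is essentially the paper's argument: use Proposition~\ref{prop:GDPandSCDslices}(v) to choose $y^*\in\bigcap_n D_G(S_n,\e)$ inside the $w^*$-slice determined by the point, then use the determining sequence to conclude $B_X=\clco_G S(B_X,y^*,\e)$. The only difference is cosmetic: you handle the pair $(x,y)$ directly, whereas the paper proves the slightly stronger one-point statement that for every $x\in S_X$ there is $y^*$ with $x\in S(B_X,y^*,\e)$ and $\clco_G S(B_X,y^*,\e)=B_X$.
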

	We will again follow the classical argument presented in the aforementioned paper.
	\begin{proof}
		From the definition, it directly follows that it suffices to show that
		\begin{equation*}
			\forall \eps > 0 \ \forall x \in S_X \ \exists y^* \in S_{X^*} : x \in S(B_X, y^*, \eps) \text{ and } \clco_G S(B_X, y^*, \eps) = B_X.
		\end{equation*}
		
		So, let $\eps > 0$ and $x \in S_X$ be arbitrary. Let $\{S_n\}$ be a determining sequence of slices for $B_X$. By \Cref{prop:GDPandSCDslices}~(v), $\bigcap_{n\in\N} D_G(S_n, \eps)$ is $w^*$-dense in $K(X^*)$ which is norming and hence there is $y^* \in \bigcap_{n\in\N} D_G(S_n, \eps)$ such that $x \in S(B_X, y^*, \eps)$. From the definition of $D_G(S_n, \eps)$ it follows that $S_n \cap S_G(B_X, y^*, \eps) \neq \emptyset$ for all $n \in \N$. And since $\{S_n\}$ is a determining sequence of slices, the definition of SCD sets gives that $B_X = \clco S_G(B_X, y^*, \eps) = \clco_G S(B_X, y^*, \eps)$.
	\end{proof}
	
	We can, of course, recover the standard results from the previous propositions -- an SCD space with the aDPr must have numerical index 1 and the properties of being SCD and having the DPr are mutually exclusive (because they would together imply that the $\{\id\}$-numerical index of the space would be 1, which is impossible).
	
	\section{On some conditions on $G$ which provide DPr-like behavior}

	It is natural to ask under which additional restrictions on the action one can recover classical consequences of the Daugavet property from its group counterpart. The content of this section sheds some light on the answer.

	\subsection{Recovering classical results} The difficulty is that the group action itself may produce almost antipodal points and therefore yield what one might call ``fake Daugavet behaviour'', even in spaces with very regular geometry (see, for instance, \Cref{theorem:GDPrOnUniformlyConvexSpace} and its consequences). A convenient way to exclude this phenomenon is to measure how far the action can move unit vectors. To this end, for a $G$-Banach space $X$, let us define the number
	\begin{equation*}
		\alpha_G(X) := \sup \{ \|gx-x\| : g \in G,\ x \in S_X \}.
	\end{equation*}
	Notice that when $G=\{\id\}$, one has $\alpha_G(X)=0$ (in fact, if $\alpha_G(X) = 0$, then $g \cdot x = x$ for every $g \in G$ and every $x \in X$, which implies that the action is trivial) and by the triangle inequality we always have $\alpha_G(X) \leq 2$. On the other hand, if $S_{\K} \subseteq G$, then $-\id_X \in G$ and, therefore, for every $x \in S_X$, one has $\alpha_G(X) \geq \|-\id_X(x)-x\|=2$, that is, $\alpha_G(X) = 2$. Clearly if $H \leq G$, then $\alpha_H(X) \leq \alpha_G(X)$.

	The condition $\alpha_G(X)<2$ can be interpreted as a uniform non-antipodality assumption on the action: no orbit contains points arbitrarily close to the antipode of a unit vector. \Cref{thm:GDPr-alphaG} below extends the classical implication ``DPr implies failure of the RNP'' (see also \Cref{thm:G-DPandRNP}). Before proving it, let us note that the assumptions $G$-DPr together with $\alpha_G(X)<2$ are easy to satisfy.
	
	\begin{example} \label{example:C3}
		Let $C_3=\{1,\omega,\omega^2\}$ be the cyclic group of order $3$, where $\omega=e^{2\pi i/3}$ and let $X=C[0,1]$ over the complex scalars. Define an action of $C_3$ on $X$ by scalar multiplication, that is, $g\cdot f = gf$ for every $g\in C_3$ and $f\in X$. Each $g$ acts as a surjective linear isometry on $X$. Moreover, since $C[0,1]$ has the classical DPr, it has the $C_3$-DPr as well by \Cref{DPr-implies-G-DPr}. Finally,
		\begin{eqnarray*}
			\alpha_{C_3}(X)
			&=& \sup \{ \|g\cdot f-f\| : g\in C_3,\ f\in S_X \} \\
			&=& \max \{|g-1|: g\in C_3 \} \\
			&=& |\omega-1|
			= \sqrt{3}
			< 2.
		\end{eqnarray*}
	\end{example}
	
	Now we prove \Cref{thm:GDPr-alphaG}. As a particular case of this result, we will get that $\alpha_{\Iso(L^p[0,1])}(L^p[0,1])=2$ for every $1 < p < \infty$ by using \Cref{cor:Lp-ellp-examples}.(a). On the other hand, as excepted, \Cref{thm:GDPr-alphaG} does not apply to the aDPr as in that case $\alpha_{S_{\K}}(X) = 2$. Notice that item (a) of \Cref{thm:GDPr-alphaG} also follows from Proposition \ref{relation-between-alpha-and-numerical-index} below with a different proof.
	
	\begin{theorem}\label{thm:GDPr-alphaG}
		Let $X$ be a $G$-Banach space with the $G$-DPr and suppose $\alpha_G(X)<2$. 
		
		\begin{itemize} 
			\item[(a)] Then, $X$ does not have the RNP. In particular, $X$ is not reflexive.
			\item[(b)] If $X$ has an unconditional basis, then $X$ contains a copy of $c_0$.
		\end{itemize}
	\end{theorem}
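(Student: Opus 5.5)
The plan for (a) is to argue by contradiction, combining \Cref{thm:G-DPandRNP} with a direct evaluation of the $G$-numerical radius of $-\id_X$. Suppose $X$ has the $G$-DPr and also the RNP. Then \Cref{thm:G-DPandRNP} gives $n_G(X)=1$. By the definition \eqref{G-numerical-index} of $n_G(X)$, this means $v_G(T)\geq \|T\|$ for every norm-one $T\in\mathcal L(X)$. We apply this to $T=-\id_X$. For every $\e>0$ it yields $x\in S_X$, $x^*\in S_{X^*}$ with $x^*(x)=1$, and $g\in G$ such that
\begin{equation*}
\re x^*(-gx) > 1-\e .
\end{equation*}
It then follows that
\begin{equation*}
\|gx-x\|\geq \re x^*(x-gx) = 1+\re x^*(-gx) > 2-\e .
\end{equation*}
Since $\e$ is arbitrary, we get $\alpha_G(X)=2$, which contradicts the hypothesis $\alpha_G(X)<2$. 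Hence $X$ fails the RNP. Since reflexive spaces have the RNP, $X$ is not reflexive.

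The plan for (b) is to reduce it to (a) via James' classical theorems on unconditional bases. Let $(e_n)$ be an unconditional basis of $X$ and suppose $X$ contains no copy of $c_0$. James' theorem says that an unconditional basis of a space not containing $c_0$ is boundedly complete. A boundedly complete basis makes $X$ isomorphic to the dual of $Z=\clspan\{e_n^*\}$, the closed span of the biorthogonal functionals. Thus $X\simeq Z^*$ is a separable dual space, and separable dual spaces have the RNP (Dunford--Pettis/Stegall). The RNP is an isomorphic invariant, so $X$ has the RNP, and this contradicts (a). Therefore $X$ contains a copy of $c_0$.

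The only delicate point is making sure \Cref{thm:G-DPandRNP} applies without extra hypotheses. As stated, it requires only the RNP and the $G$-DPr, with no continuity of the action, so part (a) is essentially a one-line consequence. The main step to verify carefully is that the sign convention in $v_G(-\id_X)$ produces $\re x^*(gx)$ close to $-1$, which is exactly what forces near-antipodal points in some orbit. Part (b) is then a purely classical isomorphic argument and needs nothing beyond (a).
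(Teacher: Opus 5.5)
Your proof is correct. Part (b) is the paper's argument: James' dichotomy for unconditional bases, where the boundedly complete case gives a separable dual and hence the RNP.

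For part (a) you take a different route from the paper's main proof. You combine \Cref{thm:G-DPandRNP} with the estimate $v_G(-\id_X)\le \alpha_G(X)-1$, which is exactly \Cref{relation-between-alpha-and-numerical-index}. The paper mentions this alternative in passing and uses the same combination in \Cref{theorem:all-together}.

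The paper's own proof is direct. It takes a single denting point $x\in S_X$ and a slice $S\ni x$ of diameter less than $\eta=(2-\alpha_G(X))/3$. It then shows that every $y$ in the $G$-slice $S_G$ satisfies $\|y-x\|\le\|gy-x\|+\|g^{-1}x-x\|<2-2\eta$. This contradicts the slice characterization in \Cref{prop:characterization-G-DP}.

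What the direct argument buys is economy and a stronger conclusion. It uses only one denting point, so it shows that $B_X$ has no denting points at all under the $G$-DPr and $\alpha_G(X)<2$ (compare \Cref{corollary-LUR-denting}(b)).

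Your route relies on the heavier machinery behind \Cref{thm:G-DPandRNP}, namely \Cref{prop:sRN-Operators} and \Cref{lem:G-DP-numerical-radius}. In exchange, it places the result in the numerical-index framework. There the single obstruction $n_G(X)<1$ yields all items of \Cref{theorem:all-together} at once.

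The detour through $n_G$ can also be shortened. Since $\alpha_G(X)=\sup_{g\in G}\|g-\id_X\|$, the equality $\alpha_G(X)=2$ is literally the $G$-Daugavet equation $\sup_{g\in G}\|g+T\|=1+\|T\|$ for $T=-\id_X$. So it suffices to apply \Cref{prop:sRN-Operators} directly to $T=-\id_X$, for which $\overline{T(B_X)}=B_X$. Unwinding that proof for this $T$ essentially reproduces the paper's denting-point argument, so the two routes share the same core mechanism.
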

	\begin{proof}
		Assume that $X$ has the RNP. Then $B_X$ is dentable, so there exists a denting point $x\in B_X$. Since every denting point of $B_X$ is extreme, necessarily $x\in S_X$. Set
		\begin{equation*}
			\eta := \frac{2-\alpha_G(X)}{3}>0.
		\end{equation*}
		Because $x$ is a denting point of $B_X$, there exists a slice
		\begin{equation*}
			S := S(B_X,x^*,\e)
			= \{ z\in B_X : \re x^*(z)>1-\e \}
		\end{equation*}
		such that $x\in S$ and $\diam(S)<\eta$. Consider now the associated $G$-slice $S_G := S_G(B_X,x^*,\e)$. We claim that $S_G \cap Q(-x,\eta)=\emptyset$, where
		\begin{eqnarray*}
			Q(-x,\eta)
			&=& \{ y\in B_X : \|-x+y\|>2-\eta \} \\
			&=& \{ y\in B_X : \|y-x\|>2-\eta \}.
		\end{eqnarray*}
		Indeed, let $y\in S_G$. Then there exists $g\in G$ such that $gy\in S$. Since $x\in S$ and $\diam(S)<\eta$, we have $\|gy-x\|<\eta$. Therefore,
		\begin{align*}
			\|y-x\|
			&\leq \|y-g^{-1}x\|+\|g^{-1}x-x\| \\
			&= \|gy-x\|+\|g^{-1}x-x\| \\
			&< \eta+\alpha_G(X) = 2-2\eta < 2-\eta.
		\end{align*}
		Hence $\|y-x\|<2-\eta$, so $y\notin Q(-x,\eta)$. This proves the claim. On the other hand, since $X$ has the $G$-DPr, the slice characterization yields $S_G \cap Q(-x,\eta)\neq\emptyset$, which contradicts the claim above. This proves (a). For (b), recall that if an unconditional basis is not boundedly complete, then the space must contain $c_0$ (\cite{{James1950}}, see also \cite[Theorem 1.c.10]{LindenstraussTzafriri1977}). On the other hand, if the space is boundedly complete, then it is a dual with an unconditional basis and therefore it has the RNP. This means that if $X$ is a $G$-Banach space with an unconditional basis satisfying the $G$-DPr and such that $\alpha_G(X) < 2$, by (a), $X$ cannot have the RNP and then it must contain $c_0$. 
	\end{proof}
	
	Note that \Cref{thm:GDPr-alphaG} provides yet another tool to deduce that a~space with the DPr cannot have the RNP since, as discussed above, for any $X$ we have $\alpha_{\{\id_X\}}(X) = 0$. Moreover, we also have the following consequence of \Cref{thm:GDPr-alphaG} (remember \Cref{thm:G-DPandRNP} as well).

	\begin{corollary} In any reflexive $G$-space $X$ with the $G$-Daugavet property, we have $\alpha_G(X) = 2$ and $n_G(X) = 1$.
	\end{corollary}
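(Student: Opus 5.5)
The plan is to combine the two results just established, since the corollary follows from them directly.

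Let $X$ be a reflexive $G$-Banach space with the $G$-DPr. Reflexive spaces have the RNP, because every bounded closed convex subset of a reflexive space is weakly compact and hence dentable.

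First we show $\alpha_G(X)=2$. We always have $\alpha_G(X)\le 2$ by the triangle inequality. Suppose instead that $\alpha_G(X)<2$. Then item (a) of \Cref{thm:GDPr-alphaG} says that $X$ fails the RNP. This contradicts reflexivity, so $\alpha_G(X)=2$.

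Next we show $n_G(X)=1$. Here we apply \Cref{thm:G-DPandRNP} directly: $X$ has the RNP, as a reflexive space, and it has the $G$-DPr by hypothesis, so $n_G(X)=1$. No continuity of the action is required, because \Cref{thm:G-DPandRNP} is stated for arbitrary actions. Its proof goes through \Cref{prop:sRN-Operators} applied to operators $T$ with $\overline{T(B_X)}$ dentable, together with \Cref{lem:G-DP-numerical-radius}. This requires no equivariance, and the continuity hypothesis of \Cref{thm:GDPandGRNP} enters only through the $H$-RNP part.

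There is no real obstacle in this argument. The only point to make explicit is that reflexivity implies the RNP. One should also note that this reflexive setting is genuinely consistent: by \Cref{cor:Lp-ellp-examples}, $L^p[0,1]$ with its full isometry group gives an example in which both conclusions hold.
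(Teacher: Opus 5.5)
Your proof is correct and is exactly the argument the paper intends: reflexivity gives the RNP, so \Cref{thm:GDPr-alphaG}(a) forces $\alpha_G(X)=2$, and \Cref{thm:G-DPandRNP} gives $n_G(X)=1$. Your remark that no continuity of the action is needed is also accurate, since the route through \Cref{prop:sRN-Operators} and \Cref{lem:G-DP-numerical-radius} never uses it.
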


	There is no hope to turn \Cref{thm:GDPr-alphaG} into an equivalence as we can see below.
	
	\begin{remark} \label{example-of-alpha-equals-sqrt3} One might think that a Banach space $X$ with the $G$-DPr and without the RNP could yield $\alpha_G(X) < 2$. However, this is not true in general. Indeed, consider $X = C[0,1]$ and $G = S_{\C}$. Then, $X$ has the $G$-DPr (in fact, this is nothing but the aDPr), it fails the RNP and $\alpha_G(X)=2$ as $-1 \in S_{\C}$ and, for every $f \in S_X$, we have that $\|(-1)f - f\| = 2$. Another possibility would be to have a result which says that $\alpha_G(X) < 2$ together with non-RNP implies the $G$-DPr. This is not the case either. Indeed, let $G=C_3$ act on $X=C[0,1]\oplus_1 \mathbb C$ by $g\cdot(f,\lambda)=(gf,\lambda)$. Then $\alpha_G(X)=\sqrt3<2$ (see Example \ref{example:C3} above) and $X$ does not have the RNP because it contains $C[0,1]$ as an $\ell_1$-summand. Moreover, $X$ does not have the $G$-Daugavet property. Indeed, the operator $T(f,\lambda)=(0,-\lambda)$ is rank-one and satisfies $\|T\|=1$. Since the action on the second summand is trivial, we have $g\circ T=T$ for every $g\in G$. Therefore $(\id+g\circ T)(f,\lambda)=(f,0)$, so $\|\id+g\circ T\|=1$ for every $g\in G$. Hence $\sup_{g\in G}\|\id+g\circ T\|=1<2=1+\|T\|$, and thus $X$ fails the $G$-DPr.
	\end{remark}

	It is also natural to wonder the following. Assume that $\alpha_G(X) = 2$  and suppose that it is never attained. Could we still yield a Banach space without the RNP whenever it satisfies the $G$-DPr? This is false in general as we can see in the next straightforward example.
	
	\begin{example} \label{MG(X)-example} Let $X=\mathbb{R}^2$ with the Euclidean norm, $\theta\in\R$ be such that $\theta/\pi \notin \Q$, and let $G=\langle R_\theta\rangle=\{R_{n\theta}:n\in\Z\}$, where $R_\theta$ denotes rotation by angle $\theta$. Then the orbit of every $x \in S_X$ is dense in the unit circle, so the action is almost transitive. Since $X$ is a Hilbert space, hence LUR, it follows from \Cref{theorem:GDPrOnUniformlyConvexSpace} that $X$ has the $G$-DPr. On the other hand, $X$ is finite-dimensional, and therefore it has the RNP. Now, for every $x \in S_X$ and every $n \in \mathbb{Z}$, we have $\|R_{n\theta}x-x\|<2$, because the equality $\|R_{n\theta}x-x\|=2$ would imply $R_{n\theta}x=-x$, that is, $n\theta \equiv \pi \pmod{2\pi}$, which is impossible since $\theta/\pi$ is irrational. However, the set $\{n\theta \mod 2\pi : n \in \mathbb{Z}\}$ is dense in $[0,2\pi)$, so for every $x \in S_X$ one can find a sequence $(n_k)$ such that $R_{n_k\theta}x \to -x$. Hence $\alpha_G(X) = 2$ (another way of seeing this is applying \Cref{thm:GDPr-alphaG}.(a)) and this supremum is never attained.
	\end{example}
	
	In what follows, we use once again the condition $\alpha_G(X) < 2$ to recover back the classical result about the DPr which says that Banach spaces with the DPr should contain copies of $\ell_1$. The following is an easy consequence of \Cref{result-contain-copies-of-l1}. In particular, we get once again \Cref{thm:GDPr-alphaG}.(a).
	
	\begin{proposition} \label{relation-between-alpha-and-numerical-index} Let $X$ be a $G$-Banach space. If $\alpha_G(X) < 2$, then $n_G(X) < 1$. 
	\end{proposition}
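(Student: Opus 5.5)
The plan is to exhibit one operator of norm one whose $G$-numerical radius is strictly less than $1$. The natural candidate is $T=-\id_X$, mirroring \Cref{rem:GnumIndex}, where $v_{\{\id_X\}}(-\id_X)=-1$ was computed. Since $n_G(X)=\inf\{v_G(T): \|T\|=1\}$, it suffices to prove $v_G(-\id_X)<1$.

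The key step is the estimate of $\re x^*(-gx)$ for $x\in S_X$, $x^*\in S_{X^*}$ with $x^*(x)=1$ and $g\in G$. We write $-gx = x-(x+gx)$, so that
\begin{equation*}
\re x^*(-gx) = 1-\re x^*(x+gx).
\end{equation*}
This form is not directly useful, since we want an upper bound and only control $\|gx-x\|$. So we instead compare $-gx$ with $-x$:
\begin{equation*}
\re x^*(-gx) = \re x^*(-x) + \re x^*(x-gx) \leq -1 + \|x-gx\| \leq -1+\alpha_G(X).
\end{equation*}
Taking the supremum over all admissible triples $(x,x^*,g)$ gives
\begin{equation*}
v_G(-\id_X)\leq \alpha_G(X)-1 < 1,
\end{equation*}
because $\alpha_G(X)<2$. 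As $\|-\id_X\|=1$, we conclude $n_G(X)\leq \alpha_G(X)-1<1$.

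No step is a genuine obstacle; the proof is a one-line estimate. The only point to check is that the supremum defining $v_G$ uses $\re x^*(T(gx))$ with $T=-\id_X$, which is exactly the quantity bounded above. In fact the argument yields the quantitative bound $n_G(X)\leq \alpha_G(X)-1$, which is slightly more than the statement requires.

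As the paper notes, this combined with \Cref{thm:G-DPandRNP} recovers \Cref{thm:GDPr-alphaG}(a): the $G$-DPr together with the RNP would force $n_G(X)=1$.
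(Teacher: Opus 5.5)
Your proof is correct and matches the paper's argument: the paper also takes $T=-\id_X$ and uses $\re x^*(gx)\geq 1-\|gx-x\|\geq 1-\alpha_G(X)$ to obtain $v_G(-\id_X)\leq \alpha_G(X)-1<1$. Your version is slightly cleaner, since the paper's display writes $\re x^*(T(x))$ where $\re x^*(T(gx))$ is meant.
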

	
	\begin{proof} Consider $T:= -\id_X$. Then $\|T\| = 1$ and, for every $x \in S_X$ and $x^* \in S_{X^*}$ with $x^*(x) = 1$, and for every $g \in G$, we have that 
		\begin{equation*}
			\re x^*(g \cdot x) = \re x^*(x) + \re x^*(g \cdot x - x) \geq 1 - \|g \cdot x - x\| \geq  1 - \alpha_G(X),
		\end{equation*}
		which shows that $\re x^*(T(x)) = \re x^*(-g \cdot x) \leq \alpha_G(X) - 1$. Taking the supremum over $g \in G$, we get that $v_G(T) \leq \alpha_G(X) - 1 < 1$, that is, $n_G(X) \leq v_G(T) < 1$. 
	\end{proof}
	
	Now we are able to put several results we have collected throughout the paper to get the following result. Notice that this recovers back the classical Daugavet spirit.
	
	\begin{theorem} \label{theorem:all-together} Let $X$ be a $G$-Banach space with the $G$-DPr. Suppose that $\alpha_G(X) < 2$. Then, the following holds true.
		\begin{itemize}
			\item[(1)] If the action is continuous, $X$ must contain a copy of $\ell_1$.
			\item[(2)] $X$ does not have the RNP.
			\item[(3)] $B_X$ is not an SCD set.
		\end{itemize}
	\end{theorem}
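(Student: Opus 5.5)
The plan is to reduce all three items to a single fact: by \Cref{relation-between-alpha-and-numerical-index}, the hypothesis $\alpha_G(X)<2$ gives $n_G(X)<1$. Each item is then proved by contradiction against an earlier result which, under the $G$-DPr together with the corresponding regularity assumption, forces $n_G(X)=1$.

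For item (1), assume the action is continuous and that $X$ contains no copy of $\ell_1$. Since $X$ has the $G$-DPr, \Cref{result-contain-copies-of-l1} applies and gives $n_G(X)=1$. This contradicts $n_G(X)<1$, so $X$ must contain a copy of $\ell_1$.

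For item (2), suppose $X$ has the RNP. Combined with the $G$-DPr, \Cref{thm:G-DPandRNP} yields $n_G(X)=1$, which is again a contradiction. As an independent check, this item is also exactly \Cref{thm:GDPr-alphaG}(a), which was proved via denting points and needs no numerical index. I would mention both routes.

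For item (3), suppose $B_X$ is an SCD set. Together with the $G$-DPr, \Cref{G-DPr-G-lush} shows that $X$ is $G$-lush and that $n_G(X)=1$. This contradicts \Cref{relation-between-alpha-and-numerical-index} once more.

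There is no genuine obstacle, since every step is a direct citation. The one point to verify is that the hypotheses of each cited result match exactly:
\begin{itemize}
\item \Cref{result-contain-copies-of-l1} requires continuity of the action, which is why item (1) carries that assumption.
\item \Cref{thm:G-DPandRNP} and \Cref{G-DPr-G-lush} require no continuity, so items (2) and (3) hold for arbitrary actions.
\end{itemize}
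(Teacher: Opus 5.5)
Your proposal is correct and matches the paper's proof. Both first use \Cref{relation-between-alpha-and-numerical-index} to get $n_G(X)<1$, and then contradict \Cref{result-contain-copies-of-l1}, \Cref{thm:G-DPandRNP} and \Cref{G-DPr-G-lush} for items (1), (2) and (3) respectively; your side remark that (2) also follows directly from \Cref{thm:GDPr-alphaG}(a) is likewise noted in the paper.
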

	
	\begin{proof} For all the items we apply \Cref{relation-between-alpha-and-numerical-index} above. Indeed, item (1) is a consequence of \Cref{result-contain-copies-of-l1}, item (2) is a consequence of \Cref{thm:G-DPandRNP} and item (3) is a consequence of \Cref{G-DPr-G-lush}.
	\end{proof}
	
	\begin{remark} Notice that the converse of \Cref{relation-between-alpha-and-numerical-index} does not hold in general even in simple situations. Indeed, consider a real Hilbert space $X$ with the scalar action $G = \{ \id_X, - \id_X \}$. Taking $g=-\id_X$, we can see easily that $\alpha_G(X) \geq \|(-x) - x\| = 2$ for every $x \in S_X$ while $v_G(T) = v(T)$ for every $T \in \mathcal{L}(X)$. This shows that $n_G(X) = n(X) = 0$ while $\alpha_G(X) = 2$. Another simple example is when one considers a complex Hilbert space $X$ and $G= S_{\C}$ acting by scalar multiplication. While $n(X) = 1/2$, we have that $\alpha_G(X) = 2$ as $-1 \in S_{\C}$ and once again $v_G(T) = v(T)$ for every $T \in \mathcal{L}(X)$, that is, $n_G(X) = n(X) = 1/2$. In fact, we have that 
		\begin{itemize}
			\item $\alpha_G(X) = 2$ and $n_G(X) = 0$ can happen (real Hilbert spaces with $G=\{\id_X, - \id_X\}$ acting by scalar multiplication).
			\item $\alpha_G(X) = 2$ and $0<n_G(X)<1$ can happen (infinite-dimensional complex Hilbert spaces with $G=S_{\C}$ acting by scalar multiplication).
			\item $\alpha_G(X) = 2$ and $n_G(X) = 1$ can happen (considering $X=L^p[0,1]$ and $G=\iso(L^p[0,1])$ and applying \Cref{thm:GDPr-alphaG} and \Cref{thm:G-DPandRNP}, respectively).
			\item $\alpha_G(X) < 2$ and $n_G(X) = 1$ cannot happen (this is just a directly consequence of \Cref{relation-between-alpha-and-numerical-index}).
		\end{itemize}
	\end{remark}
	
	\begin{remark}
		Even when $X$ satisfies the $G$-DPr, the equality $\alpha_G(X)=2$ does not force the $G$-numerical index to be $1$. Indeed, let $H$ be a real Hilbert space of dimension at least $2$, let $K$ be perfect  and consider the $G$-Banach space $X=C(K,H)$ endowed with the action of $G=\{\id_X,-\id_X\}$ by scalar multiplication. Since $K$ is perfect, $X$ has the classical Daugavet property and therefore also the $G$-DPr. Moreover, as $-\id_X \in G$, we have $\alpha_G(X)=2$. On the other hand, for this action the $G$-numerical radius coincides with the classical numerical radius, so $n_G(X)=n(X)$. Since $n(C(K,H))=n(H)$ (see \cite[Theorem 5]{MartinPaya2000}) and $n(H)=0$, it follows that $n_G(X)=0$
	\end{remark}

	\subsection{Quantitative results}  Let us notice the following estimation for $\alpha_G(X)$ - see \Cref{proposition-estimation-alpha-G} below. For every operator $T \in \mathcal{L}(X)$ and $g \in G$, we have that 
	\begin{equation*}
		\|(\id_X + gT) - (\id_X + T)\| = \|gT - T\| \leq \alpha_G(X) \|T\|.
	\end{equation*}
	This shows that 
	\begin{equation*}
		\|\id_X + gT\| \leq \|\id_X + T\| + \alpha_G(X) \|T\|. 
	\end{equation*}
	Taking the supremum over $g \in G$, we get that 
	\begin{equation*}
		\sup_{g \in G} \| \id_X + gT \| \leq \|\id_X + T\| + \alpha_G(X) \|T\|
	\end{equation*}
	for every $T \in \mathcal{L}(X)$ and every $g \in G$. Now, assume that $X$ has the $G$-DPr. Then, 
	\begin{equation*}
		1 + \|T\| = \sup_{g \in G} \|\id_X + gT\| \leq \|\id_X + T\| + \alpha_G(X) \|T\|
	\end{equation*}
	for every rank-one operator $T \in \mathcal{L}(X)$. Rearranging this last inequality, we get that
	\begin{equation*}
		\|\id_X + T\| \geq 1 + (1 - \alpha_G(X))\|T\| 
	\end{equation*}
	for every rank-one operator $T \in \mathcal{L}(X)$. In particular, if $\alpha_G(X) = 0$, then $X$ has the classical DPr. In other words, we have the following result, which might be interpreted as a way of measuring how far a space with the $G$-DPr is of having the classical DPr.
	
	\begin{proposition} \label{proposition-estimation-alpha-G} Let $X$ be a $G$-Banach space. If $X$ has the $G$-DPr, then 
		\begin{equation} \label{inequality-estimation-alpha-G}
			\| \id_X + T\| \geq 1 + (1 - \alpha_G(X)) \|T\| 
		\end{equation}
		for every rank-one $T \in \mathcal{L}(X)$.
	\end{proposition}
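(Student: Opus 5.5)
The proof is essentially the computation carried out in the paragraph preceding the statement; I would organize it as a two-step comparison argument.

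\emph{Step 1: comparison for a fixed group element.} Fix a rank-one operator $T\in\mathcal{L}(X)$ and $g\in G$. Since $g$ is an isometry, for every $x\in S_X$ we have $Tx/\|Tx\|\in S_X$ whenever $Tx\neq 0$. By the definition of $\alpha_G(X)$, this gives
\begin{equation*}
\|gTx-Tx\|\leq \alpha_G(X)\|Tx\|\leq \alpha_G(X)\|T\|,
\end{equation*}
and the inequality holds trivially if $Tx=0$. Taking the supremum over $x\in S_X$ yields $\|gT-T\|\leq\alpha_G(X)\|T\|$. The triangle inequality applied to $\id_X+gT=(\id_X+T)+(gT-T)$ then gives
\begin{equation*}
\|\id_X+gT\|\leq\|\id_X+T\|+\alpha_G(X)\|T\|.
\end{equation*}

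\emph{Step 2: using the $G$-DPr.} The right-hand side of the last inequality does not depend on $g$, so we may take the supremum over $g\in G$ on the left. Since $X$ has the $G$-DPr, Definition~\ref{definition-G-DPr} says this supremum equals $1+\|T\|$. Hence
\begin{equation*}
1+\|T\|\leq\|\id_X+T\|+\alpha_G(X)\|T\|,
\end{equation*}
and rearranging gives \eqref{inequality-estimation-alpha-G}.

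The only point needing care is the bound $\|gT-T\|\leq\alpha_G(X)\|T\|$, which requires normalizing $Tx$ before applying the definition of $\alpha_G(X)$, since that definition only controls unit vectors. Beyond that there is no real obstacle; the estimate is only informative when $\alpha_G(X)<1$.
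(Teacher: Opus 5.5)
Your argument is correct and is the same as the paper's. Both bound $\|gT-T\|\leq\alpha_G(X)\|T\|$, apply the triangle inequality, take the supremum over $g\in G$ using the $G$-DPr, and rearrange; your normalization of $Tx$ just makes explicit a step the paper leaves implicit.
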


	Let us notice in what follows that the estimate (\ref{inequality-estimation-alpha-G}) does not seem to provide back the classical DPr by assuming that the coefficient $1-\alpha_G(X)$ is arbitrarily close to 1. The reason is that there exist Banach spaces which satisfy a uniform lower bound of the same form for all rank-one operators while still failing the DPr. This follows from \cite{ChoiJung2024} and \cite{HallerLangemetsLimaNadelRuedaZoca2021}. In \cite{HallerLangemetsLimaNadelRuedaZoca2021} the following definition of a Daugavet index was given. For a {\bf real} Banach space, we define 
	\begin{equation*}
		\mathcal{T}^s(X) := \inf \{ r > 0: \exists x \in S_X \ \mbox{and a slice} \ S \ \mbox{of} \ B_X \ \mbox{with} \ S \subseteq B(x,r) \}.
	\end{equation*}
	Observe that $0 \leq \mathcal{T}^s(X) \leq 2$ and by the geometric characterization of the Daugavet property through slices it is easy to see that $\mathcal{T}^{s}(X)=2$ if and only if $X$ has the DPr. On the other hand, in \cite[Proposition 2.14]{ChoiJung2024}, it was shown that
	\begin{equation*}
		\mathcal{T}^s(X) = \inf_{x \in S_X} dc(x) 
	\end{equation*}
	where $dc(x)$ is called the Daugavet constant of the point $x$ and it can be defined as
	\begin{equation*}
		dc(x)=\inf_{\substack{S\subseteq B_X\\ S\text{ slice}}}\ \sup_{y\in S}\|x-y\|.
	\end{equation*}
	They also prove (see \cite[Proposition 2.4]{ChoiJung2024}) that, for a real Banach space $X$ and $x \in B_X$, every rank-one operator $T = x^* \otimes x$ with $\|x^*\| \geq 1$ satisfies $\| \id_X + T\| \geq 1+(dc(x) - 1)\|x^*\|$. As $\|T\| = \|x^*\|$, this last inequality becomes 
	\begin{equation} \label{choi-jung-estimate}
		\| \id_X + T\| \geq 1 + (dc(x) - 1)\|T\|
	\end{equation}
	and this means that a uniform lower bound on $dc(x)$ over the unit sphere gives a uniform lower bound on $\| \id_X + T\|$ for every rank-one operator $T \in \mathcal{L}(X)$. Let $\eta \in (0,1)$ be arbitrary. Then, by using \cite[Theorem 2.7]{HallerLangemetsLimaNadelRuedaZoca2021}, there exists a real Banach space $X_{\eta}$ such that $\mathcal{T}^s(X_{\eta})=2 - \eta$. Since $2 - \eta < 2$, it follows that $X_{\eta}$ does not satisfy the classical DPr. As $\mathcal{T}^s(X_{\eta}) = \inf_{x \in S_{X_{\eta}}} dc(x)$, we have that $dc(x) \geq 2 - \eta$ for every $x \in S_{X_{\eta}}$. By (\ref{choi-jung-estimate}), it follows that 
	\begin{equation*}
		\| \id_X + T \| \geq 1 + (1-\eta) \|T\| 
	\end{equation*}
	for every rank-one operator $T \in \mathcal{L}(X_{\eta})$. In other words, for every $0 < \eta < 1$ arbitrary, there exists a real Banach space $X_{\eta}$ such that $X_{\eta}$ fails the DPr and $\| \id_X + T\| \geq 2 - \eta$ for every rank-one operator with norm one.

	Nevertheless, observe that this does not by itself yield a $G$-Banach space with small $\alpha_G(X)$ and satisfying the $G$-DPr without the classical DPr. What it does show is that the estimate (\ref{inequality-estimation-alpha-G}) is not strong enough to prove that these two assumptions together give back the Daugavet property. So, we have the following question.
	\begin{problem} For every $\eta > 0$, does there exist a $G$-Banach space such that $X$ has the $G$-DPr, $\alpha_G(X) < \eta$ and $X$ does not have the classical Daugavet property?
	\end{problem}

	We have the following relation between $\mathcal{T}^s(X)$ and $\alpha_G(X)$.
	
	\begin{theorem} \label{estimate-T-s-alpha-G} Let $X$ be a real $G$-Banach space with the $G$-DPr. Then,
		\begin{equation*}
			\mathcal{T}^s(X) \geq 2 - \alpha_G(X).
		\end{equation*}
		Equivalently, $dc(x) \geq 2 - \alpha_G(X)$ for every $x \in S_X$.
	\end{theorem}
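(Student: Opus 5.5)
The plan is to prove the pointwise statement $dc(x)\geq 2-\alpha_G(X)$ for every $x\in S_X$. The inequality for $\mathcal{T}^s(X)$ then follows from the identity $\mathcal{T}^s(X)=\inf_{x\in S_X} dc(x)$ of \cite[Proposition 2.14]{ChoiJung2024}. So fix $x\in S_X$, a slice $S=S(B_X,x^*,\e)$ of $B_X$, and $\eta>0$. By the definition of $dc(x)$, it suffices to find $y\in S$ with $\|x-y\|>2-\alpha_G(X)-\eta$.

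The key tool is item (3) of \Cref{prop:characterization-G-DP}, applied to $x$, the $G$-slice $S_G=S_G(B_X,x^*,\e)$ and the tolerance $\eta$. It gives $z\in S_G$ with $\|x-z\|>2-\eta$. By the definition of $G$-slice, there is $g\in G$ with $gz\in S(B_X,x^*,\e)$. Set $y:=gz\in S$. This is the same mechanism as in the proof of \Cref{thm:GDPr-alphaG}(a), run in the opposite direction.

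Next we estimate $\|x-y\|$ with the triangle inequality and the fact that $g$ is an isometry:
\[
2-\eta<\|x-z\| = \|gx-gz\| \leq \|gx-x\|+\|x-gz\| \leq \alpha_G(X)+\|x-y\|.
\]
This yields $\|x-y\|>2-\alpha_G(X)-\eta$. Hence $\sup_{y\in S}\|x-y\|\geq 2-\alpha_G(X)$. Taking the infimum over slices gives $dc(x)\geq 2-\alpha_G(X)$, and then the infimum over $x\in S_X$ gives the bound on $\mathcal{T}^s(X)$.

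An equivalent route works directly with the definition of $\mathcal{T}^s(X)$. If some slice satisfied $S\subseteq B(x,r)$ with $r<2-\alpha_G(X)$, the same $y$ would contradict it. This also covers the stated equivalence between the two formulations.

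There is no real obstacle. The only point needing care is to move the group element onto $x$ rather than onto the slice, using $\|x-z\|=\|gx-gz\|$, so that the error is controlled by $\|gx-x\|\leq\alpha_G(X)$ with $x\in S_X$. The realness assumption is used only because $\mathcal{T}^s$ and $dc$ are defined for real spaces.
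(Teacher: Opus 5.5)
Your proof is correct and follows the paper's argument. Both apply the $G$-slice characterization (item (3), or equivalently item (2) with $-x$) to get $z\in S_G$ with $\|x-z\|>2-\eta$, move it into $S$ by some $g\in G$, and lose at most $\alpha_G(X)$ via the triangle inequality. The only cosmetic difference is that you use $\|x-z\|=\|gx-gz\|$ and bound $\|gx-x\|$ with $x\in S_X$, whereas the paper bounds $\|gu-u\|\le\alpha_G(X)\|u\|$ for $u\in B_X$ by normalizing; both give the same estimate.
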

	
	\begin{proof} For a fixed $x \in S_X$, we will show that $dc(x) \geq 2 - \alpha_G(X)$. Let $S \subseteq B_X$ be an arbitrary slice of $B_X$, say $S = S(B_X, x^*, \delta)$ for some $x^* \in S_{X^*}$ and $\delta > 0$ and let $\e > 0$ be given. Consider the associated $G$-slice 
		\begin{equation*}
			S_G := S_G(B_X, x^*, \delta) = \{ u \in B_X: \exists g \in G \ \mbox{such that} \ x^*(g \cdot u) > 1 - \delta \}.
		\end{equation*}
		Since $X$ has the $G$-DPr, we can apply the characterization through $G$-slices for the point $-x$ to get an element $u \in S_G$ such that $\|u-x\| > 2 - \e$. Since $u \in S_G$, there exists $g \in G$ such that $g\cdot u \in S$. Set $y:= g\cdot u \in S$. Now, we have that 
		\begin{equation*}
			\|y - x\| = \|g \cdot u - x\| \geq \|u-x\| - \|g \cdot u - u\|. 
		\end{equation*}
		Notice now that 
		\begin{equation*}
			\|g \cdot u - u\| = \left\| g \cdot \frac{u}{\|u\|} - \frac{u}{\|u\|} \right\| \cdot \|u\| \leq \alpha_G(X)
		\end{equation*}
		whenever $u\not=0$ (in the case $u=0$ the same holds true trivially). This shows that 
		\begin{equation*}
			\| y - x\| \geq \|u-x\| - \|g \cdot u - u\| \geq 2 - \alpha_G(X) - \e.
		\end{equation*}
		As $\e > 0$ is arbitrary, we get that $\sup_{z \in S} \|x-z\| \geq 2 - \alpha_G(X)$ and since the slice $S$ is also arbitrary, we have that 
		\begin{equation*}
			dc(x)=\inf_{\substack{S\subseteq B_X\\ S\text{ slice}}}\ \sup_{z\in S}\|x-z\| \geq 2 - \alpha_G(X) 
		\end{equation*}
		for every $x \in S_X$. This shows that 
		\begin{equation*}
			\mathcal{T}^s(X) = \inf_{x \in S_X} dc(x) \geq 2 - \alpha_G(X) 
		\end{equation*}
		as we wanted. 
	\end{proof}

	We have the following consequences. Item (a) below is a consequence of \cite[Proposition 2.10]{ChoiJung2024}. Indeed, if $x \in B_X$ is an LUR point, then $dc(x) = 1 - \|x\|$. In particular, for every $x \in S_X$ which is LUR, $dc(x) = 0$. So, if $X$ is LUR and has the $G$-DPr, then for every $x \in S_X$, we have that 
	\begin{equation*}
		0 = dc(x) \geq 2 - \alpha_G(X)
	\end{equation*}
	by \Cref{estimate-T-s-alpha-G}. For (b), it is immediate from \cite[Lemma 2.8]{ChoiJung2024} as if $y$ is a denting point on $S_X$, then $dc(y) = 0$. To sum it up, we have the following result. Item (a) should be compared to \Cref{theorem:GDPrOnUniformlyConvexSpace} and item (b) with \Cref{thm:GDPr-alphaG}.
	
	\begin{corollary} \label{corollary-LUR-denting} Let $X$ be a real $G$-Banach space with the $G$-DPr. The following holds true.
		\begin{itemize}
			\item[(a)] If $X$ is LUR, then $\alpha_G(X) = 2$.
			\item[(b)] If $X$ has a denting point on $S_X$, then $\alpha_G(X) = 2$.
		\end{itemize}
	\end{corollary}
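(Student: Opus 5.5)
The plan is to combine \Cref{estimate-T-s-alpha-G} with the pointwise lower bound $dc(x)\geq 2-\alpha_G(X)$ for every $x\in S_X$. It then suffices to exhibit one point $x\in S_X$ with $dc(x)=0$. Indeed, such a point gives $0\geq 2-\alpha_G(X)$, so $\alpha_G(X)\geq 2$. Since $\alpha_G(X)\leq 2$ always holds by the triangle inequality, we get $\alpha_G(X)=2$.

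For (a), I will take any $x\in S_X$. Since $X$ is LUR, $x$ is an LUR point, and \cite[Proposition 2.10]{ChoiJung2024} gives $dc(x)=1-\|x\|=0$. For a self-contained argument I would instead argue directly. Pick $x^*\in S_{X^*}$ with $x^*(x)=1$. If $y_n\in S(B_X,x^*,1/n)$, then $\|y_n\|\leq 1$ and $\|x+y_n\|\geq \re x^*(x+y_n)\to 2$, so LUR forces $y_n\to x$. It follows that $\diam$-type control holds, namely $\sup_{y\in S(B_X,x^*,\delta)}\|x-y\|\to 0$ as $\delta\to 0$, and hence $dc(x)=0$.

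For (b), let $y\in S_X$ be a denting point. For every $\eta>0$ there is a slice $S\ni y$ with $\diam(S)<\eta$. Then $\sup_{z\in S}\|y-z\|\leq \eta$, so $dc(y)=0$; this is also \cite[Lemma 2.8]{ChoiJung2024}. Applying \Cref{estimate-T-s-alpha-G} at $y$ concludes as above.

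I expect no real obstacle here. The only care needed is in (a): the LUR definition must be applied with $\|y_n\|\leq\|x\|$ and with $\|y_n\|\to 1$. The latter follows from $\re x^*(y_n)>1-1/n$.
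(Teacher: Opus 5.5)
Your proposal is correct and follows the paper's argument: the paper also applies the pointwise bound $dc(x)\geq 2-\alpha_G(X)$ from \Cref{estimate-T-s-alpha-G} at a point with $dc(x)=0$, citing \cite[Proposition 2.10]{ChoiJung2024} for LUR points and \cite[Lemma 2.8]{ChoiJung2024} for denting points. Your direct verifications of $dc(x)=0$ are correct (shrinking slices $S(B_X,x^*,1/n)$ with the LUR definition for (a), small-diameter slices for (b)), and they simply make those cited facts self-contained.
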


	As a particular case of \Cref{corollary-LUR-denting}.(b), we have that if $X$ satisfies the $G$-DPr and $\alpha_G(X) < 2$, then $X$ cannot have the RNP and we recover back \Cref{thm:GDPr-alphaG}.

	One can consider weak-star versions of the Daugavet indices of thickness as in \cite[Section 3]{HallerLangemetsLimaNadelRuedaZoca2021}. Here, we will consider the weak-star version of $\mathcal{T}^s(X)$. Let $X$ be a real Banach space. We consider 
	\begin{equation*}
		\mathcal{T}_{w^*}^s(X) := \inf \{ r>0: \exists x^* \in S_{X^*} \ \mbox{and a weak}^*\mbox{-slice} \ S \ \mbox{of} \ B_{X^*} \ \mbox{with} \ S \subseteq B(x^*, r) \}.
	\end{equation*}

	As in \Cref{estimate-T-s-alpha-G}, we will get an estimate which relates $\mathcal{T}_{w^*}^s(X)$ and $\alpha_G(X)$, and as in \Cref{relation-between-alpha-and-numerical-index} we get that real $G$-Banach spaces with the $G$-DPr have duals which fail the RNP, now in a quantitative manner. For this, we equip the dual $X^*$ with the canonical dual action as in \Cref{prop:inheritanceFromDual}. First notice that $\alpha_G(X^*) \leq \alpha_G(X)$. Indeed, for every $g \in G$ and $x^* \in S_{X^*}$, we have that 
	\begin{equation*}
		\|g \cdot x^* - x^*\| = \sup_{x \in B_X} \|(g \cdot x^* - x^*)(x) \| = \sup_{x \in B_X} |x^*(g^{-1}x - x)| \leq \alpha_G(X).
	\end{equation*}
	
	We will use this in the proof of the next result.
	
	\begin{theorem} \label{theorem:relation-between-alphaG-dual} Let $X$ be a real $G$-Banach space with the $G$-DPr. Then,
		\begin{equation} \label{estimate-alphaG-dual}
			\mathcal{T}_{w^*}^s(X) \geq 2 - \alpha_G(X).
		\end{equation}
	\end{theorem}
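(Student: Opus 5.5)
We fix $x^* \in S_{X^*}$ and a weak$^*$-slice $S = S(B_{X^*}, x, \delta) = \{y^* \in B_{X^*} : y^*(x) > 1-\delta\}$ with $x \in S_X$ and $\delta>0$. The goal is to show that $\sup_{y^* \in S}\|x^* - y^*\| \geq 2 - \alpha_G(X)$. The infimum defining $\mathcal{T}_{w^*}^s(X)$ then gives \eqref{estimate-alphaG-dual}. The idea is to dualize the argument of \Cref{estimate-T-s-alpha-G}. Instead of a $G$-slice of $B_X$ around a point of $X$, we use the $G$-DPr of $X$ to find a vector that is simultaneously almost in $S(B_X, x^*, \varepsilon)$ and, after moving by $G$, almost antipodal to $x$.

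\textbf{Step 1.} Let $\varepsilon>0$ be small. We apply item (3) of \Cref{prop:characterization-G-DP} to the point $x$ and the $G$-slice $S_G(B_X, x^*, \varepsilon)$. This gives $u \in B_X$ and $g \in G$ such that $x^*(gu) > 1-\varepsilon$ and $\|x - u\| > 2-\varepsilon$. Setting $v := gu$, we get $\|x - v\| \geq \|x-u\| - \|gu - u\| > 2 - \varepsilon - \alpha_G(X)$, using, as in \Cref{estimate-T-s-alpha-G}, that $\|gu-u\| \leq \alpha_G(X)\|u\| \leq \alpha_G(X)$.

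\textbf{Step 2.} We now want a functional $y^*$ that norms $x$ and $-v$ at the same time, up to an error. The natural tool is \cite[Lemma 2.6.8]{KMRW}, used already in the proof of $(5)\Rightarrow(6)$ of \Cref{prop:characterization-G-DP}. Any $y^* \in S_{X^*}$ with $y^*(x - v) > \|x-v\| - \varepsilon$ satisfies
\begin{equation*}
y^*(x) > \|x-v\| - \varepsilon - 1 > 1 - 2\varepsilon - \alpha_G(X) \quad\text{and}\quad -y^*(v) > 1 - 2\varepsilon - \alpha_G(X).
\end{equation*}
Here lies the main obstacle: the quantity $\alpha_G(X)$ spoils membership of $y^*$ in $S$, since we need $y^*(x) > 1-\delta$ and not merely $y^*(x) > 1-\alpha_G(X)-2\varepsilon$. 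To avoid this, we redistribute the loss. We want $\|x-u\|>2-\varepsilon$ with $u$ itself, not $gu$, so we choose $y^*$ with $y^*(x-u) > 2-2\varepsilon$. This gives $y^*(x) > 1-2\varepsilon$, so $y^* \in S$ once $2\varepsilon<\delta$, and $-y^*(u) > 1-2\varepsilon$. The $G$-defect is then moved onto the other term.

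\textbf{Step 3.} We estimate
\begin{equation*}
\|x^* - y^*\| \geq (x^* - y^*)(gu) = x^*(gu) - y^*(u) - y^*(gu - u) > (1-\varepsilon) + (1-2\varepsilon) - \alpha_G(X).
\end{equation*}
The last bound uses $|y^*(gu-u)| \leq \|gu-u\| \leq \alpha_G(X)$. Since $y^* \in S$, it follows that $\sup_{y^*\in S}\|x^*-y^*\| \geq 2 - 3\varepsilon - \alpha_G(X)$, and letting $\varepsilon \to 0$ finishes the proof. The only delicate point is the choice in Step 2 of which vector, $u$ or $gu$, is paired with $y^*$ and with $x^*$. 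Pairing $y^*$ with $u$ and $x^*$ with $gu$ ensures that the weak$^*$-slice condition is met exactly, while the loss $\alpha_G(X)$ appears only once, in the final estimate.
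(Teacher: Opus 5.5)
Your proof is correct, and it reaches the estimate by a somewhat different route than the paper.

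\textbf{The paper's route.} The paper starts from the defining operator equation, applied to the rank-one operator $-T$ with $T=x_0^*\otimes x_0$. Here $x_0$ defines the weak$^*$-slice and $x_0^*$ is the centre of the ball. It then passes to adjoints to obtain $g\in G$ and $y^*\in S_{X^*}$ with $\|g^*y^*-y^*(x_0)x_0^*\|>2-\eta$. Replacing $y^*$ by $z^*=\pm y^*$ puts it in the weak$^*$-slice. The group defect is then paid inside $X^*$ via $\|z^*-x_0^*\|\geq\|g^*z^*-x_0^*\|-\|g^*z^*-z^*\|$. This last step needs two extra observations: that $g^*$ is the dual action of $g^{-1}$, and that $\alpha_G(X^*)\leq\alpha_G(X)$ for the dual action.

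\textbf{Your route.} You stay in $X$ throughout. The $G$-slice characterization gives exactly the data used in the proof of \Cref{estimate-T-s-alpha-G}: some $u\in B_X$ and $g\in G$ with $x^*(gu)>1-\varepsilon$ and $\|x-u\|>2-\varepsilon$. A functional norming $x-u$ then automatically lies in the weak$^*$-slice determined by $x$ and takes a value close to $-1$ at $u$. Testing $x^*-y^*$ at $gu$ charges the group defect exactly once, as $\|gu-u\|\leq\alpha_G(X)$.

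Your pairing of $y^*$ with $u$ and of $x^*$ with $gu$ is the primal counterpart of the paper's comparison of $z^*$ with $g^*z^*$.

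\textbf{Comparison.} Your version avoids adjoints, the dual action and the comparison of $\alpha_G(X^*)$ with $\alpha_G(X)$. It also shows that the weak$^*$ estimate follows from the same slice-level information as the estimate for $\mathcal{T}^s(X)$. The paper's version has the advantage of working directly from the $G$-Daugavet equation rather than from a derived characterization.

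The exploratory detour in your Step~2, with $v=gu$ and the appeal to the lemma from the monograph, is unnecessary. All you actually use is the Hahn--Banach theorem applied to $x-u$.
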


	We get the following consequence of \Cref{theorem:relation-between-alphaG-dual}, which should be compared to \Cref{relation-between-alpha-and-numerical-index} above.
	
	\begin{corollary} Let $X$ be a real $G$-Banach space with the $G$-DPr. If $\alpha_G(X) < 2$, then $X^*$ fails the RNP.
	\end{corollary}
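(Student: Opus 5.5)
We deduce the corollary from \Cref{theorem:relation-between-alphaG-dual} by arguing by contradiction. Assume that $X$ is a real $G$-Banach space with the $G$-DPr, that $\alpha_G(X)<2$, and that $X^*$ has the RNP.

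The first step is to produce a small weak$^*$-slice. Since $X^*$ is a dual space with the RNP, $B_{X^*}$ is weak$^*$-dentable. This is the classical fact that in a dual space with the RNP every bounded set is weak$^*$-dentable; equivalently, $B_{X^*}$ is the weak$^*$-closed convex hull of its weak$^*$-strongly exposed points, which follows from the Phelps/Bourgain description. Fix
\begin{equation*}
	r := \frac{2-\alpha_G(X)}{2} > 0 .
\end{equation*}
Weak$^*$-dentability gives $x\in S_X$ and $\delta>0$ such that the weak$^*$-slice
\begin{equation*}
	S=S(B_{X^*},x,\delta)=\{y^*\in B_{X^*} : y^*(x)>1-\delta\}
\end{equation*}
is nonempty and satisfies $\diam(S)<r$.

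The second step is to centre this slice at a point of the sphere. Choose $x^*\in S_{X^*}$ with $x^*(x)=1$, which exists by Hahn-Banach. Then $x^*\in S$, so $\diam(S)<r$ gives $S\subseteq B(x^*,r)$. By the definition of $\mathcal{T}^s_{w^*}(X)$ as an infimum, this forces $\mathcal{T}^s_{w^*}(X)\leq r$.

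The third step combines this with \eqref{estimate-alphaG-dual}:
\begin{equation*}
	2-\alpha_G(X)\leq \mathcal{T}^s_{w^*}(X)\leq \frac{2-\alpha_G(X)}{2}.
\end{equation*}
This says $2-\alpha_G(X)\leq 0$, contradicting $\alpha_G(X)<2$. Hence $X^*$ fails the RNP.

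The main obstacle is the first step: justifying that a dual space with the RNP has weak$^*$-slices of $B_{X^*}$ with arbitrarily small diameter, as opposed to merely norm slices. We plan to cite the standard result that in a dual space the RNP is equivalent to every bounded subset being weak$^*$-dentable. An alternative route is to use the Namioka-Phelps duality (the RNP of $X^*$ is equivalent to $X$ being Asplund), together with the fact that the dual norm then has weak$^*$-strongly exposed points obtained from Fréchet differentiability points of the norm of $X$. Either route gives the small weak$^*$-slice needed above, and the remaining steps are routine.
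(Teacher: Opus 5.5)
Your proposal is correct and takes essentially the same route as the paper: assume $X^*$ has the RNP, use that $B_{X^*}$ is then weak$^*$-dentable (the paper goes through $X$ being Asplund, which is one of the two routes you mention), and contradict \Cref{theorem:relation-between-alphaG-dual}. Your explicit choice $r=(2-\alpha_G(X))/2$ and your centring of the small weak$^*$-slice at a functional $x^*\in S_{X^*}$ with $x^*(x)=1$ simply spell out the step that the paper compresses into ``so $\mathcal{T}^s_{w^*}(X)=0$''.
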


	\begin{proof}
		Assume towards a contradiction that $X^*$ has the RNP. Then $X$ is an Asplund space, and therefore the dual ball $B_{X^*}$ is weak$^*$-dentable. In particular, $B_{X^*}$ admits weak$^*$-slices of arbitrarily small diameter, so $\mathcal{T}_{w^*}^s(X)=0$. On the other hand, by \Cref{theorem:relation-between-alphaG-dual} we have $\mathcal{T}_{w^*}^s(X)\ge 2-\alpha_G(X)>0$, a contradiction.
	\end{proof}
	
	\begin{proof}[Proof of \Cref{theorem:relation-between-alphaG-dual}] We assume that $\alpha_G(X)<2$ as otherwise the inequality is trivial.   We show that no weak$^*$-slice of $B_{X^*}$ can be contained in a ball of radius strictly smaller than $2-\alpha_G(X).$
		For this, we fix $r < 2 - \alpha_G(X)$. More precisely, we will show that there do not exist $x_0^* \in S_{X^*}$ and a weak$^*$-slice $S$ of $B_{X^*}$ such that $S \subseteq B(x_0^*, r)$. Once this is done, by the definition of $\mathcal{T}_{w^*}^s(X)$, we will have that the estimate (\ref{estimate-alphaG-dual}) follows. Assume by contradiction that such $x_0^* \in S_{X^*}$ and weak$^*$-slice $S$ do exist. Since $S$ is a weak$^*$-slice of $B_{X^*}$, there are $x_0 \in S_X$ and $\e>0$ such that 
		\begin{equation*}
			S = S(B_{X^*}, x_0, \e) := \{ y^* \in B_{X^*}: y^*(x_0) > 1 - \e \}.
		\end{equation*}
		Let $\eta \in (0, \e)$ be so small that $2 - 2 \eta - \alpha_G(X) > r$ and consider the rank-one operator $T:=x_0^* \otimes x_0$. Since $X$ has the $G$-DPr, the rank-one operator $-T$ satisfies the $G$-Daugavet equation and by (\ref{G-Daugavet-equation}) we have that $\sup_{g \in G} \|g - T\| = 2$. Passing to adjoints, we have that $\sup_{g \in G} \|g^* - T^*\| = 2$. Now, as $T^*y^* = y^*(x_0) x_0^*$ for every $y^* \in X^*$, there are $g \in G$ and $y^* \in S_{X^*}$ such that 
		\begin{equation*}
			\|g^* y^* - y^*(x_0) x_0^*\| > 2 - \eta. 
		\end{equation*}
		Now, since  $\|g^* y^* - y^*(x_0) x_0^*\| \leq 1 + |y^*(x_0)|$, we have that $|y^*(x_0)| > 1 - \eta$. Let $\theta := \sign(y^*(x_0)) \in \{\pm 1\}$ and set $z^* := \theta y^* \in S_{X^*}$. We will show that $z^* \in S$ and $\|z^* - x_0^*\| > r$, which contradicts $S \subseteq B(x_0^*, r)$. Indeed, on the one hand, we have that $z^*(x_0) = |y^*(x_0)| > 1 - \eta > 1 -\e$. So, $z^* \in S$. On the other hand, we have that 
		\begin{eqnarray*}
			\|g^*z^* - x_0^*\| &\geq& \| g^* z^* - |y^*(x_0)| x_0^*\| - |1 - |y^*(x_0)| | \\
			&=& \| g^* \theta y^* - |y^*(x_0)| x_0^*\| - |1 - |y^*(x_0)|| \\
			&=& \|g^* y^* - y^*(x_0) x_0^*\| - |1 - |y^*(x_0)|| \\
			&>& 2 - 2 \eta.
		\end{eqnarray*}
		Thus,
		\begin{equation*}
			\|z^* - x_0^*\| \geq \|g^* z^* - x_0^*\| - \|g^* z^* - z^*\| > 2 - 2 \eta - \alpha_G(X^*). 
		\end{equation*}
		As $\alpha_G(X^*) \leq \alpha_G(X)$, we obtain 
		\begin{equation*}
			\|z^* - x_0^*\| > 2 - 2 \eta - \alpha_G(X).
		\end{equation*}
		By the choice of $\eta$, this shows that $z^* \in S$ is such that $\|z^* - x_0^*\| > r$, contradicting that $S \subseteq B(x_0^*, r)$. 
	\end{proof}

	\subsection{On subgroups of the complex unit sphere}  It is still not known about the existence of an infinite-dimensional complex reflexive Banach space with the aDPr (see \cite[Problem 12.6]{KMRW}). As the aDPr is the $S_{\K}$-DPr, one might think that it is possible to find a subgroup $H$ of $S_{\C}$ and a complex reflexive Banach space $X$ such that $X$ has the $H$-DPr. This would solve the open problem by applying \Cref{fact-DP-implies-GDP}. Nevertheless, this approach does not seem to work as we can see in \Cref{complex-reflexive-aDPr-problem} below as we know exactly what happens with subgroups (finite and infinite) of $S_{\C}$ acting by multiplication. For this, we define the following subset of $B_{\C}$.

	\begin{definition} Let $X$ be a complex $G$-Banach space. We define 
		\begin{equation*} 
			M_G(X) := \{x^*(g \cdot x): x \in S_X, x^* \in S_{X^*}, x^*(x) = 1, g \in G \}.
		\end{equation*}
	\end{definition}
	The set $M_G(X)$ can be seen as the numerical range of the action of $G$. Indeed, notice that
	\begin{equation*} 
		M_G(X) = \bigcup_{g\in G} \{x^*(g\cdot x): x\in S_X,\ x^*\in S_{X^*},\ x^*(x)=1\}.
	\end{equation*}
	In other words, $M_G(X)$ is the union of the spatial numerical ranges of the isometries induced by the elements of $G$. In this sense, $M_G(X)$ is a numerical range version of the parameter $\alpha_G(X)$. While $\alpha_G(X)$ only measures how far the action can move unit vectors in norm, the set $M_G(X)$ keeps track of the complex values $x^*(g \cdot x)$ which are seen by norming pairs. This distinction is natural in the complex setting, since the aDPr is closely connected
	with numerical range theory and with the action of the whole unit circle $S_{\C}$. More precisely, the estimate
	\begin{equation*} 
		|x^*(g\cdot x)-1| = |x^*(g\cdot x-x)| \leq \|g\cdot x-x\| \leq \alpha_G(X)
	\end{equation*} 
	shows that $M_G(X)\subseteq \{\lambda\in B_{\C}:|\lambda-1|\leq \alpha_G(X)\}$. Thus $M_G(X)$ represents, in scalar numerical range form, the same antipodal
	behavior measured by $\alpha_G(X)$. In fact, $\alpha_G(X)=2$ if and only if
	$-1\in\overline{M_G(X)}$. Indeed, if $-1\in\overline{M_G(X)}$, then the previous estimate gives
	$\alpha_G(X)\geq 2$, hence $\alpha_G(X)=2$. Conversely, if $\alpha_G(X)=2$,
	we can find $g_n\in G$ and $x_n\in S_X$ such that $\|g_nx_n-x_n\|\longrightarrow 2$. By the Bishop-Phelps-Bollobás theorem for functionals \cite{Bollobas1970}, there exist $x_n^*\in S_{X^*}$ with
	$x_n^*(x_n)=1$ and  $x_n^*(g_nx_n)\longrightarrow -1$. Hence $-1\in\overline{M_G(X)}$.

	We start with the following result.

	\begin{proposition} \label{MG(X)-result} Let $X$ be a complex $G$-Banach space. If $X$ has the $G$-DPr and the RNP, then $S_{\C} \subseteq \overline{M_G(X)}$.
	\end{proposition}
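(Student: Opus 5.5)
The plan is to adapt the proof of \Cref{thm:GDPr-alphaG}(a), replacing the antipode $-x$ by an arbitrary rotation $\lambda x$ with $\lambda\in S_{\C}$. Fix $\lambda\in S_{\C}$ and $\e>0$. We aim to find $x\in S_X$, $x^*\in S_{X^*}$ with $x^*(x)=1$, and $g\in G$ such that $|x^*(gx)-\lambda|$ is small.

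First, by the RNP, $B_X$ is dentable. Take a denting point $x_0\in S_X$ and a slice $S=S(B_X,x^*,\delta)$ with $x_0\in S$ and $\diam(S)<\e$. Shrinking the slice if necessary, we may also ask that $\re x^*(x_0)>1-\delta$ with $\delta$ small compared with $\e$.

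Next, apply item (2) of \Cref{prop:characterization-G-DP} to the point $\lambda x_0\in S_X$ and the $G$-slice $S_G(B_X,x^*,\delta)$. This gives $y\in B_X$ and $g\in G$ with $gy\in S$ and $\|\lambda x_0+y\|>2-\e$. Since $gy$ and $x_0$ both lie in $S$, we get $\|gy-x_0\|<\e$, and hence $\|y-g^{-1}x_0\|<\e$. Combining this with the previous inequality yields $\|\lambda x_0+g^{-1}x_0\|>2-2\e$.

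Then choose $z^*\in S_{X^*}$ with $\re z^*(\lambda x_0+g^{-1}x_0)>2-2\e$. Both $\re z^*(\lambda x_0)$ and $\re z^*(g^{-1}x_0)$ are then greater than $1-2\e$. Because $|z^*(\cdot)|\le 1$ on $B_X$, this forces $|z^*(\lambda x_0)-1|\le\sqrt{4\e}$ and $|z^*(g^{-1}x_0)-1|\le \sqrt{4\e}$. Put $w^*=\lambda z^*$. Then $w^*(x_0)\approx 1$ and $w^*(g^{-1}x_0)\approx\lambda$, with errors controlled by $\sqrt{\e}$.

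The remaining difficulty, and the main obstacle, is that $w^*(x_0)$ is only close to $1$, not equal to it, so $w^*(g^{-1}x_0)$ need not lie in $M_G(X)$. To fix this, apply the Bishop--Phelps--Bollob\'as theorem \cite{Bollobas1970} to the pair $(x_0,w^*)$, as in the discussion preceding the statement. It gives $x\in S_X$ and $x^*_1\in S_{X^*}$ with $x_1^*(x)=1$, $\|x-x_0\|$ small and $\|x_1^*-w^*\|$ small, both bounded by a quantity tending to $0$ with $\e$. Since $g^{-1}$ is an isometry,
\begin{equation*}
|x_1^*(g^{-1}x)-w^*(g^{-1}x_0)|\le \|x_1^*-w^*\|+\|x-x_0\|,
\end{equation*}
and therefore $x_1^*(g^{-1}x)\in M_G(X)$ lies within $o(1)$ of $\lambda$ as $\e\to0$. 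Letting $\e\to0$ gives $\lambda\in\overline{M_G(X)}$. As $\lambda\in S_{\C}$ was arbitrary, $S_{\C}\subseteq\overline{M_G(X)}$.

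In carrying this out, the thing to watch is that the denting point does not actually need to be exposed; only small-diameter slices are used.
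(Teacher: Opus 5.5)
Your proof is correct, but it follows a different route from the paper's. The paper first invokes \Cref{thm:G-DPandRNP} to get $n_G(X)=1$; that result rests on the strong Radon--Nikod\'ym operator characterization in \Cref{prop:sRN-Operators} and on \Cref{lem:G-DP-numerical-radius}. It then evaluates $v_G(\overline{\omega}\,\id_X)=1$. A norming pair $x^*(x)=1$ and some $g\in G$ with $\re(\overline{\omega}x^*(gx))>1-\e$ give a point $x^*(gx)\in M_G(X)$ within $\sqrt{2\e}$ of $\omega$. You instead work directly with the geometry. You rerun the proof of \Cref{thm:GDPr-alphaG}(a) with the antipode $-x_0$ replaced by $\lambda x_0$, which gives $\|\lambda x_0+g^{-1}x_0\|>2-2\e$ from a single small-diameter slice at a denting point. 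You then use Bishop--Phelps--Bollob\'as to turn the almost-norming pair $(x_0,\lambda z^*)$ into an exactly norming one. All your estimates check out. Your Bishop--Phelps--Bollob\'as step does the job that the Bonsall--Duncan-type inequality in \Cref{lem:G-DP-numerical-radius} does in the paper: passing from almost-norming to exactly norming pairs. What your version gains is self-containment and a visibly weaker hypothesis, since it uses only the existence of one denting point of $B_X$, not the full RNP. What the paper's version gains is that it factors through $n_G(X)=1$, so the same two lines show that $n_G(X)=1$ implies $S_{\C}\subseteq\overline{M_G(X)}$. The paper then combines this with \Cref{result-contain-copies-of-l1} for spaces with no copy of $\ell_1$; there $B_X$ need not have any denting point (think of $c_0$), and your argument would not apply. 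A minor point: the remark about shrinking the slice is unnecessary, since $x_0\in S$ already gives $\re x^*(x_0)>1-\delta$ and no smallness of $\delta$ is used anywhere.
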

	
	\begin{proof} Fix $\omega \in S_{\C}$ and consider the operator $T:=\overline{\omega} \id_X$. Then, $\|T\| = 1$ and since $X$ has the $G$-DPr and the RNP, we have that $n_{G}(X) = 1$ (see \Cref{thm:G-DPandRNP}). In particular, we have that $v_G(T) = \|T\| = 1$. By the definition of $v_G(T)$ and how $T$ is defined, we have that 
		\begin{equation*}
			\sup \{ \re (\overline{\omega} x^*(g \cdot x)): x \in S_X, x^* \in S_{X^*}, x^*(x) = 1, g \in G \} = 1.
		\end{equation*}
		This means that, given $\e > 0$, there are $x \in S_X$ and $x^* \in S_{X^*}$ such that $x^*(x) = 1$, and $g \in G$ such that $\re (\overline{\omega} x^*(g \cdot x)) > 1 - \e$. Set $\omega_0:= x^*(g \cdot x) \in M_G(X) \subseteq B_{\C}$. Then,
		\begin{equation*}
			|\omega_0 - \omega|^2 = |\omega_0|^2 + |\omega|^2 - 2 \re (\overline{\omega} \omega_0) \leq 2 - 2 \re (\overline{\omega} \omega_0) < 2 \e. 
		\end{equation*}
		This implies that $|\omega_0 - \omega| < \sqrt{2\e}$ and therefore $\omega \in \overline{M_G(X)}$ as we wanted. 
	\end{proof}

	\begin{remark} 
		
		If $X$ is a real Banach space, then we would have that $M_G(X) \subseteq [-1,1]$ and the analogous of \Cref{MG(X)-result} would say that $S_{\R} = \{-1,1\} \subseteq \overline{M_G(X)}$. Notice that $1 \in M_G(X)$ always holds true as by taking $g = \id_X$ and $(x, x^*) \in S_X \times S_{X^*}$ with $x^*(x) = 1$, we have that $x^*(g \cdot x) = x^*(x) = 1$. Therefore, the only nontrivial case that one would get from \Cref{MG(X)-result} would be that $-1 \in \overline{M_G(X)}$. In fact, we are not interested in the real case as the only outcome from it would be either the DPr or the aDPr.
		
	\end{remark}
	
	We have the following immediate consequence of \Cref{MG(X)-result}.
	
	\begin{corollary} \label{MG(X)-corollary} Let $X$ be a complex $G$-Banach space. If there exists a closed set $F \subseteq B_{\C}$ such that $M_G(X) \subseteq F$ and $S_{\C}\not\subseteq F$, then $X$ cannot have both $G$-DPr and RNP.
	\end{corollary}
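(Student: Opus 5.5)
This follows from \Cref{MG(X)-result} by contraposition. Assume, towards a contradiction, that $X$ has both the $G$-DPr and the RNP. Then \Cref{MG(X)-result} gives $S_{\C} \subseteq \overline{M_G(X)}$.

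On the other hand, we have $M_G(X) \subseteq F$ and $F$ is closed. Taking closures therefore gives $\overline{M_G(X)} \subseteq \overline{F} = F$. Combining the two inclusions yields $S_{\C} \subseteq F$, which contradicts the hypothesis $S_{\C} \not\subseteq F$. Hence $X$ cannot have both properties at the same time.

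No step here is a genuine obstacle. All the substance lies in \Cref{MG(X)-result}, which in turn rests on \Cref{thm:G-DPandRNP} (the conclusion $n_G(X)=1$) applied to the rotations $T=\overline{\omega}\id_X$. The only point to check is that closedness of $F$ is what lets the inclusion pass to the closure. It also explains why we need $S_{\C}\not\subseteq F$ rather than merely $S_{\C}\not\subseteq M_G(X)$.

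A typical use is $G=H$, a finite subgroup of $S_{\C}$ acting by multiplication. There $M_H(X)\subseteq \bigcup_{h\in H}\{h\}=H$, since $x^*(hx)=h\,x^*(x)=h$. Taking $F=H$, which is closed and does not contain $S_{\C}$, gives exactly the finite-group part of \Cref{complex-reflexive-aDPr-problem}.
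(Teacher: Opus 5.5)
Your proof is correct and matches the paper, which records this corollary as an immediate consequence of \Cref{MG(X)-result}: combine $S_{\C}\subseteq\overline{M_G(X)}$ with $\overline{M_G(X)}\subseteq F$, using that $F$ is closed. Your remark about a finite $H$ acting by multiplication with $F=H$ is also exactly how the paper applies the corollary in \Cref{complex-reflexive-aDPr-problem}(a).
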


	For scalar actions, \Cref{MG(X)-corollary} depends only on the subgroup of scalars and not on the complex Banach space itself as we can see below. 
	
	\begin{corollary} \label{complex-reflexive-aDPr-problem}
		Let $X$ be a complex Banach viewed as an $S_{\C}$-Banach space via scalar multiplication. Let $H \leq S_{\C}$ be a subgroup.
		\begin{itemize}
			\item[(a)] If $X$ has the RNP and $H$ is finite, then $X$ does not have the $H$-DPr.
			\item[(b)] If $H$ is infinite, then $X$ has the $H$-DPr if and only if $X$ has the aDPr.
		\end{itemize}
	\end{corollary}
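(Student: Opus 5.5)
For (a) I will apply \Cref{MG(X)-corollary}, so the work is to locate $M_H(X)$. For a scalar action every $\lambda\in H$ acts as $\lambda\id_X$. Hence for any norming pair $x\in S_X$, $x^*\in S_{X^*}$ with $x^*(x)=1$ we get $x^*(\lambda x)=\lambda$. This shows that $M_H(X)=H$ exactly, independently of $X$ (the set of norming pairs is nonempty by Hahn--Banach).

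\textbf{(a).} Suppose $H$ is finite. Then $F:=H$ is a closed subset of $B_{\C}$ with $M_H(X)\subseteq F$. Since $S_{\C}$ is uncountable, $S_{\C}\not\subseteq F$. \Cref{MG(X)-corollary} then says that $X$ cannot have both the $H$-DPr and the RNP. So if $X$ has the RNP, it fails the $H$-DPr.

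\textbf{(b).} One direction, aDPr $\Rightarrow$ $H$-DPr, is the step I expect to be the main (if mild) obstacle, since we must pass from the compact group $S_{\C}$ to a subgroup. An infinite subgroup of $S_{\C}$ is dense, because the closed subgroups of the circle are the finite cyclic groups and $S_{\C}$ itself. For a rank-one $T$, the map $\theta\mapsto\|\id+\theta T\|$ is continuous (indeed $1$-Lipschitz up to the factor $\|T\|$). The maximum over $S_{\C}$, which equals $1+\|T\|$ by the aDPr, is therefore the supremum over the dense set $H$. This gives the $H$-DPr.

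The converse, $H$-DPr $\Rightarrow$ aDPr, is immediate from \Cref{fact-DP-implies-GDP} applied to $H\leq S_{\C}$, recalling that the $S_{\C}$-DPr is exactly the aDPr.
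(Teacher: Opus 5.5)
Your proof is correct and follows the paper's argument. In (a) you compute $M_H(X)=H$ and apply \Cref{MG(X)-corollary} with $F=H$. In (b) you combine the density of infinite subgroups in $S_{\C}$ with the continuity of $\theta\mapsto\|\id_X+\theta T\|$. The only minor difference is that you get $H$-DPr $\Rightarrow$ aDPr from \Cref{fact-DP-implies-GDP}, whereas the paper reads off both directions at once from the equality of the suprema over $H$ and over $S_{\C}$.
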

	
	\begin{proof} (a). Let $H \leq S_{\C}$ act on a complex Banach space $X$ by scalar multiplication, that is, $h\cdot x=hx$ for every $h\in H$ and $x\in X$. Then $M_H(X)=H$. In fact, if $x\in S_X$, $x^*\in S_{X^*}$ satisfy $x^*(x)=1$, and $h\in H$, then $x^*(h\cdot x)=h x^*(x)=h$. Therefore, if $\overline{H}\neq S_{\C}$, then \Cref{MG(X)-corollary} implies that $X$ cannot have both the Radon-Nikodým property and the $H$-Daugavet property. In particular, this applies to every finite subgroup $H\le S_{\C}$ acting on $X$ by scalar multiplication.

		\noindent
		(b). Suppose that $H$ is infinite. Then, $H$ is dense in $S_{\C}$. As the mapping $g \mapsto \| \id_X + g \circ T\|$ is continuous on $S_{\C}$ for every rank-one operator $T \in \mathcal{L}(X)$, we can take the supremum over $g \in G$ and find out that $X$ having the $H$-DPr is equivalent to $X$ having the $S_{\C}$-DPr.
	\end{proof}

	\Cref{complex-reflexive-aDPr-problem} motives us the following problem. A positive answer to it would give us a positive answer to \cite[Problem 12.6]{KMRW}.
	
	\begin{problem} Are there an infinite subgroup $H \leq S_{\C}$ and an infinite dimensional complex reflexive $G$-Banach space with the $H$-DPr?
	\end{problem}

	If $G = \{\id_X\}$ is the trivial group, then $M_G(X) = \{1\}$. Hence, $\overline{M_G(X)} = \{1\}$ clearly does not contain $S_{\C}$. In particular, this brings us back once again to the fact that if $X$ is a (complex) Banach space with the classical DPr, then $X$ cannot have the RNP (see \Cref{thm:GDPr-alphaG} and \Cref{thm:G-DPandRNP}).

	Notice that the proof in \Cref{MG(X)-result} actually shows that if $n_G(X)=1$ then $S_{\mathbb{C}}\subseteq \overline{M_G(X)}$. From this together with \Cref{result-contain-copies-of-l1} it one obtains the following result for complex $G$-Banach spaces.

	\begin{proposition} Let $X$ be a complex $G$-Banach space and assume that the action of $G$ on $X$ is continuous.
		\begin{itemize}
			\item[(a)] If $X$ has the $G$-DPr and does not contain copies of $\ell_1$, then $S_{\C} \subseteq \overline{M_G(X)}$.
			\item[(b)] If $H \leq S_{\C}$ is finite and acts on $X$ by scalar multiplication, and if $X$ has the $H$-DPr, then $X$ contains a copy of $\ell_1$. 
		\end{itemize}
	\end{proposition}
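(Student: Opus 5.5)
Part (a) follows by chaining earlier results, with the observation noted just before the statement: the proof of \Cref{MG(X)-result} only used $n_G(X)=1$. Assume the action is continuous, $X$ has the $G$-DPr and $X$ contains no copy of $\ell_1$. Then \Cref{result-contain-copies-of-l1} gives $n_G(X)=1$, so $v_G(T)=\|T\|$ for every $T\in\mathcal L(X)$. Fix $\omega\in S_{\C}$ and take $T=\overline{\omega}\,\id_X$, which has norm one. From $v_G(T)=1$ we obtain, for each $\e>0$, a triple $x\in S_X$, $x^*\in S_{X^*}$ with $x^*(x)=1$, and $g\in G$ such that
$$\re\bigl(\overline{\omega}\,x^*(g\cdot x)\bigr)>1-\e.$$
Set $\omega_0=x^*(g\cdot x)\in M_G(X)\subseteq B_{\C}$. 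The identity $|\omega_0-\omega|^2\le 2-2\re(\overline{\omega}\omega_0)$ then gives $|\omega_0-\omega|<\sqrt{2\e}$. Hence $\omega\in\overline{M_G(X)}$, and since $\omega$ was arbitrary, $S_{\C}\subseteq\overline{M_G(X)}$.

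For part (b), let $H\le S_{\C}$ be finite and act by scalar multiplication. This action is continuous: each $h$ acts by a scalar, and in any topology on $H$ the map $(h,x)\mapsto hx$ is continuous, since it is so for the discrete topology and for the subspace topology inherited from $S_{\C}$. As computed in the proof of \Cref{complex-reflexive-aDPr-problem}, $x^*(h\cdot x)=h\,x^*(x)=h$, so $M_H(X)=H$. Being finite, $H$ is closed, so $\overline{M_H(X)}=H$. Suppose $X$ had the $H$-DPr but contained no copy of $\ell_1$. Part (a) would then give $S_{\C}\subseteq H$, which is impossible because $S_{\C}$ is uncountable and $H$ is finite. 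Therefore $X$ must contain a copy of $\ell_1$.

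The only delicate point is checking that the hypotheses of \Cref{result-contain-copies-of-l1} are met. In (a) continuity is assumed. In (b) it holds because the action is by scalars from a finite group.
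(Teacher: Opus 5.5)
Your proof is correct and follows the paper's route. For (a) you combine \Cref{result-contain-copies-of-l1}, which gives $n_G(X)=1$, with the computation from the proof of \Cref{MG(X)-result} for $T=\overline{\omega}\,\id_X$; for (b) you apply (a) using $\overline{M_H(X)}=H\not\supseteq S_{\C}$ when $H$ is finite. One small overstatement: the scalar action is not continuous for \emph{every} topology on $H$ (it fails, e.g., for the indiscrete topology on $\{\pm1\}$), but you only need the subspace topology from $S_{\C}$, which for finite $H$ is discrete, and there continuity is clear.
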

	
	\section{Acknowledgments and funding information}

	\textbf{Acknowledgments}: A substantial part of this work was carried out during a visit of Tomáš Raunig to the University of Granada in 2025. He is grateful for the hospitality and support received during his stay. The authors are especially grateful to Yoël Perreau for his careful reading of previous versions of this manuscript and for several valuable suggestions which proved very useful in its preparation. They are also grateful to Michal Doucha for valuable insights on group theory. Finally, they would like to thank Mingu Jung and Miguel Martín for some discussions related to the topics of the paper.

	\textbf{Funding information}: We describe the fundings of the three authors.
	
	Sheldon Dantas was supported by Grant PID2021- 122126NB-C33, funded by MICIU / AEI/ 10.13039/ 501100011033 and by ERDF/EU, and by Grant PID2021-122126NB-C31, funded by MICIU/AEI/10.13039/501100011033 and by ERDF/EU. 
	
	Helena del Río was supported by Grant PRE2022-103590, funded by MICIU/AEI/ 10.13039/ 501100011033 and by ESF+, Grant PID2021-122126NB-C31, funded by MICIU/AEI/10.13039/ 501100011033 and by ERDF/EU, by the "María de Maeztu" Excellence Unit IMAG, funded by MICIU/AEI/10.13039/501100011033 under reference CEX2020-001105-M and by Junta de Andaluc\'ia, grant FQM-0185. 
	
	Tomáš Raunig was supported by the GAČR project 25-15366S, the GAUK project 344226, and the Czech Academy of Sciences (RVO 67985840).

	\bibliographystyle{abbrv}
	\bibliography{references}

\end{document}